\pdfoutput=1
\RequirePackage{type1cm}
\documentclass[11pt,letterpaper,leqno]{article}

\usepackage[letterpaper,textwidth=405bp,textheight=646bp,centering]{geometry}
\usepackage[T1]{fontenc}
\usepackage{times}
\usepackage{amsmath,amsthm,amsfonts,amssymb}
\usepackage{mathrsfs}
\usepackage[numbers]{natbib}
\usepackage[font=small,labelfont=bf]{caption}
\usepackage[colorlinks,citecolor=blue,urlcolor=blue,linkcolor=blue]{hyperref}
\usepackage{graphicx}
\usepackage[ruled,vlined,linesnumbered]{algorithm2e}
\usepackage{tikz}
\usetikzlibrary{positioning,arrows.meta}
\usepackage{xcolor}
\usepackage{url}
\usepackage{tikz-3dplot}
\usepackage{bbm}
\newcommand{\1}{\mathbbm{1}}
\usepackage{pgfplots}
\pgfplotsset{compat=1.18}
\usepgfplotslibrary{groupplots,fillbetween,colormaps}
\hypersetup{linktoc=all,
  pdftitle={Shaping of invariant measures of iterated function systems},
  pdfauthor={Allahkaram Shafiei, Jakub Marecek, Robert Shorten}}
\DeclareFontFamily{U}{rsfs}{\skewchar\font127 }
\DeclareFontShape{U}{rsfs}{m}{n}{<-6> rsfs5 <6-8> rsfs7 <8-> rsfs10}{}
\DeclareFontShapeChangeRule{it}{sc}{sc}{sc}
\DeclareFontShapeChangeRule{sl}{sc}{sc}{sc}
\DeclareFontShapeChangeRule{sc}{it}{it}{it}
\DeclareFontShapeChangeRule{sc}{sl}{sl}{sl}

\usepackage[scaled=.9]{helvet}
\usepackage{textcomp}
\makeatletter
\renewcommand\normalsize{%
  \@setfontsize\normalsize\@xipt{13\p@ plus .5\p@ minus .5\p@}%
  \abovedisplayskip 7\p@ \@plus3\p@ \@minus3\p@
  \belowdisplayskip\abovedisplayskip
  \abovedisplayshortskip\abovedisplayskip
  \belowdisplayshortskip\abovedisplayskip
  \let\@listi\@listI}
\normalsize
\renewcommand\small{%
  \@setfontsize\small\@xpt{12\p@ plus .3\p@ minus .3\p@}%
  \abovedisplayskip 6\p@ \@plus2\p@ \@minus2\p@
  \belowdisplayskip\abovedisplayskip
  \abovedisplayshortskip\abovedisplayskip
  \belowdisplayshortskip\abovedisplayskip}
\renewcommand\footnotesize{%
  \@setfontsize\footnotesize\@ixpt{11\p@ plus .2\p@ minus .2\p@}%
  \abovedisplayskip 6\p@ \@plus2\p@ \@minus2\p@
  \belowdisplayskip\abovedisplayskip
  \abovedisplayshortskip\abovedisplayskip
  \belowdisplayshortskip\abovedisplayskip}
\renewcommand\section{\@startsection{section}{1}{\z@}%
  {-3ex \@plus -1ex \@minus -.2ex}{1.5ex \@plus .2ex}{\normalfont\large\bfseries}}
\renewcommand\subsection{\@startsection{subsection}{2}{\z@}%
  {-2.5ex \@plus -1ex \@minus -.2ex}{1ex \@plus .2ex}{\normalfont\normalsize\bfseries}}
\renewcommand\subsubsection{\@startsection{subsubsection}{3}{\z@}%
  {-2ex \@plus -.5ex \@minus -.2ex}{.5ex \@plus .2ex}{\normalfont\normalsize\itshape}}
\makeatother
\newcommand{\bauthor}[1]{{\scshape #1}}
\newcommand{\oauthor}[1]{{\scshape #1}}
\newcommand{\beditor}[1]{#1}
\newcommand{\bsnm}[1]{#1}
\newcommand{\binits}[1]{#1}
\newcommand{\byear}[1]{#1}
\newcommand{\batitle}[1]{#1}
\newcommand{\bctitle}[1]{#1}
\newcommand{\bjtitle}[1]{\textit{#1}}
\newcommand{\bbtitle}[1]{\textit{#1}}
\newcommand{\bsertitle}[1]{\textit{#1}}
\newcommand{\bvolume}[1]{\textbf{#1}}

\newcommand{\bfpage}[1]{#1}
\newcommand{\blpage}[1]{#1}
\newcommand{\blocation}[1]{#1}
\newcommand{\bpublisher}[1]{#1}
\newenvironment{barticle}{}{}
\newenvironment{bbook}{}{}
\newenvironment{bchapter}{}{}
\newenvironment{botherref}{}{}
\let\endbibitem\relax

\numberwithin{equation}{section}
\theoremstyle{plain}
\newtheorem{theorem}{Theorem}[section]
\newtheorem{lemma}[theorem]{Lemma}
\newtheorem{proposition}[theorem]{Proposition}
\newtheorem{corollary}[theorem]{Corollary}
\theoremstyle{definition}
\newtheorem{definition}[theorem]{Definition}
\newtheorem{assumption}[theorem]{Assumption}
\newtheorem{example}[theorem]{Example}
\newtheorem{remark}[theorem]{Remark}
\newtheorem{openproblem}{Open Problem}

\newcommand{\R}{\mathbb{R}}
\newcommand{\N}{\mathbb{N}}
\newcommand{\Z}{\mathbb{Z}}
\newcommand{\E}{\mathbb{E}}
\newcommand{\PP}{\mathbb{P}}
\newcommand{\cX}{\mathcal{X}}
\newcommand{\cP}{\mathcal{P}}
\newcommand{\cA}{\mathcal{A}}
\newcommand{\cAu}{\mathcal{A}_{\mathrm{u}}}
\newcommand{\cU}{\mathcal{U}}
\newcommand{\cD}{\mathcal{D}}
\newcommand{\cM}{\mathcal{M}}
\newcommand{\cC}{\mathcal{C}}
\newcommand{\cE}{\mathcal{E}}
\newcommand{\cF}{\mathcal{F}}
\newcommand{\cK}{\mathcal{K}}
\newcommand{\cN}{\mathcal{N}}
\newcommand{\cB}{\mathcal{B}}
\newcommand{\sD}{\mathscr{D}}
\newcommand{\sM}{\mathscr{M}}
\newcommand{\sF}{\mathscr{F}}
\newcommand{\Inv}{\operatorname{Inv}}
\newcommand{\supp}{\operatorname{supp}}
\newcommand{\diam}{\operatorname{diam}}
\newcommand{\Lip}{\operatorname{Lip}}
\newcommand{\Fix}{\operatorname{Fix}}
\newcommand{\conv}{\operatorname{conv}}
\newcommand{\dist}{\operatorname{dist}}
\newcommand{\cl}{\operatorname{cl}}
\newcommand{\dTV}{d_{\mathrm{TV}}}
\newcommand{\dK}{d_{\mathrm{K}}}
\newcommand{\dW}{d_{\mathrm{W}}}
\newcommand{\eps}{\varepsilon}
\newcommand{\bi}{\mathbf{i}}
\newcommand{\bj}{\mathbf{j}}
\newcommand{\bI}{\mathbf{I}}
\newcommand{\one}{\mathbf{1}}
\newcommand{\mb}{\partial^{\circ}\!\cA}
\newcommand{\uJ}{\underline{J}}
\newcommand{\Jstar}{J^{\star}}
\newcommand{\PiA}{\Pi_{\cA}}
\newcommand{\id}{\mathrm{id}}
\newcommand{\dd}{\mathrm{d}}
\providecommand{\bseriesno}[1]{#1}
\begin{document}

\title{Shaping of invariant measures of iterated function systems}
\author{Allahkaram Shafiei\thanks{Czech Technical University in Prague,
\href{mailto:shafiall@fel.cvut.cz}{\texttt{shafiall@fel.cvut.cz}}}
\and Robert Shorten\thanks{Imperial College London,
\href{mailto:r.shorten@imperial.ac.uk}{\texttt{r.shorten@imperial.ac.uk}}}
\and Jakub Mare\v{c}ek\thanks{Czech Technical University in Prague,
\href{mailto:jakub.marecek@fel.cvut.cz}{\texttt{jakub.marecek@fel.cvut.cz}}}
}
\date{}
\maketitle

\begin{abstract}
We study an inverse problem for iterated function systems: given maps $w_1,\dots,w_m$ on a Polish space
and a target probability measure $\nu$, choose the law $\mu$ with which the maps are sampled so that the
invariant measure $\pi_\mu$ of the resulting Markov chain is as close as possible to $\nu$ in the
Wasserstein-1 distance. The minimization runs over the \emph{admissible set} $\cA$ of sampling laws for
which the chain has a unique invariant measure attracting every initial law; the maps need not be
contractions. On bounded state spaces, the regions certified by a Doeblin or a splitting condition are
convex unions of relatively open faces of the simplex, and they exhaust $\cA$ when the state space is
finite; for affine systems with nonnegative matrices $\cA$ is a spectral sublevel set; in general $\cA$ is
neither open, nor closed, nor convex, nor connected. Existence of an optimal law reduces to the invariant
measures left behind at the missing boundary $\cl(\cA)\setminus\cA$: boundary points without invariant
measures of finite first moment repel minimizing sequences, which yields existence for all systems of
homotheties of $\R^d$, and for compact state spaces a solution exists for every target if and only if the
set of reachable invariant measures is closed; a two-state example shows that existence can fail. A
search over a $\delta$-net of the simplex returns an $\eps$-optimal law over any region on which
$\mu\mapsto\pi_\mu$ is Lipschitz and the chains are uniformly attracting, with explicit constants and, on an
interval, an explicit sample size. Under contraction on average, $\mu\mapsto\pi_\mu$ is differentiable, its
derivative is injective if and only if the push-forwards $(w_i)_\#\pi_\mu$ are linearly independent, and
the objective is weakly convex, so that a projected subgradient method with inexact subgradients converges
to near-stationary points.

\medskip\noindent
\textit{MSC2020 subject classifications:} Primary 60J05, 37H12; secondary 49Q22, 60J20, 37A50,
28A80, 93E20.

\smallskip\noindent
\textit{Keywords:} iterated function system, invariant measure, Wasserstein distance, inverse problem,
ergodicity, Doeblin condition, splitting condition, random switching.
\end{abstract}

\section{Introduction}\label{sec:intro}

\subsection{The shaping problem}
Many stochastic systems are built by composing, at random, finitely many deterministic mechanisms: a
queue receives or serves a customer, a battery is charged or discharged, an agent in a shared-resource
network is switched on or off, a pixel of a fractal image is pushed by one of a few affine maps. If the
mechanism is selected independently at each step with law $\mu=(\mu_1,\dots,\mu_m)$ on the index set
$[m]:=\{1,\dots,m\}$, the system is a Markov chain
\begin{equation}\label{eq:chain}
X_{k+1}=w_{I_{k+1}}(X_k),\qquad (I_k)_{k\ge1}\ \text{i.i.d.\ with law }\mu,
\end{equation}
known as a (probabilistic) \emph{iterated function system} (IFS), and its long-run behavior is
described by an invariant probability measure $\pi_\mu$. In the classical theory $\mu$ is given and one
studies $\pi_\mu$. In this paper $\mu$ is the \emph{design variable}: we are given a target measure
$\nu$, typically through a histogram or a sample, and we look for the sampling law whose invariant
measure is closest to it; see Figure~\ref{fig:loop}.

Let $(\cX,\rho)$ be a Polish metric space, let $w_1,\dots,w_m:\cX\to\cX$ be continuous maps, let
$\Delta_m:=\{\mu\in\R_+^m:\sum_{i}\mu_i=1\}$, and let $P^*_\mu\lambda:=\sum_{i=1}^m\mu_i\,(w_i)_\#\lambda$
be the Markov operator of~\eqref{eq:chain} acting on the space $\cP_1(\cX)$ of probability measures with
finite first moment. The \emph{invariant measure shaping problem} is
\begin{equation}
\label{eq:A}
\min_{\mu\in\cA}\ \Big[J(\mu):=W_1(\nu,\pi_\mu)\Big]
\tag{A}
\end{equation}
where $W_1$ is the Wasserstein-1 distance and $\cA\subseteq\Delta_m$ is the \emph{admissible set}
\begin{equation}\label{eq:admissible}
\cA
:=
\left\{
\mu\in\Delta_m:
\begin{array}{l}
P_\mu^*\text{ has a unique invariant measure }\pi_\mu\in\cP_1(\cX),\\[1mm]
\text{and }(P_\mu^*)^k\lambda\xrightarrow[W_1]{}\pi_\mu
\quad\text{for every }\lambda\in\cP_1(\cX)
\end{array}
\right\}.
\end{equation}
We minimize over $\cA$ rather than over $\Delta_m$ because the objective is meaningful exactly there:
for $\mu\in\cA$ the long-run law of the system exists, is unique, and does not depend on how the system
is initialized, so that $J(\mu)$ is the distance between the target and what an observer of the running
system will actually see. Outside $\cA$ the quantity $W_1(\nu,\pi_\mu)$ is either undefined (no invariant
measure), ambiguous (several), or not observable (an invariant measure that is not attracting). We
require attraction in $W_1$ rather than weakly because the computational results of
Section~\ref{sec:computation} approximate $\pi_\mu$ by iterates of $P_\mu^*$ in $W_1$; on bounded state
spaces the two requirements coincide, and on unbounded ones the difference is a first-moment effect
(Remark~\ref{rem:weak}).

Problem~\eqref{eq:A} is a finite-dimensional optimization problem with an infinite-dimensional
equilibrium constraint. Its difficulty is concentrated in the feasible set. The set $\cA$ is defined by
an asymptotic property of a Markov operator and is not given by finitely many inequalities; it is in
general not closed, so the direct method of the calculus of variations does not apply; and the map
$\mu\mapsto\pi_\mu$ is nonlinear, so $J$ is not convex. The two questions addressed in this paper are
therefore:
\begin{itemize}
\item[(Q1)] \emph{What does the admissible set $\cA$ look like?}
\item[(Q2)] \emph{Under which conditions does Problem~\eqref{eq:A} have a solution, and how can
an approximate solution be computed from data?}
\end{itemize}
A guiding principle of the paper is to answer both questions \emph{without assuming that the maps
$w_i$ are contractions}, neither individually nor on average. Contractivity is the standard route to
$\cA=\Delta_m$, but it excludes saturating, monotone, expanding or merely nonexpansive mechanisms that
are common in applications (Section~\ref{sec:numerics} treats a storage model in which every map has
Lipschitz constant exactly one), and it hides the phenomena that make~\eqref{eq:A} interesting.

\begin{figure}[tb]
\centering
\begin{tikzpicture}[
    node distance=9mm and 12mm,
    every node/.style={font=\small},
    block/.style={rectangle, draw, thick, rounded corners=2pt, align=center, minimum height=10mm, inner sep=2mm},
    arrow/.style={-{Latex[length=2mm]}, thick},
    lbl/.style={font=\scriptsize, midway}]
\node[block] (sup) {supervisor\\ broadcasts $\mu\in\Delta_m$};
\node[block, right=of sup] (sys) {population of agents:\\ $X_{k+1}=w_{I_{k+1}}(X_k)$, $I_{k+1}\sim\mu$};
\node[block, right=of sys] (pi) {invariant\\ measure $\pi_\mu$};
\node[block, below=of sys] (cmp) {shaping error $J(\mu)=W_1(\nu,\pi_\mu)$\\ and subgradient $s_\mu(f)$};
\node[block, below=of sup] (upd) {design update\\ (Alg.~\ref{alg:shaping} or \ref{alg:psg})};
\node[block, below=6mm of cmp] (tgt) {target $\nu$ (or samples $\hat\nu_N$)};
\draw[arrow] (sup) -- (sys);
\draw[arrow] (sys) -- node[lbl, above] {$k\to\infty$} (pi);
\draw[arrow] (pi) |- (cmp);
\draw[arrow] (tgt) -- (cmp);
\draw[arrow] (cmp) -- (upd);
\draw[arrow] (upd) -- (sup);
\end{tikzpicture}
\caption{An illustration of possible applications of invariant-measure shaping. The supervisor broadcasts a sampling law $\mu$; each agent switches between the modes $w_1,\dots,w_m$ independently according to $\mu$. The design loop (offline) adjusts $\mu$ so that the long-run distribution $\pi_\mu$ is close to the target $\nu$ in the Wasserstein distance.}
\label{fig:loop}
\end{figure}
\subsection{Two illustrations}
\emph{A continuous-time analogue.} The viewpoint is not specific to IFS. Consider the
Ornstein--Uhlenbeck family $\dd X_t=-\theta X_t\,\dd t+\sigma\,\dd B_t$ with parameter $\theta$. Solving
the stationary Fokker--Planck equation $\theta\,(x p(x))'+\tfrac{\sigma^2}{2}p''(x)=0$ gives the invariant law
$\pi_\theta=N(0,\sigma^2/(2\theta))$, which exists, is unique and is globally attracting if and only if
$\theta>0$. For the target $\nu=N(0,v)$ one finds
$W_1(\nu,\pi_\theta)=\sqrt{2/\pi}\,\big|\sigma/\sqrt{2\theta}-\sqrt v\big|$, minimized at
$\theta^\dagger=\sigma^2/(2v)$. The admissible set $(0,\infty)$ is not closed, but as $\theta\downarrow0$ the
invariant law spreads out and the objective blows up: the missing boundary point repels minimizing
sequences. Theorem~\ref{thm:universal} shows that this mechanism is general. (The parameter set is not
compact here, and its other end behaves differently: as $\theta\to\infty$, $\pi_\theta\to\delta_0$, which is not
reachable, so that for the target $\nu=\delta_0$ the infimum is not attained.)

\emph{A discrete example.} On $\cX=\R$ take $w_1(x)=x+1$ and $w_2(x)=x/2$. For $\mu=(1-p,p)$ with
$p>0$ the chain~\eqref{eq:chain} has a unique attracting invariant measure with mean $2(1-p)/p$,
whereas for $p=0$ it is a deterministic translation and has no invariant probability measure at all.
Hence $\cA=\Delta_2\setminus\{e_1\}$ is not closed, and it is not clear a priori that~\eqref{eq:A} has a
solution. It does, for every target with finite mean (Theorem~\ref{thm:homothety}), but a change of
maps can destroy this: for the two-state system of Example~\ref{ex:nonexistence} the infimum
in~\eqref{eq:A} is not attained.

\subsection{Contributions}
The paper makes four contributions.

\smallskip\noindent
\textbf{(1) Geometry of the admissible set.} We introduce a hierarchy
$\cAu\subseteq\cA\subseteq\cU$ of uniformly attracting, attracting, and uniquely ergodic sampling laws
and show that, for compact $\cX$, the outer and inner sets are $G_\delta$ subsets of the simplex on
which $\mu\mapsto\pi_\mu$ is continuous (Theorem~\ref{thm:topology}). We then identify three
\emph{certified regions} inside $\cA$, defined by a Doeblin condition, by the Dubins--Freedman splitting
condition for monotone maps, and by contraction on average. Our main structural observation is that the
first two depend on $\mu$ only through its support and are monotone in the support. A combinatorial
lemma (Lemma~\ref{lem:face}) turns this into geometry: the Doeblin region and the splitting region are
convex, relatively open in $\Delta_m$, and their complements are unions of closed faces
(Theorems~\ref{thm:doeblin} and~\ref{thm:splitting}); on bounded state spaces they are contained in $\cA$,
and boundedness cannot be dropped (Example~\ref{ex:doeblin-unbounded}). For finite state spaces the
Doeblin region is all of $\cA$, which gives a complete description (Theorem~\ref{thm:finite}). For
compact state spaces $\cA$ is co-analytic (Remark~\ref{rem:topology}). For affine maps on $\R^d$ we
sandwich $\cA$ between two spectral conditions which coincide for entrywise nonnegative matrices, so
that $\cA=\{\mu:\rho(\sum_i\mu_iA_i)<1\}$; for homotheties $w_i(x)=a_ix+b_i$ with $a_i\ge0$ this is the
slice of the simplex by the open half-space $\{\sum_i\mu_ia_i<1\}$ (Theorem~\ref{thm:affine}).
Examples~\ref{ex:nonclosed}--\ref{ex:twolane} show that nothing of this survives in general: $\cA$ can
fail to be closed, open, convex or connected, even on a compact state space with Lipschitz maps.

\smallskip\noindent
\textbf{(2) Existence theory.} We prove that $J$ is lower semicontinuous on $\cA$ without any
contraction or compactness of $\cX$, and we reduce solvability of~\eqref{eq:A} to its lower
semicontinuous relaxation on the closure of $\cA$ (Theorem~\ref{thm:exact}): a solution exists if and
only if some admissible law does at least as well as every point of the \emph{missing boundary}
$\cl(\cA)\setminus\cA$, and the relaxed objective at a missing boundary point is the distance from $\nu$
to the invariant measures that the admissible laws leave behind there. The decisive object is therefore
this set of \emph{limiting invariant measures}. For compact $\cX$ the closure of the set
$\{\pi_\mu:\mu\in\cA\}$ of reachable invariant measures consists of the reachable ones and the limiting
ones, so that~\eqref{eq:A} has a solution \emph{for every target} if and only if the reachable set is
closed (Theorem~\ref{thm:universal}). Two mechanisms make this verifiable. A boundary point at which the
chain has no invariant measure of finite first moment is automatically repelling, and a boundary point
all of whose limiting invariant measures are reachable from inside is harmless. Combining the two mechanisms we show that for
\emph{every} system of homotheties on $\R^d$ with at least one contraction, Problem~\eqref{eq:A} has a
solution for every target in $\cP_1(\R^d)$, with no condition on the translation parts
(Theorem~\ref{thm:homothety}).

\smallskip\noindent
\textbf{(3) Computation with guarantees.} We show that a search over a $\delta$-net of the simplex
(Lemma~\ref{lem.delta net}) returns, from a finite sample of $\nu$ and finitely many iterations of
$P_\mu^*$, a sampling law that is $\eps$-optimal over any \emph{regular search region}, i.e.\ a region on
which $\mu\mapsto\pi_\mu$ is Lipschitz and the chains are uniformly attracting (Theorem~\ref{thm:eps}); it
is $\eps$-optimal for~\eqref{eq:A} itself whenever the region contains a solution, which the existence
theory guarantees a priori for homotheties (Proposition~\ref{prop:localization}). The constants are
explicit in each certified region, and so is the sample size, with high probability
(Corollary~\ref{cor:samplesize}); they come from a perturbation bound for uniformly ergodic kernels
(Proposition~\ref{prop:unifergodic}) in which contraction of the maps is replaced by contraction of the
Markov operator in total variation or in the Kolmogorov metric. A support-preserving refinement of the
$\delta$-net lemma ensures that the net is compatible with the facial structure of $\cA$. The search is
exponential in $m$, and we regard it as a certificate rather than as a practical method beyond a handful of
modes. For systems that contract on average we prove a linear-response formula for $\mu\mapsto\pi_\mu$ in
terms of a Poisson equation (Theorem~\ref{thm:linear-response}), show that its derivative is injective if
and only if the push-forwards $(w_i)_\#\pi_\mu$ are linearly independent
(Proposition~\ref{prop:injectivity}), show that $J$ is weakly convex with computable subgradients
(Theorem~\ref{thm:structure}), derive an equal-sensitivity optimality condition
(Corollary~\ref{cor:optimality}) and a parameter-recovery bound (Corollary~\ref{cor:recovery}), and analyze
a projected subgradient method with inexact subgradients (Theorem~\ref{thm:psg} and
Proposition~\ref{prop:inexact}).

\smallskip\noindent
\textbf{(4) Numerical evidence.} Section~\ref{sec:numerics} reports experiments on a saturating storage
model (non-contractive, with $\cA$ not closed), on an exactly computable ternary benchmark, on
homotheties near the missing boundary, on the temperature distribution of a population of
thermostatically controlled loads, and on systems with $m=8$ and $m=12$ modes, for which a $\delta$-net is
out of reach; a contractive benchmark is reported in Appendix~\ref{sec:supp-battery}. Code that reproduces the
experiments of Sections~\ref{sec:exp-ternary}, \ref{sec:exp-tcl} and~\ref{sec:exp-many} and of
Appendix~\ref{sec:supp-affine} is available from the authors on request.

\subsection{Novelty and related work}\label{sec:related}
\emph{Iterated function systems.} IFS were introduced by Hutchinson~\cite{Hutchinson1981} and Barnsley
and Demko~\cite{BarnsleyDemko1985}; existence, uniqueness and attractivity of the invariant measure
under contraction on average go back to~\cite{BarnsleyElton1988,BarnsleyEtAl1988}, with ergodic
theorems by Elton~\cite{Elton1987} and a unifying backward-iteration argument by Diaconis and
Freedman~\cite{DiaconisFreedman1999}; see the surveys~\cite{Stenflo2012,GhoshMarecek2022}. Weaker forms of
contractivity are treated in~\cite{Steinsaltz1999,JarnerTweedie2001,LasotaYorke1994,Szarek2003}, and
non-contractive monotone systems by the splitting method of Dubins and
Freedman~\cite{DubinsFreedman1966,BhattacharyaLee1988,BhattacharyaMajumdar2007}; general tools for
unique ergodicity beyond minorization include asymptotic
coupling~\cite{HairerMattinglyScheutzow2011,HairerMattingly2011}. All these works fix the sampling law.
For products of random matrices, the dependence of the top Lyapunov exponent on the weights of the
matrices is classical~\cite{Peres1992}, and it governs almost sure stability. We ask how the \emph{set}
of laws for which unique ergodicity and $W_1$-attraction hold sits inside the simplex, for maps that need
not be linear or contractive, which to our knowledge has not been studied systematically.

\emph{Inverse problems for IFS.} The classical inverse problem asks for maps \emph{and} probabilities
such that the attractor or the invariant measure approximates a target. It is usually approached
through Barnsley's collage theorem~\cite{BarnsleyEtAl1986,BarnsleyHurd1993,Jacquin1992}, and for measures
by moment matching and collage-type minimization~\cite{HandyMantica1990,Vrscay1991,ForteVrscay1995,KunzeEtAl2012}.
These methods assume strictly contractive maps, so that the feasible set is the whole parameter space,
and they minimize the collage distance, an upper bound on the true error. Problem~\eqref{eq:A} is
complementary: the maps are fixed by the application, the true distance $W_1(\nu,\pi_\mu)$ is minimized,
and the feasible set is part of the problem.

\emph{Designing the stationary law of a Markov chain.} Constructing a chain with a prescribed
stationary law is the principle of Markov chain Monte Carlo~\cite{Hastings1970}, and optimizing a chain
subject to a fixed stationary law is the fastest-mixing problem~\cite{BoydDiaconisXiao2004}. For
deterministic maps the corresponding question is the inverse Frobenius--Perron
problem~\cite{Bollt2000}. In control, steering the law of a diffusion is studied through the
Fokker--Planck equation~\cite{AnnunziatoBorzi2013} and through covariance steering and Schr\"odinger
bridges~\cite{ChenGeorgiouPavon2016}. Closest to our motivation is the ergodic control of
ensembles~\cite{FioravantiEtAl2019,KungurtsevEtAl2023}, where IFS model the closed-loop behavior of
large populations of agents and unique ergodicity is the design requirement, and the control of
populations of flexible loads by broadcast randomized policies, whose transition probabilities are
designed to shape the stationary distribution of each load~\cite{MeynEtAl2015}. In all these settings the
family of available kernels is rich enough to reach the target exactly. In~\eqref{eq:A} the family
$\{P_\mu\}$ is $(m-1)$-dimensional, the target is in general not reachable, and the question becomes
one of best approximation over a set of invariant measures that need not be closed.

\emph{Sensitivity of invariant measures.} The dependence of the stationary law on the kernel is classical
for finite chains~\cite{Schweitzer1968,ChoMeyer2001} and for uniformly ergodic
chains~\cite{Mitrophanov2005}; Wasserstein versions are given in~\cite{RudolfSchweizer2018}, and linear
response in~\cite{HairerMajda2010}. Proposition~\ref{prop:unifergodic} is a perturbation bound of this type
written for a family of metrics, which lets us treat the Doeblin, splitting and contractive regimes at
once. Theorem~\ref{thm:linear-response} is a linear-response formula in the spirit of~\cite{HairerMajda2010},
written for the sampling law of an IFS. The inverse question at first order, namely which perturbation
of a system produces a prescribed or an extremal change of its invariant measure, is the \emph{linear
request} or \emph{optimal response} problem~\cite{GalatoloPollicott2017,Kloeckner2018,AntownEtAl2018};
Problem~\eqref{eq:A} is its global, finite-dimensional counterpart, in which the perturbations are
restricted to the simplex of sampling laws and the target need not be reachable.

\emph{Stability under random switching.} That stability of a randomly switched system can depend on the
switching law in a non-monotone way is well documented in continuous
time~\cite{BenaimEtAl2014,LawleyMattinglyReed2014,BenaimEtAl2015}. Example~\ref{ex:nonconvex} is a
discrete-time counterpart for~$\cA$. For affine recursions the roles of the Lyapunov exponent and of
moment conditions are classical~\cite{Kesten1973,Brandt1986,BuraczewskiDamekMikosch2016}; the first-moment
quantity that governs $W_1$-admissibility is the $p$-radius with $p=1$~\cite{JungersProtasov2011}.

\emph{Optimal transport.} We use $W_1$ because it metrizes weak convergence together with convergence
of first moments, is insensitive to the mutual singularity that is typical of invariant measures of IFS,
and is computable~\cite{Villani2009,Santambrogio2015,PeyreCuturi2019}; rates for empirical measures are
taken from~\cite{FournierGuillin2015,WeedBach2019,BobkovLedoux2019}. When the target is realizable,
minimizing $W_1(\hat\nu_N,\pi_\mu)$ is a minimum-Wasserstein estimator of
$\mu$~\cite{BassettiBodiniRegazzini2006,BerntonEtAl2019}; Corollaries~\ref{cor:consistency}
and~\ref{cor:recovery} are results of this type in which the model $\mu\mapsto\pi_\mu$ is only given
implicitly, as a fixed point.

\subsection{Organization and notation}
Section~\ref{sec:setting} fixes the setting. Section~\ref{sec:geometry} studies the geometry of $\cA$
(question (Q1)). Section~\ref{sec:existence} contains the existence theory (first half of (Q2)), and
Section~\ref{sec:computation} the $\delta$-net algorithm and a state-space discretization (second half).
Section~\ref{sec:sensitivity} studies the sensitivity of $\pi_\mu$ in the contractive regime and a
projected subgradient method. Section~\ref{sec:numerics} reports numerical experiments and
Section~\ref{sec:conclusion} lists open problems.

For a nonempty $S\subseteq[m]$ we write $\Delta_S:=\{\mu\in\Delta_m:\supp\mu\subseteq S\}$ for the closed
face of the simplex spanned by $S$ and $\Delta_S^\circ:=\{\mu\in\Delta_m:\supp\mu=S\}$ for its relative
interior, where $\supp\mu:=\{i:\mu_i>0\}$; $e_1,\dots,e_m$ are the vertices of $\Delta_m$, and
$\|\cdot\|_1$ is the $\ell_1$-norm on $\R^m$. Topological notions for subsets of $\Delta_m$ (open,
closed, closure $\cl$) are relative to $\Delta_m$. For a matrix $M$, $\rho(M)$ is its spectral radius.
For a word $\bi=(i_1,\dots,i_k)\in[m]^k$ we put
\[
w_{\bi}:=w_{i_k}\circ\cdots\circ w_{i_1},\qquad \mu_{\bi}:=\mu_{i_1}\cdots\mu_{i_k}.
\]

\section{Setting and preliminaries}\label{sec:setting}

\subsection{Measures, Wasserstein distance, and the Markov operator}
Throughout, $(\cX,\rho)$ is a complete separable metric space with Borel $\sigma$-algebra $\cB(\cX)$,
$\cP(\cX)$ is the set of Borel probability measures on $\cX$, and $x_\circ\in\cX$ is a fixed base point. We
write $M_1(\lambda):=\int_\cX\rho(x,x_\circ)\,\lambda(\dd x)$ and
$\cP_1(\cX):=\{\lambda\in\cP(\cX):M_1(\lambda)<\infty\}$. For $\alpha,\beta\in\cP_1(\cX)$,
\[
W_1(\alpha,\beta):=\inf_{\gamma\in\Gamma(\alpha,\beta)}\int_{\cX\times\cX}\rho(x,y)\,\gamma(\dd x,\dd y)
=\sup\Big\{\int f\,\dd\alpha-\int f\,\dd\beta:\ \Lip(f)\le1\Big\},
\]
where $\Gamma(\alpha,\beta)$ is the set of couplings of $\alpha$ and $\beta$ and the second expression is
the Kantorovich--Rubinstein duality. We shall use the following standard facts
\cite[Chapter~6]{Villani2009}:
\begin{itemize}
\item[(W1)] $(\cP_1(\cX),W_1)$ is a complete separable metric space, and $W_1(\lambda_n,\lambda)\to0$ if and
only if $\lambda_n\to\lambda$ weakly and $M_1(\lambda_n)\to M_1(\lambda)$.
\item[(W2)] $W_1$ is lower semicontinuous for weak convergence: if $\alpha_n\to\alpha$ and
$\beta_n\to\beta$ weakly, then $W_1(\alpha,\beta)\le\liminf_n W_1(\alpha_n,\beta_n)$.
\item[(W3)] $W_1$ is jointly convex: $W_1(\sum_jc_j\alpha_j,\sum_jc_j\beta_j)\le\sum_jc_jW_1(\alpha_j,\beta_j)$
for convex weights $(c_j)$.
\item[(W4)] $M_1(\lambda)=W_1(\lambda,\delta_{x_\circ})$, and $W_1(\alpha,\beta)\le\diam(\cX)\,\dTV(\alpha,\beta)$,
where $\dTV(\alpha,\beta):=\sup_{B\in\cB(\cX)}|\alpha(B)-\beta(B)|$.
\end{itemize}
For a Borel map $T:\cX\to\cX$ and $\lambda\in\cP(\cX)$, the pushforward $T_\#\lambda$ is the law of $T(X)$
when $X$ has law $\lambda$.

\begin{assumption}[Standing assumption]\label{ass:standing}
The maps $w_1,\dots,w_m:\cX\to\cX$ are continuous and have at most linear growth: there is $c_0<\infty$
such that $\rho(w_i(x),x_\circ)\le c_0\,(1+\rho(x,x_\circ))$ for all $x\in\cX$ and $i\in[m]$.
\end{assumption}

The growth condition is void when $\cX$ is bounded and holds whenever the maps are Lipschitz. No
contraction property is assumed. For $\mu\in\Delta_m$ the transition kernel of the
chain~\eqref{eq:chain} and the associated Markov operator are
\[
P_\mu(x,B):=\sum_{i=1}^m\mu_i\,\one_B(w_i(x)),\qquad
P_\mu^*\lambda:=\int_\cX P_\mu(x,\cdot)\,\lambda(\dd x)=\sum_{i=1}^m\mu_i\,(w_i)_\#\lambda ,
\]
so that $(P_\mu^*)^k\lambda=\sum_{\bi\in[m]^k}\mu_{\bi}\,(w_{\bi})_\#\lambda$ is the law of $X_k$ when $X_0$
has law $\lambda$.

\begin{lemma}[Basic properties of the Markov operator]\label{lem:basic}
Under Assumption~\ref{ass:standing}:
\begin{enumerate}
\item[\rm(i)] $P_\mu^*$ maps $\cP_1(\cX)$ into itself and $M_1(P_\mu^*\lambda)\le c_0(1+M_1(\lambda))$.
\item[\rm(ii)] If $\mu_n\to\mu$ in $\Delta_m$ and $\lambda_n\to\lambda$ weakly, then
$P_{\mu_n}^*\lambda_n\to P_\mu^*\lambda$ weakly.
\item[\rm(iii)] For all $\mu,\eta\in\Delta_m$ and $\lambda\in\cP_1(\cX)$,
\begin{equation}\label{eq:mixture}
W_1\big(P_\mu^*\lambda,P_\eta^*\lambda\big)\le\frac12\,\|\mu-\eta\|_1\,
\max_{i,j\in[m]}\int_\cX\rho\big(w_i(x),w_j(x)\big)\,\lambda(\dd x).
\end{equation}
\end{enumerate}
\end{lemma}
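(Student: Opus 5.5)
For (i) we will integrate the growth bound of Assumption~\ref{ass:standing} against $\lambda$. Since $P_\mu^*\lambda=\sum_i\mu_i(w_i)_\#\lambda$, we have
\[
M_1(P_\mu^*\lambda)=\sum_i\mu_i\int\rho(w_i(x),x_\circ)\,\lambda(\dd x)\le\sum_i\mu_i\,c_0\,(1+M_1(\lambda))=c_0(1+M_1(\lambda)).
\]
Measurability of $x\mapsto\rho(w_i(x),x_\circ)$ follows from continuity of $w_i$. Hence $P_\mu^*\lambda\in\cP_1(\cX)$ whenever $\lambda\in\cP_1(\cX)$.

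For (ii) we will test against a bounded continuous $f$ and split
\[
\int f\,\dd P_{\mu_n}^*\lambda_n-\int f\,\dd P_\mu^*\lambda=\sum_i(\mu_{n,i}-\mu_i)\int f\circ w_i\,\dd\lambda_n+\sum_i\mu_i\Big(\int f\circ w_i\,\dd\lambda_n-\int f\circ w_i\,\dd\lambda\Big).
\]
The first sum is bounded by $\|f\|_\infty\|\mu_n-\mu\|_1\to0$. In the second sum each $f\circ w_i$ is bounded and continuous, because $w_i$ is continuous, so each term tends to $0$ by weak convergence of $\lambda_n$. This is the continuous mapping theorem combined with a uniform bound on the weights.

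For (iii) we will construct a coupling. Let $c:=\tfrac12\|\mu-\eta\|_1$ and $r_i:=\min(\mu_i,\eta_i)$, so that $\sum_i r_i=1-c$. Take $X\sim\lambda$ and draw a pair of indices $(I,J)$ independent of $X$ from a coupling $\kappa$ of $\mu$ and $\eta$ with $\kappa(I=J=i)=r_i$. The remaining mass $c$ is spread by any coupling of the residuals $(\mu-r)/c$ and $(\eta-r)/c$, for instance their product; if $c=0$ there is nothing to prove. Then $(w_I(X),w_J(X))$ is a coupling of $P_\mu^*\lambda$ and $P_\eta^*\lambda$. Conditioning on $(I,J)$ and using independence from $X$ gives
\[
W_1\le\sum_{i\ne j}\kappa(i,j)\int\rho(w_i(x),w_j(x))\,\lambda(\dd x)\le c\,\max_{i,j}\int\rho(w_i,w_j)\,\dd\lambda,
\]
because $\kappa(I\ne J)=c$. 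The right-hand side is finite by (i), since $\rho(w_i(x),w_j(x))\le\rho(w_i(x),x_\circ)+\rho(w_j(x),x_\circ)$.

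The only step needing care is the maximal-coupling construction and the finiteness check in (iii); the rest is routine.
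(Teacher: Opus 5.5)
Your proof is correct and follows the paper's argument. Parts (i) and (ii) are the same as in the paper. In (iii), your maximal coupling of the indices with a shared $X\sim\lambda$ (product coupling on the residuals) is the unpacked form of the paper's argument: the paper writes $\mu=(\mu\wedge\eta)+c\,\alpha$ and $\eta=(\mu\wedge\eta)+c\,\beta$, applies joint convexity (W3) twice, and then couples through the same point, which yields the same weights $c\,\alpha_i\beta_j$.
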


\begin{proof}
(i) is immediate from the growth condition. For (ii), let $f$ be bounded and continuous. Then
$f\circ w_i$ is bounded and continuous, so
$\int f\,\dd(P_{\mu_n}^*\lambda_n)=\sum_i\mu_{n,i}\int f\circ w_i\,\dd\lambda_n\to\sum_i\mu_i\int f\circ w_i\,\dd\lambda$.
For (iii), put $c:=\frac12\|\mu-\eta\|_1=\sum_i(\mu_i-\eta_i)^+=\sum_i(\eta_i-\mu_i)^+$ and assume $c>0$. Then
$\mu=(\mu\wedge\eta)+c\,\alpha$ and $\eta=(\mu\wedge\eta)+c\,\beta$ with $\alpha,\beta\in\Delta_m$, where
$\mu\wedge\eta$ is the coordinatewise minimum. Hence $P_\mu^*\lambda$ and $P_\eta^*\lambda$ are mixtures
with a common component of mass $1-c$, and two applications of (W3) give
\[
W_1(P_\mu^*\lambda,P_\eta^*\lambda)\le c\,W_1(P_\alpha^*\lambda,P_\beta^*\lambda)
\le c\sum_{i,j}\alpha_i\beta_j\,W_1\big((w_i)_\#\lambda,(w_j)_\#\lambda\big).
\]
Finally $W_1((w_i)_\#\lambda,(w_j)_\#\lambda)\le\int\rho(w_i(x),w_j(x))\,\lambda(\dd x)$, by coupling through
the same $x$.
\end{proof}

\begin{lemma}[Contraction and unique invariant measure]\label{lem:contraction}
Assume that each $w_i$ is Lipschitz continuous with constant $L_i\ge0$, and define
$L(\mu):=\sum_{i=1}^m\mu_iL_i$. If $L(\mu)<1$, then $P_\mu^*$ is a strict contraction on
$(\cP_1(\cX),W_1)$ with constant $L(\mu)$. Consequently, $P_\mu^*$ admits a unique invariant measure
$\pi_\mu\in\cP_1(\cX)$, and
\[
W_1\big((P_\mu^*)^k\lambda,\pi_\mu\big)\le L(\mu)^k\,W_1(\lambda,\pi_\mu)\qquad\text{for all }\lambda\in\cP_1(\cX),\ k\ge0.
\]
In particular, $\mu\in\cA$.
\end{lemma}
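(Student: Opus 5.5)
The plan is to prove the contraction estimate through the coupling form of $W_1$, and then get existence, uniqueness and attraction from Banach's fixed point theorem on the complete space $(\cP_1(\cX),W_1)$ of fact (W1).

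\emph{Step 1: invariance of $\cP_1$.} Lipschitz maps have linear growth, so Assumption~\ref{ass:standing} holds. By Lemma~\ref{lem:basic}(i), $P_\mu^*$ then maps $\cP_1(\cX)$ into itself.

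\emph{Step 2: contraction.} Fix $\alpha,\beta\in\cP_1(\cX)$ and a coupling $\gamma\in\Gamma(\alpha,\beta)$. For each $i$, the pushforward $(w_i\times w_i)_\#\gamma$ couples $(w_i)_\#\alpha$ and $(w_i)_\#\beta$. Hence
\[
W_1\big((w_i)_\#\alpha,(w_i)_\#\beta\big)\le\int\rho(w_i(x),w_i(y))\,\gamma(\dd x,\dd y)\le L_i\int\rho\,\dd\gamma .
\]
Joint convexity (W3) applied to the mixtures $P_\mu^*\alpha=\sum_i\mu_i(w_i)_\#\alpha$ and $P_\mu^*\beta=\sum_i\mu_i(w_i)_\#\beta$ gives
\[
W_1(P_\mu^*\alpha,P_\mu^*\beta)\le\sum_i\mu_iL_i\int\rho\,\dd\gamma .
\]
Taking the infimum over $\gamma$ yields $W_1(P_\mu^*\alpha,P_\mu^*\beta)\le L(\mu)\,W_1(\alpha,\beta)$. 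Alternatively one can use Kantorovich--Rubinstein duality: for $\Lip(f)\le1$, the function $\sum_i\mu_i f\circ w_i$ has Lipschitz constant at most $L(\mu)$. Either route works.

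\emph{Step 3: fixed point and rate.} Banach's theorem gives a unique fixed point $\pi_\mu\in\cP_1(\cX)$. Iterating the contraction with $\beta=\pi_\mu$ gives
\[
W_1\big((P_\mu^*)^k\lambda,\pi_\mu\big)\le L(\mu)^kW_1(\lambda,\pi_\mu)\to0 .
\]
This is exactly the $W_1$-attraction required in~\eqref{eq:admissible}. Uniqueness among invariant measures in $\cP_1(\cX)$ is also part of Banach's conclusion, so $\mu\in\cA$.

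The only delicate point is the bookkeeping in Step 2. The same coupling $\gamma$ must be used for every $i$ before mixing, so that the bound is $\sum_i\mu_iL_i$ rather than $\max_i L_i$. Convexity (W3) handles this directly. Everything else is routine.
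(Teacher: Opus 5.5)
Your proof is correct and follows the paper's argument: you push a coupling forward by $w_i\times w_i$, mix via (W3) to get the constant $L(\mu)$, apply Banach's theorem on the complete space $(\cP_1(\cX),W_1)$, and obtain the rate by iterating against $\pi_\mu$. Your closing remark is over-cautious: each term is bounded separately by $L_i\,W_1(\alpha,\beta)$, so a different coupling for each $i$ would give the same bound $L(\mu)\,W_1(\alpha,\beta)$, never $\max_i L_i$.
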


\begin{proof}
Let $\lambda,\lambda'\in\cP_1(\cX)$. Pushing an optimal coupling of $(\lambda,\lambda')$ forward by
$w_i\times w_i$ gives $W_1((w_i)_\#\lambda,(w_i)_\#\lambda')\le L_i\,W_1(\lambda,\lambda')$, so by (W3)
\[
W_1(P_\mu^*\lambda,P_\mu^*\lambda')
\le\sum_{i=1}^m\mu_i\,W_1\big((w_i)_\#\lambda,(w_i)_\#\lambda'\big)
\le\sum_{i=1}^m\mu_iL_i\,W_1(\lambda,\lambda')=L(\mu)\,W_1(\lambda,\lambda').
\]
Since $(\cP_1(\cX),W_1)$ is complete by (W1), the Banach fixed point theorem yields a unique fixed point
$\pi_\mu\in\cP_1(\cX)$ of $P_\mu^*$, that is, a unique invariant measure with finite first moment. By
induction,
$W_1((P_\mu^*)^k\lambda,\pi_\mu)=W_1((P_\mu^*)^k\lambda,(P_\mu^*)^k\pi_\mu)\le L(\mu)^k\,W_1(\lambda,\pi_\mu)$,
which tends to zero; hence $\mu\in\cA$.
\end{proof}

\begin{lemma}[Lipschitz stability of the invariant measure]\label{lem:lipschitz-pi}
Let $\cX$ be compact with diameter $D:=\diam(\cX)<\infty$, let the maps $w_i$ and $L(\mu)$ be as in
Lemma~\ref{lem:contraction}, and for $r\in[0,1)$ put
\[
\cA_r:=\{\mu\in\Delta_m:L(\mu)\le r\}.
\]
Then $\cA_r\subseteq\cA$, and for all $\mu,\eta\in\cA_r$,
\[
W_1(\pi_\mu,\pi_\eta)\le\frac{D}{2(1-r)}\,\|\mu-\eta\|_1.
\]
\end{lemma}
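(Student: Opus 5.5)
The inclusion $\cA_r\subseteq\cA$ is immediate from Lemma~\ref{lem:contraction}, since $L(\mu)\le r<1$ for $\mu\in\cA_r$. On a compact space every Borel probability measure has finite first moment, so $\cP_1(\cX)=\cP(\cX)$ and all invariant measures involved live in the space where that lemma applies.

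For the stability estimate I use the standard fixed-point perturbation argument. Fix $\mu,\eta\in\cA_r$. Since $\pi_\mu=P_\mu^*\pi_\mu$ and $\pi_\eta=P_\eta^*\pi_\eta$, the triangle inequality gives
\[
W_1(\pi_\mu,\pi_\eta)\le W_1\big(P_\mu^*\pi_\mu,P_\mu^*\pi_\eta\big)+W_1\big(P_\mu^*\pi_\eta,P_\eta^*\pi_\eta\big).
\]
The first term is at most $L(\mu)\,W_1(\pi_\mu,\pi_\eta)\le r\,W_1(\pi_\mu,\pi_\eta)$, by the contraction estimate in Lemma~\ref{lem:contraction}. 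The second term is controlled by Lemma~\ref{lem:basic}(iii) with $\lambda=\pi_\eta$. There the integrand satisfies $\rho(w_i(x),w_j(x))\le D$ pointwise, so the maximum over $i,j$ is at most $D$, and the second term is at most $\tfrac D2\|\mu-\eta\|_1$.

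Since $W_1(\pi_\mu,\pi_\eta)\le D<\infty$, we may move the first term to the left-hand side. Dividing by $1-r>0$ then gives the claimed bound $W_1(\pi_\mu,\pi_\eta)\le \frac{D}{2(1-r)}\|\mu-\eta\|_1$.

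There is no serious obstacle. The only points needing care are the finiteness of $W_1(\pi_\mu,\pi_\eta)$ before rearranging, which holds because $\cX$ is bounded, and choosing which operator to apply contraction with: we split so that the contraction is for $P_\mu^*$ and the mixture bound is evaluated at $\pi_\eta$. Either order works, since both $L(\mu)\le r$ and $L(\eta)\le r$.
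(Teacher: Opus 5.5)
Your proof is correct and follows the paper's argument: invariance, the triangle inequality, contraction of $P_\mu^*$ with constant $r$, and a bound of $\frac D2\|\mu-\eta\|_1$ on the mixture term, followed by rearranging. The only cosmetic difference is in that mixture bound: the paper uses the total variation bound $\dTV\le\frac12\|\mu-\eta\|_1$ together with (W4), while you apply Lemma~\ref{lem:basic}(iii) with the pointwise bound $\rho(w_i(x),w_j(x))\le D$, and the two give the same estimate.
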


\begin{proof}
The inclusion $\cA_r\subseteq\cA$ is Lemma~\ref{lem:contraction}. Since $\pi_\mu$ and $\pi_\eta$ are fixed
points,
\[
\begin{aligned}
W_1(\pi_\mu,\pi_\eta)&=W_1(P_\mu^*\pi_\mu,P_\eta^*\pi_\eta)
\le W_1(P_\mu^*\pi_\mu,P_\mu^*\pi_\eta)+W_1(P_\mu^*\pi_\eta,P_\eta^*\pi_\eta)\\
&\le r\,W_1(\pi_\mu,\pi_\eta)+W_1\Big(\sum_i\mu_i(w_i)_\#\pi_\eta,\ \sum_i\eta_i(w_i)_\#\pi_\eta\Big).
\end{aligned}
\]
The last term is the distance between two mixtures of the same probability measures
$(w_i)_\#\pi_\eta$. Their total variation distance is at most $\frac12\|\mu-\eta\|_1$, so by (W4) the term
is at most $\frac D2\|\mu-\eta\|_1$. Rearranging gives the claim.
\end{proof}

The same three steps (invariance, contraction of one of the two operators, and a bound on the distance
between two mixtures of the same push-forwards) give all perturbation bounds of this paper:
Proposition~\ref{prop:unifergodic}(ii) replaces contraction in $W_1$ by contraction in a compatible metric,
Proposition~\ref{prop:localization}(iii) and~\eqref{eq:stability-general} replace $D$ by a moment of the
invariant measure.

\subsection{Invariant measures and the admissible set}
For $\mu\in\Delta_m$ let
\[
\Inv(\mu):=\{\pi\in\cP_1(\cX):P_\mu^*\pi=\pi\}
\]
be the set of invariant measures with finite first moment, and let
\[
\cU:=\{\mu\in\Delta_m:\Inv(\mu)\text{ is a singleton}\}
\]
be the set of \emph{uniquely ergodic} sampling laws. For $\mu\in\cU$ we denote by $\pi_\mu$ the unique
element of $\Inv(\mu)$. The admissible set $\cA$ is defined by~\eqref{eq:admissible}, and
\[
\PiA:=\{\pi_\mu:\mu\in\cA\}\subseteq\cP_1(\cX)
\]
is the set of \emph{reachable} invariant measures. With this notation, Problem~\eqref{eq:A} asks for a point
of $\PiA$ nearest to $\nu$ in $W_1$, together with a parameter that realizes it.

\begin{lemma}\label{lem:attraction}
Let $\mu\in\Delta_m$ and $\pi\in\cP_1(\cX)$ be such that $(P_\mu^*)^k\lambda\to\pi$ in $W_1$ for every
$\lambda\in\cP_1(\cX)$. Then $\Inv(\mu)=\{\pi\}$. Consequently
\[
\cA=\big\{\mu\in\Delta_m:\ \exists\,\pi\in\cP_1(\cX)\ \text{with}\ (P_\mu^*)^k\lambda\to\pi\ \text{in }W_1\
\text{for all }\lambda\in\cP_1(\cX)\big\}\subseteq\cU .
\]
\end{lemma}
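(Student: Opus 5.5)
The plan has two parts: show that the limit measure $\pi$ is itself invariant, then show it is the only invariant measure; the description of $\cA$ and the inclusion $\cA\subseteq\cU$ then follow directly.

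\emph{Invariance of $\pi$.} Fix any $\lambda\in\cP_1(\cX)$ and put $\lambda_k:=(P_\mu^*)^k\lambda$. By hypothesis $\lambda_k\to\pi$ in $W_1$, so $\lambda_k\to\pi$ weakly by (W1). Lemma~\ref{lem:basic}(ii), applied with the constant sequence $\mu_n=\mu$, gives $P_\mu^*\lambda_k\to P_\mu^*\pi$ weakly. But $P_\mu^*\lambda_k=\lambda_{k+1}\to\pi$ weakly. Weak limits are unique on a metric space, so $P_\mu^*\pi=\pi$. Since $\pi\in\cP_1(\cX)$ by assumption, this gives $\pi\in\Inv(\mu)$. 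Working with weak convergence here avoids needing any Lipschitz continuity of $P_\mu^*$ in $W_1$, which is not available without contraction.

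\emph{Uniqueness.} Let $\pi'\in\Inv(\mu)$ be arbitrary. Then $(P_\mu^*)^k\pi'=\pi'$ for every $k$. On the other hand, since $\pi'\in\cP_1(\cX)$, the hypothesis gives $(P_\mu^*)^k\pi'\to\pi$ in $W_1$. The constant sequence $\pi'$ therefore converges to $\pi$, so $W_1(\pi',\pi)=0$ and $\pi'=\pi$. Together with the first part, this proves $\Inv(\mu)=\{\pi\}$.

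\emph{The description of $\cA$.} For the inclusion ``$\subseteq$'': if $\mu\in\cA$, the definition~\eqref{eq:admissible} itself supplies the measure $\pi=\pi_\mu$. For ``$\supseteq$'': if some $\pi$ attracts every $\lambda\in\cP_1(\cX)$ in $W_1$, the first statement gives that $P_\mu^*$ has the unique invariant measure $\pi$ in $\cP_1(\cX)$, and that measure attracts every initial law, so $\mu\in\cA$. The inclusion in $\cU$ is the first statement again, since $\Inv(\mu)$ is a singleton.

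No step is a real obstacle; the only point needing care is the invariance step, which is handled by the weak-continuity route described above.
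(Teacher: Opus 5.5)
Your proposal is correct and follows essentially the same route as the paper: invariance of $\pi$ via the weak continuity of Lemma~\ref{lem:basic}(ii) and uniqueness of weak limits, then uniqueness by applying the hypothesis to the constant orbit of any $\pi'\in\Inv(\mu)$. Your explicit verification of the two inclusions for the description of $\cA$ spells out what the paper leaves implicit.
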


\begin{proof}
Fix $\lambda\in\cP_1(\cX)$ and put $\lambda_k:=(P_\mu^*)^k\lambda$. By Lemma~\ref{lem:basic}(ii),
$\lambda_{k+1}=P_\mu^*\lambda_k\to P_\mu^*\pi$ weakly; since also $\lambda_{k+1}\to\pi$, we get $P_\mu^*\pi=\pi$. If $\pi'\in\Inv(\mu)$, then
$\pi'=(P_\mu^*)^k\pi'\to\pi$, so $\pi'=\pi$.
\end{proof}

Thus the uniqueness requirement in~\eqref{eq:admissible} is a consequence of global attraction; we keep it
in the definition for emphasis. The inclusion $\cA\subseteq\cU$ can be strict: a periodic chain has a
unique invariant law which does not attract (Example~\ref{ex:nonexistence}). The sets $\cA$ and $\cU$
and the measures $\pi_\mu$ do not depend on the base point $x_\circ$.

\begin{remark}[Attraction in $W_1$ versus weak attraction]\label{rem:weak}
Let $\cA_{\rm w}\supseteq\cA$ be the set of $\mu\in\cU$ such that $(P_\mu^*)^k\lambda\to\pi_\mu$ weakly for
every $\lambda\in\cP_1(\cX)$. By (W1), $W_1$-convergence is weak convergence together with convergence of
first moments, and on a bounded space the latter follows from the former, because $x\mapsto\rho(x,x_\circ)$
is then bounded and continuous. Hence $\cA_{\rm w}=\cA$ whenever $\cX$ is bounded, which covers all state
spaces of Section~\ref{sec:geometry} except those of Example~\ref{ex:doeblin-unbounded},
Proposition~\ref{prop:contraction} and Section~\ref{sec:affine}. On unbounded spaces the
inclusion can be strict: in Example~\ref{ex:nonconvex} the chain driven by two nilpotent maps is absorbed at
the origin almost surely for every sampling law, so the whole edge lies in $\cA_{\rm w}$, while its first
moments do not converge to zero on a closed sub-interval, which is therefore missing from $\cA$. The
existence theory of Section~\ref{sec:existence} only uses that the feasible set is contained in $\cU$
(Remark~\ref{rem:feasible}) and applies verbatim to $\cA_{\rm w}$. We nevertheless work with $\cA$, because
the computational results of Section~\ref{sec:computation} approximate $\pi_\mu$ by the iterates
$(P_\mu^*)^k\lambda_0$, and the resulting truncation error is controlled in $W_1$ only under
$W_1$-attraction.
\end{remark}

\begin{remark}[General parametrized families]\label{rem:general}
An IFS with sampling law $\mu$ is a parametrized stochastic dynamical system whose kernel
$P_\mu=\sum_i\mu_iP_{e_i}$ depends affinely on the parameter. The existence theory of
Section~\ref{sec:existence} uses only two properties of the family $(P_\mu)_{\mu\in\Delta_m}$: the
parameter set is a compact metric space, and $(\mu,\lambda)\mapsto P_\mu^*\lambda$ is jointly weakly
continuous (Lemma~\ref{lem:basic}(ii)). It therefore applies verbatim to any family
$(P_\theta)_{\theta\in\Theta}$ of Feller kernels with these two properties. The geometric results of
Section~\ref{sec:geometry}, in contrast, exploit the simplex and the affine dependence on $\mu$.
\end{remark}

\section{Geometry of the admissible set}\label{sec:geometry}

\subsection{General topological structure}\label{sec:topology}
We begin with what can be said without any structural hypothesis on the maps. Let $\cX$ be compact
with diameter $D$, so that $\cP_1(\cX)=\cP(\cX)$ is compact for $W_1$. For $k\ge1$ define
\begin{equation}\label{eq:Phik}
\Phi_k(\mu):=\sup_{x,y\in\cX}W_1\big(P_\mu^k(x,\cdot),P_\mu^k(y,\cdot)\big),
\qquad
\cAu:=\big\{\mu\in\Delta_m:\lim_{k\to\infty}\Phi_k(\mu)=0\big\}.
\end{equation}

\begin{theorem}[Topological structure]\label{thm:topology}
Let $\cX$ be compact and let Assumption~\ref{ass:standing} hold.
\begin{enumerate}
\item[\rm(i)] For every $\mu\in\Delta_m$ the set $\Inv(\mu)$ is nonempty, convex and compact, and the
graph $\{(\mu,\pi):\pi\in\Inv(\mu)\}$ is closed in $\Delta_m\times\cP(\cX)$.
\item[\rm(ii)] $\cAu\subseteq\cA\subseteq\cU$. Moreover $\mu\in\cAu$ if and only if $\mu\in\cA$ and
$\sup_{\lambda\in\cP(\cX)}W_1\big((P_\mu^*)^k\lambda,\pi_\mu\big)\to0$ as $k\to\infty$, and in that case this
supremum is at most $\Phi_k(\mu)$.
\item[\rm(iii)] $\cU$ and $\cAu$ are $G_\delta$ subsets of $\Delta_m$.
\item[\rm(iv)] The map $\mu\mapsto\pi_\mu$ is continuous from $\cU$ to $(\cP(\cX),W_1)$. In particular
$J$ is continuous on $\cU$, hence on $\cA$.
\end{enumerate}
\end{theorem}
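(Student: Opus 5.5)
The plan is to take the four items in order, with compactness of $\cP(\cX)$ in $W_1$ and the joint weak continuity of Lemma~\ref{lem:basic}(ii) doing most of the work. On a compact space weak convergence and $W_1$-convergence coincide by (W1).

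For (i), fix $\mu$ and any $\lambda\in\cP(\cX)$, and form the Cesàro averages $\bar\lambda_n:=\frac1n\sum_{k<n}(P_\mu^*)^k\lambda$. Since $\cP(\cX)$ is compact, some subsequence converges. We have $P_\mu^*\bar\lambda_n-\bar\lambda_n=\frac1n((P_\mu^*)^n\lambda-\lambda)$, which tends to zero, and Lemma~\ref{lem:basic}(ii) then shows that the limit is invariant (Krylov--Bogoliubov). Convexity of $\Inv(\mu)$ follows from linearity of $P_\mu^*$. For the graph, take $(\mu_n,\pi_n)\to(\mu,\pi)$ with $P_{\mu_n}^*\pi_n=\pi_n$. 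Lemma~\ref{lem:basic}(ii) gives $P_{\mu_n}^*\pi_n\to P_\mu^*\pi$ weakly, so $P_\mu^*\pi=\pi$. This closedness yields compactness of each fibre $\Inv(\mu)$ inside the compact space $\cP(\cX)$.

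For (ii), the inclusion $\cA\subseteq\cU$ is Lemma~\ref{lem:attraction}. Let $\mu\in\cAu$, pick $\pi\in\Inv(\mu)$ by (i), and let $\lambda$ be arbitrary. Then $(P_\mu^*)^k\lambda-\pi=\iint\big(P^k_\mu(x,\cdot)-P^k_\mu(y,\cdot)\big)\lambda(\dd x)\pi(\dd y)$. By joint convexity (W3) in integral form (couple $\lambda\otimes\pi$ and glue optimal couplings measurably, or approximate by finitely supported measures), $W_1((P_\mu^*)^k\lambda,\pi)\le\Phi_k(\mu)\to0$. Hence $\mu\in\cA$, and the supremum bound holds with $\pi=\pi_\mu$. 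For the converse, apply the uniform convergence to $\lambda=\delta_x$ and $\delta_y$ and use the triangle inequality: $\Phi_k\le 2\sup_\lambda W_1((P^*_\mu)^k\lambda,\pi_\mu)$.

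For (iii), first show that each $\Phi_k$ is upper semicontinuous (indeed continuous). The map $(\mu,x)\mapsto P_\mu^k(x,\cdot)=\sum_{\bi}\mu_{\bi}\delta_{w_\bi(x)}$ is continuous into $(\cP(\cX),W_1)$, and a supremum over the compact set $\cX\times\cX$ of a jointly continuous function is continuous in $\mu$. Next, $\Phi_{k+l}\le\Phi_k\Phi_l/D$-type submultiplicativity is not available in general. The plan is instead to use monotonicity: $\Phi_{k+1}\le\Phi_k$, because $P^{k+1}(x,\cdot)=P^*_\mu{}^k\,P(x,\cdot)$ and, by the convexity argument above, its distance to $P^{k+1}(y,\cdot)$ is at most $\sup_{x',y'}W_1(P^k(x',\cdot),P^k(y',\cdot))$. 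Then
\[
\cAu=\bigcap_{j\ge1}\bigcup_k\{\Phi_k<1/j\},
\]
which is $G_\delta$ since each set $\{\Phi_k<1/j\}$ is open.

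For $\cU$, use the compact diameter function $d(\mu):=\diam_{W_1}\Inv(\mu)$. Closedness of the graph from (i) gives upper semicontinuity of $\mu\mapsto\Inv(\mu)$, hence of $d$. Then $\cU=\bigcap_j\{d<1/j\}$, and each set $\{d<1/j\}$ is open. The step I expect to need the most care is verifying the upper semicontinuity of $d$: take $\mu_n\to\mu$ with $d(\mu_n)\ge1/j$, choose pairs in $\Inv(\mu_n)$ at distance $\ge 1/j$, extract convergent subsequences, and use graph closedness to land two points of $\Inv(\mu)$ at distance $\ge1/j$. This makes $\{d\ge1/j\}$ closed.

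For (iv), let $\mu_n\to\mu$ in $\cU$. Every subsequence of $\pi_{\mu_n}$ has a further convergent subsequence by compactness, and its limit lies in $\Inv(\mu)=\{\pi_\mu\}$ by graph closedness, so $\pi_{\mu_n}\to\pi_\mu$ in $W_1$. Continuity of $J$ then follows from the triangle inequality $|J(\mu)-J(\eta)|\le W_1(\pi_\mu,\pi_\eta)$.
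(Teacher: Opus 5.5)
Your proposal is correct and follows essentially the same route as the paper: the closed graph comes from Lemma~\ref{lem:basic}(ii), $\cU$ is handled by upper semicontinuity of the diameter of $\Inv(\mu)$, $\cAu$ by monotonicity and continuity of $\Phi_k$, and (iv) by compactness. The differences are minor. The paper cites Krylov--Bogolyubov instead of running your Ces\`aro argument. It gets the mixture bound $W_1((P_\mu^*)^k\lambda,(P_\mu^*)^k\lambda')\le\Phi_k(\mu)$ in one line from Kantorovich--Rubinstein duality, which spares you the measurable gluing or approximation step. It also proves the explicit Lipschitz bound $|\Phi_k(\mu)-\Phi_k(\eta)|\le Dk\|\mu-\eta\|_1$ rather than only continuity.
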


\begin{proof}
(i) Existence of an invariant measure for a Feller kernel on a compact space is the Krylov--Bogolyubov
theorem \cite[Theorem~3.1.1]{DaPratoZabczyk1996}. Convexity is clear, the closed graph property is
Lemma~\ref{lem:basic}(ii), and compactness of $\Inv(\mu)$ follows.

(ii) Let $f$ be $1$-Lipschitz and write $P_\mu^kf(x):=\int f(z)\,P_\mu^k(x,\dd z)$. By duality,
$P_\mu^kf(x)-P_\mu^kf(y)\le\Phi_k(\mu)$ for all $x,y$, hence for all $\lambda,\lambda'\in\cP(\cX)$
\begin{equation}\label{eq:Phibound}
W_1\big((P_\mu^*)^k\lambda,(P_\mu^*)^k\lambda'\big)
=\sup_{\Lip(f)\le1}\iint\big(P_\mu^kf(x)-P_\mu^kf(y)\big)\,\lambda(\dd x)\,\lambda'(\dd y)\le\Phi_k(\mu).
\end{equation}
Let $\mu\in\cAu$ and $\pi\in\Inv(\mu)$. Taking $\lambda'=\pi$ in~\eqref{eq:Phibound} gives
$W_1((P_\mu^*)^k\lambda,\pi)\le\Phi_k(\mu)\to0$ for every $\lambda$, so $\mu\in\cA$ by
Lemma~\ref{lem:attraction}, with the stated uniform bound. Conversely, if $\mu\in\cA$ and the supremum tends to zero, then
$\Phi_k(\mu)\le2\sup_\lambda W_1((P_\mu^*)^k\lambda,\pi_\mu)\to0$ by the triangle inequality applied to Dirac
masses. The inclusion $\cA\subseteq\cU$ is Lemma~\ref{lem:attraction}.

(iii) Let $g(\mu)$ be the $W_1$-diameter of $\Inv(\mu)$. If $\mu_n\to\mu$, choose
$\pi_n,\pi_n'\in\Inv(\mu_n)$ with $W_1(\pi_n,\pi_n')=g(\mu_n)$. Along a subsequence realizing
$\limsup_ng(\mu_n)$ and such that $\pi_n\to\pi$ and $\pi_n'\to\pi'$, part~(i) gives $\pi,\pi'\in\Inv(\mu)$, so
$g(\mu)\ge W_1(\pi,\pi')=\limsup_ng(\mu_n)$. Thus $g$ is upper semicontinuous and
$\cU=\bigcap_{j\ge1}\{g<1/j\}$ is a countable intersection of open sets.

For $\cAu$, note first that each $\Phi_k$ is Lipschitz. Indeed
$P_\mu^k(x,\cdot)=\sum_{\bi\in[m]^k}\mu_{\bi}\delta_{w_{\bi}(x)}$ and
$\sum_{\bi\in[m]^k}|\mu_{\bi}-\eta_{\bi}|\le k\|\mu-\eta\|_1$, so (W4) yields
$W_1(P_\mu^k(x,\cdot),P_\eta^k(x,\cdot))\le\frac{Dk}{2}\|\mu-\eta\|_1$ for every $x$, and therefore
$|\Phi_k(\mu)-\Phi_k(\eta)|\le Dk\|\mu-\eta\|_1$. Next, $\Phi_{k+1}\le\Phi_k$: for $1$-Lipschitz $f$,
\[
P_\mu^{k+1}f(x)-P_\mu^{k+1}f(y)=\iint\big(P_\mu^kf(u)-P_\mu^kf(v)\big)\,P_\mu(x,\dd u)\,P_\mu(y,\dd v)\le\Phi_k(\mu).
\]
Taking the supremum over $f$, $x$ and $y$ gives $\Phi_{k+1}(\mu)\le\Phi_k(\mu)$. Since $(\Phi_k(\mu))_{k\ge1}$ is nonincreasing and bounded below by $0$, its limit exists and equals its infimum. Therefore

\[
\lim_{k\to\infty}\Phi_k(\mu)=0
\iff \inf_{k\ge1}\Phi_k(\mu)=0
\iff \forall j\ge1\ \exists k\ge1:\ \Phi_k(\mu)<\tfrac1j ,
\]
so that $\cAu=\bigcap_{j\ge1}\bigcup_{k\ge1}\{\Phi_k<1/j\}$, a countable intersection of open sets because each
$\Phi_k$ is continuous, is a $G_\delta$ set.

(iv) Let $\mu_n\to\mu$ in $\cU$. By compactness of $\cP(\cX)$ and part~(i), every limit point of
$(\pi_{\mu_n})$ lies in $\Inv(\mu)=\{\pi_\mu\}$, so $\pi_{\mu_n}\to\pi_\mu$. Continuity of $J$ follows from
$|J(\mu)-J(\eta)|\le W_1(\pi_\mu,\pi_\eta)$.
\end{proof}

\begin{remark}\label{rem:topology}
(a) The inclusions in (ii) are strict in general. The swap of two points has a unique invariant law that
does not attract, so it lies in $\cU\setminus\cA$. For $\cA\setminus\cAu$, let $\cX=\R/\Z$ and let $w$ be
the map induced by $\theta\mapsto\theta+\frac1{10}\sin^2(\pi\theta)$. Every orbit converges monotonically
to the unique fixed point $0$, so $w^k_\#\lambda\to\delta_0$ for every $\lambda$ by dominated
convergence; but $w^k$ is a homeomorphism of the circle, hence onto, and
$\sup_x\rho(w^k(x),0)=\frac12$ for every $k$.

(b) We do not know whether $\cA$ itself is always a $G_\delta$, or even a Borel, set: pointwise attraction
is not a countable condition; see Open Problem~\ref{op:borel}. It is \emph{co-analytic}, and in particular
universally measurable~\cite[Theorem~21.10]{Kechris1995}. Indeed, on the compact space $\cX$, attraction
of every $\lambda\in\cP(\cX)$ follows from attraction of every Dirac mass, by dominated convergence
applied to $\int P_\mu^kf\,\dd\lambda$ for bounded continuous $f$, so that
\[
\cA=\big\{\mu\in\cU:\ (\mu,x)\in B\ \text{for every }x\in\cX\big\},\qquad
B:=\big\{(\mu,x)\in\cU\times\cX:\ F_k(\mu,x)\to0\big\},
\]
with $F_k(\mu,x):=W_1(P_\mu^k(x,\cdot),\pi_\mu)$. Each $F_k$ is continuous on $\cU\times\cX$ by part~(iv)
and because $P_\mu^k(x,\cdot)=\sum_{\bi}\mu_{\bi}\delta_{w_{\bi}(x)}$, so $B$ is a Borel subset of
$\cU\times\cX$, which is Polish by part~(iii). Hence $\cA$ is the complement in $\cU$ of the projection of
the Borel set $(\cU\times\cX)\setminus B$, that is, of an analytic set.
\end{remark}

\subsection{Uniformly ergodic sampling laws}\label{sec:unifergodic}
Quantitative information requires a rate, and a rate requires a metric in which the Markov operator
contracts. Contraction of the maps in $\rho$ is one way to obtain it, but not the only one. In this
subsection $\cX$ is bounded with diameter $D<\infty$, so that $\cP_1(\cX)=\cP(\cX)$.

\begin{definition}[Compatible metric]\label{def:compatible}
A metric $d$ on $\cP(\cX)$ is \emph{compatible} with the IFS if
\begin{enumerate}
\item[\rm(d1)] $(\cP(\cX),d)$ is complete and $d\le1$;
\item[\rm(d2)] $W_1(\alpha,\beta)\le D\,d(\alpha,\beta)$ for all $\alpha,\beta\in\cP(\cX)$;
\item[\rm(d3)] $d(P_\mu^*\lambda,P_\eta^*\lambda)\le\frac12\|\mu-\eta\|_1$ for all $\mu,\eta\in\Delta_m$ and
$\lambda\in\cP(\cX)$.
\end{enumerate}
For a compatible metric $d$, $n\in\N$ and $\eps\in(0,1]$, we let $\cE_d(n,\eps)$ be the set of all
$\mu\in\Delta_m$ such that $P_\mu^*$ is nonexpansive for $d$ and
\[
d\big((P_\mu^*)^n\lambda,(P_\mu^*)^n\lambda'\big)\le(1-\eps)\,d(\lambda,\lambda')\qquad\text{for all }\lambda,\lambda'\in\cP(\cX).
\]
\end{definition}

Three compatible metrics will be used:
\begin{itemize}
\item the total variation distance $\dTV$, for which (d2) is (W4) and (d3) holds because
$(P_\mu^*\lambda-P_\eta^*\lambda)(B)=\sum_i(\mu_i-\eta_i)\lambda(w_i^{-1}(B))$ with
$\lambda(w_i^{-1}(B))\in[0,1]$ and $\sum_i(\mu_i-\eta_i)=0$;
\item on $\cX=[a,b]$, the Kolmogorov distance
$\dK(\alpha,\beta):=\sup_{y\in[a,b]}|\alpha([a,y])-\beta([a,y])|$, for which (d2) follows from
$W_1(\alpha,\beta)=\int_a^b|\alpha([a,y])-\beta([a,y])|\,\dd y$ \cite[Proposition~2.17]{Santambrogio2015} and
(d3) as for $\dTV$;
\item the normalized Wasserstein distance $\dW:=W_1/D$, for which (d3) is~\eqref{eq:mixture}.
\end{itemize}

\begin{proposition}[Uniform ergodicity and perturbation]\label{prop:unifergodic}
Let $\cX$ be bounded, let $d$ be a compatible metric, and let $\mu\in\cE_d(n,\eps)$.
\begin{enumerate}
\item[\rm(i)] $\mu\in\cA$; $\pi_\mu$ is the only invariant probability measure of $P_\mu$; and for all
$\lambda\in\cP(\cX)$ and $k\ge0$,
\[
W_1\big((P_\mu^*)^k\lambda,\pi_\mu\big)\le D\,(1-\eps)^{\lfloor k/n\rfloor}.
\]
\item[\rm(ii)] For every $\eta\in\Delta_m$ and every invariant probability measure $\pi'$ of $P_\eta$,
\begin{equation}\label{eq:perturbation}
W_1(\pi_\mu,\pi')\le D\,d(\pi_\mu,\pi')\le\frac{D\,n}{2\,\eps}\,\|\mu-\eta\|_1 .
\end{equation}
\end{enumerate}
\end{proposition}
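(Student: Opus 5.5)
The plan is to apply the Banach fixed point theorem to the iterate $Q:=(P_\mu^*)^n$ on the complete space $(\cP(\cX),d)$, and then to run the three-step perturbation argument used for Lemma~\ref{lem:lipschitz-pi}, with $d$ in place of $W_1$.

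\emph{Part (i).} By (d1) and the definition of $\cE_d(n,\eps)$, $Q$ is a strict contraction with constant $1-\eps$. It therefore has a unique fixed point $\pi_\mu\in\cP(\cX)$. Since $P_\mu^*$ commutes with $Q$, the measure $P_\mu^*\pi_\mu$ is also a fixed point of $Q$, and uniqueness gives $P_\mu^*\pi_\mu=\pi_\mu$. Any invariant probability measure of $P_\mu$ is a fixed point of $Q$, hence equals $\pi_\mu$; since $\cX$ is bounded, this covers every probability measure.

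For the rate, write $k=qn+r$ with $0\le r<n$. Nonexpansiveness of $P_\mu^*$ and contraction of $Q$ give
\[
d\big((P_\mu^*)^k\lambda,\pi_\mu\big)=d\big(Q^q(P_\mu^*)^r\lambda,Q^q\pi_\mu\big)\le(1-\eps)^q\,d\big((P_\mu^*)^r\lambda,\pi_\mu\big)\le(1-\eps)^q,
\]
where the last step uses $d\le1$. Combining this with (d2) yields the stated bound $D(1-\eps)^{\lfloor k/n\rfloor}$ in $W_1$. Consequently $(P_\mu^*)^k\lambda\to\pi_\mu$ in $W_1$ for every $\lambda$, so $\mu\in\cA$ by Lemma~\ref{lem:attraction}.

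\emph{Part (ii).} The first inequality is (d2). For the second, let $\pi'$ be invariant for $P_\eta$. Then $\pi'=(P_\eta^*)^n\pi'$, so
\[
d(\pi_\mu,\pi')\le d\big(Q\pi_\mu,Q\pi'\big)+d\big((P_\mu^*)^n\pi',(P_\eta^*)^n\pi'\big)\le(1-\eps)\,d(\pi_\mu,\pi')+d\big((P_\mu^*)^n\pi',(P_\eta^*)^n\pi'\big).
\]
The remaining term is handled by telescoping. Put $\lambda_j:=(P_\eta^*)^{j}\pi'$, which equals $\pi'$ by invariance. Then
\[
(P_\mu^*)^n\pi'-(P_\eta^*)^n\pi'\quad\text{telescopes as}\quad\sum_{j=0}^{n-1}\Big[(P_\mu^*)^{n-j}\lambda_j-(P_\mu^*)^{n-j-1}\lambda_{j+1}\Big],
\]
and the triangle inequality bounds $d\big((P_\mu^*)^n\pi',(P_\eta^*)^n\pi'\big)$ by
\[
\sum_{j=0}^{n-1}d\big((P_\mu^*)^{n-j-1}P_\mu^*\lambda_j,\ (P_\mu^*)^{n-j-1}P_\eta^*\lambda_j\big).
\]
By nonexpansiveness of $P_\mu^*$, each summand is at most $d(P_\mu^*\lambda_j,P_\eta^*\lambda_j)$, which by (d3) is at most $\tfrac12\|\mu-\eta\|_1$. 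The sum is therefore at most $\tfrac n2\|\mu-\eta\|_1$. Rearranging gives $\eps\,d(\pi_\mu,\pi')\le\tfrac n2\|\mu-\eta\|_1$, which is the claimed bound \eqref{eq:perturbation}.

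\emph{Main obstacle.} The delicate point is that $P_\eta^*$ need not be nonexpansive for $d$. The telescoping above is arranged so that only $P_\mu^*$ is ever applied after a perturbation, and the $P_\eta^*$-iterates are always evaluated at the invariant measure $\pi'$. This is exactly what the invariance of $\pi'$ provides.
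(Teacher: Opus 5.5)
Your proof is correct and follows the paper's argument essentially line for line. Part (i) is Banach's fixed point theorem for $(P_\mu^*)^n$ in $(\cP(\cX),d)$, and part (ii) is the same triangle inequality followed by a chain of $n$ steps, each bounded by $\tfrac12\|\mu-\eta\|_1$ via nonexpansiveness of $P_\mu^*$ and (d3). Your hybrid chain $(P_\mu^*)^{n-j}(P_\eta^*)^j\pi'$ coincides with the paper's chain $(P_\mu^*)^{n-j}\pi'$ once invariance of $\pi'$ is used, so the two telescopings are identical.
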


\begin{proof}
(i) By the Banach fixed point theorem in the complete space $(\cP(\cX),d)$, the map $(P_\mu^*)^n$ has a
unique fixed point $\pi$. Since $P_\mu^*\pi$ is also fixed by $(P_\mu^*)^n$, we get $P_\mu^*\pi=\pi$, and
every invariant measure of $P_\mu$ is a fixed point of $(P_\mu^*)^n$, hence equal to $\pi$. Writing
$k=qn+r$ with $0\le r<n$ and using $d\le1$,
$d((P_\mu^*)^k\lambda,\pi)=d((P_\mu^*)^{qn}(P_\mu^*)^r\lambda,(P_\mu^*)^{qn}\pi)\le(1-\eps)^q$. The claim follows
from (d2) and Lemma~\ref{lem:attraction}.

(ii) Since $\pi_\mu=(P_\mu^*)^n\pi_\mu$ and $\pi'=(P_\eta^*)^n\pi'=P_\eta^*\pi'$,
\begin{align*}
d(\pi_\mu,\pi')&\le d\big((P_\mu^*)^n\pi_\mu,(P_\mu^*)^n\pi'\big)+d\big((P_\mu^*)^n\pi',\pi'\big)\\
&\le(1-\eps)\,d(\pi_\mu,\pi')+\sum_{j=0}^{n-1}d\big((P_\mu^*)^{j+1}\pi',(P_\mu^*)^{j}\pi'\big).
\end{align*}
By nonexpansiveness of $P_\mu^*$ and (d3), each term of the sum is at most
$d(P_\mu^*\pi',\pi')=d(P_\mu^*\pi',P_\eta^*\pi')\le\frac12\|\mu-\eta\|_1$. Rearranging gives the second
inequality in~\eqref{eq:perturbation}; the first is (d2).
\end{proof}

Only the reference law $\mu$ needs to be uniformly ergodic in~\eqref{eq:perturbation}: the bound controls
\emph{all} invariant measures of all nearby kernels, admissible or not. This is the quantitative
counterpart of the closed graph property in Theorem~\ref{thm:topology}(i).

\subsection{Support-determined certificates and the face lemma}\label{sec:faces}
The decomposition $\Delta_m=\bigsqcup_{\emptyset\neq S\subseteq[m]}\Delta_S^\circ$ of the simplex into its
relatively open faces is the natural coordinate system for conditions that depend on which maps are
used but not on how often.

\begin{lemma}[Face lemma]\label{lem:face}
Let $\sF$ be a family of nonempty subsets of $[m]$ that is an \emph{up-set}: $S\in\sF$ and
$S\subseteq T\subseteq[m]$ imply $T\in\sF$. Let $\cF:=\bigcup_{S\in\sF}\Delta_S^\circ=\{\mu\in\Delta_m:\supp\mu\in\sF\}$.
Then:
\begin{enumerate}
\item[\rm(i)] $\cF$ is convex.
\item[\rm(ii)] $\Delta_m\setminus\cF=\bigcup_{S\notin\sF}\Delta_S$ is a finite union of closed faces; in
particular $\cF$ is relatively open in $\Delta_m$.
\item[\rm(iii)] If $\sF\neq\emptyset$, then $\cF\supseteq\Delta_{[m]}^\circ$; thus $\cF$ is dense in $\Delta_m$
and its complement lies in the relative boundary of the simplex.
\item[\rm(iv)] $\cF$ is closed if and only if $\cF\in\{\emptyset,\Delta_m\}$, and $\cF=\Delta_m$ if and
only if $\{i\}\in\sF$ for every $i\in[m]$.
\end{enumerate}
\end{lemma}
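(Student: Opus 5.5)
The whole argument rests on one identity for supports. For $\mu,\eta\in\Delta_m$ and $t\in(0,1)$ we have $\supp(t\mu+(1-t)\eta)=\supp\mu\cup\supp\eta$, because all coordinates are nonnegative. Together with the up-set property, this identity gives each of the four claims almost immediately.

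For (i), take $\mu,\eta\in\cF$ and $t\in(0,1)$. The support of $t\mu+(1-t)\eta$ contains $\supp\mu\in\sF$, so it lies in $\sF$ because $\sF$ is an up-set, and the convex combination belongs to $\cF$.

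For (ii), note that $\mu\notin\cF$ means $S:=\supp\mu\notin\sF$, and $\mu\in\Delta_S$. Hence $\Delta_m\setminus\cF\subseteq\bigcup_{S\notin\sF}\Delta_S$. For the reverse inclusion, let $\mu\in\Delta_S$ with $S\notin\sF$. Then $\supp\mu\subseteq S$. If $\supp\mu$ were in $\sF$, the up-set property would force $S\in\sF$, a contradiction. So $\mu\notin\cF$. Each $\Delta_S$ is closed and there are finitely many $S$, so the complement is closed and $\cF$ is relatively open. Here the up-set hypothesis is used in its contrapositive form, which says that the complement family $\{S:S\notin\sF\}$ is a down-set.

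For (iii), suppose $\sF\neq\emptyset$ and pick any $S\in\sF$. Since $S\subseteq[m]$, the up-set property gives $[m]\in\sF$, and therefore $\Delta_{[m]}^\circ\subseteq\cF$. The relative interior of the simplex is dense in $\Delta_m$, so $\cF$ is dense. Its complement misses $\Delta_{[m]}^\circ$ and so lies in the relative boundary $\Delta_m\setminus\Delta_{[m]}^\circ$.

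For (iv), first suppose $\cF$ is closed and nonempty. Then $\sF\neq\emptyset$, so $\cF$ is dense by (iii), and being closed it equals $\Delta_m$. The converse direction of the first equivalence is trivial. For the second equivalence, $\cF=\Delta_m$ implies $e_i\in\cF$, and $\supp e_i=\{i\}$, so $\{i\}\in\sF$ for every $i$. Conversely, if every singleton is in $\sF$, then every nonempty $S$ contains some singleton and hence lies in $\sF$, so $\cF=\Delta_m$.

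I expect no real obstacle. The only point needing care is (ii), where the up-set property has to be applied in contrapositive form to obtain the equality, rather than just the inclusion, between $\Delta_m\setminus\cF$ and the union of the closed faces.
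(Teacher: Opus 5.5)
Your proof is correct and follows the paper's argument essentially step for step: the support of a convex combination is the union of the supports, the family of supports not in $\sF$ is a down-set, and a nonempty up-set contains $[m]$. The only variation is in (iv), where you obtain $\cF=\Delta_m$ from density plus closedness, while the paper argues that $\cF$ is relatively open and $\Delta_m$ is connected; both arguments work.
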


\begin{proof}
(i) If $\mu\in\Delta_S^\circ$ and $\eta\in\Delta_T^\circ$ with $S,T\in\sF$, then for $t\in(0,1)$ the support of
$(1-t)\mu+t\eta$ is $S\cup T\supseteq S$, which belongs to $\sF$. (ii) The complement of $\cF$ is
$\bigcup_{S\notin\sF}\Delta_S^\circ$. If $S\notin\sF$ and $\emptyset\ne T\subseteq S$, then $T\notin\sF$ because
$\sF$ is an up-set, so $\Delta_S=\bigcup_{\emptyset\ne T\subseteq S}\Delta_T^\circ$ is contained in the
complement. (iii) A nonempty up-set contains $[m]$. (iv) The simplex is connected and $\cF$ is open, so
$\cF$ is closed only if it is empty or everything; the last claim follows from the up-set property.
\end{proof}

We now exhibit two up-sets that certify admissibility.

\begin{definition}[Point-accessible supports]\label{def:pa}
A nonempty $S\subseteq[m]$ is \emph{point-accessible} if there are $n\in\N$ and $z\in\cX$ such that for
every $x\in\cX$ there is a word $\bi\in S^n$ with $w_{\bi}(x)=z$. We call $(n,z)$ a \emph{witness}, write
$\sD$ for the family of point-accessible supports, and set $\cD:=\{\mu\in\Delta_m:\supp\mu\in\sD\}$.
\end{definition}

\begin{theorem}[Doeblin region]\label{thm:doeblin}
Let Assumption~\ref{ass:standing} hold.
\begin{enumerate}
\item[\rm(i)] $\sD$ is an up-set. Hence $\cD$ is convex and relatively open in $\Delta_m$, its complement
is a union of closed faces, and $\cD$ is either empty or dense in $\Delta_m$.
\item[\rm(ii)] $\mu\in\cD$ if and only if $P_\mu$ satisfies the Doeblin condition: there are $n_0\in\N$,
$\eps\in(0,1]$ and $\nu_0\in\cP(\cX)$ such that
\begin{equation}\label{eq:Doeblin}
P_\mu^{n_0}(x,B)\ge\eps\,\nu_0(B)\qquad\text{for all }x\in\cX,\ B\in\cB(\cX).
\end{equation}
\item[\rm(iii)] Let $\cX$ be bounded. If $S\in\sD$ has witness $(n,z)$ and $\supp\mu\supseteq S$, then
$\mu\in\cE_{\dTV}(n,\eps)$ with $\eps=(\min_{i\in S}\mu_i)^n$. Consequently $\cD\subseteq\cA$, and for
$\mu\in\cD$ the conclusions of Proposition~\ref{prop:unifergodic} hold with $d=\dTV$.
\end{enumerate}
\end{theorem}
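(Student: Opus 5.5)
The three parts rest on one observation: every iterate of the kernel is a finite atomic measure,
\[
P_\mu^{k}(x,\cdot)=\sum_{\bi\in[m]^k}\mu_{\bi}\,\delta_{w_{\bi}(x)},
\]
and only words $\bi\in(\supp\mu)^k$ carry positive weight. Part (i) is then the face lemma, part (ii) identifies the Doeblin minorizing measure through this atomic structure, and part (iii) is a standard Dobrushin-type contraction.

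\emph{Part (i).} Let $S\in\sD$ have witness $(n,z)$ and let $S\subseteq T$. Since $S^n\subseteq T^n$, the same witness $(n,z)$ works for $T$, so $\sD$ is an up-set. All remaining claims are Lemma~\ref{lem:face}(i)--(iii) applied to $\sF=\sD$.

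\emph{Part (ii), forward direction.} Let $S=\supp\mu\in\sD$ with witness $(n,z)$. For every $x$ some word $\bi\in S^n$ satisfies $w_{\bi}(x)=z$. Hence
\[
P_\mu^n(x,\{z\})\ge\mu_{\bi}\ge\Big(\min_{i\in S}\mu_i\Big)^n=:\eps>0,
\]
and \eqref{eq:Doeblin} holds with $n_0=n$ and $\nu_0=\delta_z$.

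\emph{Part (ii), converse.} This is the step I expect to need the most care. Fix any $x$ and let $A_x:=\{w_{\bi}(x):\bi\in(\supp\mu)^{n_0}\}$, a finite set. Taking $B=\cX\setminus A_x$ in \eqref{eq:Doeblin} gives $\eps\,\nu_0(\cX\setminus A_x)\le P_\mu^{n_0}(x,\cX\setminus A_x)=0$. So $\nu_0$ is concentrated on the finite set $A_x$, and it therefore has an atom $z$ with $\nu_0(\{z\})>0$. This atom does not depend on any later choice of point.

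Now take an arbitrary $y\in\cX$. Then $P_\mu^{n_0}(y,\{z\})\ge\eps\,\nu_0(\{z\})>0$. By the atomic formula above, some word $\bi\in(\supp\mu)^{n_0}$ must satisfy $w_{\bi}(y)=z$. Thus $(n_0,z)$ is a witness for $\supp\mu$, and $\mu\in\cD$.

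\emph{Part (iii).} Let $\supp\mu\supseteq S$ with $S$ having witness $(n,z)$, and set $\eps=(\min_{i\in S}\mu_i)^n$. By part (ii), $P_\mu^n(x,\{z\})\ge\eps$ for every $x$. If $\eps=1$, then $P_\mu^n(x,\cdot)=\delta_z$ for all $x$ and the contraction below is trivial. Otherwise $\eps<1$, and I decompose
\[
P_\mu^n(x,\cdot)=\eps\,\delta_z+(1-\eps)R(x,\cdot),\qquad R(x,\cdot):=\frac{P_\mu^n(x,\cdot)-\eps\,\delta_z}{1-\eps}.
\]
Here $R$ is a Markov kernel, measurable in $x$ because $P_\mu^n$ is. For $\lambda,\lambda'\in\cP(\cX)$ the $\delta_z$ terms cancel in $(\lambda-\lambda')P_\mu^n$. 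Since any Markov kernel does not increase total variation, this gives
\[
\dTV\big((P_\mu^*)^n\lambda,(P_\mu^*)^n\lambda'\big)=(1-\eps)\,\dTV(\lambda R,\lambda'R)\le(1-\eps)\,\dTV(\lambda,\lambda').
\]
The same fact about Markov kernels shows that $P_\mu^*$ is nonexpansive for $\dTV$. Hence $\mu\in\cE_{\dTV}(n,\eps)$.

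Since $\dTV$ is a compatible metric on bounded $\cX$, Proposition~\ref{prop:unifergodic} applies. Part (i) of that proposition gives $\mu\in\cA$, so $\cD\subseteq\cA$, and its full conclusions hold with $d=\dTV$.
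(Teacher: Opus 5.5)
Your proof is correct and follows the paper's argument in all three parts. For (i) you use the up-set observation plus the face lemma, for the converse in (ii) you use the atomic form of $P_\mu^{n_0}(x,\cdot)$ to extract an atom of $\nu_0$, and for (iii) you give a Dobrushin-type minorization argument. The only difference is cosmetic: in (iii) you split off $\eps\,\delta_z$ and contract through the residual kernel $R$, whereas the paper writes $\lambda-\lambda'=c(\alpha-\alpha')$ by Jordan decomposition and subtracts the common minorant $\eps\,\delta_z$ from $(P_\mu^*)^n\alpha$ and $(P_\mu^*)^n\alpha'$; the two are equivalent.
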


\begin{proof}
(i) If $S\subseteq T$ then $S^n\subseteq T^n$, so a witness for $S$ is a witness for $T$; apply
Lemma~\ref{lem:face}.

(ii) Let $S:=\supp\mu\in\sD$ with witness $(n,z)$. For each $x$ there is $\bi\in S^n$ with $w_{\bi}(x)=z$,
so $P_\mu^n(x,\{z\})\ge\mu_{\bi}\ge(\min_{i\in S}\mu_i)^n=:\eps$, which is~\eqref{eq:Doeblin} with
$\nu_0=\delta_z$. Conversely, assume~\eqref{eq:Doeblin}. For any fixed $x_1$ the measure
$P_\mu^{n_0}(x_1,\cdot)=\sum_{\bi}\mu_{\bi}\delta_{w_{\bi}(x_1)}$ is carried by a finite set, so $\nu_0$ is
carried by the same finite set and has an atom $z$. Then $P_\mu^{n_0}(x,\{z\})\ge\eps\nu_0(\{z\})>0$ for
every $x$, i.e.\ for every $x$ there is a word $\bi$ of length $n_0$ with $\mu_{\bi}>0$ and
$w_{\bi}(x)=z$. Thus $(n_0,z)$ is a witness for $\supp\mu$.

(iii) Total variation is nonexpansive under every Markov operator. Let $\lambda\ne\lambda'$ and
$c:=\dTV(\lambda,\lambda')$. By the Jordan decomposition, $\lambda-\lambda'=c\,(\alpha-\alpha')$ for two
probability measures $\alpha,\alpha'$. As in (ii), $(P_\mu^*)^n\alpha\ge\eps\delta_z$ and
$(P_\mu^*)^n\alpha'\ge\eps\delta_z$, so
$(P_\mu^*)^n\alpha-(P_\mu^*)^n\alpha'$ is the difference of two nonnegative measures of mass $1-\eps$, whence
$\dTV((P_\mu^*)^n\lambda,(P_\mu^*)^n\lambda')\le c\,(1-\eps)$. Therefore $\mu\in\cE_{\dTV}(n,\eps)$.
\end{proof}

Part~(ii) shows that for an IFS the Doeblin condition is a purely combinatorial property of the maps
indexed by the support of $\mu$: some point must be reachable from everywhere in a common number of
steps. For injective maps on an infinite space this never happens, since a word sends at most one point
to $z$; the Doeblin region is tailored to maps that merge trajectories, such as maps with saturation.

Boundedness of $\cX$ in part~(iii) cannot be dropped: on an unbounded space the Doeblin condition still
gives a unique invariant probability measure and convergence in total variation, but not a finite first
moment.

\begin{example}[Doeblin without admissibility]\label{ex:doeblin-unbounded}
Let $\cX=[0,\infty)$, $w_1\equiv0$ and $w_2(x)=2x+1$; both maps are Lipschitz. The support $\{1\}$ is
point-accessible and $\{2\}$ is not, so $\cD=\Delta_2\setminus\{e_2\}$. For $\mu=(1-t,t)$ with $t<1$ the
chain is reset to $0$ with probability $1-t$, and $j$ steps after the last reset it is at $2^j-1$; its
unique invariant probability measure is the law of $2^G-1$ with $\PP(G=j)=(1-t)t^j$, $j\ge0$, which has
finite mean if and only if $t<\frac12$. Hence $\Inv(\mu)=\emptyset$ for $t\ge\frac12$, while
$\theta_1(\mu)=2t<1$ for $t<\frac12$, and Proposition~\ref{prop:contraction} below gives
$\cA=\{\mu\in\Delta_2:\mu_2<\frac12\}$, which is not a union of relatively open faces.
\end{example}

For order-preserving maps on an interval, a weaker requirement suffices.

\begin{definition}[Splitting supports]\label{def:splitting}
Let $\cX=[a,b]\subset\R$ and let every $w_i$ be nondecreasing. A nonempty $S\subseteq[m]$ is
\emph{splitting} if there are $n\in\N$ and words $\bi,\bj\in S^n$ such that
\begin{equation}\label{eq:splitting}
w_{\bi}(b)\le w_{\bj}(a).
\end{equation}
We write $\sM$ for the family of splitting supports and $\cM:=\{\mu\in\Delta_m:\supp\mu\in\sM\}$.
\end{definition}

Since $w_{\bi}$ and $w_{\bj}$ are nondecreasing, \eqref{eq:splitting} says that $w_{\bi}$ maps the whole
interval to the left of the point $x_0:=w_{\bj}(a)$ and $w_{\bj}$ maps it to the right of $x_0$. This is the
splitting condition of Dubins and Freedman~\cite{DubinsFreedman1966}; see
also~\cite{BhattacharyaLee1988,BhattacharyaMajumdar2007}. A point-accessible support of monotone maps is
splitting (take $\bi,\bj$ with $w_{\bi}(b)=z=w_{\bj}(a)$), so $\sD\subseteq\sM$ in this setting.

\begin{theorem}[Splitting region]\label{thm:splitting}
Let $\cX=[a,b]$ and let $w_1,\dots,w_m$ be continuous and nondecreasing.
\begin{enumerate}
\item[\rm(i)] $\sM$ is an up-set. Hence $\cM$ is convex and relatively open in $\Delta_m$, its complement
is a union of closed faces, and $\cM$ is either empty or dense in $\Delta_m$.
\item[\rm(ii)] If $S\in\sM$ with words $\bi,\bj\in S^n$ as in~\eqref{eq:splitting} and $\supp\mu\supseteq S$,
then $\mu\in\cE_{\dK}(n,\chi)$ with $\chi=\min\{\mu_{\bi},\mu_{\bj}\}\ge(\min_{i\in S}\mu_i)^n$. Consequently
$\cD\subseteq\cM\subseteq\cA$, and for $\mu\in\cM$ the conclusions of Proposition~\ref{prop:unifergodic}
hold with $d=\dK$ and $D=b-a$.
\end{enumerate}
\end{theorem}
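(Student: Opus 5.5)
Part~(i) follows the pattern of Theorem~\ref{thm:doeblin}(i). If $S\subseteq T$, then $S^n\subseteq T^n$, so the words $\bi,\bj\in S^n$ satisfying~\eqref{eq:splitting} also lie in $T^n$. Hence $\sM$ is an up-set, and Lemma~\ref{lem:face} gives convexity, relative openness, the facial structure of the complement, and the dichotomy empty/dense.

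For part~(ii) we verify the two requirements of $\cE_{\dK}(n,\chi)$. Write $F_\lambda(y):=\lambda([a,y])$. Since each $w_i$ is nondecreasing and continuous on $[a,b]$, the preimage $w_i^{-1}([a,y])$ is a closed initial segment $[a,c_i(y)]$, or empty, or all of $[a,b]$. Hence
\[
F_{P_\mu^*\lambda}(y)=\sum_i\mu_i\,\lambda\big(w_i^{-1}([a,y])\big),
\]
and each term is $\lambda$ evaluated on an initial segment. Therefore
\[
|F_{P_\mu^*\lambda}(y)-F_{P_\mu^*\lambda'}(y)|\le\sum_i\mu_i\,\dK(\lambda,\lambda')=\dK(\lambda,\lambda'),
\]
which is nonexpansiveness. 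The same holds for $(P_\mu^*)^n$, which is a mixture over words $\bk\in[m]^n$ of push-forwards by the nondecreasing maps $w_{\bk}$:
\[
F_{(P_\mu^*)^n\lambda}(y)-F_{(P_\mu^*)^n\lambda'}(y)=\sum_{\bk}\mu_{\bk}\,\big(\lambda(w_{\bk}^{-1}[a,y])-\lambda'(w_{\bk}^{-1}[a,y])\big).
\]

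The contraction comes from the two splitting words. Put $x_0:=w_{\bj}(a)$. For $y\ge x_0$ we have $w_{\bi}([a,b])\subseteq[a,w_{\bi}(b)]\subseteq[a,y]$, so $w_{\bi}^{-1}[a,y]=[a,b]$ and the $\bi$-term vanishes. For $y<x_0$ we have $w_{\bj}(x)\ge x_0>y$ for all $x$, so $w_{\bj}^{-1}[a,y]=\emptyset$ and the $\bj$-term vanishes. In either case at least one word of weight $\ge\chi$ contributes zero, and the remaining words contribute at most $(1-\chi)\,\dK(\lambda,\lambda')$. If $\bi=\bj$, then $w_{\bi}$ is constant and the single word has weight $\mu_{\bi}\ge\chi$, so the same bound holds. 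This gives $\mu\in\cE_{\dK}(n,\chi)$. The bound $\chi\ge(\min_{i\in S}\mu_i)^n$ is immediate because both words lie in $S^n$ and $\supp\mu\supseteq S$.

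The remaining point is that $\dK$ is a compatible metric, which the paper has already noted: (d2) and (d3) hold, and $\dK\le1$. The step I expect to need the most care is completeness in (d1). A $\dK$-Cauchy sequence has distribution functions converging uniformly to a function $F$ that is nondecreasing and right-continuous with $F(b)=1$. Such an $F$ is the distribution function of a probability measure on $[a,b]$, with a possible atom at $a$ allowed. Finally, Proposition~\ref{prop:unifergodic} gives $\cM\subseteq\cA$ together with its quantitative conclusions for $d=\dK$ and $D=b-a$. The inclusion $\cD\subseteq\cM$ is the remark preceding the theorem: for point-accessible supports, take $\bi,\bj$ with $w_{\bi}(b)=z=w_{\bj}(a)$.
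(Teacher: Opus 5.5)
Your proof is correct and follows the paper's argument essentially step by step. Part~(i) uses $S^n\subseteq T^n$ and Lemma~\ref{lem:face}; nonexpansiveness comes from the initial-segment structure of preimages; and the contraction comes from the $\bj$-term vanishing for $y<w_{\bj}(a)$ and the $\bi$-term vanishing for $y\ge w_{\bj}(a)$. The differences are minor: continuity of the maps gives you closed initial segments $[a,c]$, which spares the paper's limiting argument for half-open segments $[a,c)$, and you spell out the completeness of $(\cP([a,b]),\dK)$ that the paper only calls elementary.
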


\begin{proof}
(i) is as in Theorem~\ref{thm:doeblin}. For (ii), let $g:[a,b]\to[a,b]$ be nondecreasing and
$y\in[a,b]$. The set $g^{-1}([a,y])$ is an initial segment of $[a,b]$: it is empty, or all of $[a,b]$,
or of the form $[a,c]$ or $[a,c)$. In each case
\begin{equation}\label{eq:initial}
\big|\lambda(g^{-1}([a,y]))-\lambda'(g^{-1}([a,y]))\big|\le\dK(\lambda,\lambda'),
\end{equation}
with left-hand side equal to zero in the first two cases; for $[a,c)$ one uses that
$\lambda([a,c))=\lim_{y'\uparrow c}\lambda([a,y'])$.
Applying~\eqref{eq:initial} to $g=w_i$ and averaging shows that $P_\mu^*$ is nonexpansive for $\dK$. Now fix
$y$ and expand
\[
(P_\mu^*)^n\lambda([a,y])-(P_\mu^*)^n\lambda'([a,y])
=\sum_{\mathbf k\in[m]^n}\mu_{\mathbf k}\Big(\lambda\big(w_{\mathbf k}^{-1}([a,y])\big)-\lambda'\big(w_{\mathbf k}^{-1}([a,y])\big)\Big).
\]
If $y<w_{\bj}(a)$, then $w_{\bj}^{-1}([a,y])=\emptyset$ and the term $\mathbf k=\bj$ vanishes. If
$y\ge w_{\bj}(a)$, then $y\ge w_{\bi}(b)$ by~\eqref{eq:splitting}, so $w_{\bi}^{-1}([a,y])=[a,b]$ and the term
$\mathbf k=\bi$ vanishes. By~\eqref{eq:initial} the remaining terms sum to at most
$(1-\chi)\,\dK(\lambda,\lambda')$. Taking the supremum over $y$ gives $\mu\in\cE_{\dK}(n,\chi)$, and
completeness of $(\cP([a,b]),\dK)$ is elementary.
\end{proof}

The classical regime fits the same scheme, with a half-space in place of a union of faces.

\begin{proposition}[Contraction on average]\label{prop:contraction}
Let Assumption~\ref{ass:standing} hold and suppose that every $w_i$ is Lipschitz. For $n\ge1$ put
$\theta_n(\mu):=\sum_{\bi\in[m]^n}\mu_{\bi}\Lip(w_{\bi})$.
\begin{enumerate}
\item[\rm(i)] If $\theta_n(\mu)<1$ for some $n$, then $\mu\in\cA$.
\item[\rm(ii)] The set $\cC:=\{\mu\in\Delta_m:\theta_1(\mu)<1\}$ is the intersection of $\Delta_m$ with an
open half-space; it is convex and relatively open, and $\cC=\Delta_m$ if and only if $\max_i\Lip(w_i)<1$.
\item[\rm(iii)] If $\cX$ is bounded and $\theta_1(\mu)<1$, then $\mu\in\cE_{\dW}(1,1-\theta_1(\mu))$.
\end{enumerate}
\end{proposition}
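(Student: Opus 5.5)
For (i) the plan is to reduce to Lemma~\ref{lem:contraction} applied to the $n$-step operator. First, I will check that $(P_\mu^*)^n$ is a contraction on $(\cP_1(\cX),W_1)$ with constant $\theta_n(\mu)$. Expanding $(P_\mu^*)^n\lambda=\sum_{\bi\in[m]^n}\mu_{\bi}(w_{\bi})_\#\lambda$, pushing an optimal coupling forward by $w_{\bi}\times w_{\bi}$, and using (W3) gives
\[
W_1\big((P_\mu^*)^n\lambda,(P_\mu^*)^n\lambda'\big)\le\theta_n(\mu)\,W_1(\lambda,\lambda').
\]
This is exactly the computation in the proof of Lemma~\ref{lem:contraction}, with the maps $w_{\bi}$ in place of $w_i$. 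Banach's theorem in the complete space $(\cP_1(\cX),W_1)$ (fact (W1)) then yields a unique fixed point $\pi$ of $(P_\mu^*)^n$. Since $P_\mu^*\pi$ is also fixed by $(P_\mu^*)^n$, it equals $\pi$, so $\pi$ is invariant.

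For attraction, I will write $k=qn+r$ with $0\le r<n$ and bound
\[
W_1\big((P_\mu^*)^k\lambda,\pi\big)\le\theta_n(\mu)^q\,W_1\big((P_\mu^*)^r\lambda,\pi\big).
\]
The second factor is finite for each of the finitely many $r$, because $P_\mu^*$ preserves $\cP_1(\cX)$ by Lemma~\ref{lem:basic}(i). Hence $(P_\mu^*)^k\lambda\to\pi$ in $W_1$ for every $\lambda\in\cP_1(\cX)$, and Lemma~\ref{lem:attraction} gives $\mu\in\cA$. The only point needing care is that the maps $w_i$ need not satisfy $\Lip(w_i)<1$, so contraction is available only for the $n$-step operator. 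That is why the remainder $r$ has to be handled through the moment bound of Lemma~\ref{lem:basic}(i) rather than through contraction.

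For (ii), $\theta_1(\mu)=\sum_i\mu_i\Lip(w_i)$ is a linear functional of $\mu$. So $\cC$ is $\Delta_m$ intersected with the open half-space $\{\sum_i\mu_iL_i<1\}$, where $L_i:=\Lip(w_i)$, and it is therefore convex and relatively open. For the last claim, a linear function on the simplex attains its maximum at a vertex. Thus $\cC=\Delta_m$ iff $\theta_1(e_i)=L_i<1$ for all $i$, that is, iff $\max_iL_i<1$.

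For (iii), I will verify Definition~\ref{def:compatible} for $d=\dW=W_1/D$ with $n=1$. The metric is compatible, since (d3) is~\eqref{eq:mixture} with $\rho\le D$, (d2) holds with equality, and (d1) holds because $W_1\le D$ and completeness is (W1). By Lemma~\ref{lem:contraction}, $P_\mu^*$ contracts $W_1$, hence $\dW$, with constant $\theta_1(\mu)=1-(1-\theta_1(\mu))$. This gives nonexpansiveness and the required contraction with $\eps=1-\theta_1(\mu)\in(0,1]$. I expect none of these steps to be a genuine obstacle; the mildest subtlety is the remainder bookkeeping in (i).
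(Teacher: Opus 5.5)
Your proof is correct and follows the paper's argument. You obtain contraction of the $n$-step operator by coupling through the same word, apply Banach's theorem, get invariance of the fixed point because $P_\mu^*$ commutes with $(P_\mu^*)^n$, and prove attraction via the bound $\theta_n(\mu)^q\max_{r<n}W_1((P_\mu^*)^r\lambda,\pi)$. For (ii) and (iii) you write out in full what the paper dismisses as immediate from the linearity of $\theta_1$, including the check that $\dW$ is a compatible metric.
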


\begin{proof}
Coupling $(P_\mu^*)^n\lambda$ and $(P_\mu^*)^n\lambda'$ through the same word and an optimal coupling of
$(\lambda,\lambda')$ gives $W_1((P_\mu^*)^n\lambda,(P_\mu^*)^n\lambda')\le\theta_n(\mu)W_1(\lambda,\lambda')$. If
$\theta_n(\mu)<1$, the Banach fixed point theorem in $(\cP_1(\cX),W_1)$ gives a unique fixed point $\pi$ of
$(P_\mu^*)^n$, which is invariant for $P_\mu$ as in the proof of Proposition~\ref{prop:unifergodic}, and
\[
W_1\big((P_\mu^*)^{qn+r}\lambda,\pi\big)\le\theta_n(\mu)^q\max_{r'<n}W_1\big((P_\mu^*)^{r'}\lambda,\pi\big)\xrightarrow[q\to\infty]{}0 .
\]
This proves (i). Parts (ii) and (iii) are immediate since $\theta_1(\mu)=\sum_i\mu_i\Lip(w_i)$ is linear.
\end{proof}

In summary, when $\cX$ is bounded,
\[
\cD\cup\cM\cup\cC\ \subseteq\ \cA ,
\]
where $\cM$ is defined for monotone maps on an interval, and each of the three certified regions is
convex and relatively open. The first two are insensitive to the size of the Lipschitz constants and
are the relevant ones when contraction is not available.

\subsection{Finite state spaces: a complete description}\label{sec:finite}

\begin{theorem}[Finite state spaces]\label{thm:finite}
Let $\cX$ be finite. Then $\cA=\cAu=\cD$. Consequently:
\begin{enumerate}
\item[\rm(i)] Whether $\mu$ is admissible depends only on $\supp\mu$, and the family of admissible supports
is an up-set.
\item[\rm(ii)] $\cA$ is convex and relatively open in $\Delta_m$, and $\Delta_m\setminus\cA$ is a union of
closed faces of the simplex.
\item[\rm(iii)] $\cA\neq\emptyset$ if and only if $[m]$ is point-accessible, in which case $\cA$ contains the
relative interior of $\Delta_m$. Moreover $\cA$ is closed if and only if $\cA\in\{\emptyset,\Delta_m\}$, and
$\cA=\Delta_m$ if and only if every $w_i$ has a constant iterate.
\item[\rm(iv)] On $\cA$, the map $\mu\mapsto\pi_\mu$ is a rational function of $\mu$ whose denominator does
not vanish on $\cA$.
\end{enumerate}
\end{theorem}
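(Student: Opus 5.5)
The core claim is the chain of equalities $\cA=\cAu=\cD$. Everything else then follows from results already proved. On a finite $\cX$ with metric $\rho$, the space is compact and bounded, so Theorem~\ref{thm:doeblin}(iii) gives $\cD\subseteq\cA$. Moreover, for $\mu\in\cD$ Proposition~\ref{prop:unifergodic}(i) gives the uniform bound $\sup_\lambda W_1((P_\mu^*)^k\lambda,\pi_\mu)\le D(1-\eps)^{\lfloor k/n\rfloor}\to0$. Theorem~\ref{thm:topology}(ii) then places $\mu$ in $\cAu$, and the same theorem gives $\cAu\subseteq\cA$. So the plan reduces to proving $\cA\subseteq\cD$.

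For $\cA\subseteq\cD$, fix $\mu\in\cA$ and write $\pi=\pi_\mu$. Choose $z\in\supp\pi$, so $\pi(\{z\})=:p>0$. For each $x\in\cX$ we have $P_\mu^k(x,\cdot)\to\pi$ in $W_1$. On a finite space this is convergence of point masses, so $P_\mu^k(x,\{z\})\to p$. Since $\cX$ is finite, there is a common $k_0$ with $P_\mu^{k}(x,\{z\})\ge p/2$ for all $x$ and all $k\ge k_0$. This is exactly the Doeblin condition~\eqref{eq:Doeblin} with $n_0=k_0$, $\nu_0=\delta_z$ and $\eps=p/2$. Theorem~\ref{thm:doeblin}(ii) then gives $\mu\in\cD$. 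The only delicate point is that attraction is pointwise in $x$, and the common number of steps needed for a witness comes from finiteness of $\cX$. I expect this uniformization to be the main (mild) obstacle; it is precisely where finiteness is used.

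Parts (i) and (ii) then follow at once from $\cA=\cD$ together with Theorem~\ref{thm:doeblin}(i) and the face lemma (Lemma~\ref{lem:face}). For (iii), $\cD\neq\emptyset$ if and only if the up-set $\sD$ is nonempty, which holds if and only if $[m]\in\sD$. In that case Lemma~\ref{lem:face}(iii) gives $\cD\supseteq\Delta_{[m]}^\circ$. The closedness statement is Lemma~\ref{lem:face}(iv), which also says $\cA=\Delta_m$ if and only if every singleton $\{i\}$ is point-accessible. It remains to show that $\{i\}$ is point-accessible if and only if some iterate $w_i^n$ is constant. A witness $(n,z)$ means $w_i^n(x)=z$ for all $x$, which is exactly a constant iterate, and the converse is immediate.

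For (iv), identify measures on $\cX=\{x_1,\dots,x_N\}$ with probability vectors. Then $P_\mu$ is a stochastic matrix whose entries are linear in $\mu$. For $\mu\in\cA$ the invariant vector is unique, so $I-P_\mu$ has rank $N-1$. Replacing one column of the system $\pi(I-P_\mu)=0$ by the normalization $\sum_j\pi_j=1$ gives a linear system with matrix $M(\mu)$, nonsingular exactly when the invariant law is unique. Cramer's rule then expresses $\pi_\mu$ as a ratio of polynomials in $\mu$, with denominator $\det M(\mu)\neq0$ on $\cA\subseteq\cU$. An alternative with the same conclusion is the Markov chain tree theorem, which gives $\pi_\mu(x)\propto$ a sum over spanning arborescences rooted at $x$, with denominator the total sum.
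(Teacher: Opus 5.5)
Your proof is correct and follows essentially the paper's route. You get $\cD\subseteq\cAu\subseteq\cA$ from Theorem~\ref{thm:doeblin}(iii) and Proposition~\ref{prop:unifergodic}(i). You get the reverse inclusion by taking an atom $z$ of $\pi_\mu$ and using finiteness of $\cX$ to make the number of steps uniform in $x$. Parts (i)--(iii) then follow from Lemma~\ref{lem:face}. The only differences are cosmetic. You pass through the Doeblin condition and Theorem~\ref{thm:doeblin}(ii), where the paper writes down the witness $(\max_x k_x,z)$ directly. For (iv) you use Cramer's rule on a square system, where the paper uses the normal-equations formula $(M(\mu)^\top M(\mu))^{-1}M(\mu)^\top b$.
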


\begin{proof}
By Theorem~\ref{thm:doeblin}(iii) and Proposition~\ref{prop:unifergodic}(i), $\cD\subseteq\cAu\subseteq\cA$.
Conversely let $\mu\in\cA$ and pick $z$ with $\pi_\mu(\{z\})>0$. On a finite space, $W_1$-convergence of
$P_\mu^k(x,\cdot)$ to $\pi_\mu$ implies $P_\mu^k(x,\{z\})\to\pi_\mu(\{z\})$, so for each $x$ there is $k_x$ with
$P_\mu^k(x,\{z\})>0$ for all $k\ge k_x$. With $n:=\max_xk_x$, every $x$ admits a word $\bi\in(\supp\mu)^n$
with $w_{\bi}(x)=z$; thus $\supp\mu\in\sD$. Statements (i)--(iii) now follow from
Theorem~\ref{thm:doeblin}(i) and Lemma~\ref{lem:face}; for the last claim of (iii), $\{i\}\in\sD$ means
precisely that some iterate $w_i^n$ is constant. For (iv), $\pi_\mu$ is the unique solution of the linear
system $M(\mu)\pi=b$ obtained by stacking $(I-P_\mu^\top)\pi=0$ and $\one^\top\pi=1$; the matrix $M(\mu)$ is
affine in $\mu$ and has full column rank for $\mu\in\cA$, so
$\pi_\mu=(M(\mu)^\top M(\mu))^{-1}M(\mu)^\top b$ is rational in $\mu$, with denominator
$\det(M(\mu)^\top M(\mu))\neq0$ on $\cA$.
\end{proof}

\begin{remark}\label{rem:finite}
(a) In the language of finite Markov chains, $S$ is point-accessible if and only if the directed graph
with edges $x\to w_i(x)$, $i\in S$, has exactly one closed communicating class and this class is
aperiodic \cite[Chapter~4]{Seneta2006}; equivalently, $P_\mu$ is stochastic, indecomposable and aperiodic
in the sense of Wolfowitz~\cite{Wolfowitz1963} for one, hence every, $\mu\in\Delta_S^\circ$. Admissibility of
a support can therefore be decided by a graph search.

(b) The set $\cU$ has the same facial structure: $\mu\in\cU$ if and only if the graph of $\supp\mu$ has
exactly one closed class. This family is again an up-set, because a set closed under the maps of
$T\supseteq S$ is closed under the maps of $S$, so that enlarging the support cannot increase the number
of minimal closed sets.

(c) Figure~\ref{fig:geometry}(a,b) shows two instances with $m=3$. In (a) the maps are those of the
storage model with hold of Example~\ref{ex:hold}, restricted to the lattice $\{0,0.1,\dots,1\}$: the supports $\{1\}$ and $\{2\}$ are
point-accessible and $\{3\}$ is not, so $\cA=\Delta_3\setminus\{e_3\}$. In (b), $\cX=\{1,2,3\}$, $w_1=\id$,
$w_2$ exchanges $1$ and $2$ and fixes $3$, and $w_3\equiv1$: a support is point-accessible if and only
if it contains $3$, so $\cA=\Delta_3\setminus\Delta_{\{1,2\}}$ is the simplex minus a closed edge.
\end{remark}

\begin{figure}[t]
\centering
\includegraphics[width=\textwidth]{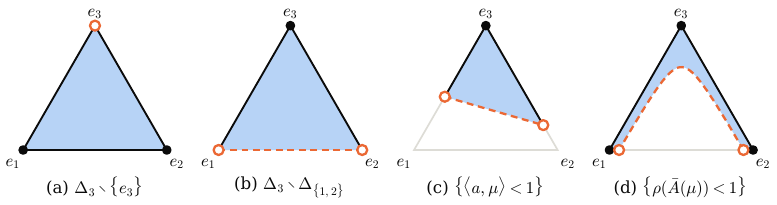}
\caption{Four admissible sets in the simplex $\Delta_3$ (shaded; solid boundary pieces and filled
vertices belong to $\cA$, dashed pieces and hollow points do not). (a), (b): finite state spaces,
Remark~\ref{rem:finite}(c). (c): homotheties with $a=(1.6,1.2,0.2)$, Theorem~\ref{thm:affine}(iv). (d): the
entrywise nonnegative linear system of Example~\ref{ex:nonconvex}, for which $\cA$ is connected but not
convex; its intersection with the edge $\Delta_{\{1,2\}}$ is disconnected.}
\label{fig:geometry}
\end{figure}

\subsection{Affine systems: spectral description}\label{sec:affine}
Let $\cX=\R^d$ with a norm $\|\cdot\|$ and $w_i(x)=A_ix+b_i$ with $A_i\in\R^{d\times d}$ and $b_i\in\R^d$. Put
\[
\bar A(\mu):=\sum_{i=1}^m\mu_iA_i,\qquad\bar b(\mu):=\sum_{i=1}^m\mu_ib_i,\qquad
\varrho_1(\mu):=\lim_{k\to\infty}\Big(\sum_{\bi\in[m]^k}\mu_{\bi}\,\|A_{\bi}\|\Big)^{1/k},
\]
where $A_{\bi}:=A_{i_k}\cdots A_{i_1}$ and $\|A_{\bi}\|$ is the operator norm. The limit exists and equals
the infimum over $k$ by submultiplicativity, and it does not depend on the norm; $\varrho_1$ is the
$p$-radius of the weighted family $(A_i,\mu_i)$ for $p=1$~\cite{JungersProtasov2011}. Since
$\|\bar A(\mu)^k\|=\|\sum_{\bi}\mu_{\bi}A_{\bi}\|\le\sum_{\bi}\mu_{\bi}\|A_{\bi}\|$, we always have
$\rho(\bar A(\mu))\le\varrho_1(\mu)$.

\begin{theorem}[Affine systems]\label{thm:affine}
For the affine IFS $w_i(x)=A_ix+b_i$ on $\R^d$:
\begin{enumerate}
\item[\rm(i)] $\{\mu:\varrho_1(\mu)<1\}\subseteq\cA\subseteq\{\mu:\rho(\bar A(\mu))<1\}$, and for
$\mu\in\cA$ the mean of $\pi_\mu$ is $\bar x(\mu)=(I-\bar A(\mu))^{-1}\bar b(\mu)$.
\item[\rm(ii)] If $b_i=0$ for all $i$, then $\cA=\{\mu:\varrho_1(\mu)<1\}$.
\item[\rm(iii)] If all matrices $A_i$ are entrywise nonnegative, then $\varrho_1(\mu)=\rho(\bar A(\mu))$ and
\[
\cA=\big\{\mu\in\Delta_m:\rho(\bar A(\mu))<1\big\},
\]
which is relatively open in $\Delta_m$.
\item[\rm(iv)] If $A_i=a_iI$ with $a_i\ge0$, then, with $a:=(a_1,\dots,a_m)$,
\[
\cA=\big\{\mu\in\Delta_m:\langle a,\mu\rangle<1\big\}
\]
is the intersection of the simplex with an open half-space. It is convex and relatively open; it is
closed if and only if $\max_ia_i<1$ (then $\cA=\Delta_m$) or $\min_ia_i\ge1$ (then $\cA=\emptyset$).
\end{enumerate}
\end{theorem}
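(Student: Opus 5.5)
The plan is to prove (i) first and derive (ii)--(iv) from it.

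\emph{First inclusion in (i).} Assume $\varrho_1(\mu)<1$. Since $\varrho_1$ is the infimum over $k$, there is $n$ with $\sum_{\bi\in[m]^n}\mu_{\bi}\|A_{\bi}\|<1$. Because $\Lip(w_{\bi})=\|A_{\bi}\|$, this quantity is $\theta_n(\mu)$, and Proposition~\ref{prop:contraction}(i) gives $\mu\in\cA$. The maps are affine, so Assumption~\ref{ass:standing} holds.

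\emph{Second inclusion and the mean formula.} Let $\mu\in\cA$. Invariance and linearity of the expectation give $m_1:=\int x\,\pi_\mu(\dd x)=\bar A(\mu)m_1+\bar b(\mu)$. For the spectral condition I would use attraction of the means. Start from $\lambda=\delta_{x}$; the mean $m^{(k)}$ of $(P_\mu^*)^k\delta_x$ satisfies $m^{(k+1)}=\bar A m^{(k)}+\bar b$. $W_1$-convergence gives convergence of means, since $x\mapsto\langle \ell,x\rangle$ is Lipschitz. Hence $\bar A^k(x-m_1)=m^{(k)}-m_1\to0$ for every $x\in\R^d$. So $\bar A(\mu)^k\to0$, which forces $\rho(\bar A(\mu))<1$. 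Then $I-\bar A$ is invertible, and $m_1=(I-\bar A)^{-1}\bar b$.

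\emph{Part (ii).} Let $b_i=0$ and $\mu\in\cA$. Then $\delta_0$ is invariant, so $\pi_\mu=\delta_0$. Attraction from $\delta_x$ gives $M_1((P_\mu^*)^k\delta_x)=\sum_{\bi}\mu_{\bi}\|A_{\bi}x\|\to0$ for every $x$. Apply this to a basis $x=e_1,\dots,e_d$ and use the norm equivalence $\|A\|\le C\sum_j\|Ae_j\|$. This gives $\sum_{\bi\in[m]^k}\mu_{\bi}\|A_{\bi}\|\to0$, so some $k$ has this sum $<1$, i.e.\ $\varrho_1(\mu)<1$. I expect this passage from pointwise decay to decay of the averaged operator norm to be the main step, but the linearity of $x\mapsto A_{\bi}x$ together with the finite basis makes it routine.

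\emph{Part (iii).} Let $A_i\ge0$ entrywise. Take the norm $\|x\|=\sum_j|x_j|$. Its operator norm is the maximal column sum, and for nonnegative $A$ it equals $\max_j\one^\top Ae_j$. Then
\[
\sum_{\bi}\mu_{\bi}\|A_{\bi}\|\le\sum_{\bi}\mu_{\bi}\one^\top A_{\bi}\one=\one^\top\bar A(\mu)^k\one\le d\,\|\bar A(\mu)^k\|.
\]
Taking $k$-th roots and letting $k\to\infty$ gives $\varrho_1\le\rho(\bar A)$; the reverse inequality was noted before the theorem. Combined with (i), this yields $\cA=\{\rho(\bar A)<1\}$. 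This set is relatively open because $\mu\mapsto\rho(\bar A(\mu))$ is continuous.

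\emph{Part (iv).} Here $\bar A(\mu)=\langle a,\mu\rangle I$, so $\rho(\bar A)=\langle a,\mu\rangle$, and (iii) gives $\cA=\{\langle a,\mu\rangle<1\}$. This is a half-space slice of the simplex, hence convex and relatively open. Since $\Delta_m$ is connected, $\cA$ is closed only if it is $\emptyset$ or $\Delta_m$. The first case means $\min_i a_i\ge1$, and the second means $\max_i a_i<1$, by checking the vertices $e_i$.
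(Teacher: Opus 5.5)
Your proposal is correct and follows the paper's proof essentially step by step: Proposition~\ref{prop:contraction}(i) via $\theta_n(\mu)=\sum_{\bi}\mu_{\bi}\|A_{\bi}\|$ for the first inclusion, decay of means for $\rho(\bar A(\mu))<1$, the basis-vector argument for (ii), the $\ell_1$ column-sum comparison for (iii), and connectedness of $\Delta_m$ for (iv). The only cosmetic difference is in the spectral bound. You compare the mean of $(P_\mu^*)^k\delta_x$ with the invariant mean through the affine recursion, while the paper bounds $\|\bar A(\mu)^kx\|$ by $W_1\big((P_\mu^*)^k\delta_x,(P_\mu^*)^k\delta_0\big)$. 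Both rest on the same fact, that $W_1$-convergence controls means.
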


\begin{proof}
(i) Since $\Lip(w_{\bi})=\|A_{\bi}\|$, the condition $\varrho_1(\mu)<1$ means $\theta_n(\mu)<1$ for some $n$,
and Proposition~\ref{prop:contraction}(i) gives $\mu\in\cA$. For the second inclusion let $\mu\in\cA$ and
let $X_k^x$ denote the chain started at $x$. Then $X_k^x-X_k^0=A_{\bI}x$ where $\bI=(I_1,\dots,I_k)$, so
$\E X_k^x-\E X_k^0=\bar A(\mu)^kx$ by independence. For any coupling $(Y,Y')$ of the laws of $X_k^x$ and
$X_k^0$ we have $\E\|Y-Y'\|\ge\|\E Y-\E Y'\|$, hence
\[
\|\bar A(\mu)^kx\|\le W_1\big((P_\mu^*)^k\delta_x,(P_\mu^*)^k\delta_0\big)\xrightarrow[k\to\infty]{}0
\qquad\text{for every }x\in\R^d,
\]
which forces $\rho(\bar A(\mu))<1$. Integrating $x$ against $\pi_\mu=P_\mu^*\pi_\mu$ gives
$\bar x(\mu)=\bar A(\mu)\bar x(\mu)+\bar b(\mu)$, and $I-\bar A(\mu)$ is invertible.

(ii) If all $b_i$ vanish, $\delta_0$ is invariant for every $\mu$. If $\mu\in\cA$, then $\pi_\mu=\delta_0$,
and therefore $\E\|A_{\bI}x\|=W_1((P_\mu^*)^k\delta_x,\delta_0)$ tends to zero for each $x$. Applying this to the vectors of a basis
gives $\sum_{\bi\in[m]^k}\mu_{\bi}\|A_{\bi}\|\to0$, so this sum is smaller than one for some $k$ and
$\varrho_1(\mu)<1$.

(iii) Use the $\ell_1$-norm on $\R^d$. For a nonnegative matrix $M$, the induced norm is the largest column
sum, so $\|M\|\le\one^\top M\one\le d\,\|M\|$. Hence
$\sum_{\bi\in[m]^k}\mu_{\bi}\|A_{\bi}\|\le\one^\top\bar A(\mu)^k\one\le d\,\|\bar A(\mu)^k\|$, and taking $k$-th
roots gives $\varrho_1(\mu)\le\rho(\bar A(\mu))$. With (i) this proves the identity. The set is open
because the spectral radius is continuous.

(iv) is the case $\bar A(\mu)=\langle a,\mu\rangle I$ of (iii). If $\min_ia_i<1\le\max_ia_i$, then $\cA$ is
a nonempty proper open subset of the connected set $\Delta_m$, hence not closed.
\end{proof}

\begin{example}[The admissible set is not closed]\label{ex:nonclosed}
Let $\cX=\R$, $w_1(x)=x+1$ and $w_2(x)=x/2$. By Theorem~\ref{thm:affine}(iv) with $a=(1,\frac12)$,
\[
\cA=\{\mu\in\Delta_2:\mu_1+\tfrac12\mu_2<1\}=\Delta_2\setminus\{e_1\}.
\]
At $\mu=e_1$ the chain is the translation $x\mapsto x+1$, which has no invariant probability measure:
an invariant $\pi$ would give the same mass $c$ to every interval $[j,j+1)$, $j\in\Z$, and $\sum_{j}c=1$ is
impossible. The laws $(1-\frac1j,\frac1j)\in\cA$ converge to $e_1\notin\cA$.
\end{example}

\begin{example}[A non-convex admissible set with a disconnected edge]\label{ex:nonconvex}
Let $\cX=\R^2$ and consider the linear maps $w_i(x)=A_ix$ with
\[
A_1=\begin{pmatrix}0&4\\0&0\end{pmatrix},\qquad A_2=\begin{pmatrix}0&0\\4&0\end{pmatrix},\qquad
A_3=\tfrac12\begin{pmatrix}1&0\\0&1\end{pmatrix}.
\]
The matrix $\bar A(\mu)$ has eigenvalues $\frac12\mu_3\pm4\sqrt{\mu_1\mu_2}$, so by
Theorem~\ref{thm:affine}(iii)
\[
\cA=\big\{\mu\in\Delta_3:\ \tfrac12\mu_3+4\sqrt{\mu_1\mu_2}<1\big\},
\]
shown in Figure~\ref{fig:geometry}(d). The vertices $e_1$ and $e_2$ are admissible (each of $A_1,A_2$ is
nilpotent) and their midpoint is not, so $\cA$ is not convex. It is connected, since the edges
$[e_1,e_3]$ and $[e_2,e_3]$ lie in $\cA$, but the IFS made of $w_1,w_2$ alone has the disconnected admissible set
$\{(1-t,t):t(1-t)<\frac1{16}\}=\{t<t_-\}\cup\{t>t_+\}$ with $t_\pm=\frac12\pm\frac{\sqrt3}{4}$. Mixing two
stable mechanisms can thus be destabilizing, as in the continuous-time switched systems
of~\cite{BenaimEtAl2014,LawleyMattinglyReed2014}. Here the instability is one of first moments: for
every $t\in(0,1)$ the chain driven by $w_1,w_2$ is absorbed at the origin almost surely (it survives only
as long as the two maps alternate), but $\E X_{2k}=(16\,t(1-t))^kX_0$, which does not tend to zero when
$t\in[t_-,t_+]$: convergence holds in probability and fails in $W_1$. The same argument applies to every
$\mu\in\Delta_3$, since $A_1A_3^jA_1=A_2A_3^jA_2=0$ for all $j$, so with weak attraction in place of
$W_1$-attraction the whole simplex would be admissible (Remark~\ref{rem:weak}). The non-convexity here is
a first-moment phenomenon; Example~\ref{ex:twolane} below shows that it is not the only one.
\end{example}

\subsection{Compact state spaces: no general regularity}\label{sec:pathology}
One might hope that the first-moment effect in Example~\ref{ex:nonconvex} is the only obstruction to
convexity and that, for compact $\cX$, the facial structure of Theorem~\ref{thm:finite} persists. It does
not.

\begin{example}[Two isolated vertices]\label{ex:twolane}
Let $u_1=(1,0)$, $u_2=(0,1)$ and
\[
\cX:=\{0\}\cup\Big\{\tfrac{1}{h+1}\,u_l:\ l\in\{1,2\},\ h\in\N_0\Big\}\subset\R^2
\]
with the $\ell_1$-distance; $\cX$ is compact and countable. We write $(l,h)$ for the point
$\frac1{h+1}u_l$ and call $h$ its height. Define
\begin{align*}
w_1(1,h)&=(1,h+1), & w_1(2,h)&=(1,\lfloor h/2\rfloor),\\
w_2(2,h)&=(2,h+1), & w_2(1,h)&=(2,\lfloor h/2\rfloor),
\end{align*}
and $w_1(0)=w_2(0)=0$. Both maps are Lipschitz (with constant $4$). We claim that
\[
\cA=\{e_1,e_2\}.
\]
\emph{The vertices are admissible.} After one application of $w_1$ every point other than $0$ lies on the
first branch, where $w_1$ raises the height by one; thus $\rho(w_1^k(x),0)\le1/k$ for all $x$ and
$W_1((w_1^k)_\#\lambda,\delta_0)\le1/k$. The same holds for $w_2$.

\emph{No other law is admissible.} Let $\mu=(1-t,t)$ with $t\in(0,1)$ and $\theta:=\min\{t,1-t\}$. The
origin is a common fixed point, so $\delta_0\in\Inv(\mu)$, and if $\mu$ were admissible we would have
$\pi_\mu=\delta_0$. Let $H_k$ be the height of $X_k$ for a chain started at $x\neq0$ with height $H_0$. From
a point of the first branch, the height increases by one with probability $1-t$ and is halved with
probability $t$; on the second branch the roles of $t$ and $1-t$ are exchanged. Hence
$\E[H_{k+1}\mid X_k]\le(1-\theta/2)H_k+1$ and $\sup_k\E H_k\le H_0+2/\theta$. By Jensen's inequality,
\[
W_1\big((P_\mu^*)^k\delta_x,\delta_0\big)=\E\Big[\frac1{H_k+1}\Big]\ge\frac1{H_0+1+2/\theta}\qquad\text{for all }k,
\]
so $\delta_0$ does not attract $\delta_x$, and $\mu\notin\cA$.
\end{example}

In this example each map sends everything to the common fixed point, yet any random mixture of the two
keeps the chain away from it: after a change of branch the height is halved. The admissible set is
closed, not relatively open, not convex and not connected, and the family of admissible supports
$\{\{1\},\{2\}\}$ is not an up-set. Together with Example~\ref{ex:nonclosed} and Remark~\ref{rem:finite}(c) this proves:

\begin{proposition}[No general regularity]\label{prop:noregularity}
Under Assumption~\ref{ass:standing} alone, the admissible set $\cA$ need not be closed, nor relatively
open, nor convex, nor connected, and each of these properties can fail on a compact state space with
Lipschitz maps.
\end{proposition}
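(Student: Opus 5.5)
The proposition is a collection of counterexamples, so the plan is to point to one example on a compact state space with Lipschitz maps for each of the four properties. Non-convexity and disconnectedness, together with failure of relative openness, are already delivered by Example~\ref{ex:twolane}. There $\cX$ is compact and both maps are $4$-Lipschitz, and the example shows $\cA=\{e_1,e_2\}$. Two isolated vertices of $\Delta_2$ form a set that is not convex, since the midpoint is missing, and not connected. It is also not relatively open, because any neighbourhood of $e_1$ in $\Delta_2$ contains the laws $(1-t,t)$ with small $t>0$, and these are not admissible. Before relying on this I would re-check the two halves of that example: the vertex bound $W_1((w_1^k)_\#\lambda,\delta_0)\le1/k$, and the Jensen lower bound $\E[1/(H_k+1)]\ge1/(H_0+1+2/\theta)$ derived from the drift inequality $\E[H_{k+1}\mid X_k]\le(1-\theta/2)H_k+1$.

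For failure of closedness, Example~\ref{ex:nonclosed} is not enough, because its state space $\R$ is not compact. Instead I would use Remark~\ref{rem:finite}(c)(b): $\cX=\{1,2,3\}$ is finite, hence compact, and every map on a finite metric space is Lipschitz. Theorem~\ref{thm:finite} then applies and gives $\cA=\Delta_3\setminus\Delta_{\{1,2\}}$. This set is nonempty and a proper subset of the simplex, so by Theorem~\ref{thm:finite}(iii) (equivalently Lemma~\ref{lem:face}(iv)) it is not closed.

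The one step I would verify directly is the support computation in that remark: a support is point-accessible if and only if it contains $3$. If $3\in S$, then $w_3\equiv1$ sends everything to $1$ in one step, so $(1,1)$ is a witness. If $S\subseteq\{1,2\}$, then $\{3\}$ is invariant under every map in $S$ and so is $\{1,2\}$, so no single point can be reached from both $3$ and $1$. The same example also works on the other side, by restricting to a sub-system to keep everything compact.

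Assumption~\ref{ass:standing} is void on bounded spaces, so it holds in both examples. Combining the two examples then gives all four failures on compact spaces with Lipschitz maps. Example~\ref{ex:nonclosed} is cited only as an additional non-compact illustration.
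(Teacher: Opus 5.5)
Your proposal is correct and follows the paper's route. The paper obtains the proposition by combining Example~\ref{ex:twolane}, where $\cA=\{e_1,e_2\}$ is closed but not relatively open, convex or connected, with Example~\ref{ex:nonclosed} and the finite-state instances of Remark~\ref{rem:finite}(c) for the failure of closedness; you rightly note that only the latter gives non-closedness on a compact space with Lipschitz maps, though your unexplained remark about the example working ``on the other side'' is unnecessary and can be dropped.
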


The contrast between Proposition~\ref{prop:noregularity} and Theorems~\ref{thm:doeblin}--\ref{thm:affine}
delimits what a general theory can assert about $\cA$: regular geometry is a property of certified
regions and of structured classes of maps, not of admissibility as such. The existence theory of the next
section is designed to work without it.

\section{Existence of optimal sampling laws}\label{sec:existence}

We now turn to question (Q2): when does Problem~\eqref{eq:A} have a solution? Since $\cA$ may fail to be
closed, a minimizing sequence may converge to a point outside $\cA$. The idea of this section is to
analyze what the invariant measures do along such a sequence. Either their first moments blow up, and
then the objective blows up as well, or they converge to an invariant measure of the limiting kernel,
and then everything depends on whether this limit can also be produced by an admissible law.

Throughout this section, Assumption~\ref{ass:standing} is in force, $\nu\in\cP_1(\cX)$ is fixed,
$\cA\neq\emptyset$, and
\[
\Jstar:=\inf_{\mu\in\cA}J(\mu)\in[0,\infty).
\]

\begin{assumption}\label{ass:proper}
The metric space $(\cX,\rho)$ is proper: closed bounded subsets of $\cX$ are compact.
\end{assumption}

Assumption~\ref{ass:proper} holds for compact spaces and for closed subsets of $\R^d$. Neither
contractivity of the maps nor compactness of $\cX$ is required.

\subsection{Lower semicontinuity and the relaxed problem}

\begin{lemma}[Compactness and closedness along bounded sequences]\label{lem:closed}
Let Assumptions~\ref{ass:standing} and~\ref{ass:proper} hold. Let $\mu_n\to\mu$ in $\Delta_m$ and
$\pi_n\in\Inv(\mu_n)$ with $\sup_nM_1(\pi_n)<\infty$. Then $(\pi_n)$ is tight, every weak limit point $\pi$
of $(\pi_n)$ belongs to $\Inv(\mu)$, and if $\pi_{n_j}\to\pi$ weakly then
$W_1(\nu,\pi)\le\liminf_{j}W_1(\nu,\pi_{n_j})$.
\end{lemma}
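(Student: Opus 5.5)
The proof has three steps: tightness from the moment bound, invariance of limits from joint weak continuity of the Markov operator, and lower semicontinuity of $J$ from (W2).

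\emph{Tightness.} Put $C:=\sup_nM_1(\pi_n)<\infty$. By Markov's inequality, $\pi_n(\{x:\rho(x,x_\circ)>R\})\le C/R$ for every $n$. By Assumption~\ref{ass:proper}, the closed ball $\bar B(x_\circ,R)$ is compact. Choosing $R$ large therefore gives a single compact set carrying mass at least $1-\eps$ under every $\pi_n$, which is tightness. Prokhorov's theorem then provides weak limit points.

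\emph{Limits are invariant.} Suppose $\pi_{n_j}\to\pi$ weakly. Lemma~\ref{lem:basic}(ii), applied with $\mu_{n_j}\to\mu$ and $\lambda_{n_j}=\pi_{n_j}$, gives
\[
\pi_{n_j}=P_{\mu_{n_j}}^*\pi_{n_j}\to P_\mu^*\pi \quad\text{weakly.}
\]
Uniqueness of weak limits yields $P_\mu^*\pi=\pi$. It remains to check that $\pi\in\cP_1(\cX)$. The function $x\mapsto\rho(x,x_\circ)\wedge R$ is bounded and continuous, so
\[
\int\rho(x,x_\circ)\wedge R\,\dd\pi=\lim_j\int\rho(x,x_\circ)\wedge R\,\dd\pi_{n_j}\le C .
\]
Letting $R\to\infty$ and applying monotone convergence gives $M_1(\pi)\le C$. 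Hence $\pi\in\Inv(\mu)$.

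\emph{Lower semicontinuity.} Apply (W2) with $\alpha_j=\nu$ constant and $\beta_j=\pi_{n_j}\to\pi$ weakly. This gives $W_1(\nu,\pi)\le\liminf_jW_1(\nu,\pi_{n_j})$.

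None of these steps is a real obstacle. The only point needing care is the finite first moment of the limit, which requires the truncation argument above, since weak convergence alone does not control first moments.
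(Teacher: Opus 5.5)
Your proof is correct and follows the paper's argument essentially step for step: Markov's inequality together with properness gives tightness, Lemma~\ref{lem:basic}(ii) gives invariance of weak limits, and (W2) gives the lower semicontinuity claim. The only difference is one of presentation. You justify $M_1(\pi)\le\sup_nM_1(\pi_n)$ explicitly by truncating with $\rho(\cdot,x_\circ)\wedge R$ and applying monotone convergence, whereas the paper simply invokes lower semicontinuity, under weak convergence, of integrals of nonnegative continuous functions, which is the same fact.
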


\begin{proof}
By Markov's inequality, $\pi_n(\{x:\rho(x,x_\circ)>R\})\le R^{-1}\sup_nM_1(\pi_n)$, and closed balls are compact,
so $(\pi_n)$ is tight and Prokhorov's theorem applies. Let $\pi_{n_j}\to\pi$ weakly. By
Lemma~\ref{lem:basic}(ii), $\pi_{n_j}=P_{\mu_{n_j}}^*\pi_{n_j}\to P_\mu^*\pi$ weakly, hence $P_\mu^*\pi=\pi$.
Since $x\mapsto\rho(x,x_\circ)$ is continuous and nonnegative, $M_1(\pi)\le\liminf_jM_1(\pi_{n_j})<\infty$, so
$\pi\in\Inv(\mu)$. The last claim is (W2).
\end{proof}

\begin{proposition}[Lower semicontinuity of the objective]\label{prop:lsc}
Under Assumptions~\ref{ass:standing} and~\ref{ass:proper}, the function $J(\mu)=W_1(\nu,\pi_\mu)$ is lower
semicontinuous on $\cU$, and hence on $\cA$.
\end{proposition}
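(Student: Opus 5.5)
We prove the claim sequentially: fix $\mu\in\cU$ and a sequence $\mu_n\to\mu$ in $\cU$, and show $J(\mu)\le\liminf_n J(\mu_n)$. Passing to a subsequence, we may assume that $J(\mu_n)$ converges to $\ell:=\liminf_nJ(\mu_n)$. If $\ell=\infty$ there is nothing to prove, so assume $\ell<\infty$. Since $\cA\subseteq\cU$ by Lemma~\ref{lem:attraction}, lower semicontinuity on $\cA$ then follows by restriction.

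The key step is to convert the bound on the objective into a bound on first moments, so that Lemma~\ref{lem:closed} applies. By (W4) and the triangle inequality,
\[
M_1(\pi_{\mu_n})=W_1(\pi_{\mu_n},\delta_{x_\circ})\le W_1(\pi_{\mu_n},\nu)+W_1(\nu,\delta_{x_\circ})=J(\mu_n)+M_1(\nu).
\]
The right-hand side is bounded in $n$, because $J(\mu_n)\to\ell<\infty$ and $\nu\in\cP_1(\cX)$. Hence $\sup_nM_1(\pi_{\mu_n})<\infty$, with $\pi_{\mu_n}\in\Inv(\mu_n)$.

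Lemma~\ref{lem:closed} now gives three things. The sequence $(\pi_{\mu_n})$ is tight, so some subsequence $\pi_{\mu_{n_j}}$ converges weakly to a limit $\pi$. This limit lies in $\Inv(\mu)$. Finally, $W_1(\nu,\pi)\le\liminf_jW_1(\nu,\pi_{\mu_{n_j}})=\ell$. Because $\mu\in\cU$, the set $\Inv(\mu)$ is the singleton $\{\pi_\mu\}$, so $\pi=\pi_\mu$ and therefore $J(\mu)\le\ell$.

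The only point needing care is the one just used: the limit measure must be identified with $\pi_\mu$. Lemma~\ref{lem:closed} places $\pi$ in $\Inv(\mu)$, which requires $M_1(\pi)<\infty$, and that finiteness comes from the uniform moment bound above. Unique ergodicity at $\mu$ then forces $\pi=\pi_\mu$. Without $\mu\in\cU$, the weak limit could be some other invariant measure, and the inequality would only bound $\inf_{\pi'\in\Inv(\mu)}W_1(\nu,\pi')$ rather than $J(\mu)$. No contraction or compactness of $\cX$ is used beyond properness (Assumption~\ref{ass:proper}), which enters only through Lemma~\ref{lem:closed}.
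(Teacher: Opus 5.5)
Your proposal is correct and follows essentially the same argument as the paper. Both proofs pass to a subsequence realizing the $\liminf$ and bound the first moments by $M_1(\nu)+J(\mu_n)$ via (W4). Both then invoke Lemma~\ref{lem:closed} to obtain a weak limit in $\Inv(\mu)=\{\pi_\mu\}$, and conclude $J(\mu)\le\ell$ from (W2).
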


\begin{proof}
Let $\mu_n\to\mu$ in $\cU$ and $\ell:=\liminf_nJ(\mu_n)$. If $\ell=\infty$ there is nothing to prove;
otherwise pass to a subsequence along which $J(\mu_n)\to\ell$. By (W4),
$M_1(\pi_{\mu_n})\le M_1(\nu)+J(\mu_n)$ is bounded. By Lemma~\ref{lem:closed} a further subsequence converges
weakly to some $\pi\in\Inv(\mu)=\{\pi_\mu\}$, and $J(\mu)=W_1(\nu,\pi_\mu)\le\ell$.
\end{proof}

On a non-compact space $J$ need not be continuous on $\cA$, because $W_1$ is only lower semicontinuous
for weak convergence; Proposition~\ref{prop:lsc} is what the direct method needs.

\begin{definition}[Missing boundary, relaxed objective, limiting invariant measures]\label{def:relaxed}
The \emph{missing boundary} of the admissible set is $\mb:=\cl(\cA)\setminus\cA$. For $\mu_0\in\cl(\cA)$ let
\[
\uJ(\mu_0):=\inf\Big\{\liminf_{n\to\infty}J(\eta_n):\ \eta_n\in\cA,\ \eta_n\to\mu_0\Big\}\in[0,\infty],
\]
and let $\Lambda(\mu_0)$ be the set of all $\pi\in\cP(\cX)$ for which there is a sequence $\eta_n\in\cA$
with $\eta_n\to\mu_0$, $\sup_nM_1(\pi_{\eta_n})<\infty$ and $\pi_{\eta_n}\to\pi$ weakly.
\end{definition}

The function $\uJ$ is the lower semicontinuous envelope of $J$, extended to the closure of $\cA$; the set
$\Lambda(\mu_0)$ collects the invariant measures that the admissible laws leave behind at $\mu_0$.

\begin{theorem}[Relaxation and solvability]\label{thm:exact}
Let Assumptions~\ref{ass:standing} and~\ref{ass:proper} hold, let $\nu\in\cP_1(\cX)$ and
$\cA\neq\emptyset$.
\begin{enumerate}
\item[\rm(i)] $\uJ$ is lower semicontinuous on the compact set $\cl(\cA)$, $\uJ=J$ on $\cA$, and
\[
\min_{\mu\in\cl(\cA)}\uJ(\mu)=\inf_{\mu\in\cA}J(\mu)=\Jstar .
\]
\item[\rm(ii)] For every $\mu_0\in\mb$ we have $\Lambda(\mu_0)\subseteq\Inv(\mu_0)$, and if
$\uJ(\mu_0)<\infty$ there is $\pi\in\Lambda(\mu_0)$ with $W_1(\nu,\pi)\le\uJ(\mu_0)$. In particular
$\uJ(\mu_0)=+\infty$ whenever $\Lambda(\mu_0)=\emptyset$, and a fortiori whenever $\Inv(\mu_0)=\emptyset$.
If $\cX$ is compact, then $\Lambda(\mu_0)$ is nonempty and compact and
$\uJ(\mu_0)=\min_{\pi\in\Lambda(\mu_0)}W_1(\nu,\pi)$.
\item[\rm(iii)] The following are equivalent:
\begin{enumerate}
\item[\rm(a)] Problem~\eqref{eq:A} has a solution;
\item[\rm(b)] there is $\mu^\dagger\in\cA$ such that $J(\mu^\dagger)\le\uJ(\mu_0)$ for every $\mu_0\in\mb$;
\item[\rm(c)] some minimizing sequence of~\eqref{eq:A} has a cluster point in $\cA$.
\end{enumerate}
In that case the set of solutions of~\eqref{eq:A} is $\cA\cap\operatorname*{arg\,min}_{\cl(\cA)}\uJ$.
\end{enumerate}
\end{theorem}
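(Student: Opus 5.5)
For part (i), I would treat $\uJ$ as the lower semicontinuous envelope of $J$ relative to $\cl(\cA)$. Lower semicontinuity comes from a diagonal argument. Let $\mu_k\to\mu_0$ in $\cl(\cA)$ with $\uJ(\mu_k)\to\ell<\infty$. For each $k$ choose $\eta_k\in\cA$ with $\|\eta_k-\mu_k\|_1<1/k$ and $J(\eta_k)<\uJ(\mu_k)+1/k$; then $\eta_k\to\mu_0$ and $\liminf_kJ(\eta_k)\le\ell$, so $\uJ(\mu_0)\le\ell$.

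The identity $\uJ=J$ on $\cA$ needs two inequalities. The constant sequence gives $\uJ\le J$. Proposition~\ref{prop:lsc}, applied to sequences in $\cA\subseteq\cU$ converging to a point of $\cA$, gives $\uJ\ge J$. Compactness of $\cl(\cA)$, a closed subset of $\Delta_m$, then yields a minimizer of $\uJ$. Its value equals $\Jstar$: we have $\uJ\ge\Jstar$ everywhere, since each $\liminf J(\eta_n)$ is at least $\Jstar$, and $\inf_{\cl(\cA)}\uJ\le\inf_\cA J$ because $\uJ=J$ on $\cA$.

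For part (ii), the inclusion $\Lambda(\mu_0)\subseteq\Inv(\mu_0)$ is Lemma~\ref{lem:closed} applied to $\pi_{\eta_n}\in\Inv(\eta_n)$. If $\uJ(\mu_0)<\infty$, I would pick $\eta_n\in\cA$ with $\eta_n\to\mu_0$ and $J(\eta_n)\to\uJ(\mu_0)$, again by a diagonal argument. By (W4), $M_1(\pi_{\eta_n})\le M_1(\nu)+J(\eta_n)$ is bounded. Lemma~\ref{lem:closed} then gives a weakly convergent subsequence with limit $\pi\in\Lambda(\mu_0)$ and $W_1(\nu,\pi)\le\uJ(\mu_0)$. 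The two consequences stated for empty $\Lambda(\mu_0)$ or $\Inv(\mu_0)$ follow at once.

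When $\cX$ is compact I would argue as follows. First, $\Lambda(\mu_0)$ is nonempty because $\mu_0\in\cl(\cA)$ supplies a sequence in $\cA$ converging to it, and $\cP(\cX)$ is compact. Second, $\Lambda(\mu_0)$ is closed by a diagonal argument in the metrizable space $(\cP(\cX),W_1)$, hence compact. Third, $W_1$ is continuous, so each $\pi\in\Lambda(\mu_0)$, realized by some $\eta_n$, gives $\uJ(\mu_0)\le\lim J(\eta_n)=W_1(\nu,\pi)$. Combined with the previous bound, this shows $\uJ(\mu_0)=\min_{\pi\in\Lambda(\mu_0)}W_1(\nu,\pi)$.

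For part (iii):
\begin{itemize}
\item (a)$\Rightarrow$(b): take $\mu^\dagger$ to be a solution. Then $J(\mu^\dagger)=\Jstar\le\uJ$ on $\cl(\cA)$ by (i).
\item (b)$\Rightarrow$(a): a minimizer $\bar\mu$ of $\uJ$ exists by (i). If $\bar\mu\in\cA$, it solves the problem because $\uJ=J$ there. Otherwise $\bar\mu\in\mb$, and then $J(\mu^\dagger)\le\uJ(\bar\mu)=\Jstar$, so $\mu^\dagger$ solves it.
\item (a)$\Rightarrow$(c): the constant sequence at a solution is a minimizing sequence with a cluster point in $\cA$.
\item (c)$\Rightarrow$(a): if $\mu_n\to\mu^\dagger\in\cA$ along a subsequence with $J(\mu_n)\to\Jstar$, then Proposition~\ref{prop:lsc} gives $J(\mu^\dagger)\le\Jstar$.
\end{itemize}
The description of the solution set follows because $\uJ=J$ on $\cA$ and $\min\uJ=\Jstar$. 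The only delicate point is the diagonal-sequence bookkeeping: in the lower semicontinuity of $\uJ$, and in choosing sequences that realize $\uJ(\mu_0)$ while keeping first moments bounded. I expect the rest to be routine.
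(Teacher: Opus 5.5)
Your proposal is correct and matches the paper's proof step by step. Both use the diagonal argument for lower semicontinuity and attainment of $\uJ$, and Proposition~\ref{prop:lsc} for $\uJ=J$ on $\cA$. Both prove part (ii) with Lemma~\ref{lem:closed} and the moment bound from (W4), and both prove the same four implications in part (iii), with your diagonal bookkeeping spelled out somewhat more explicitly than the paper's.
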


\begin{proof}
(i) A diagonal argument shows that the infimum defining $\uJ(\mu_0)$ is attained along some sequence and
that $\uJ$ is lower semicontinuous. On $\cA$, the constant sequence gives $\uJ\le J$, and
Proposition~\ref{prop:lsc} gives $\uJ\ge J$. Since every value $\liminf_nJ(\eta_n)$ is at least $\Jstar$, we
have $\uJ\ge\Jstar$ on $\cl(\cA)$, while $\inf_{\cA}\uJ=\inf_{\cA}J=\Jstar$. A lower semicontinuous function
attains its infimum on a compact set.

(ii) The inclusion $\Lambda(\mu_0)\subseteq\Inv(\mu_0)$ is Lemma~\ref{lem:closed}. If $\uJ(\mu_0)<\infty$,
choose $\eta_n\in\cA$ with $\eta_n\to\mu_0$ and $J(\eta_n)\to\uJ(\mu_0)$. Then
$M_1(\pi_{\eta_n})\le M_1(\nu)+J(\eta_n)$ is bounded, and Lemma~\ref{lem:closed} yields a weak limit point
$\pi\in\Lambda(\mu_0)$ with $W_1(\nu,\pi)\le\uJ(\mu_0)$. If $\cX$ is compact, weak convergence and
$W_1$-convergence coincide and first moments are bounded, so $\Lambda(\mu_0)$ is the set of $W_1$-limit
points of $\pi_\eta$ as $\eta\to\mu_0$ in $\cA$; it is nonempty, closed, hence compact, and
$\uJ(\mu_0)\le\lim_nW_1(\nu,\pi_{\eta_n})=W_1(\nu,\pi)$ for every $\pi\in\Lambda(\mu_0)$ realized by $(\eta_n)$.

(iii) (a)$\Rightarrow$(b): a solution $\mu^\dagger$ satisfies $J(\mu^\dagger)=\Jstar\le\uJ(\mu_0)$ by (i).
(b)$\Rightarrow$(a): let $\hat\mu$ minimize $\uJ$ over $\cl(\cA)$. If $\hat\mu\in\cA$ then
$J(\hat\mu)=\uJ(\hat\mu)=\Jstar$. Otherwise $\hat\mu\in\mb$ and $J(\mu^\dagger)\le\uJ(\hat\mu)=\Jstar$. In both
cases the infimum is attained. (a)$\Rightarrow$(c) by taking a constant sequence, and (c)$\Rightarrow$(a)
by Proposition~\ref{prop:lsc}: if $\eta_n\to\bar\mu\in\cA$ along a minimizing sequence, then
$J(\bar\mu)\le\liminf_nJ(\eta_n)=\Jstar$. The description of the solution set follows from (i).
\end{proof}

Theorem~\ref{thm:exact} reduces the existence question to a comparison at the missing boundary. The
relaxed problem $\min_{\cl(\cA)}\uJ$ always has a solution and has the same value as~\eqref{eq:A};
Problem~\eqref{eq:A} itself has a solution exactly when the relaxation does not strictly prefer a
missing boundary point. Part~(iii) is a reformulation rather than a test, since condition~(b) involves
$\uJ$ on $\mb$. What makes it usable is part~(ii): $\uJ(\mu_0)$ is the distance from $\nu$ to the limiting
invariant measures at $\mu_0$, which are invariant measures of the limiting kernel, and this turns~(b) into
the verifiable sufficient conditions of the next subsection.

\begin{remark}\label{rem:feasible}
The proofs of Proposition~\ref{prop:lsc} and Theorem~\ref{thm:exact} use only that the feasible set is
contained in $\cU$, and the same holds for Theorem~\ref{thm:universal} below. These three results
therefore remain valid, with the same proofs, if $\cA$ is replaced by any nonempty set $\cF\subseteq\cU$,
for instance $\cU$ itself, $\cAu$, or one of the certified regions of Section~\ref{sec:geometry}; the
examples and the results on homotheties in the rest of this section use the specific form of $\cA$.
\end{remark}

\subsection{Solvability for every target}

\begin{theorem}[Universal solvability]\label{thm:universal}
Let Assumptions~\ref{ass:standing} and~\ref{ass:proper} hold and let $\cA\neq\emptyset$.
\begin{enumerate}
\item[\rm(i)] \emph{(Sufficient condition.)} If
\begin{equation}\label{eq:reach}
\Lambda(\mu_0)\subseteq\PiA\qquad\text{for every }\mu_0\in\mb ,
\end{equation}
then Problem~\eqref{eq:A} has a solution for every target $\nu\in\cP_1(\cX)$. In
particular, this is the case
\begin{enumerate}
\item[\rm(a)] if $\cA$ is closed, for instance if $\cA=\Delta_m$; or
\item[\rm(b)] if $\Inv(\mu_0)=\emptyset$ for every $\mu_0\in\mb$; then, in addition, $J(\eta_n)\to\infty$ for
every sequence $\eta_n\in\cA$ converging to a point of $\mb$.
\end{enumerate}
\item[\rm(ii)] \emph{(Necessary and sufficient condition for compact $\cX$.)} If $\cX$ is compact, the
following are equivalent:
\begin{enumerate}
\item[\rm(1)] Problem~\eqref{eq:A} has a solution for every target $\nu\in\cP(\cX)$;
\item[\rm(2)] the reachable set $\PiA=\{\pi_\mu:\mu\in\cA\}$ is closed in $(\cP(\cX),W_1)$;
\item[\rm(3)] condition~\eqref{eq:reach} holds.
\end{enumerate}
\end{enumerate}
\end{theorem}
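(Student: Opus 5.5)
Everything will rest on Theorem~\ref{thm:exact}. For (i), I will verify condition (iii)(b) of Theorem~\ref{thm:exact} under~\eqref{eq:reach}. Fix $\nu\in\cP_1(\cX)$. By Theorem~\ref{thm:exact}(i), pick $\hat\mu\in\cl(\cA)$ minimizing $\uJ$, so $\uJ(\hat\mu)=\Jstar<\infty$. If $\hat\mu\in\cA$ we are done. Otherwise $\hat\mu\in\mb$, and Theorem~\ref{thm:exact}(ii) gives $\pi\in\Lambda(\hat\mu)$ with $W_1(\nu,\pi)\le\Jstar$. By~\eqref{eq:reach}, $\pi=\pi_{\mu^\dagger}$ for some $\mu^\dagger\in\cA$, so $J(\mu^\dagger)\le\Jstar$ and $\mu^\dagger$ is a solution.

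For case~(a): if $\cA$ is closed then $\mb=\emptyset$, so~\eqref{eq:reach} is vacuous. For case~(b): Theorem~\ref{thm:exact}(ii) gives $\Lambda(\mu_0)\subseteq\Inv(\mu_0)=\emptyset$, so~\eqref{eq:reach} again holds vacuously. For the blow-up claim, take $\eta_n\to\mu_0\in\mb$ and suppose $\liminf J(\eta_n)<\infty$. Then along a subsequence $M_1(\pi_{\eta_n})\le M_1(\nu)+J(\eta_n)$ is bounded, and Lemma~\ref{lem:closed} produces an element of $\Inv(\mu_0)$, which is a contradiction. Applying this to every subsequence gives $J(\eta_n)\to\infty$.

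For (ii), let $\cX$ be compact, so that weak and $W_1$ convergence coincide and every set in sight is compact. I plan the cycle (3)$\Rightarrow$(1)$\Rightarrow$(2)$\Rightarrow$(3).
\begin{itemize}
\item[] \textbf{(3)$\Rightarrow$(1).} This is part~(i).
\item[] \textbf{(1)$\Rightarrow$(2).} Let $\pi\in\cl(\PiA)$ and take $\nu:=\pi$. Then $\Jstar=0$, and a solution $\mu$ satisfies $W_1(\pi,\pi_\mu)=0$, so $\pi\in\PiA$.
\item[] \textbf{(2)$\Rightarrow$(3).} Let $\pi\in\Lambda(\mu_0)$. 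Then $\pi$ is a $W_1$-limit of the measures $\pi_{\eta_n}\in\PiA$, so $\pi\in\cl(\PiA)=\PiA$.
\end{itemize}

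The main subtlety lies in (1)$\Rightarrow$(2): the target must be allowed to be an arbitrary element of $\cP(\cX)$, which is fine here. The only other point to check is that Theorem~\ref{thm:exact}(ii) indeed produces an element of $\Lambda$ at the minimizer of $\uJ$ even for non-compact $\cX$; this is where properness (Assumption~\ref{ass:proper}) enters. I do not expect a serious obstacle.
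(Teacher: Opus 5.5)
Your proof is correct. Part~(i), including cases~(a) and~(b), follows the paper's argument essentially verbatim. For the divergence in~(b) you rerun Lemma~\ref{lem:closed} directly instead of quoting $\uJ\equiv+\infty$ on $\mb$ from Theorem~\ref{thm:exact}(ii), which amounts to the same thing.

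Part~(ii) is organized differently. The paper first proves the identity \eqref{eq:closure}, $\cl_{W_1}(\PiA)=\PiA\cup\bigcup_{\mu_0\in\mb}\Lambda(\mu_0)$. Its nontrivial inclusion uses continuity of $\mu\mapsto\pi_\mu$ on $\cU$ (Theorem~\ref{thm:topology}(iv)) to show that limits of $\pi_{\eta_n}$ at points of $\cA$ are reachable. This gives (2)$\Leftrightarrow$(3) directly, and (1)$\Leftrightarrow$(2) is then the elementary nearest-point fact in the compact space $(\cP(\cX),W_1)$.

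Your cycle needs only the trivial inclusion $\Lambda(\mu_0)\subseteq\cl_{W_1}(\PiA)$ for (2)$\Rightarrow$(3). You obtain the hard direction (3)$\Rightarrow$(2) by passing through (1), that is, through part~(i) and hence the relaxation of Theorem~\ref{thm:exact}, rather than through Theorem~\ref{thm:topology}(iv).

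Your route is shorter and reuses machinery already in place. The paper's route yields more: an explicit description of exactly which measures are missing from $\PiA$, namely the limiting invariant measures at the missing boundary, which the remark after the proof singles out as the problem-specific content. Its implication (2)$\Rightarrow$(1) also does not depend on the relaxation at all.
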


\begin{proof}
(i) Let $\hat\mu$ minimize $\uJ$ over $\cl(\cA)$; then $\uJ(\hat\mu)=\Jstar<\infty$ by
Theorem~\ref{thm:exact}(i). If $\hat\mu\in\cA$ we are done. Otherwise Theorem~\ref{thm:exact}(ii) provides
$\pi\in\Lambda(\hat\mu)$ with $W_1(\nu,\pi)\le\Jstar$, and by~\eqref{eq:reach} $\pi=\pi_{\mu'}$ for some
$\mu'\in\cA$, so $J(\mu')\le\Jstar$. Case~(a) is the case $\mb=\emptyset$. In case~(b),
$\Lambda(\mu_0)\subseteq\Inv(\mu_0)=\emptyset$, so~\eqref{eq:reach} holds trivially and $\uJ\equiv+\infty$ on
$\mb$ by Theorem~\ref{thm:exact}(ii), which is the stated divergence.

(ii) We first note that, for compact $\cX$,
\begin{equation}\label{eq:closure}
\cl_{W_1}(\PiA)=\PiA\cup\bigcup_{\mu_0\in\mb}\Lambda(\mu_0).
\end{equation}
Indeed, if $\pi_{\eta_n}\to\pi$ with $\eta_n\in\cA$, pass to a subsequence with $\eta_n\to\mu_0\in\cl(\cA)$;
if $\mu_0\in\cA$ then $\pi=\pi_{\mu_0}$ by Theorem~\ref{thm:topology}(iv), and otherwise
$\pi\in\Lambda(\mu_0)$. The converse inclusion is clear. Hence (2)$\Leftrightarrow$(3).
(2)$\Rightarrow$(1): a closed subset of the compact space $(\cP(\cX),W_1)$ is compact and
$\pi\mapsto W_1(\nu,\pi)$ is continuous. (1)$\Rightarrow$(2): if $\pi\in\cl_{W_1}(\PiA)\setminus\PiA$, then for
the target $\nu:=\pi$ we have $\Jstar=0$, while $J(\mu)=W_1(\pi,\pi_\mu)>0$ for every $\mu\in\cA$.
\end{proof}

The equivalence (1)$\Leftrightarrow$(2) is an instance of the elementary fact that a subset of a compact
metric space contains a nearest point to every point if and only if it is closed. The content specific to
the shaping problem is the identity~\eqref{eq:closure}, which identifies the points missing from $\PiA$ as
limiting invariant measures at the missing boundary, and condition~\eqref{eq:reach}, which can be verified
through the dynamics at $\mb$ alone, without knowledge of $\PiA$.

Condition~\eqref{eq:reach} says that whatever the admissible laws leave behind at the missing boundary
can also be produced from inside. Case~(b) is the mechanism seen in the Ornstein--Uhlenbeck illustration
of the Introduction: where the limiting kernel has no invariant measure of finite first moment, mass
escapes, and the missing boundary repels every minimizing sequence. No verification of a Lyapunov
condition or of a moment estimate is needed: non-existence of an invariant measure at the boundary is
enough. A second mechanism that produces~\eqref{eq:reach} is the presence of an idle mechanism.

\begin{lemma}[Laziness]\label{lem:lazy}
Suppose that $w_j=\id$ for some $j\in[m]$. Let $\eta\in\cA$ with $\eta_j=0$ and $s\in(0,1]$. Then
$\mu_s:=(1-s)e_j+s\,\eta\in\cA$ and $\pi_{\mu_s}=\pi_\eta$. In particular $J$ is constant on the half-open
segment $\{\mu_s:s\in(0,1]\}$.
\end{lemma}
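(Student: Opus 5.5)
The plan is to write the lazy operator as an affine combination of the identity and $P_\eta^*$, and then represent its iterates through a binomial mixture of iterates of $P_\eta^*$. Since $w_j=\id$ and $\eta_j=0$, we have for every $\lambda\in\cP_1(\cX)$
\[
P_{\mu_s}^*\lambda=(1-s)\lambda+s\,P_\eta^*\lambda .
\]
Iterating, and using that the two operators $\id$ and $P_\eta^*$ commute,
\[
(P_{\mu_s}^*)^k\lambda=\sum_{r=0}^k\binom kr s^r(1-s)^{k-r}\,(P_\eta^*)^r\lambda .
\]
Invariance of $\pi_\eta$ is immediate from the first identity: $P_{\mu_s}^*\pi_\eta=(1-s)\pi_\eta+s\pi_\eta=\pi_\eta$. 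The real content is therefore attraction.

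Fix $\lambda\in\cP_1(\cX)$ and put $a_r:=W_1((P_\eta^*)^r\lambda,\pi_\eta)$. By $\eta\in\cA$ we have $a_r\to0$. Each $a_r$ is finite, because $(P_\eta^*)^r\lambda\in\cP_1(\cX)$ by Lemma~\ref{lem:basic}(i), so $\sup_ra_r=:C<\infty$. By joint convexity (W3), applied to the binomial mixture against $\pi_\eta=\sum_r\binom kr s^r(1-s)^{k-r}\pi_\eta$,
\[
W_1\big((P_{\mu_s}^*)^k\lambda,\pi_\eta\big)\le\sum_{r=0}^k\binom kr s^r(1-s)^{k-r}a_r=\E\,a_{B_k},
\]
where $B_k\sim\mathrm{Bin}(k,s)$.

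Given $\eps>0$, choose $R$ with $a_r\le\eps$ for all $r>R$. Splitting the expectation at $R$ gives
\[
\E\,a_{B_k}\le\eps+C\,\PP(B_k\le R).
\]
Since $s>0$, $\PP(B_k\le R)\to0$ as $k\to\infty$, and hence the left-hand side tends to zero. By Lemma~\ref{lem:attraction} this shows $\mu_s\in\cA$ with $\pi_{\mu_s}=\pi_\eta$. Consequently $J(\mu_s)=W_1(\nu,\pi_\eta)$ for all $s\in(0,1]$, which is the constancy claim.

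The only step needing care is the uniform bound $C$, which justifies the dominated splitting. This follows because a convergent real sequence is bounded. The case $s=1$ is trivial, since then $\mu_s=\eta$.
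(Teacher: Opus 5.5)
Your proof is correct and follows the paper's route. Both write $P_{\mu_s}^*=(1-s)\,\mathrm{Id}+s\,P_\eta^*$, expand the iterates as a binomial mixture of $(P_\eta^*)^l\lambda$, and bound $W_1$ by joint convexity (W3); both then use that the weights on any fixed finite set of indices vanish as $k\to\infty$ before invoking Lemma~\ref{lem:attraction}. Your split at $R$ with $\PP(B_k\le R)\to0$ just spells out that last step.
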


\begin{proof}
Since $P_{\mu_s}^*=(1-s)\,\mathrm{Id}+s\,P_\eta^*$, we have
$(P_{\mu_s}^*)^k\lambda=\sum_{l=0}^k\binom kl s^l(1-s)^{k-l}(P_\eta^*)^l\lambda$, and by (W3)
\[
W_1\big((P_{\mu_s}^*)^k\lambda,\pi_\eta\big)\le\sum_{l=0}^k\binom kl s^l(1-s)^{k-l}\,W_1\big((P_\eta^*)^l\lambda,\pi_\eta\big).
\]
The sequence $W_1((P_\eta^*)^l\lambda,\pi_\eta)$ is bounded and tends to zero, and the binomial weights of any
fixed finite set of indices $l$ tend to zero as $k\to\infty$. Hence the right-hand side tends to zero, and
Lemma~\ref{lem:attraction} applies.
\end{proof}

\subsection{Two examples}

\begin{example}[Non-existence on a two-point space]\label{ex:nonexistence}
Let $\cX=\{0,1\}\subset\R$, let $w_1$ be the swap $w_1(x)=1-x$ and $w_2\equiv0$, and write $\mu=(1-t,t)$. For
$t>0$ the support of $\mu$ is point-accessible (with witness $n=1$, $z=0$), so $\mu\in\cA$ by
Theorem~\ref{thm:finite}, and a direct computation gives
\[
\pi_\mu=\Big(\frac{1}{2-t},\ \frac{1-t}{2-t}\Big).
\]
For $t=0$ the chain is the deterministic swap: its unique invariant law is $(\frac12,\frac12)$, but
$(P_{e_1}^*)^k\delta_0$ oscillates between $\delta_0$ and $\delta_1$. Hence $\cA=\Delta_2\setminus\{e_1\}$,
$\cU=\Delta_2$ and $\Lambda(e_1)=\{(\frac12,\frac12)\}\not\subseteq\PiA$. For the target $\nu=(\frac12,\frac12)$,
\[
J(\mu)=\Big|\frac{1-t}{2-t}-\frac12\Big|=\frac{t}{2(2-t)}>0\quad\text{for }t\in(0,1],\qquad\inf_{\cA}J=0 ,
\]
and Problem~\eqref{eq:A} has no solution. The target is the long-run occupation law of the periodic
chain, which every admissible law approximates and none attains. If the attraction requirement is
dropped and one minimizes over $\cU$ instead (Remark~\ref{rem:feasible}), the minimizer is $e_1$.
\end{example}

\begin{example}[Storage with an idle mode: a non-closed, non-coercive, solvable case]\label{ex:hold}
Let $\cX=[0,1]$ be the normalized level of a storage device with the three mechanisms
\[
w_1(x)=\min\{1,x+0.3\},\qquad w_2(x)=\max\{0,x-0.2\},\qquad w_3(x)=x
\]
(charge, discharge, hold).
Every map is nondecreasing and has Lipschitz constant exactly one, so $\theta_1(\mu)=1$ for all $\mu$ and
the contraction region $\cC$ is empty. However $w_1^4\equiv1$ and $w_2^5\equiv0$, so $\{1\}$ and $\{2\}$ are
point-accessible, and by Theorem~\ref{thm:doeblin} every $\mu\neq e_3$ is admissible. At $e_3$ the kernel
is the identity and every probability measure is invariant. Hence
\[
\cA=\cD=\Delta_3\setminus\{e_3\},\qquad\mb=\{e_3\},
\]
as in Figure~\ref{fig:geometry}(a). Since $\cX$ is compact, $J$ is bounded and there is no coercivity at
$e_3$. Nevertheless~\eqref{eq:reach} holds. Every $\eta\in\cA$ can be written as
$\eta=(1-s)e_3+s\,\eta'$ with $s\in(0,1]$ and $\eta'\in\Delta_{\{1,2\}}\subseteq\cA$, and
$\pi_\eta=\pi_{\eta'}$ by Lemma~\ref{lem:lazy}. Since $\Delta_{\{1,2\}}$ is compact and $\eta'\mapsto\pi_{\eta'}$ is
continuous on it (Theorem~\ref{thm:topology}(iv)), we get
$\Lambda(e_3)\subseteq\{\pi_{\eta'}:\eta'\in\Delta_{\{1,2\}}\}\subseteq\PiA$. By Theorem~\ref{thm:universal},
Problem~\eqref{eq:A} has a solution for every target. Solutions are never unique here: with $\mu^\dagger$,
the whole half-open segment from $\mu^\dagger$ towards $e_3$ consists of solutions, so the solution set
is not closed (Figure~\ref{fig:landscapes}(a)).
\end{example}

\subsection{Homotheties: unconditional existence}\label{sec:homothety}
We now show that for the class of Theorem~\ref{thm:affine}(iv) the two mechanisms of
Theorem~\ref{thm:universal} always combine. Let $\cX=\R^d$ with the Euclidean norm and
\begin{equation}\label{eq:homothety}
w_i(x)=a_ix+b_i,\qquad a_i\ge0,\quad b_i\in\R^d,\qquad i\in[m],
\end{equation}
so that $\cA=\{\mu\in\Delta_m:\langle a,\mu\rangle<1\}$. We call $\mu_0\in\Delta_m$ \emph{critical} if
$\langle a,\mu_0\rangle=1$, and write $\Fix(w_i)$ for the set of fixed points of $w_i$.

\begin{lemma}[Invariant measures at critical parameters]\label{lem:critical}
Let $\mu_0$ be critical for the system~\eqref{eq:homothety}. Then
\[
\Inv(\mu_0)=\Big\{\pi\in\cP_1(\R^d):\ \pi\big(F(\mu_0)\big)=1\Big\},\qquad
F(\mu_0):=\bigcap_{i\in\supp\mu_0}\Fix(w_i).
\]
In particular $\Inv(\mu_0)=\emptyset$ if the maps $w_i$, $i\in\supp\mu_0$, have no common fixed point, and
this is the case whenever $\bar b(\mu_0)\neq0$.
\end{lemma}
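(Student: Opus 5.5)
We first prove the easy inclusion. Suppose $\pi\in\cP_1(\R^d)$ with $\pi(F(\mu_0))=1$. For $i\in\supp\mu_0$ the map $w_i$ is the identity on $F(\mu_0)$, so $(w_i)_\#\pi=\pi$. Hence $P_{\mu_0}^*\pi=\sum_{i\in\supp\mu_0}\mu_{0,i}\pi=\pi$, and $\pi\in\Inv(\mu_0)$.

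For the converse, let $\pi\in\Inv(\mu_0)$ and let $X\sim\pi$, with $I\sim\mu_0$ independent of $X$. The plan is a first-moment argument with a strictly convex functional. We use the Euclidean norm and the quantity $\E\|X-c\|$ for a suitable centre $c$. The natural centre is the mean: integrating $x$ against $\pi=P_{\mu_0}^*\pi$ gives
\[
\bar x=\langle a,\mu_0\rangle\bar x+\bar b(\mu_0)=\bar x+\bar b(\mu_0),
\]
so $\bar b(\mu_0)=0$ is necessary for $\Inv(\mu_0)\neq\emptyset$. This already proves the last sentence of the lemma once the main claim is established. Since $\pi$ has finite first moment, $\bar x$ is well defined.

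Now fix an arbitrary point $c\in\R^d$ and compare $\E\|X-c\|$ with $\E\|w_I(X)-c\|$; these are equal by invariance. Write
\[
w_i(x)-c=a_i(x-c)+(w_i(c)-c).
\]
The triangle inequality gives $\E\|w_I(X)-c\|\le\sum_i\mu_{0,i}\big(a_i\E\|X-c\|+\|w_i(c)-c\|\big)$. Since $\sum\mu_{0,i}a_i=1$, this alone yields nothing. We therefore choose $c$ cleverly, or exploit equality in the triangle inequality.

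The approach I will try first avoids choosing $c$ and uses a coupling. Take two independent copies $X,X'\sim\pi$ and drive them with the same index $I$. Then $w_I(X)-w_I(X')=a_I(X-X')$. Since $(w_I(X),w_I(X'))$ has marginals $\pi,\pi$ but is not independent, this alone does not close the argument. It is cleaner to use a function $V$ with $\E V(w_I(x))$ computed exactly. For the scalar case consider $\E\|X-z\|$ as a function $g(z)$. Invariance gives
\[
g(z)=\sum_i\mu_{0,i}\,a_i\,g\big(w_i^{-1}(z)\big)
\]
when $a_i>0$ (and $g(z)=\|b_i-z\|$ contributions when $a_i=0$).

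The main obstacle is turning equality of first moments into the support statement. I propose to use the backward-iteration representation instead. Since $\pi=(P_{\mu_0}^*)^k\pi$, we have $X\overset{d}{=}A_kX_0+B_k$, where $X_0\sim\pi$ is independent, $A_k=a_{I_1}\cdots a_{I_k}$ is a product of i.i.d.\ factors with $\E A_k=1$, and $B_k=\sum_{j}a_{I_1}\cdots a_{I_{j-1}}b_{I_j}$.

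If some $a_i\neq1$ on the support, then $\E\log a_I<0$ by strict Jensen, so $A_k\to0$ almost surely. Then $B_k$ converges almost surely to the Diaconis--Freedman limit $B_\infty$ only if the noise is degenerate; otherwise the first moments blow up, since $\E\|B_k\|$ grows like $\sum_j\E A_{j-1}=k$ whenever the $b_i$ are not compatible.

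Concretely, after translating so that the maps have a common fixed point, when one exists, we get $b_i=0$ for all $i$. The invariant measure is then a fixed point of multiplicative noise with $A_k\to0$ almost surely and $\E A_k=1$. This forces mass to concentrate at the origin: tightness of $A_kX_0$ together with $A_k\to0$ gives $X\overset{d}{=}\lim A_kX_0=0$ on the set where $a_I\neq1$ occurs.

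If no common fixed point exists, we show that $B_k$ is not tight, contradicting $X\overset{d}{=}A_kX_0+B_k$ with $A_kX_0$ tight. This non-tightness step is where I expect the real work to be.

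The degenerate case, where all $a_i=1$ on the support, consists of translations with $\bar b=0$. Here we use the translation argument of Example~\ref{ex:nonclosed}, or a Fourier argument: $\hat\pi(\xi)\big(1-\sum\mu_{0,i}e^{i\langle\xi,b_i\rangle}\big)=0$, which forces $\pi=\delta$-free unless all $b_i=0$. In that case $F(\mu_0)=\R^d$ and the claim is trivial.
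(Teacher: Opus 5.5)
Your easy inclusion, the mean computation giving $\bar b(\mu_0)=0$, and the case of a common fixed point with some $a_i\neq1$ (translate it to $0$, so $X\overset{d}{=}A_kX_0\to0$) are fine. The pure-translation case also works by your Fourier identity once it is stated properly: points where the characteristic function of $b_I$ differs from $1$ accumulate at $\xi=0$ unless all $b_i=0$, which contradicts continuity of $\hat\pi$ at $0$.

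\textbf{The gap.} The case that carries the content of the lemma is left unproved, and the route you propose for it fails. That case is: some $a_i\neq1$ on $\supp\mu_0$, $\bar b(\mu_0)=0$, and no common fixed point. Once $\E\log a_I<0$, the products $A_j$ tend to zero exponentially fast almost surely. Hence $B_k=\sum_{j\le k}A_{j-1}b_{I_j}$ converges absolutely almost surely to a perpetuity $B_\infty$, for \emph{any} translation parts. So $(B_k)$ is tight, and the law of $B_\infty$ is an invariant probability measure of $P_{\mu_0}$. A concrete instance on $\R$ is $w_1(x)=x/2$, $w_2(x)=x+1$, $w_3(x)=\frac32x-1$ with $\mu_0=(\frac13,\frac13,\frac13)$. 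It is critical, has $\bar b(\mu_0)=0$ and no common fixed point, yet $\E\log a_I=\frac13\log\frac34<0$. No tightness argument can therefore yield $\Inv(\mu_0)=\emptyset$; the only obstruction is that this invariant measure has infinite first moment.

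Your remark that $\E\|B_k\|$ grows like $k$ does not rescue this. Since $\bar b(\mu_0)=0$ we have $\E B_k=0$, so $k\,b_{\max}$ is only an upper bound. Showing $\E\|B_k\|\to\infty$, which would contradict $\E\|B_k\|\le 2M_1(\pi)$, amounts to the statement you are trying to prove.

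\textbf{The missing idea.} Use the finite first moment directly, through the forward stationary chain. Let $X_0\sim\pi$ and $X_{k+1}=w_{I_{k+1}}(X_k)$. Because $\langle a,\mu_0\rangle=1$ and $\bar b(\mu_0)=0$, we get $\E[X_{k+1}\mid X_0,I_1,\dots,I_k]=X_k$. So $(X_k)$ is a martingale with $\E\|X_k\|=M_1(\pi)$, and by Doob's theorem it converges almost surely; thus $X_{k+1}-X_k\to0$ almost surely. By stationarity the law of $X_{k+1}-X_k$ does not depend on $k$, hence $X_1=X_0$ almost surely. This says $\sum_i\mu_{0,i}\,\pi(\Fix(w_i))=1$, i.e.\ $\pi(F(\mu_0))=1$. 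This is the paper's argument, and it treats all cases at once, so your case distinction and the backward iteration become unnecessary.
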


\begin{proof}
A measure carried by common fixed points is invariant. Conversely let $\pi\in\Inv(\mu_0)$ and run the
stationary chain $X_{k+1}=w_{I_{k+1}}(X_k)$ with $X_0\sim\pi$ independent of the i.i.d.\ sequence $(I_k)$ with
law $\mu_0$. With $\cF_k:=\sigma(X_0,I_1,\dots,I_k)$,
\[
\E[X_{k+1}\mid\cF_k]=\sum_i\mu_{0,i}(a_iX_k+b_i)=X_k+\bar b(\mu_0).
\]
Taking expectations and using stationarity and $\E\|X_0\|<\infty$ gives $\bar b(\mu_0)=0$. Thus $(X_k)$ is a
martingale, bounded in $L^1$ because $\E\|X_k\|=M_1(\pi)$, and by Doob's theorem it converges almost surely.
Hence $X_{k+1}-X_k\to0$ almost surely. By stationarity the law of $X_{k+1}-X_k$ does not depend on $k$, so
$X_1=X_0$ almost surely, that is, $\sum_i\mu_{0,i}\,\pi(\Fix(w_i))=1$. Therefore $\pi(\Fix(w_i))=1$ for each
$i\in\supp\mu_0$. Finally, a common fixed point $x$ satisfies $x=\sum_i\mu_{0,i}w_i(x)=x+\bar b(\mu_0)$.
\end{proof}

\begin{theorem}[Existence for homotheties]\label{thm:homothety}
Consider the system~\eqref{eq:homothety} on $\R^d$ and assume $\min_ia_i<1$, so that $\cA\neq\emptyset$. Then
Problem~\eqref{eq:A} has a solution for every target $\nu\in\cP_1(\R^d)$.
\end{theorem}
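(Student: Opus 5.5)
The plan is to verify the sufficient condition~\eqref{eq:reach} of Theorem~\ref{thm:universal}(i) by describing the missing boundary exactly and applying Lemma~\ref{lem:critical} at each of its points. By Theorem~\ref{thm:affine}(iv), $\cA=\{\mu\in\Delta_m:\langle a,\mu\rangle<1\}$. Since $\min_ia_i<1$, this set is a nonempty relatively open slice of the simplex, and
\[
\cl(\cA)=\{\langle a,\mu\rangle\le1\},\qquad \mb=\{\mu_0\in\Delta_m:\langle a,\mu_0\rangle=1\},
\]
the set of critical laws. If $\max_ia_i<1$, then $\mb=\emptyset$ and case~(a) applies. So fix a critical $\mu_0$ and a limiting measure $\pi\in\Lambda(\mu_0)\subseteq\Inv(\mu_0)$, where the inclusion is Theorem~\ref{thm:exact}(ii). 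By Lemma~\ref{lem:critical}, $\pi$ is carried by $F(\mu_0)=\bigcap_{i\in\supp\mu_0}\Fix(w_i)$. For a homothety we have three possibilities: if $a_i\ne1$, then $\Fix(w_i)$ is the single point $b_i/(1-a_i)$; if $a_i=1$ and $b_i\neq0$, it is empty; if $a_i=1$ and $b_i=0$, it is $\R^d$. I split accordingly.

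\emph{Case 1: some $i\in\supp\mu_0$ is a translation with $b_i\ne0$, or no common fixed point exists.} Then $F(\mu_0)=\emptyset$, so $\Inv(\mu_0)=\emptyset$ and $\Lambda(\mu_0)=\emptyset\subseteq\PiA$ holds trivially.

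\emph{Case 2: some $i\in\supp\mu_0$ has $a_i\neq1$ and $F(\mu_0)=\{x^*\}\neq\emptyset$.} Then $\pi=\delta_{x^*}$. Because $\langle a,\mu_0\rangle=1$ and not all $a_i$ on the support equal $1$, some $i\in\supp\mu_0$ has $a_i<1$. For this $i$, $e_i\in\cA$ and $w_i(x^*)=x^*$. Since $w_i$ is a strict contraction, $\pi_{e_i}=\delta_{x^*}$, so $\pi\in\PiA$.

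\emph{Case 3: every $i\in\supp\mu_0$ is the identity.} This is the step I expect to be the main obstacle, since here $\Inv(\mu_0)$ is everything and no fixed-point argument pins $\pi$ down; instead I will use the laziness mechanism. Let $Z:=\{i:w_i=\id\}$ and take $\eta_n\in\cA$ with $\eta_n\to\mu_0$, $\sup_nM_1(\pi_{\eta_n})<\infty$ and $\pi_{\eta_n}\to\pi$ weakly.
\begin{itemize}
\item Decompose $\eta_n=(1-s_n)\iota_n+s_n\eta_n'$ with $\iota_n\in\Delta_Z$ and $\supp\eta_n'\cap Z=\emptyset$. Here $s_n>0$, because $\langle a,\eta_n\rangle<1$ forces mass outside $Z$. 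The same inequality forces $\langle a,\eta_n'\rangle<1$, so $\eta_n'\in\cA$.
\item Lemma~\ref{lem:lazy} extends verbatim to several identity maps, since $P^*_{\iota_n}=\mathrm{Id}$. Hence $\pi_{\eta_n}=\pi_{\eta_n'}$.
\item Pass to a subsequence with $\eta_n'\to\eta'$, where $\eta'\in\Delta_{[m]\setminus Z}$ and $\langle a,\eta'\rangle\le1$. The first moments $M_1(\pi_{\eta'_n})$ remain bounded, so Lemma~\ref{lem:closed} gives $\pi\in\Inv(\eta')$.
\end{itemize}
If $\eta'\in\cA$, then $\Inv(\eta')=\{\pi_{\eta'}\}$ and $\pi\in\PiA$. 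Otherwise $\eta'$ is critical, so $\pi\in\Lambda(\eta')$. Its support contains no identity map, so $\eta'$ falls under Case~1 or Case~2, and again $\pi\in\PiA$.

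\emph{Conclusion.} In every case $\Lambda(\mu_0)\subseteq\PiA$, so condition~\eqref{eq:reach} holds on all of $\mb$. Theorem~\ref{thm:universal}(i) then yields a solution of Problem~\eqref{eq:A} for every target $\nu\in\cP_1(\R^d)$. The proper-space Assumption~\ref{ass:proper} holds on $\R^d$, and Assumption~\ref{ass:standing} holds because the maps are affine.
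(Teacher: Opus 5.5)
Your proof is correct and uses the same ingredients as the paper's proof: you remove the identity maps via the lazy decomposition $\eta=(1-s)\iota+s\,\eta'$, pass to the limit of the invariant measures with Lemma~\ref{lem:closed}, apply Lemma~\ref{lem:critical} at a critical limit, and reach the Dirac mass at the common fixed point from a contracting vertex $e_i$. The only difference is packaging: you verify condition~\eqref{eq:reach} at every point of $\mb$, invoke Theorem~\ref{thm:universal}(i), and treat identity-only supports as a separate Case~3, whereas the paper discards the identity maps from a minimizing sequence at the outset and analyzes its cluster point directly, so that this case never arises.
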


\begin{proof}
\emph{Step 1: identity maps can be discarded.} Let $I_0:=\{i:w_i=\id\}$ and $\Delta':=\Delta_{[m]\setminus I_0}$,
which is nonempty because some $a_i<1$. For $\eta\in\cA$ put $s:=\sum_{i\notin I_0}\eta_i$; then $s>0$,
since $\langle a,\eta\rangle<1$, and $\eta=(1-s)\iota+s\,\eta'$ with $\iota\in\Delta_{I_0}$ (if $s<1$) and
$\eta'\in\Delta'$. From $\langle a,\eta\rangle=(1-s)+s\langle a,\eta'\rangle<1$ we get
$\langle a,\eta'\rangle<1$, so $\eta'\in\cA$, and $\pi_{\eta'}$ is invariant for
$P_\eta^*=(1-s)\,\mathrm{Id}+s\,P_{\eta'}^*$, so $\pi_\eta=\pi_{\eta'}$. Consequently there is a minimizing
sequence $(\eta_n)$ of~\eqref{eq:A} in $\cA\cap\Delta'$.

\emph{Step 2: limit of a minimizing sequence.} Passing to a subsequence, $\eta_n\to\hat\mu\in\Delta'$, and
$\langle a,\hat\mu\rangle\le1$. Since $M_1(\pi_{\eta_n})\le M_1(\nu)+J(\eta_n)$ is bounded,
Lemma~\ref{lem:closed} gives, along a further subsequence, a weak limit $\pi\in\Inv(\hat\mu)$ with
$W_1(\nu,\pi)\le\Jstar$. If $\langle a,\hat\mu\rangle<1$, then $\hat\mu\in\cA$, $\pi=\pi_{\hat\mu}$, and
$\hat\mu$ is a solution.

\emph{Step 3: the critical case.} Let $\langle a,\hat\mu\rangle=1$. By Lemma~\ref{lem:critical},
$\pi(F(\hat\mu))=1$, so the maps $w_i$, $i\in T:=\supp\hat\mu$, have a common fixed point. Not all $a_i$,
$i\in T$, equal one: otherwise these maps would be translations with a fixed point, hence identities,
contradicting $T\cap I_0=\emptyset$. Since the $\hat\mu$-average of the $a_i$ equals one, there is $i\in T$
with $a_i<1$. Its only fixed point is $x_i^*:=b_i/(1-a_i)$, so $F(\hat\mu)=\{x_i^*\}$ and
$\pi=\delta_{x_i^*}$. But $e_i\in\cA$ and $\pi_{e_i}=\delta_{x_i^*}$. Hence $J(e_i)=W_1(\nu,\pi)\le\Jstar$, and
$e_i$ is a solution.
\end{proof}

The proof shows what happens at a critical parameter: either the maps in its support have no common
fixed point, and then the invariant measures escape to infinity and the objective diverges
(Theorem~\ref{thm:universal}(i)(b)); or they share a fixed point, and then the only possible limit is the
Dirac mass at that point, which is reachable from a vertex. In the running example $w_1(x)=x+1$,
$w_2(x)=x/2$ of Example~\ref{ex:nonclosed}, the single critical parameter $e_1$ is a translation without
fixed point, so the minimum of $J$ over the non-closed set $\cA=\Delta_2\setminus\{e_1\}$ is attained for
every target with finite mean. No condition on the translation vectors is needed.

We do not know whether Theorem~\ref{thm:homothety} extends to the class of Theorem~\ref{thm:affine}(iii),
entrywise nonnegative matrices $A_i$, whose critical parameters satisfy $\rho(\bar A(\mu_0))=1$. If $v\ge0$ is a
left Perron vector of $\bar A(\mu_0)$, the argument of Lemma~\ref{lem:critical} applies to the scalar process
$v^\top X_k$ and shows that $v^\top\bar b(\mu_0)=0$ and that every $\pi\in\Inv(\mu_0)$ is carried by the
intersection of the affine sets $\{x:v^\top((A_i-I)x+b_i)=0\}$, $i\in\supp\mu_0$, but this no longer
identifies $\pi$ as a measure that is reachable from a vertex.

When the boundary drift does not vanish, coercivity can be quantified, which localizes the minimizers a
priori.

\begin{proposition}[Quantitative coercivity and localization]\label{prop:localization}
Consider the system~\eqref{eq:homothety} with $a_i\in[0,1]$ for all $i$, let
$S:=\{i:a_i=1\}$ satisfy $\emptyset\neq S\neq[m]$, so that $\cA=\Delta_m\setminus\Delta_S$, and assume
\begin{equation}\label{eq:C}
\kappa_{\rm b}:=\dist\big(0,\conv\{b_i:i\in S\}\big)>0 .
\end{equation}
Put $b_{\max}:=\max_i\|b_i\|$, $\theta_0:=1-\max_{i\notin S}a_i$, $\tau(\mu):=\sum_{i\in S}\mu_i$, and let
$m_\nu$ be the mean of $\nu\in\cP_1(\R^d)$. Then:
\begin{enumerate}
\item[\rm(i)] For every $\mu\in\cA$,
\[
J(\mu)\ \ge\ \|\bar x(\mu)\|-\|m_\nu\|\ \ge\ \frac{\kappa_{\rm b}\,\tau(\mu)}{1-\tau(\mu)}-b_{\max}-\|m_\nu\| ,
\qquad\bar x(\mu)=\frac{\bar b(\mu)}{1-\langle a,\mu\rangle}.
\]
\item[\rm(ii)] For any reference law $\mu_1\in\cA$, every solution $\mu^\dagger$ of~\eqref{eq:A} lies in the
compact set
\[
\cK_R:=\Big\{\mu\in\Delta_m:\tau(\mu)\le\frac{R}{1+R}\Big\}\subset\cA,\qquad
R:=\frac{J(\mu_1)+b_{\max}+\|m_\nu\|}{\kappa_{\rm b}},
\]
on which $\langle a,\mu\rangle\le1-\theta_0/(1+R)$.
\item[\rm(iii)] (This part uses only $a_i\in[0,1]$ for all $i$, and neither~\eqref{eq:C} nor the
assumption on $S$.) For all $\mu,\eta\in\Delta_m$ with $\langle a,\mu\rangle\le r<1$ and
$\langle a,\eta\rangle\le r'<1$,
\begin{gather*}
M_1(\pi_\eta)\le\frac{b_{\max}}{1-r'},\qquad
W_1\big((P_\eta^*)^k\delta_0,\pi_\eta\big)\le\frac{b_{\max}\,(r')^k}{1-r'},\\
W_1(\pi_\mu,\pi_\eta)\le\frac{M_1(\pi_\eta)+2b_{\max}}{2(1-r)}\,\|\mu-\eta\|_1 .
\end{gather*}
\end{enumerate}
\end{proposition}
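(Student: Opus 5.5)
We prove the three parts in the order (iii), (i), (ii), because (iii) supplies the moment and mean identities used in the other two.

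\emph{Part (iii).} Since every $a_i\in[0,1]$, the map $w_i$ is $a_i$-Lipschitz. Hence $L(\eta)=\langle a,\eta\rangle\le r'<1$, and Lemma~\ref{lem:contraction} shows that $P_\eta^*$ is a $W_1$-contraction with constant $r'$.

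For the moment bound, first note $M_1(\pi_\eta)=W_1(\pi_\eta,\delta_0)$. Then
\[
W_1(\pi_\eta,\delta_0)\le W_1(P_\eta^*\pi_\eta,P_\eta^*\delta_0)+W_1(P_\eta^*\delta_0,\delta_0)\le r'M_1(\pi_\eta)+\textstyle\sum_i\eta_i\|b_i\|.
\]
Rearranging gives $M_1(\pi_\eta)\le b_{\max}/(1-r')$. The truncation bound then follows from $W_1((P_\eta^*)^k\delta_0,\pi_\eta)\le (r')^kW_1(\delta_0,\pi_\eta)$.

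For the perturbation bound, repeat the proof of Lemma~\ref{lem:lipschitz-pi}, contracting with $P_\mu^*$ (constant $r$). This gives
\[
(1-r)\,W_1(\pi_\mu,\pi_\eta)\le W_1(P_\mu^*\pi_\eta,P_\eta^*\pi_\eta).
\]
Apply Lemma~\ref{lem:basic}(iii) with $\lambda=\pi_\eta$. Since $\|w_i(x)-w_j(x)\|\le|a_i-a_j|\|x\|+\|b_i-b_j\|\le\|x\|+2b_{\max}$, integrating against $\pi_\eta$ gives the factor $\frac12\|\mu-\eta\|_1(M_1(\pi_\eta)+2b_{\max})$, which yields the stated bound.

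\emph{Part (i).} Theorem~\ref{thm:affine}(i) gives $\bar x(\mu)=\bar b(\mu)/(1-\langle a,\mu\rangle)$. The first inequality holds because the identity map is $1$-Lipschitz, so $\|\bar x(\mu)-m_\nu\|\le W_1(\pi_\mu,\nu)$; here $\bar x(\mu)$ is the mean of $\pi_\mu$.

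For the second inequality we bound numerator and denominator separately.
\begin{itemize}
\item \emph{Numerator.} Split $\bar b(\mu)=\sum_{i\in S}\mu_ib_i+\sum_{i\notin S}\mu_ib_i$. The first sum equals $\tau$ times a point of $\conv\{b_i:i\in S\}$, so it has norm at least $\tau\kappa_{\rm b}$. The second sum has norm at most $(1-\tau)b_{\max}$. Hence $\|\bar b(\mu)\|\ge\tau\kappa_{\rm b}-(1-\tau)b_{\max}$.
\item \emph{Denominator.} $1-\langle a,\mu\rangle=\sum_{i\notin S}\mu_i(1-a_i)\le 1-\tau$.
\end{itemize}
If the numerator bound is nonnegative, dividing gives $\|\bar x(\mu)\|\ge \frac{\kappa_{\rm b}\tau}{1-\tau}-b_{\max}$. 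If it is negative, the inequality holds trivially. This sign case is the only delicate bookkeeping step.

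\emph{Part (ii).} A solution satisfies $J(\mu^\dagger)\le J(\mu_1)$. Combining this with (i) gives $\kappa_{\rm b}\tau/(1-\tau)\le J(\mu_1)+b_{\max}+\|m_\nu\|=R\kappa_{\rm b}$, that is, $\tau\le R/(1+R)$.

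The set $\cK_R$ is compact because it is defined by a closed linear inequality in $\Delta_m$. It lies in $\cA$ because on it
\[
1-\langle a,\mu\rangle=\sum_{i\notin S}\mu_i(1-a_i)\ge\theta_0(1-\tau)\ge\theta_0/(1+R)>0.
\]
The same computation is the stated bound on $\langle a,\mu\rangle$.

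I expect no real obstacle beyond choosing the right contraction operator in the perturbation bound of (iii), and the sign case in (i).
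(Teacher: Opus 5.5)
Your proposal is correct and is essentially the paper's proof: (i) uses the same split of $\bar b(\mu)$ and of $1-\langle a,\mu\rangle$, (ii) the same rearrangement, and (iii) the same invariance--contraction--mixture argument. The only cosmetic difference is that you get $M_1(\pi_\eta)\le b_{\max}/(1-r')$ from the triangle inequality through $P_\eta^*\delta_0$, where the paper integrates $\|a_ix+b_i\|\le a_i\|x\|+\|b_i\|$ against $\pi_\eta$; also, contrary to your opening remark, (i) and (ii) do not actually use (iii).
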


\begin{proof}
(i) The formula for $\bar x(\mu)$ is Theorem~\ref{thm:affine}(i). For any coupling $(Y,Y')$ of $\nu$ and
$\pi_\mu$, $\E\|Y-Y'\|\ge\|m_\nu-\bar x(\mu)\|$, which gives the first inequality. Write $\tau=\tau(\mu)$. If
$\tau>0$, then $\sum_{i\in S}\mu_ib_i=\tau\,c$ with $c\in\conv\{b_i:i\in S\}$, so
$\|\bar b(\mu)\|\ge\tau\kappa_{\rm b}-(1-\tau)b_{\max}$, and this also holds for $\tau=0$. Moreover
$0<1-\langle a,\mu\rangle=\sum_{i\notin S}\mu_i(1-a_i)\le1-\tau$. Hence
$\|\bar x(\mu)\|\ge\tau\kappa_{\rm b}/(1-\tau)-b_{\max}$ (the bound is trivial when its right-hand side is negative).

(ii) A solution satisfies $J(\mu^\dagger)\le J(\mu_1)$, so (i) gives $\tau/(1-\tau)\le R$ for
$\tau=\tau(\mu^\dagger)$. On $\cK_R$, $1-\langle a,\mu\rangle\ge\theta_0(1-\tau(\mu))\ge\theta_0/(1+R)$.

(iii) With base point $x_\circ=0$, invariance gives
\[
M_1(\pi_\eta)=\sum_i\eta_i\int\|a_ix+b_i\|\,\pi_\eta(\dd x)\le\langle a,\eta\rangle M_1(\pi_\eta)+b_{\max},
\]
which is the first bound, and the second follows from
$W_1((P_\eta^*)^k\delta_0,\pi_\eta)\le\langle a,\eta\rangle^kM_1(\pi_\eta)$. For the third, by invariance, the
contraction estimate of Proposition~\ref{prop:contraction} and~\eqref{eq:mixture},
\begin{align*}
W_1(\pi_\mu,\pi_\eta)&\le W_1(P_\mu^*\pi_\mu,P_\mu^*\pi_\eta)+W_1(P_\mu^*\pi_\eta,P_\eta^*\pi_\eta)\\
&\le r\,W_1(\pi_\mu,\pi_\eta)+\tfrac12\|\mu-\eta\|_1\big(M_1(\pi_\eta)+2b_{\max}\big),
\end{align*}
because $\|w_i(x)-w_j(x)\|\le|a_i-a_j|\,\|x\|+\|b_i-b_j\|\le\|x\|+2b_{\max}$.
\end{proof}

Condition~\eqref{eq:C} is not needed for existence (Theorem~\ref{thm:homothety}), only for the explicit
rate in (i): for $w_1(x)=x+1$, $w_2(x)=x-1$, $w_3(x)=x/2$ the origin lies in $\conv\{b_1,b_2\}$, the mean of
$\pi_\mu$ vanishes along $\mu=(\frac{1-p}2,\frac{1-p}2,p)$, and yet $J(\mu)\to\infty$ as $p\downarrow0$, more
slowly than in the presence of a boundary drift; see Section~\ref{sec:exp-affine}.

\section{Computation: search over a \texorpdfstring{$\delta$}{delta}-net}\label{sec:computation}

In practice the target is known through a sample, the invariant measure through finitely many
iterations of the Markov operator, and the simplex through finitely many of its points. This section
shows that the three approximations can be controlled simultaneously.

\subsection{The \texorpdfstring{$\delta$}{delta}-net}

\begin{lemma}[$\delta$-net of the simplex]\label{lem.delta net}
For $\delta>0$ let $q:=\lceil 2m/\delta\rceil$ and
\[
\mathcal N_\delta:=\left\{\left(k_1/q,\dots,k_m/q\right):k_i\in\mathbb Z_{\ge0},\ \textstyle\sum_{i=1}^m k_i=q\right\}\subset\Delta_m.
\]
Then $|\mathcal N_\delta|=\binom{q+m-1}{m-1}$, and for every $\mu\in\Delta_m$ there is $\mu'\in\mathcal N_\delta$ with $\|\mu-\mu'\|_1\le\delta$.
Moreover, if $\delta\le2$, then $\mu'$ can be chosen such that $\supp\mu'=\supp\mu$.
\end{lemma}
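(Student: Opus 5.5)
The plan has three parts: count the grid points, build a nearby grid point by rounding, and then adjust the rounding so that supports are preserved.

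\emph{Counting.} The cardinality $|\mathcal N_\delta|=\binom{q+m-1}{m-1}$ is the stars-and-bars count of nonnegative integer solutions of $k_1+\dots+k_m=q$.

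\emph{Approximation.} Given $\mu\in\Delta_m$, we first round down, setting $k_i^0:=\lfloor q\mu_i\rfloor$. The deficit $r:=q-\sum_ik_i^0$ is then an integer with $0\le r\le m-1$, because each fractional part $q\mu_i-k_i^0$ lies in $[0,1)$ and these parts sum to $r$. We add one unit to $r$ distinct coordinates $i$ with positive fractional part; there are at least $r$ such coordinates, since the fractional parts are each $<1$ and sum to $r$. Call the result $k$ and set $\mu':=k/q$. Then every coordinate satisfies $|q\mu_i-k_i|<1$, and therefore
\[
\|\mu-\mu'\|_1<m/q\le\delta/2\le\delta .
\]
This rounding already preserves zeros: if $\mu_i=0$ then $k_i=0$, so $\supp\mu'\subseteq\supp\mu$.

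\emph{Support preservation.} This is the only delicate step. A coordinate with $0<q\mu_i<1$ may have been rounded to $0$, in which case it leaves the support. Let $Z:=\{i\in\supp\mu:k_i=0\}$. For each $i\in Z$ we set $k_i=1$, and we compensate by removing one unit from coordinates with $k_j\ge2$, or with $k_j\ge1$ when $j\notin Z$, so that no coordinate of $\supp\mu$ drops to zero.

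\emph{Feasibility of the compensation.} Since $|\supp\mu|\le m\le q$ (using $\delta\le2$, which gives $q\ge m$), the total $q$ is large enough to give each support coordinate at least one unit. Concretely, the excess $\sum_{j\in\supp\mu}(k_j-1)^+$ available for removal is at least $q-|\supp\mu|+|Z|\ge|Z|$, so the required units can always be found.

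\emph{Error accounting.} Each $i\in Z$ moves by at most $1$ in $k$-units, namely from $q\mu_i<1$ to $1$. Each donor coordinate loses one unit; in the worst case this adds an error of at most one unit on top of the previous error below $1$. Hence the total change in $k$-units is below $2m$, which gives
\[
\|\mu-\mu'\|_1\le 2m/q\le\delta .
\]

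The main obstacle is this bookkeeping: showing that donors exist and bounding the combined error. For the error, the cleaner route is to argue per coordinate that $|k_i-q\mu_i|\le 1$ can be replaced by $|k_i-q\mu_i|<2$ after the adjustment, and then sum over the $m$ coordinates.
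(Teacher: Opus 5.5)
Your first stage is correct: it even gives $\|\mu-\mu'\|_1<m/q\le\delta/2$ for every $\delta>0$. Your feasibility count $\sum_{j\in\supp\mu}(k_j-1)^+=q-|\supp\mu|+|Z|\ge|Z|$ is also right.

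The gap is in the error accounting of the repair step. You assume each donor gives up a single unit. The ``cleaner route'' you propose, $|k_i-q\mu_i|<2$ for every coordinate after the adjustment, is false, because one donor may have to supply all $|Z|$ units. Take $\mu=(1-(m-1)\epsilon,\epsilon,\dots,\epsilon)$ with $0<(m-1)q\epsilon<1$. Then $\lfloor q\mu_1\rfloor=q-1$, all other floors vanish, and $r=1$. Whichever coordinate receives the extra unit, coordinate $1$ is the only one with $k_j\ge2$. The repaired vector is therefore forced to be $(q-m+1,1,\dots,1)$, and $|k_1-q\mu_1|=(m-1)(1-q\epsilon)>2$ as soon as $m\ge4$.

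Your conclusion survives if you charge error per donated unit rather than per donor:
\begin{itemize}
\item a coordinate $i\in Z$ ends with error $1-q\mu_i<1$;
\item a coordinate that neither receives nor donates keeps error $<1$;
\item a donor $j$ that gives $d_j$ units ends with error $<1+d_j$.
\end{itemize}
Moreover $\sum_jd_j=|Z|\le|\supp\mu|-1$, since $\sum_ik_i=q\ge1$ prevents all of $\supp\mu$ from lying in $Z$. Hence $\sum_i|q\mu_i-k_i|<m+|Z|\le2m-1$, and so $\|\mu-\mu'\|_1<2m/q\le\delta$.

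You should also drop the clause allowing donors with $k_j\ge1$ when $j\notin Z$. A coordinate of $\supp\mu\setminus Z$ with $k_j=1$ would leave the support if it donated; only units above one may be taken, as your $(k_j-1)^+$ count already reflects.

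The paper avoids this bookkeeping by rounding \emph{up} on the support from the start: $k_i^0=\lceil q\mu_i\rceil\ge1$ for $i\in\supp\mu$. Every support coordinate is then positive at once, and the overshoot is $r\le|\supp\mu|-1$. Removing these $r$ units while keeping every coordinate at least one gives $\|\mu-\mu'\|_1\le2r/q<2m/q$ directly.
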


\begin{proof}
The cardinality of $\cN_\delta$ is the number of nonnegative integer solutions of $\sum_{i=1}^mk_i=q$,
which is $\binom{q+m-1}{m-1}$ (stars and bars). If $\delta>2$ the approximation claim is trivial, because
any two points of $\Delta_m$ are at $\ell_1$-distance at most $2$. Let $\delta\le2$, so that $q\ge m$, let
$\mu\in\Delta_m$, $T:=\supp\mu$ and $s:=|T|$. Put $k_i^0:=\lceil q\mu_i\rceil\ge1$ for $i\in T$. Then
$\sum_{i\in T}k_i^0=q+r$ for an integer $r$ with $0\le r\le s-1$, because
$q=\sum_{i\in T}q\mu_i\le\sum_{i\in T}k_i^0<q+s$. Since $\sum_{i\in T}(k_i^0-1)=q+r-s\ge r$, we can choose
integers $0\le c_i\le k_i^0-1$ with $\sum_{i\in T}c_i=r$. Set $k_i:=k_i^0-c_i\ge1$ for $i\in T$ and $k_i:=0$
for $i\notin T$, and $\mu':=(k_1/q,\dots,k_m/q)$. Then $\mu'\in\cN_\delta$, $\supp\mu'=T$, and
\[
\|\mu-\mu'\|_1\le\sum_{i\in T}\Big(\frac{k_i^0}{q}-\mu_i\Big)+\sum_{i\in T}\frac{c_i}{q}=\frac rq+\frac rq
\le\frac{2(m-1)}{q}<\frac{2m}{q}\le\delta .
\]
\end{proof}

The support-preserving refinement makes the net compatible with the facial structure found in
Section~\ref{sec:geometry}.

\begin{corollary}[Nets of face-structured sets]\label{cor:facenet}
Let $\cF\subseteq\Delta_m$ be a nonempty union of relatively open faces of the simplex, for instance $\cD$, $\cM$,
or $\cA$ itself when $\cX$ is finite. Then for every $\delta\le2$ the finite set $\cN_\delta\cap\cF$ is a
$\delta$-net of $\cF$. If moreover $\cX$ is compact and $\cF\subseteq\cA$, then
\[
\min_{\mu\in\cN_\delta\cap\cF}J(\mu)\ \xrightarrow[\delta\to0]{}\ \inf_{\mu\in\cF}J(\mu).
\]
\end{corollary}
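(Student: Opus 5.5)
The argument has two parts: a net claim that follows from the support-preserving refinement of Lemma~\ref{lem.delta net}, and a convergence claim that follows from continuity of $J$ on $\cA$ (Theorem~\ref{thm:topology}(iv)).

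\emph{Net claim.} Fix $\delta\le2$ and $\mu\in\cF$. By hypothesis $\cF$ is a union of relatively open faces, so $\cF=\bigcup_{S\in\sF}\Delta_S^\circ$ for some family $\sF$ of supports. Hence membership in $\cF$ depends only on the support, and $\supp\mu\in\sF$. Lemma~\ref{lem.delta net} supplies $\mu'\in\cN_\delta$ with $\|\mu-\mu'\|_1\le\delta$ and $\supp\mu'=\supp\mu$. Then $\mu'\in\Delta_{\supp\mu}^\circ\subseteq\cF$, so $\mu'\in\cN_\delta\cap\cF$. This shows $\cN_\delta\cap\cF$ is a $\delta$-net of $\cF$, and it is nonempty because $\cF$ is. No up-set property is needed here, only the facial description. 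The examples $\cD$ and $\cM$ qualify by Theorems~\ref{thm:doeblin} and~\ref{thm:splitting}, and $\cA$ for finite $\cX$ qualifies by Theorem~\ref{thm:finite}.

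\emph{Convergence.} Assume now that $\cX$ is compact and $\cF\subseteq\cA$. Since $\cN_\delta\cap\cF\subseteq\cF$, we have $\min_{\cN_\delta\cap\cF}J\ge\inf_\cF J$ for every $\delta$. The minimum exists because the set is finite and nonempty. For the reverse bound, fix $\eps>0$ and choose $\bar\mu\in\cF$ with $J(\bar\mu)<\inf_\cF J+\eps$. By Theorem~\ref{thm:topology}(iv), $J$ is continuous on $\cU\supseteq\cA\supseteq\cF$, so it is continuous at $\bar\mu$ relative to $\cF$. Pick $r>0$ such that $|J(\mu)-J(\bar\mu)|<\eps$ for all $\mu\in\cF$ with $\|\mu-\bar\mu\|_1\le r$. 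For every $\delta\le\min\{r,2\}$, the net claim gives $\mu'\in\cN_\delta\cap\cF$ within distance $\delta$ of $\bar\mu$. Therefore $\min_{\cN_\delta\cap\cF}J\le J(\mu')<\inf_\cF J+2\eps$. Since $\eps$ is arbitrary, the limit follows; the statement is a genuine limit, not merely a liminf, because the upper bound holds for all small $\delta$.

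The key point is the support-preserving choice of $\mu'$, which was already proved in Lemma~\ref{lem.delta net}. It matters because $\cF$ is generally not closed: a plain nearest net point could fall into the excluded closed faces, where $J$ may be undefined. The continuity step is straightforward only because $\bar\mu$ itself lies in $\cF$. Approximation at points of $\cl(\cF)\setminus\cF$ never arises, since $\inf_\cF J$ is approached by points of $\cF$. The argument does not require $J$ to be continuous up to the missing boundary.
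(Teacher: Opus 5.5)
Your proof is correct and follows the paper's argument. The net claim is the support-preserving refinement of Lemma~\ref{lem.delta net}, combined with the fact that membership in $\cF$ depends only on the support. The limit follows by approximating an $\eps$-minimizer in $\cF$ by net points with the same support and using continuity of $J$ on $\cA$ (Theorem~\ref{thm:topology}(iv)). Your version only makes explicit the trivial lower bound $\min_{\cN_\delta\cap\cF}J\ge\inf_\cF J$ and the choice $\delta\le\min\{r,2\}$, which the paper leaves implicit.
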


\begin{proof}
The first claim is Lemma~\ref{lem.delta net}, since membership in $\cF$ depends only on the support. For the
second, fix $\eps>0$ and $\mu_\eps\in\cF$ with $J(\mu_\eps)\le\inf_\cF J+\eps$, and let $\mu'_\delta\in\cN_\delta\cap\cF$
with $\|\mu'_\delta-\mu_\eps\|_1\le\delta$. By continuity of $J$ on $\cA$ (Theorem~\ref{thm:topology}(iv)),
$J(\mu'_\delta)\to J(\mu_\eps)$ as $\delta\to0$.
\end{proof}

\subsection{The algorithm and its error decomposition}
Let $x_1,\dots,x_N$ be i.i.d.\ with law $\nu$, let $\hat\nu_N:=\frac1N\sum_{j=1}^N\delta_{x_j}$, fix
$\lambda_0\in\cP_1(\cX)$, and for $k\ge0$ put
\[
\pi_\mu^{(k)}:=(P_\mu^*)^k\lambda_0,\qquad\hat J_{N,k}(\mu):=W_1\big(\hat\nu_N,\pi_\mu^{(k)}\big).
\]
Algorithm~\ref{alg:shaping} minimizes $\hat J_{N,k}$ over a finite set $\mathcal G$ of grid points.

\begin{proposition}[Error decomposition]\label{prop:decomposition}
Let $\mu\in\cA$. Then
\begin{equation}\label{eq:pointwise}
\big|\hat J_{N,k}(\mu)-J(\mu)\big|\le W_1(\hat\nu_N,\nu)+W_1\big(\pi_\mu^{(k)},\pi_\mu\big).
\end{equation}
Consequently, for every finite $\mathcal G\subseteq\cA$ and every
$\hat\mu\in\operatorname*{arg\,min}_{\mathcal G}\hat J_{N,k}$,
\begin{equation}\label{eq:oracle}
J(\hat\mu)\le\min_{\mu\in\mathcal G}J(\mu)+2\,W_1(\hat\nu_N,\nu)+2\max_{\mu\in\mathcal G}W_1\big(\pi_\mu^{(k)},\pi_\mu\big).
\end{equation}
\end{proposition}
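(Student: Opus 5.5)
The pointwise bound \eqref{eq:pointwise} follows from the triangle inequality for $W_1$ on $\cP_1(\cX)$. Since $\mu\in\cA$, the measure $\pi_\mu$ exists in $\cP_1(\cX)$. By Lemma~\ref{lem:basic}(i) and induction, $\pi_\mu^{(k)}=(P_\mu^*)^k\lambda_0\in\cP_1(\cX)$. The empirical measure $\hat\nu_N$ has finite support, so it also lies in $\cP_1(\cX)$. All four measures therefore sit in the metric space $(\cP_1(\cX),W_1)$, and all the distances below are finite.

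In that metric space, the reverse triangle inequality for the map $(\alpha,\beta)\mapsto W_1(\alpha,\beta)$ gives
\[
\big|W_1(\hat\nu_N,\pi_\mu^{(k)})-W_1(\nu,\pi_\mu)\big|\le W_1(\hat\nu_N,\nu)+W_1(\pi_\mu^{(k)},\pi_\mu).
\]
To see this, note that $W_1(\hat\nu_N,\pi_\mu^{(k)})\le W_1(\hat\nu_N,\nu)+W_1(\nu,\pi_\mu)+W_1(\pi_\mu,\pi_\mu^{(k)})$, and the same holds with the roles of the two pairs exchanged. This is exactly \eqref{eq:pointwise}.

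For \eqref{eq:oracle}, I would write $e:=W_1(\hat\nu_N,\nu)+\max_{\mu\in\mathcal G}W_1(\pi_\mu^{(k)},\pi_\mu)$. By \eqref{eq:pointwise}, $|\hat J_{N,k}(\mu)-J(\mu)|\le e$ for every $\mu\in\mathcal G$, and this uses $\mathcal G\subseteq\cA$. Let $\mu^*\in\mathcal G$ attain $\min_{\mathcal G}J$, which exists because $\mathcal G$ is finite. Then
\[
J(\hat\mu)\le\hat J_{N,k}(\hat\mu)+e\le\hat J_{N,k}(\mu^*)+e\le J(\mu^*)+2e,
\]
where the middle inequality holds because $\hat\mu$ minimizes $\hat J_{N,k}$ over $\mathcal G$. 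This is \eqref{eq:oracle}.

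No step is a real obstacle. The only point needing care is that every measure involved has finite first moment, so that $W_1$ is a genuine finite metric and the triangle inequality applies; this is guaranteed by the membership $\mu\in\cA$ together with Lemma~\ref{lem:basic}(i).
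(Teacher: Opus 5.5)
Your proof is correct and follows the paper's argument: the triangle inequality applied in both directions gives \eqref{eq:pointwise}, and the chain $J(\hat\mu)\le\hat J_{N,k}(\hat\mu)+e\le\hat J_{N,k}(\mu^\circ)+e\le J(\mu^\circ)+2e$, with $\mu^\circ$ a minimizer of $J$ over $\mathcal G$, gives \eqref{eq:oracle}. Your check that all four measures have finite first moment is a useful explicit detail that the paper leaves implicit.
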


\begin{proof}
By the triangle inequality,
\begin{align*}
W_1(\hat\nu_N,\pi_\mu^{(k)})&\le W_1(\hat\nu_N,\nu)+W_1(\nu,\pi_\mu)+W_1(\pi_\mu,\pi_\mu^{(k)}),\\
W_1(\nu,\pi_\mu)&\le W_1(\nu,\hat\nu_N)+W_1(\hat\nu_N,\pi_\mu^{(k)})+W_1(\pi_\mu^{(k)},\pi_\mu),
\end{align*}
which is~\eqref{eq:pointwise}. Let $e$ denote the sum of the last two terms of~\eqref{eq:oracle} divided by two and
let $\mu^\circ$ minimize $J$ over $\mathcal G$. Then
$J(\hat\mu)\le\hat J_{N,k}(\hat\mu)+e\le\hat J_{N,k}(\mu^\circ)+e\le J(\mu^\circ)+2e$.
\end{proof}

The bound~\eqref{eq:oracle} requires neither contraction nor compactness: the statistical error, the
truncation error, and the discretization error (hidden in $\min_{\mathcal G}J$) are separated, and each
can be estimated in the regime at hand.

\begin{algorithm}[!htb]
\caption{Invariant measure shaping by search over a $\delta$-net}
\label{alg:shaping}
\KwIn{maps $w_1,\dots,w_m$; sample $x_1,\dots,x_N$ of the target $\nu$; initial law $\lambda_0$;
number of iterations $k$; mesh $\delta>0$; search region $\cK'\subseteq\cA$ (default: $\cA$).}
\KwOut{a sampling law $\hat\mu\in\cN_\delta\cap\cK'$.}
$\hat\nu_N\leftarrow\frac1N\sum_{j=1}^N\delta_{x_j}$;\quad $q\leftarrow\lceil2m/\delta\rceil$\;
$\mathcal G\leftarrow\cN_\delta\cap\cK'$, where
$\cN_\delta=\{(k_1/q,\dots,k_m/q):k_i\in\Z_{\ge0},\ \sum_ik_i=q\}$\;
$\hat J\leftarrow+\infty$\;
\For{$\mu\in\mathcal G$}{
  $\pi\leftarrow\lambda_0$\;
  \For{$t=1,\dots,k$}{$\pi\leftarrow\sum_{i=1}^m\mu_i\,(w_i)_\#\pi$\tcp*{one application of $P_\mu^*$}}
  $\hat J_{N,k}(\mu)\leftarrow W_1(\hat\nu_N,\pi)$\tcp*{discrete optimal transport}
  \If{$\hat J_{N,k}(\mu)<\hat J$}{$\hat J\leftarrow\hat J_{N,k}(\mu)$;\quad $\hat\mu\leftarrow\mu$\;}
}
\Return{$\hat\mu$}
\end{algorithm}

\subsection{\texorpdfstring{$\eps$}{epsilon}-optimal shaping}

\begin{definition}[Regular search region]\label{def:regular}
A pair of sets $\cK\subseteq\cK'\subseteq\cA$ is a \emph{regular search region} with constants $\bar\delta>0$,
$L_\pi<\infty$ and a sequence $r_k\downarrow0$ if
\begin{enumerate}
\item[\rm(R1)] for every $\delta\le\bar\delta$ and $\mu\in\cK$, every $\mu'\in\cN_\delta$ with
$\|\mu-\mu'\|_1\le\delta$ belongs to $\cK'$;
\item[\rm(R2)] $W_1(\pi_\mu,\pi_\eta)\le L_\pi\,\|\mu-\eta\|_1$ for all $\mu\in\cK$ and $\eta\in\cK'$;
\item[\rm(R3)] $W_1\big((P_\eta^*)^k\lambda_0,\pi_\eta\big)\le r_k$ for all $\eta\in\cK'$ and $k\ge0$.
\end{enumerate}
\end{definition}

\begin{theorem}[$\eps$-optimal sampling law]\label{thm:eps}
Let Assumption~\ref{ass:standing} hold, let $\nu\in\cP_1(\cX)$, and let $\cK\subseteq\cK'$ be a regular
search region with constants $(\bar\delta,L_\pi,(r_k))$. Let $\hat\mu$ be the output of
Algorithm~\ref{alg:shaping} with mesh $\delta\le\bar\delta$ and search region $\cK'$. Then
\begin{equation}\label{eq:main-bound}
J(\hat\mu)\le\inf_{\mu\in\cK}J(\mu)+L_\pi\,\delta+2\,r_k+2\,W_1(\hat\nu_N,\nu).
\end{equation}
In particular, for $\eps>0$, if
\[
\delta\le\min\Big\{\bar\delta,\frac{\eps}{3L_\pi}\Big\},\qquad r_k\le\frac\eps6,\qquad W_1(\hat\nu_N,\nu)\le\frac\eps6,
\]
then $J(\hat\mu)\le\inf_{\cK}J+\eps$, and $J(\hat\mu)\le\Jstar+\eps$ whenever $\cK$ contains a solution
of Problem~\eqref{eq:A}, or more generally a point $\mu$ with $J(\mu)\le\Jstar+\eps'$, in which case
$J(\hat\mu)\le\Jstar+\eps+\eps'$. Almost surely, the third condition holds for all sufficiently large $N$;
an explicit $N$ for which it holds with prescribed probability is given in
Corollary~\ref{cor:samplesize}. The algorithm evaluates $\hat J_{N,k}$ at no more than
$|\cN_\delta|=\binom{q+m-1}{m-1}$ points, $q=\lceil2m/\delta\rceil$. Finally, the approximation of the objective
is uniform over the search region,
\begin{equation}\label{eq:uniform}
\sup_{\mu\in\cK'}\big|\hat J_{N,k}(\mu)-J(\mu)\big|\le W_1(\hat\nu_N,\nu)+r_k=:\eps_{N,k},
\end{equation}
so that every $\mu'\in\cK'$ with $\hat J_{N,k}(\mu')\le\inf_{\cK'}\hat J_{N,k}+\eps_{\rm opt}$, found by any method,
satisfies $J(\mu')\le\inf_{\cK'}J+2\eps_{N,k}+\eps_{\rm opt}$.
\end{theorem}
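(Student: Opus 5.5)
The plan is to combine the error decomposition of Proposition~\ref{prop:decomposition} with the regularity properties (R1)--(R3). First, the uniform bound~\eqref{eq:uniform}. For every $\mu\in\cK'\subseteq\cA$, inequality~\eqref{eq:pointwise} gives $|\hat J_{N,k}(\mu)-J(\mu)|\le W_1(\hat\nu_N,\nu)+W_1(\pi_\mu^{(k)},\pi_\mu)$. By (R3) the second term is at most $r_k$, and taking the supremum over $\cK'$ yields~\eqref{eq:uniform}. The last claim then follows from the usual three-step chain. For $\mu'$ with $\hat J_{N,k}(\mu')\le\inf_{\cK'}\hat J_{N,k}+\eps_{\rm opt}$ and any $\mu\in\cK'$,
\[
J(\mu')\le\hat J_{N,k}(\mu')+\eps_{N,k}\le\hat J_{N,k}(\mu)+\eps_{\rm opt}+\eps_{N,k}\le J(\mu)+2\eps_{N,k}+\eps_{\rm opt},
\]
and we take the infimum over $\mu$.

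Next, the main bound~\eqref{eq:main-bound}. The algorithm searches over $\mathcal G=\cN_\delta\cap\cK'$, a finite subset of $\cA$, so~\eqref{eq:oracle} together with (R3) gives
\[
J(\hat\mu)\le\min_{\mathcal G}J+2r_k+2W_1(\hat\nu_N,\nu).
\]
It remains to show that $\min_{\mathcal G}J\le\inf_{\cK}J+L_\pi\delta$. Fix $\mu\in\cK$. Lemma~\ref{lem.delta net} provides $\mu'\in\cN_\delta$ with $\|\mu-\mu'\|_1\le\delta$, and (R1) with $\delta\le\bar\delta$ places $\mu'$ in $\cK'$, hence in $\mathcal G$. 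By (R2), applied with $\mu\in\cK$ and $\eta=\mu'\in\cK'$,
\[
J(\mu')\le J(\mu)+W_1(\pi_\mu,\pi_{\mu'})\le J(\mu)+L_\pi\delta.
\]
Taking the infimum over $\mu\in\cK$ gives the claim. The nonemptiness of $\mathcal G$ is automatic when $\cK\neq\emptyset$; if $\cK=\emptyset$ the bound is vacuous. The only point to watch is the asymmetry of (R2) (reference point in $\cK$, perturbation in $\cK'$), which is exactly what this step uses, so I expect no real obstacle.

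The consequences are bookkeeping. Under the three conditions the error terms are at most $\eps/3+\eps/3+\eps/3=\eps$. If $\cK$ contains a point with $J\le\Jstar+\eps'$, then $\inf_\cK J\le\Jstar+\eps'$, and $\eps'=0$ covers the case of a solution. The almost sure statement follows from $W_1(\hat\nu_N,\nu)\to0$ a.s. for $\nu\in\cP_1(\cX)$. This holds by Varadarajan's theorem (weak convergence of empirical measures on a Polish space) combined with the strong law of large numbers for $\rho(x_j,x_\circ)$, which gives convergence of first moments, and then by (W1). The count of evaluations is $|\mathcal G|\le|\cN_\delta|$, which is the cardinality computed in Lemma~\ref{lem.delta net}.
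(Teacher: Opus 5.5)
Your proposal is correct and follows the paper's proof essentially step for step. Both arguments bound $\min_{\mathcal G}J$ by $\inf_{\cK}J+L_\pi\delta$ via the $\delta$-net lemma with (R1)--(R2), feed this into \eqref{eq:oracle} with $\mathcal G=\cN_\delta\cap\cK'$ and (R3), and combine \eqref{eq:pointwise} with (R3) to get the uniform bound and the final three-step chain; the almost-sure claim rests on the same Varadarajan/SLLN/(W1) argument.
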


\begin{proof}
Fix $\mu\in\cK$. By Lemma~\ref{lem.delta net} there is $\mu'\in\cN_\delta$ with $\|\mu-\mu'\|_1\le\delta$, and
$\mu'\in\cK'$ by (R1). By (R2), $J(\mu')\le J(\mu)+W_1(\pi_\mu,\pi_{\mu'})\le J(\mu)+L_\pi\delta$. Hence
$\min_{\cN_\delta\cap\cK'}J\le\inf_\cK J+L_\pi\delta$, and~\eqref{eq:main-bound} follows from~\eqref{eq:oracle}
with $\mathcal G=\cN_\delta\cap\cK'$ and (R3). The almost sure convergence $W_1(\hat\nu_N,\nu)\to0$ for
$\nu\in\cP_1(\cX)$ follows from Varadarajan's theorem and the strong law of large numbers for
$\rho(x_j,x_\circ)$, by (W1). The bound~\eqref{eq:uniform} is~\eqref{eq:pointwise} combined with (R3), and it gives
$J(\mu')\le\hat J_{N,k}(\mu')+\eps_{N,k}\le\inf_{\cK'}\hat J_{N,k}+\eps_{\rm opt}+\eps_{N,k}\le\inf_{\cK'}J+2\eps_{N,k}+\eps_{\rm opt}$.
\end{proof}

On a bounded state space the statistical term concentrates around its mean, which gives an explicit
sample size.

\begin{corollary}[Explicit sample size]\label{cor:samplesize}
Let $\cX$ be bounded with diameter $D$ and let $\alpha\in(0,1)$. With probability at least $1-\alpha$,
\[
W_1(\hat\nu_N,\nu)\le\E\,W_1(\hat\nu_N,\nu)+D\sqrt{\frac{\log(1/\alpha)}{2N}} .
\]
In particular, if $\cX=[a,b]$, then with probability at least $1-\alpha$,
$W_1(\hat\nu_N,\nu)\le(b-a)\big(1+\sqrt{2\log(1/\alpha)}\big)/(2\sqrt N)$, and the third condition of
Theorem~\ref{thm:eps} holds with probability at least $1-\alpha$ as soon as
\[
N\ge\frac{9\,(b-a)^2\big(1+\sqrt{2\log(1/\alpha)}\big)^2}{\eps^2}.
\]
\end{corollary}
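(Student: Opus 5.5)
The proof splits into a concentration step and an expectation step. For the concentration step we view $F(x_1,\dots,x_N):=W_1(\hat\nu_N,\nu)$ as a function of the i.i.d.\ sample and apply McDiarmid's bounded-differences inequality. Replacing one point $x_j$ by $x_j'$ changes $\hat\nu_N$ only through the mass $1/N$ that moves from $\delta_{x_j}$ to $\delta_{x_j'}$. By the triangle inequality and (W3), or directly by coupling the two empirical measures through the identity on the other points,
\[
|F(\dots,x_j,\dots)-F(\dots,x_j',\dots)|\le W_1(\hat\nu_N,\hat\nu_N')\le\frac{\rho(x_j,x_j')}{N}\le\frac DN .
\]
Measurability of $F$ is routine because $F$ is continuous in the sample. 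McDiarmid's inequality with $c_j=D/N$ then gives $\PP(F-\E F\ge t)\le\exp(-2t^2/(N\cdot D^2/N^2))=\exp(-2Nt^2/D^2)$. Setting this equal to $\alpha$ yields $t=D\sqrt{\log(1/\alpha)/(2N)}$, which is the first claim.

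For the interval case $\cX=[a,b]$ we bound $\E W_1(\hat\nu_N,\nu)$ using the CDF formula
\[
W_1(\hat\nu_N,\nu)=\int_a^b|\hat F_N(y)-F(y)|\,\dd y,
\]
quoted earlier from \cite[Proposition~2.17]{Santambrogio2015}. Fubini and Jensen give
\[
\E|\hat F_N(y)-F(y)|\le\sqrt{F(y)(1-F(y))/N}\le\frac1{2\sqrt N},
\]
so $\E W_1\le(b-a)/(2\sqrt N)$. Adding the deviation term with $D=b-a$, and noting $\sqrt{\log(1/\alpha)/(2N)}=\sqrt{2\log(1/\alpha)}/(2\sqrt N)$, gives the stated high-probability bound $(b-a)(1+\sqrt{2\log(1/\alpha)})/(2\sqrt N)$.

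Finally, the requirement $W_1(\hat\nu_N,\nu)\le\eps/6$ from Theorem~\ref{thm:eps} holds on this event once
\[
(b-a)\big(1+\sqrt{2\log(1/\alpha)}\big)/(2\sqrt N)\le\eps/6,
\]
that is, once $\sqrt N\ge3(b-a)(1+\sqrt{2\log(1/\alpha)})/\eps$. Squaring gives the stated $N$. None of these steps is a real obstacle. The only point needing care is the bounded-difference constant, and it is exactly $D/N$ through the coupling argument above.
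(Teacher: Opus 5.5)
Your proposal is correct and follows the paper's proof step for step: McDiarmid's inequality with bounded differences $D/N$ (moving mass $1/N$ from $x_j$ to $x_j'$), then the CDF representation of $W_1$ on $[a,b]$ with Jensen's inequality and the binomial variance bound $1/(4N)$, and finally solving $(b-a)(1+\sqrt{2\log(1/\alpha)})/(2\sqrt N)\le\eps/6$ for $N$. Your algebra also checks out, including the identity $\sqrt{\log(1/\alpha)/(2N)}=\sqrt{2\log(1/\alpha)}/(2\sqrt N)$.
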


\begin{proof}
Replacing one sample point $x_j$ by $x_j'$ moves mass $1/N$ from $x_j$ to $x_j'$, which changes
$W_1(\hat\nu_N,\nu)$ by at most $\rho(x_j,x_j')/N\le D/N$. McDiarmid's bounded-difference
inequality~\cite{McDiarmid1989} gives $\PP\big(W_1(\hat\nu_N,\nu)\ge\E\,W_1(\hat\nu_N,\nu)+t\big)\le
\exp(-2Nt^2/D^2)$, which is the first claim. On $[a,b]$,
$\E\,W_1(\hat\nu_N,\nu)=\int_a^b\E|\hat\nu_N([a,y])-\nu([a,y])|\,\dd y\le(b-a)/(2\sqrt N)$ by Jensen's
inequality, since the variance of $\hat\nu_N([a,y])$ is at most $1/(4N)$. The last claim is the condition
$(b-a)(1+\sqrt{2\log(1/\alpha)})/(2\sqrt N)\le\eps/6$.
\end{proof}

In $\R^d$ the expectation $\E\,W_1(\hat\nu_N,\nu)$ decays at a dimension-dependent rate, for instance
like $N^{-1/d}$ for $d\ge3$ on bounded sets~\cite{FournierGuillin2015,WeedBach2019}.

The next corollary lists regular search regions for the certified regions of Section~\ref{sec:geometry}.
For a nonempty $S\subseteq[m]$ and $\beta\in(0,1/|S|]$ let
\[
\cK_\beta(S):=\{\mu\in\Delta_m:\mu_i\ge\beta\ \text{for all }i\in S\},
\]
a compact convex set of laws whose support contains $S$.

\begin{corollary}[Explicit constants]\label{cor:constants}
In each of the following cases $\cK\subseteq\cK'$ is a regular search region with the stated constants.
In {\rm(a)--(c)}, $\cX$ is bounded with diameter $D$ and $\lambda_0\in\cP(\cX)$ is arbitrary.
\begin{enumerate}
\item[\rm(a)] \emph{Doeblin regime.} $S\in\sD$ with witness $(n,z)$, $\cK=\cK_\beta(S)$, $\cK'=\cK_{\beta/2}(S)$:
\[
\bar\delta=\beta,\qquad L_\pi=\frac{D\,n}{2\,\beta^{n}},\qquad r_k=D\Big(1-\frac{\beta^n}{2^n}\Big)^{\lfloor k/n\rfloor}.
\]
\item[\rm(b)] \emph{Splitting regime.} $\cX=[a,b]$, nondecreasing maps, $S\in\sM$ with words of length $n$
in~\eqref{eq:splitting}: the same sets and constants as in {\rm(a)}, with $D=b-a$.
\item[\rm(c)] \emph{Contraction on average.} Lipschitz maps with $L_i:=\Lip(w_i)$,
$L_{\max}:=\max_iL_i$, $r\in(0,1)$, $\cK=\{\mu:\sum_i\mu_iL_i\le r\}$ and
$\cK'=\{\mu:\sum_i\mu_iL_i\le\frac{1+r}{2}\}$:
\[
\bar\delta=\frac{1-r}{L_{\max}},\qquad L_\pi=\frac{D}{2(1-r)},\qquad r_k=D\Big(\frac{1+r}{2}\Big)^k .
\]
If $L_{\max}<1$ one may take $\cK=\cK'=\Delta_m$, $\bar\delta=2$, $L_\pi=D/(2(1-L_{\max}))$ and $r_k=D\,L_{\max}^k$.
\item[\rm(d)] \emph{Homotheties on $\R^d$.} The system~\eqref{eq:homothety} with $a_i\in[0,1]$,
$\lambda_0$ the Dirac mass at the origin, $r\in(0,1)$, $\cK=\{\mu:\langle a,\mu\rangle\le r\}$ and
$\cK'=\{\mu:\langle a,\mu\rangle\le\frac{1+r}2\}$:
\[
\bar\delta=1-r,\qquad L_\pi=\frac{(2-r)\,b_{\max}}{(1-r)^2},\qquad r_k=\frac{2\,b_{\max}}{1-r}\Big(\frac{1+r}{2}\Big)^k .
\]
\end{enumerate}
\end{corollary}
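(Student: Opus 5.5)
The plan is to verify (R1)--(R3) case by case, each time reducing to an earlier perturbation or contraction result applied at the worst point of $\cK'$.

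\emph{Step (R1) in every case.} For (a),(b): if $\mu\in\cK_\beta(S)$ and $\|\mu-\mu'\|_1\le\delta\le\beta$, then $\mu'_i\ge\mu_i-|\mu_i-\mu'_i|$. The crude bound $|\mu_i-\mu'_i|\le\delta$ only gives $\mu'_i\ge0$, so it is not enough. I will use instead that $\sum_i(\mu_i-\mu'_i)=0$, which gives $|\mu_i-\mu'_i|\le\frac12\|\mu-\mu'\|_1\le\beta/2$, hence $\mu'_i\ge\beta/2$ and $\mu'\in\cK_{\beta/2}(S)$. For (c): $\sum_i\mu'_iL_i\le r+\sum_i(\mu'_i-\mu_i)^+L_i\le r+\frac{\delta}{2}L_{\max}\le\frac{1+r}{2}$ when $\delta\le(1-r)/L_{\max}$. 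For (d) the same computation with $a_i\le1$ gives $\langle a,\mu'\rangle\le r+\delta/2\le\frac{1+r}2$. If $L_{\max}<1$, (R1) is trivial.

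\emph{Steps (R2),(R3).} In (a), Theorem~\ref{thm:doeblin}(iii) puts every $\eta\in\cK_{\beta/2}(S)$ in $\cE_{\dTV}(n,(\beta/2)^n)$. Proposition~\ref{prop:unifergodic}(i) then yields $r_k=D(1-\beta^n/2^n)^{\lfloor k/n\rfloor}$ uniformly on $\cK'$. For (R2) I apply Proposition~\ref{prop:unifergodic}(ii) with the reference law $\mu\in\cK$ itself: $\mu\in\cE_{\dTV}(n,\beta^n)$, so $W_1(\pi_\mu,\pi_\eta)\le\frac{Dn}{2\beta^n}\|\mu-\eta\|_1$ for every $\eta$ at all. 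This asymmetry is why (R2) only needs $\mu\in\cK$. Case (b) is identical, with Theorem~\ref{thm:splitting}(ii), $d=\dK$, and $\chi\ge(\min_{i\in S}\mu_i)^n$.

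In (c), Lemma~\ref{lem:contraction} gives $r_k=L(\eta)^kW_1(\lambda_0,\pi_\eta)\le D(\frac{1+r}2)^k$. For (R2) I repeat the proof of Lemma~\ref{lem:lipschitz-pi}, contracting with the operator of $\mu\in\cK$ (constant $r$) rather than requiring both laws in $\cA_r$. The mixture term is bounded by $\frac D2\|\mu-\eta\|_1$, giving $L_\pi=D/(2(1-r))$.

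In (d), Proposition~\ref{prop:localization}(iii) with $r'=\frac{1+r}2$ gives $r_k=\frac{b_{\max}}{1-r'}(r')^k=\frac{2b_{\max}}{1-r}(\frac{1+r}2)^k$ and $M_1(\pi_\eta)\le 2b_{\max}/(1-r)$. Then the third bound of that proposition, with the contraction again taken at $\mu\in\cK$, gives
\[
L_\pi=\frac{2b_{\max}/(1-r)+2b_{\max}}{2(1-r)}=\frac{(2-r)b_{\max}}{(1-r)^2}.
\]

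The main obstacle is (R1) in (a),(b). The stated $\bar\delta=\beta$ works only through the halving $|\mu_i-\mu'_i|\le\frac12\|\mu-\mu'\|_1$, so this inequality must be checked carefully. The remaining steps are bookkeeping of constants.
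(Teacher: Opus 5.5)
Your proposal is correct and follows the paper's proof essentially step for step. You get (R1) from $|\mu_i-\mu_i'|\le\frac12\|\mu-\mu'\|_1$ or its linear-functional analogue, and (R2), (R3) from Theorem~\ref{thm:doeblin}(iii), Theorem~\ref{thm:splitting}(ii) and Proposition~\ref{prop:unifergodic}, respectively Proposition~\ref{prop:localization}(iii) with $r'=(1+r)/2$. The only cosmetic difference is in (c), where you rerun the argument of Lemma~\ref{lem:lipschitz-pi} and use Lemma~\ref{lem:contraction} directly instead of going through Proposition~\ref{prop:contraction}(iii) and the metric $\dW$ in Proposition~\ref{prop:unifergodic}; this is the same computation.
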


\begin{proof}
(a) If $\mu\in\cK_\beta(S)$ and $\|\mu-\mu'\|_1\le\delta\le\beta$, then $|\mu_i-\mu'_i|\le\frac12\|\mu-\mu'\|_1\le\beta/2$
for every $i$, because $\sum_i(\mu_i-\mu'_i)=0$; hence $\mu'_i\ge\beta/2$ for $i\in S$, which is (R1). By Theorem~\ref{thm:doeblin}(iii), $\cK_\beta(S)\subseteq\cE_{\dTV}(n,\beta^n)$ and
$\cK_{\beta/2}(S)\subseteq\cE_{\dTV}(n,(\beta/2)^n)$, so (R2) and (R3) are the two parts of
Proposition~\ref{prop:unifergodic}. (b) is identical, with Theorem~\ref{thm:splitting}(ii). (c) Since
$|\sum_i(\mu_i-\mu_i')L_i|\le\frac12\|\mu-\mu'\|_1L_{\max}$, (R1) holds for $\delta\le(1-r)/L_{\max}$;
(R2) and (R3) follow from Propositions~\ref{prop:contraction}(iii) and~\ref{prop:unifergodic}. (d) Here
$|\langle a,\mu-\mu'\rangle|\le\frac12\|\mu-\mu'\|_1$, which gives (R1), and (R2), (R3) are
Proposition~\ref{prop:localization}(iii) with $r'=(1+r)/2$.
\end{proof}

\begin{corollary}[Consistency]\label{cor:consistency}
Let $\cK\subseteq\cK'$ be a regular search region with $\cK'$ compact, let Assumption~\ref{ass:proper}
hold, and suppose that $\cK$ contains a solution of Problem~\eqref{eq:A}. Let $\delta_j\to0$, $k_j\to\infty$ and
$N_j\to\infty$, and let $\hat\mu_j$ be the corresponding outputs of Algorithm~\ref{alg:shaping}. Then, almost
surely, $J(\hat\mu_j)\to\Jstar$ and every accumulation point of $(\hat\mu_j)$ is a solution
of~\eqref{eq:A}.
\end{corollary}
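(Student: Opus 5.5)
We plan to combine the bound~\eqref{eq:main-bound} of Theorem~\ref{thm:eps} with the lower semicontinuity of Proposition~\ref{prop:lsc}. The argument has three steps.

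\emph{Step 1: the values converge.} Fix a solution $\mu^\dagger\in\cK$, so that $\inf_\cK J=\Jstar$. For $j$ large we have $\delta_j\le\bar\delta$, and Theorem~\ref{thm:eps} gives
\[
\Jstar\le J(\hat\mu_j)\le\Jstar+L_\pi\delta_j+2r_{k_j}+2W_1(\hat\nu_{N_j},\nu).
\]
The lower bound holds because $\hat\mu_j\in\cK'\subseteq\cA$. On the right, $\delta_j\to0$, and $r_{k_j}\to0$ since $r_k\downarrow0$. For the last term we use that the samples form one i.i.d.\ sequence $x_1,x_2,\dots$ and that $\hat\nu_{N_j}$ uses its first $N_j$ points. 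Almost surely $W_1(\hat\nu_N,\nu)\to0$ as $N\to\infty$, as recorded in the proof of Theorem~\ref{thm:eps} (Varadarajan together with the strong law for $\rho(x_j,x_\circ)$, via (W1)). Since $N_j\to\infty$, this term tends to zero along $j$ on a single almost sure event. Hence $J(\hat\mu_j)\to\Jstar$ almost surely.

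\emph{Step 2: accumulation points lie in $\cA$.} Let $\bar\mu$ be an accumulation point, so $\hat\mu_{j_l}\to\bar\mu$ along a subsequence. Every $\hat\mu_j$ lies in $\cK'$, which is compact, so $\bar\mu\in\cK'\subseteq\cA$. This is where compactness of $\cK'$ is needed: it prevents the limit from landing on the missing boundary $\mb$, where Theorem~\ref{thm:exact} shows optimality could fail.

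\emph{Step 3: accumulation points are solutions.} By Proposition~\ref{prop:lsc} (this uses Assumption~\ref{ass:proper}), $J$ is lower semicontinuous on $\cA\subseteq\cU$. Therefore
\[
J(\bar\mu)\le\liminf_l J(\hat\mu_{j_l})=\Jstar .
\]
Since $\bar\mu\in\cA$, we also have $J(\bar\mu)\ge\Jstar$, so $\bar\mu$ is a solution.

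The only delicate point is the almost sure statement in Step 1. Everything holds on the single event $\{W_1(\hat\nu_N,\nu)\to0\}$, which requires coupling all sample sizes through one sequence. If each $\hat\nu_{N_j}$ were drawn from independent samples, we would instead need a Borel--Cantelli argument with a rate, for instance the bound of Corollary~\ref{cor:samplesize} on bounded $\cX$. Here we adopt the nested-sample interpretation.
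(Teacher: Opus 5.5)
Your proposal is correct and follows the paper's proof: the bound~\eqref{eq:main-bound} with $\inf_\cK J=\Jstar$ gives convergence of the values, compactness of $\cK'$ places every accumulation point in $\cA$, and Proposition~\ref{prop:lsc} then shows that it is a solution. Your extra remarks, that $\delta_j\le\bar\delta$ holds for large $j$ and that all $\hat\nu_{N_j}$ are built from a single i.i.d.\ sequence, make explicit what the paper leaves implicit.
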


\begin{proof}
By~\eqref{eq:main-bound}, $\Jstar\le J(\hat\mu_j)\le\Jstar+L_\pi\delta_j+2r_{k_j}+2W_1(\hat\nu_{N_j},\nu)\to\Jstar$
almost surely. If $\hat\mu_j\to\bar\mu$ along a subsequence, then $\bar\mu\in\cK'\subseteq\cA$ and
$J(\bar\mu)\le\liminf_jJ(\hat\mu_j)=\Jstar$ by Proposition~\ref{prop:lsc}.
\end{proof}

\begin{remark}[Scope and limitations]\label{rem:scope}
(a) \emph{Existence and computation fit together.} Theorem~\ref{thm:eps} needs a region $\cK$ that contains a
solution. Section~\ref{sec:existence} provides it: under the hypotheses of
Proposition~\ref{prop:localization}, all solutions lie in the explicit set $\cK_R$, on which
Corollary~\ref{cor:constants}(d) applies with $r=1-\theta_0/(1+R)$. Without localization, the regions
$\cK_\beta(S)$ exhaust the corresponding face-structured sets as $\beta\downarrow0$, and
$\inf_{\cK_\beta(S)}J$ decreases to the infimum over $\{\mu:\supp\mu\supseteq S\}$.

(b) \emph{Cost.} The net has $O\big((2m/\delta)^{m-1}\big)$ points: polynomial in $1/\eps$ for fixed $m$ and
exponential in $m$. In the Doeblin regime of Corollary~\ref{cor:constants}(a), the choice $\delta=\eps/(3L_\pi)$
and the condition $r_k\le\eps/6$ require
\[
|\cN_\delta|=O\Big(\Big(\frac{3mDn}{\eps\,\beta^n}\Big)^{m-1}\Big)\qquad\text{and}\qquad
k\ge n\,\Big\lceil\Big(\frac2\beta\Big)^n\log\frac{6D}{\eps}\Big\rceil ,
\]
the latter because $-\log(1-x)\ge x$. The guarantee of Theorem~\ref{thm:eps} is therefore a
certificate of finiteness with explicit constants rather than a practical method beyond a handful of modes;
Section~\ref{sec:exp-many} shows what can be done for $m=12$. The objective is in general not convex: in Example~\ref{ex:hold} it is continuous on $\cA$, constant on
each half-open segment issued from $e_3$, and not constant, which is incompatible with convexity (let one
endpoint of a chord tend to $e_3$ along one such segment while the other endpoint stays on a segment
with a larger value). An exhaustive search is therefore a natural baseline with a worst-case guarantee; exploiting more structure is the subject of Section~\ref{sec:sensitivity} and of Open Problem~\ref{op:complexity}. The measure
$\pi_\mu^{(k)}$ has up to $m^k$ atoms when $\lambda_0$ is a Dirac mass; it can be computed exactly when the
maps preserve a finite lattice (Section~\ref{sec:exp-storage}), and its atoms do not depend on $\mu$, so
they can be enumerated once (Appendix~\ref{sec:supp-battery}); in the contractive regime the state space can
also be quantized (Section~\ref{sec:quantization}). Otherwise $\pi_\mu^{(k)}$ may be replaced by the
empirical measure of independent runs of the chain, which adds the corresponding Monte Carlo error
to~\eqref{eq:oracle}.

(c) \emph{Conservativeness.} The constants of the Doeblin and splitting regimes depend on $\beta^{-n}$ and
are pessimistic; they certify finiteness and the correct qualitative dependence. The
bound~\eqref{eq:oracle} is sharper, but it involves the unknown quantities $W_1(\hat\nu_N,\nu)$ and
$W_1(\pi_\mu^{(k)},\pi_\mu)$; it becomes computable once these are replaced by the high-probability bound of
Corollary~\ref{cor:samplesize} and by $r_k$. In the experiments of Section~\ref{sec:numerics}, where $\nu$
and $\pi_\mu$ are known, we evaluate it exactly as a consistency check of the implementation.
\end{remark}

\subsection{Discretization of the state space}\label{sec:quantization}
When $\cX$ is a continuum and the maps contract on average, Problem~\eqref{eq:A} can be replaced by a
finite-state problem at a controlled cost, in the spirit of Ulam's
method~\cite{Ulam1960,Li1976,DellnitzJunge1999}. Let $\cX$ be totally bounded, let
$\cX=\bigcup_{j=1}^nC_j$ be a Borel partition with $\diam C_j\le h$, pick $x_j\in C_j$, let
$\cX_n:=\{x_1,\dots,x_n\}$ and let $Q_h:\cX\to\cX_n$ be the quantizer $Q_h(x):=x_j$ for $x\in C_j$, so that
$\rho(x,Q_h(x))\le h$. The quantized maps $\widetilde w_i:=Q_h\circ w_i|_{\cX_n}$ define an IFS on the finite
set $\cX_n$ with Markov operator $\widetilde P_\mu^*\lambda:=\sum_i\mu_i(\widetilde w_i)_\#\lambda$.

On a box $\cX=\prod_{l=1}^d[a_l,b_l]\subset\R^d$ with the Euclidean norm, maps $w_i$ that leave the box
invariant, and a uniform grid $\cX_h$ of step $h$ aligned with the box ($(b_l-a_l)/h\in\N$), one can instead round \emph{randomly and without bias}: let $R_h(y,\cdot)$ be the law
of the grid point obtained by rounding each coordinate $y_l$ independently to the neighboring grid value
above it with probability equal to its fractional position in the grid cell, and below it otherwise, so
that $\int z\,R_h(y,\dd z)=y$ and $\int\|z-y\|^2R_h(y,\dd z)\le dh^2/4$. The randomized quantized chain on
$\cX_h$ has kernel $\widehat P_\mu(x,\cdot):=\sum_i\mu_iR_h(w_i(x),\cdot)$. This is the model used in
Section~\ref{sec:exp-tcl}.

\begin{proposition}[Quantization error]\label{prop:quantization}
Let the maps $w_i$ be Lipschitz and let $\mu\in\Delta_m$.
\begin{enumerate}
\item[\rm(i)] If $\theta_1(\mu)=\sum_i\mu_i\Lip(w_i)<1$ and $\widetilde\pi$ is \emph{any} invariant distribution
of the quantized chain $\widetilde P_\mu$, then
\[
W_1(\pi_\mu,\widetilde\pi)\le\frac{h}{1-\theta_1(\mu)} .
\]
Consequently $|W_1(\nu,\pi_\mu)-W_1(\nu,\widetilde\pi)|\le h/(1-\theta_1(\mu))$ for every $\nu\in\cP_1(\cX)$, and
any two invariant distributions of $\widetilde P_\mu$ are at $W_1$-distance at most $2h/(1-\theta_1(\mu))$.
\item[\rm(ii)] On a box as above, if $\theta_2(\mu):=\sum_i\mu_i\Lip(w_i)^2<1$ and $\widehat\pi$ is any
invariant distribution of the randomized quantized chain $\widehat P_\mu$, then
\[
W_1(\pi_\mu,\widehat\pi)\le\frac{h}{2}\sqrt{\frac{d}{1-\theta_2(\mu)}} .
\]
\end{enumerate}
\end{proposition}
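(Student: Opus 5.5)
We follow the three-step scheme behind Lemma~\ref{lem:lipschitz-pi}: use invariance of both measures, contract with the exact operator, and bound the one-step error between exact and quantized operators applied to the quantized invariant law.

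For (i), view $\widetilde\pi$ as a measure on $\cX$ carried by $\cX_n$; it lies in $\cP_1(\cX)$ because $\cX$ is totally bounded. Since $\widetilde P_\mu^*\widetilde\pi=\widetilde\pi$,
\[
W_1(\pi_\mu,\widetilde\pi)\le W_1(P_\mu^*\pi_\mu,P_\mu^*\widetilde\pi)+W_1(P_\mu^*\widetilde\pi,\widetilde P_\mu^*\widetilde\pi).
\]
The contraction estimate in the proof of Proposition~\ref{prop:contraction} (with $n=1$) bounds the first term by $\theta_1(\mu)W_1(\pi_\mu,\widetilde\pi)$. For the second term, $(\widetilde w_i)_\#\widetilde\pi=(Q_h)_\#(w_i)_\#\widetilde\pi$. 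Coupling $y$ with $Q_h(y)$ gives $W_1\big((w_i)_\#\widetilde\pi,(\widetilde w_i)_\#\widetilde\pi\big)\le h$, and (W3) then bounds the mixture by $h$. Rearranging yields $W_1(\pi_\mu,\widetilde\pi)\le h/(1-\theta_1(\mu))$. Existence of $\pi_\mu$ comes from Proposition~\ref{prop:contraction}; completeness is not an issue, since we may pass to the completion or simply use that $\cX$ is Polish. The consequence for $J$ is the triangle inequality, and the bound on two invariant distributions is the triangle inequality through $\pi_\mu$.

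For (ii), the same scheme cannot be run in $W_1$. A single randomized rounding moves mass by about $h$, so a naive $W_1$ bound would give $h\sqrt d/(2(1-\theta_1))$ rather than the claimed rate. The unbiasedness of $R_h$ has to enter, and that forces a second-moment argument. The natural tool is $W_2$, together with the bound $W_1\le W_2$. For Lipschitz maps, coupling through the same index and an optimal $W_2$ coupling gives
\[
W_2(P_\mu^*\alpha,P_\mu^*\beta)^2\le\theta_2(\mu)W_2(\alpha,\beta)^2.
\]
The key step is a one-step estimate that adds squared errors instead of errors. Take $(X,Y)$ an optimal $W_2$ coupling of $(\pi_\mu,\widehat\pi)$ and a common index $I$. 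Set $X'=w_I(X)$ and let $Y'\sim R_h(w_I(Y),\cdot)$, drawn conditionally independently. Then $Y'-w_I(Y)$ has conditional mean zero given $(X,Y,I)$, so the cross term vanishes:
\[
\E\|X'-Y'\|^2=\E\|w_I(X)-w_I(Y)\|^2+\E\|Y'-w_I(Y)\|^2\le\theta_2(\mu)W_2^2+\tfrac{dh^2}{4}.
\]
Here $(X',Y')$ is a coupling of $(\pi_\mu,\widehat\pi)$ by invariance. Hence $W_2^2\le\theta_2W_2^2+dh^2/4$, which gives $W_2\le\frac h2\sqrt{d/(1-\theta_2)}$, and $W_1\le W_2$ finishes the proof.

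The main obstacle is the preliminary fact $W_2(\pi_\mu,\widehat\pi)<\infty$, needed before rearranging. It holds because the box is bounded. We also need $\pi_\mu$ to exist under $\theta_2<1$. This follows either from Jensen ($\theta_1\le\sqrt{\theta_2}<1$) and Proposition~\ref{prop:contraction}, or directly from $W_2$-contraction on the compact box.
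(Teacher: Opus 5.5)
Your proof is correct. Part (i) is exactly the paper's argument: invariance of both laws, $\theta_1(\mu)$-contraction of $P_\mu^*$, and the one-step bound $W_1(P_\mu^*\widetilde\pi,\widetilde P_\mu^*\widetilde\pi)\le h$ obtained by coupling through the same index and the same point.

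For (ii) you take a different route. The paper does not compare the two invariant laws directly. It starts the exact chain and the randomized chain from the same point $X_0=\widehat X_0\sim\widehat\pi$ and drives them with common indices. From $e_0=0$ it runs the recursion $\E\|e_{k+1}\|^2\le\theta_2(\mu)\E\|e_k\|^2+dh^2/4$, which gives $W_1(\widehat\pi,(P_\mu^*)^k\widehat\pi)\le\frac h2\sqrt{d/(1-\theta_2(\mu))}$ for every $k$. It then lets $k\to\infty$, using $\theta_1(\mu)\le\theta_2(\mu)^{1/2}<1$ and Lemma~\ref{lem:contraction}.

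You instead run the stationary, one-step version of the same computation in $W_2$. An optimal coupling of $(\pi_\mu,\widehat\pi)$, pushed forward one step with a common index and an unbiased rounding, is again a coupling of the same pair. The unbiasedness of $R_h$ kills the same cross term as in the paper.

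What each route buys:
\begin{itemize}
\item Your version is shorter and follows the fixed-point scheme of Lemma~\ref{lem:lipschitz-pi}. It also delivers the bound directly in $W_2$.
\item Its price is the a priori finiteness of $W_2(\pi_\mu,\widehat\pi)$, which you correctly obtain from boundedness of the box, and the use of an optimal (or $\eps$-optimal) $W_2$ coupling.
\item The paper's trajectory argument needs neither, because its error recursion starts at zero. In exchange it needs the limiting step, that is, the $W_1$-attraction of $(P_\mu^*)^k\widehat\pi$ to $\pi_\mu$.
\end{itemize}
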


\begin{proof}
(i) For every $\lambda\in\cP(\cX_n)$, coupling $P_\mu^*\lambda$ and $\widetilde P_\mu^*\lambda$ through the same
index $i$ and the same point $x$ gives $W_1(P_\mu^*\lambda,\widetilde P_\mu^*\lambda)\le h$, because
$\rho(w_i(x),Q_h(w_i(x)))\le h$. By invariance of both measures and Proposition~\ref{prop:contraction},
\[
W_1(\pi_\mu,\widetilde\pi)\le W_1(P_\mu^*\pi_\mu,P_\mu^*\widetilde\pi)+W_1(P_\mu^*\widetilde\pi,\widetilde P_\mu^*\widetilde\pi)
\le\theta_1(\mu)\,W_1(\pi_\mu,\widetilde\pi)+h .
\]
The remaining claims follow from the triangle inequality.

(ii) Let $\widehat X_0=X_0\sim\widehat\pi$, let $(I_k)$ be i.i.d.\ with law $\mu$, and put
$X_{k+1}:=w_{I_{k+1}}(X_k)$ and $\widehat X_{k+1}:=w_{I_{k+1}}(\widehat X_k)+\xi_{k+1}$, where, conditionally on
$\cF_k:=\sigma(X_0,I_1,\xi_1,\dots,I_k,\xi_k)$ and $I_{k+1}$, the point $w_{I_{k+1}}(\widehat X_k)+\xi_{k+1}$ has law
$R_h(w_{I_{k+1}}(\widehat X_k),\cdot)$. Then $\widehat X_k\sim\widehat\pi$ and $X_k\sim(P_\mu^*)^k\widehat\pi$ for all
$k$, and $\E[\xi_{k+1}\mid\cF_k,I_{k+1}]=0$, $\E[\|\xi_{k+1}\|^2\mid\cF_k,I_{k+1}]\le dh^2/4$. The error
$e_k:=\widehat X_k-X_k$ satisfies $e_{k+1}=\big(w_{I_{k+1}}(\widehat X_k)-w_{I_{k+1}}(X_k)\big)+\xi_{k+1}$, where the
first term is measurable with respect to $(\cF_k,I_{k+1})$. Hence the cross term vanishes in
$\E\|e_{k+1}\|^2$, and since $I_{k+1}$ is independent of $\cF_k$,
\[
\E\|e_{k+1}\|^2\le\sum_i\mu_i\Lip(w_i)^2\,\E\|e_k\|^2+\frac{dh^2}4=\theta_2(\mu)\,\E\|e_k\|^2+\frac{dh^2}4 .
\]
With $e_0=0$ this gives $\E\|e_k\|^2\le dh^2/(4(1-\theta_2(\mu)))$ for all $k$, so that
$W_1(\widehat\pi,(P_\mu^*)^k\widehat\pi)\le\E\|e_k\|\le\frac h2\sqrt{d/(1-\theta_2(\mu))}$. Finally
$\theta_1(\mu)\le\theta_2(\mu)^{1/2}<1$ by Jensen's inequality, so $(P_\mu^*)^k\widehat\pi\to\pi_\mu$ in $W_1$ by
Lemma~\ref{lem:contraction}, and the claim follows by letting $k\to\infty$.
\end{proof}

Unbiased rounding errors average out instead of accumulating: for maps with a common Lipschitz constant
$L$ close to one, (ii) gives an error of order $h/\sqrt{2(1-L)}$ instead of $h/(1-L)$. The price is the
second-moment condition $\theta_2(\mu)<1$, which is stronger than $\theta_1(\mu)<1$: for Lipschitz constants
$0$ and $1.9$ with equal weights, $\theta_1=0.95$ but $\theta_2\approx1.8$.

The bound is uniform on $\{\theta_1\le r\}$ and holds for every invariant distribution of the
quantized chain, so the non-uniqueness that quantization may create is harmless in the contractive
regime: minimizing $W_1(\nu,\widetilde\pi)$ over pairs $(\mu,\widetilde\pi)$ with $\theta_1(\mu)\le r$ and
$\widetilde\pi$ invariant for $\widetilde P_\mu$ yields a law that is $2h/(1-r)$-optimal for~\eqref{eq:A}
restricted to $\{\theta_1\le r\}$. The quantized problem is a finite-state shaping problem, to which
Theorem~\ref{thm:finite} and Open Problem~\ref{op:complexity} apply. Without contraction no such bound
is available in general, since quantization can change the long-run behavior of the chain; the storage
models of Section~\ref{sec:exp-storage} do not need it because they leave a finite lattice invariant.

\section{Sensitivity of the invariant measure and a projected subgradient method}
\label{sec:sensitivity}

The grid search of Algorithm~\ref{alg:shaping} evaluates the objective at $|\mathcal N_\delta|=\binom{q+m-1}{m-1}$ points, and is therefore limited to a handful of modes. In this section we show that $\mu\mapsto\pi_\mu$ is differentiable, with a derivative given by the solution of a Poisson equation, and that the shaping objective $J$ is weakly convex on the sets $\mathcal A_r$. This yields computable subgradients, first-order optimality conditions, a local identifiability criterion, and a projected subgradient method with non-asymptotic guarantees.

\smallskip\noindent\emph{Setting and notation.} Throughout this section, $(\mathcal X,\rho)$ is a complete separable metric space with the base point $x_\circ$ of Section~\ref{sec:setting}. Each $w_i$ is Lipschitz with constant $L_i$, and we set $L_{\max}:=\max_iL_i$ and $b:=\max_i\rho(w_i(x_\circ),x_\circ)$. For $r\in[0,1)$ we use the set $\mathcal A_r=\{\mu\in\Delta_m:L(\mu)\le r\}$ of Lemma~\ref{lem:lipschitz-pi}; it is a convex polytope, and $\mathcal A_r\subseteq\mathcal A$ by Lemma~\ref{lem:contraction}. The optimization results of this section concern Problem~\eqref{eq:A} restricted to $\mathcal A_r$, that is, the feasible set $\cF=\mathcal A_r$ of Remark~\ref{rem:feasible}; they say nothing about minimizers in $\cA\setminus\mathcal A_{<1}$, which exist, for instance, in Example~\ref{ex:hold}, where $\mathcal A_{<1}=\emptyset$. We write $\mathcal A_{<1}:=\{\mu\in\Delta_m:L(\mu)<1\}$, and $T_0:=\{h\in\R^m:\sum_ih_i=0\}$ for the tangent space of the simplex. For a Lipschitz function $f:\mathcal X\to\R$, $\operatorname{Lip}(f)$ denotes its Lipschitz constant, $\operatorname{Lip}_1:=\{f:\operatorname{Lip}(f)\le1\}$, and $\lambda(f):=\int f\,d\lambda$ for $\lambda\in\mathcal P_1(\mathcal X)$. The transition operator acts on functions by
\[
(P_\mu f)(x):=\int f(y)\,P_\mu(x,dy)=\sum_{i=1}^m\mu_i f\big(w_i(x)\big),
\]
so that $(P_\mu^*\lambda)(f)=\lambda(P_\mu f)$ and $\operatorname{Lip}(P_\mu f)\le L(\mu)\operatorname{Lip}(f)$. We use the Kantorovich--Rubinstein duality \cite[Thm.~5.10 and Particular Case~5.16]{Villani2009}: for $\alpha,\beta\in\mathcal P_1(\mathcal X)$,
\begin{equation}\label{eq:KR}
W_1(\alpha,\beta)=\max_{f\in\operatorname{Lip}_1}\big(\alpha(f)-\beta(f)\big),
\end{equation}
where the maximum is attained. Finally, for $\mu\in\mathcal A_{<1}$ we define the \emph{mode spread}
\[
D_\mu:=\max_{i,j\in[m]}\int_{\mathcal X}\rho\big(w_i(x),w_j(x)\big)\,\pi_\mu(dx).
\]
Integrating the bound $\rho(w_i(x),x_\circ)\le L_i\rho(x,x_\circ)+b$ against $\pi_\mu$ and using invariance gives
$\int\rho(x,x_\circ)\,\pi_\mu(dx)\le b/(1-L(\mu))$. Hence, by the triangle inequality,
\begin{equation}\label{eq:Dr}
D_\mu\le D_r:=\min\Big\{\operatorname{diam}(\mathcal X),\ 2b\Big(1+\frac{L_{\max}}{1-r}\Big)\Big\}\qquad\text{for all }\mu\in\mathcal A_r .
\end{equation}

\subsection{Linear response}

\begin{lemma}[Poisson equation]\label{lem:poisson}
Let $\mu\in\mathcal A_{<1}$ and let $f$ be Lipschitz. The series
\[
u_f^\mu(x):=\sum_{k=0}^\infty\Big((P_\mu^kf)(x)-\pi_\mu(f)\Big)
\]
converges absolutely for every $x\in\mathcal X$. The function $u_f^\mu$ is Lipschitz with $\operatorname{Lip}(u_f^\mu)\le\operatorname{Lip}(f)/(1-L(\mu))$, satisfies $\pi_\mu(u_f^\mu)=0$, and solves the Poisson equation
\begin{equation}\label{eq:poisson}
u_f^\mu-P_\mu u_f^\mu=f-\pi_\mu(f).
\end{equation}
\end{lemma}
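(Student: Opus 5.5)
The plan is to bound each term of the series through the contraction estimate of Lemma~\ref{lem:contraction}, and then to verify the Poisson equation by telescoping. Since $P_\mu^kf(x)=\int f\,\dd\big((P_\mu^*)^k\delta_x\big)$ and $\pi_\mu(f)=\int f\,\dd\big((P_\mu^*)^k\pi_\mu\big)$, the duality \eqref{eq:KR} gives
\[
\big|(P_\mu^kf)(x)-\pi_\mu(f)\big|\le\Lip(f)\,W_1\big((P_\mu^*)^k\delta_x,\pi_\mu\big)\le\Lip(f)\,L(\mu)^k\,W_1(\delta_x,\pi_\mu).
\]
The last quantity equals $\Lip(f)\,L(\mu)^k\int\rho(x,y)\,\pi_\mu(\dd y)$. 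This integral is finite because $\pi_\mu\in\cP_1(\cX)$; indeed $M_1(\pi_\mu)\le b/(1-L(\mu))$ as noted before~\eqref{eq:Dr}. Since $L(\mu)<1$, the series converges absolutely for every $x$, and it is dominated by $\Lip(f)\,W_1(\delta_x,\pi_\mu)/(1-L(\mu))$.

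For the Lipschitz bound, the constants $\pi_\mu(f)$ cancel in a difference. We use $\Lip(P_\mu^kf)\le L(\mu)^k\Lip(f)$, obtained by iterating $\Lip(P_\mu g)\le L(\mu)\Lip(g)$. Summing termwise, which is allowed by absolute convergence at both points, gives
\[
|u_f^\mu(x)-u_f^\mu(y)|\le\sum_k L(\mu)^k\Lip(f)\rho(x,y)=\frac{\Lip(f)}{1-L(\mu)}\,\rho(x,y).
\]

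For $\pi_\mu(u_f^\mu)=0$, we integrate the series against $\pi_\mu$. This is justified by dominated convergence (Fubini for series): the dominating function $\sum_k\Lip(f)L(\mu)^kW_1(\delta_x,\pi_\mu)$ is $\pi_\mu$-integrable, because $\int W_1(\delta_x,\pi_\mu)\,\pi_\mu(\dd x)\le 2M_1(\pi_\mu)<\infty$. Each term integrates to $\pi_\mu(P_\mu^kf)-\pi_\mu(f)=0$, by invariance $(P_\mu^*)^k\pi_\mu=\pi_\mu$.

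For the Poisson equation, we first check that $P_\mu$ may be applied termwise. Here $P_\mu$ is a finite convex combination of evaluations at the points $w_i(x)$, and the series converges absolutely at each of these points. Hence $P_\mu u_f^\mu(x)=\sum_{k\ge0}\big(P_\mu^{k+1}f(x)-\pi_\mu(f)\big)$, with no integrability issue at all. Subtracting this from the series for $u_f^\mu(x)$ and telescoping the partial sums gives $u_f^\mu-P_\mu u_f^\mu=\lim_K\big(f-P_\mu^{K+1}f(x)\big)$. By the first step, $P_\mu^{K+1}f(x)\to\pi_\mu(f)$, so the limit equals $f-\pi_\mu(f)$, which is~\eqref{eq:poisson}.

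The only point requiring care is the interchange of the sum with the integral in the step $\pi_\mu(u_f^\mu)=0$. That interchange is handled by the first-moment bound on $\pi_\mu$, so I do not expect a genuine obstacle.
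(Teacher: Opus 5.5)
Your proposal is correct and follows essentially the same route as the paper: the duality-plus-contraction bound $|(P_\mu^kf)(x)-\pi_\mu(f)|\le\Lip(f)\,L(\mu)^k\int\rho(x,y)\,\pi_\mu(\dd y)$, termwise Lipschitz estimates, termwise integration against $\pi_\mu$ justified by $\iint\rho(x,y)\,\pi_\mu(\dd y)\,\pi_\mu(\dd x)<\infty$, and termwise application of the finite operator $P_\mu$. The only cosmetic difference is that you telescope partial sums and use $P_\mu^{K+1}f(x)\to\pi_\mu(f)$, whereas the paper simply drops the $k=0$ term of the reindexed convergent series.
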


\begin{proof}
By duality and Lemma~\ref{lem:contraction},
$|(P_\mu^kf)(x)-\pi_\mu(f)|=|((P_\mu^*)^k\delta_x)(f)-\pi_\mu(f)|\le\operatorname{Lip}(f)\,L(\mu)^k\int\rho(x,y)\,\pi_\mu(dy)$, and the integral is finite. Hence the series converges absolutely. Since $\operatorname{Lip}(P_\mu^kf)\le L(\mu)^k\operatorname{Lip}(f)$, every partial sum, and therefore the pointwise limit, is Lipschitz with constant at most $\operatorname{Lip}(f)/(1-L(\mu))$. Invariance gives $\pi_\mu(P_\mu^kf)=\pi_\mu(f)$ for all $k$. The bound above shows that the series may be integrated term by term against $\pi_\mu$, because $\int\!\!\int\rho(x,y)\,\pi_\mu(dy)\,\pi_\mu(dx)<\infty$; this yields $\pi_\mu(u_f^\mu)=0$. Finally, $P_\mu$ is a finite combination of compositions, so it acts term by term, and $P_\mu u_f^\mu=\sum_{k\ge1}(P_\mu^kf-\pi_\mu(f))=u_f^\mu-(f-\pi_\mu(f))$.
\end{proof}

\begin{lemma}[Mixtures of push-forwards]\label{lem:mixture}
Let $\mu\in\mathcal A_{<1}$, $h\in T_0$, and let $g$ be Lipschitz. Then
\[
\Big|\sum_{i=1}^mh_i\int g\big(w_i(x)\big)\,\pi_\mu(dx)\Big|\le\tfrac12\,D_\mu\operatorname{Lip}(g)\,\|h\|_1 .
\]
Consequently, for all $\mu,\eta\in\mathcal A_{<1}$,
\begin{equation}\label{eq:stability-general}
W_1(\pi_\mu,\pi_\eta)\le\frac{D_\mu}{2(1-L(\eta))}\,\|\mu-\eta\|_1 .
\end{equation}
\end{lemma}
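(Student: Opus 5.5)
For the first inequality, the plan is to use the fact that $h\in T_0$ lets us subtract any constant from the integrand. Fix $j\in[m]$ and write
\[
\sum_ih_i\int g(w_i(x))\,\pi_\mu(dx)=\sum_ih_i\int\big(g(w_i(x))-g(w_j(x))\big)\,\pi_\mu(dx),
\]
which holds because $\sum_ih_i=0$.

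A single fixed $j$ only gives the bound $\|h\|_1D_\mu\Lip(g)$, so the factor $\frac12$ needs more care. I will split $h=h^+-h^-$, where $h^\pm\ge0$ and $\sum_ih_i^+=\sum_ih_i^-=\frac12\|h\|_1=:c$. If $c=0$ there is nothing to prove. Otherwise put $\alpha:=h^+/c$ and $\beta:=h^-/c$, both in $\Delta_m$, exactly as in the proof of Lemma~\ref{lem:basic}(iii). This yields
\[
\sum_ih_i\int g\circ w_i\,\dd\pi_\mu=c\sum_{i,j}\alpha_i\beta_j\int\big(g(w_i(x))-g(w_j(x))\big)\,\pi_\mu(dx).
\]
Each integral on the right is bounded in absolute value by $\Lip(g)\int\rho(w_i(x),w_j(x))\,\pi_\mu(dx)\le\Lip(g)D_\mu$. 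Since $\sum_{i,j}\alpha_i\beta_j=1$, this gives the claim. Finiteness of the integrals follows from $\pi_\mu\in\cP_1$ and the linear growth of Lipschitz maps.

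For \eqref{eq:stability-general}, I will follow the three-step scheme announced after Lemma~\ref{lem:lipschitz-pi}, but with the roles arranged so that the mixture term is evaluated against $\pi_\mu$. This is what produces $D_\mu$ in the numerator and $L(\eta)$ in the contraction factor. By invariance and the triangle inequality,
\[
W_1(\pi_\mu,\pi_\eta)=W_1(P_\mu^*\pi_\mu,P_\eta^*\pi_\eta)\le W_1(P_\mu^*\pi_\mu,P_\eta^*\pi_\mu)+W_1(P_\eta^*\pi_\mu,P_\eta^*\pi_\eta).
\]
By Lemma~\ref{lem:contraction}, the second term is at most $L(\eta)W_1(\pi_\mu,\pi_\eta)$.

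For the first term I will use the duality \eqref{eq:KR}. For $f\in\Lip_1$, the difference $(P_\mu^*\pi_\mu)(f)-(P_\eta^*\pi_\mu)(f)$ equals $\sum_ih_i\int f\circ w_i\,\dd\pi_\mu$ with $h=\mu-\eta\in T_0$. The first part therefore bounds it by $\frac12D_\mu\|\mu-\eta\|_1$. Rearranging, which is allowed since $L(\eta)<1$ and $W_1(\pi_\mu,\pi_\eta)<\infty$, gives \eqref{eq:stability-general}.

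I do not expect a real obstacle here. The only points needing care are obtaining the factor $\frac12$ through the $h^\pm$ decomposition, and placing $\pi_\mu$ rather than $\pi_\eta$ in the mixture term, so that the constants come out as stated.
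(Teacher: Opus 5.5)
Your proposal is correct and follows the paper's proof essentially line by line. The normalized split $h=h^+-h^-$ with $c=\frac12\|h\|_1$ is exactly the paper's identity $\sum_ih_ia_i=s^{-1}\sum_{i,j}h_i^+h_j^-(a_i-a_j)$, and the second claim is obtained, as in the paper, from invariance, the contraction factor $L(\eta)$ of $P_\eta^*$, and duality applied to the mixture term evaluated against $\pi_\mu$.
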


\begin{proof}
Write $h=h^+-h^-$ with $h^\pm\ge0$. Since $\sum_ih_i=0$, we have $\|h^+\|_1=\|h^-\|_1=\tfrac12\|h\|_1=:s$, and we may assume $s>0$. With $a_i:=\int g\circ w_i\,d\pi_\mu$, $\sum_ih_ia_i=s^{-1}\sum_{i,j}h_i^+h_j^-(a_i-a_j)$, and $|a_i-a_j|\le\operatorname{Lip}(g)\int\rho(w_i(x),w_j(x))\,\pi_\mu(dx)\le\operatorname{Lip}(g)D_\mu$. This proves the first claim. For the second, as in the proof of Lemma~\ref{lem:lipschitz-pi},
$W_1(\pi_\eta,\pi_\mu)\le L(\eta)\,W_1(\pi_\eta,\pi_\mu)+W_1(P_\eta^*\pi_\mu,P_\mu^*\pi_\mu)$. By \eqref{eq:KR} and the first claim with $h=\eta-\mu$,
$W_1(P_\eta^*\pi_\mu,P_\mu^*\pi_\mu)=\max_{f\in\operatorname{Lip}_1}\sum_i(\eta_i-\mu_i)\,\pi_\mu(f\circ w_i)\le\tfrac12D_\mu\|\eta-\mu\|_1$.
\end{proof}

Since $D_\mu\le\operatorname{diam}(\mathcal X)$, the bound \eqref{eq:stability-general} contains Lemma~\ref{lem:lipschitz-pi}, and it does not require $\mathcal X$ to be bounded.

\begin{proposition}[Exact perturbation identity]\label{prop:identity}
For all $\mu,\eta\in\mathcal A_{<1}$ and every Lipschitz $f$,
\begin{equation}\label{eq:identity}
\pi_\eta(f)-\pi_\mu(f)=\sum_{i=1}^m(\eta_i-\mu_i)\int u_f^\mu\big(w_i(x)\big)\,\pi_\eta(dx).
\end{equation}
\end{proposition}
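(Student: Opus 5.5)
The plan is to integrate the Poisson equation~\eqref{eq:poisson} for $\mu$ against the invariant measure $\pi_\eta$ of the \emph{other} law, and then to use invariance of $\pi_\eta$ under $P_\eta$ to turn the difference $P_\eta-P_\mu$ into the stated mixture of push-forwards. Fix $\mu,\eta\in\cA_{<1}$ and a Lipschitz $f$, and put $u:=u_f^\mu$. By Lemma~\ref{lem:poisson}, $u$ is Lipschitz and
\[
u-P_\mu u=f-\pi_\mu(f).
\]

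First I check integrability. Since $u$ and $f$ are Lipschitz, $|u(x)|\le|u(x_\circ)|+\Lip(u)\rho(x,x_\circ)$. Moreover $\pi_\eta\in\cP_1(\cX)$, because $\eta\in\cA$ by Lemma~\ref{lem:contraction}. Hence $u$, $f$, and $u\circ w_i$ (the latter Lipschitz with constant $L_i\Lip(u)$) are all $\pi_\eta$-integrable. Integrating the Poisson equation against $\pi_\eta$ gives
\[
\pi_\eta(u)-\pi_\eta(P_\mu u)=\pi_\eta(f)-\pi_\mu(f).
\]

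Next I rewrite the left-hand side. Invariance $P_\eta^*\pi_\eta=\pi_\eta$ yields $\pi_\eta(u)=\pi_\eta(P_\eta u)$. This identity holds for integrable Lipschitz $u$, not only bounded ones, because $(P_\eta^*\lambda)(g)=\lambda(P_\eta g)$ is a finite sum of push-forward integrals $\int g\circ w_i\,\dd\lambda$, and each of these is valid for every $\lambda$-integrable $g\circ w_i$. Therefore
\[
\pi_\eta(f)-\pi_\mu(f)=\pi_\eta\big((P_\eta-P_\mu)u\big)=\sum_{i=1}^m(\eta_i-\mu_i)\int u\big(w_i(x)\big)\,\pi_\eta(\dd x),
\]
which is exactly~\eqref{eq:identity}.

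The only delicate point is the justification of integrability and of invariance for unbounded test functions. I handle it as above, through the linear growth of Lipschitz functions and the finite first moment of $\pi_\eta$. Everything else is algebra.
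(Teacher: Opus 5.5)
Your proposal is correct and follows the paper's proof: integrate the Poisson equation~\eqref{eq:poisson} for $\mu$ against $\pi_\eta$, replace $\pi_\eta(u_f^\mu)$ by $\pi_\eta(P_\eta u_f^\mu)$ using invariance, and expand $(P_\eta-P_\mu)u_f^\mu=\sum_i(\eta_i-\mu_i)\,u_f^\mu\circ w_i$. Your integrability argument (Lipschitz functions grow at most linearly and $\pi_\eta\in\mathcal P_1(\mathcal X)$) is the justification the paper gives in one line.
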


\begin{proof}
Integrate \eqref{eq:poisson} against $\pi_\eta$; this is possible because $u_f^\mu$ is Lipschitz and $\pi_\eta\in\mathcal P_1(\mathcal X)$. Using $\pi_\eta(u_f^\mu)=\pi_\eta(P_\eta u_f^\mu)$, we obtain
$\pi_\eta(f)-\pi_\mu(f)=\pi_\eta(u_f^\mu)-\pi_\eta(P_\mu u_f^\mu)=\pi_\eta\big((P_\eta-P_\mu)u_f^\mu\big)$, and $(P_\eta-P_\mu)u=\sum_i(\eta_i-\mu_i)\,u\circ w_i$.
\end{proof}

\begin{definition}[Sensitivity vector]\label{def:sensitivity}
For $\mu\in\mathcal A_{<1}$ and Lipschitz $f$, the \emph{sensitivity vector} $s_\mu(f)\in\R^m$ has the entries
\[
s_\mu(f)_i:=\int u_f^\mu\big(w_i(x)\big)\,\pi_\mu(dx)=\big((w_i)_\#\pi_\mu\big)(u_f^\mu),\qquad i\in[m].
\]
\end{definition}

Only the differences $s_\mu(f)_i-s_\mu(f)_j$ matter below, since the vector is always paired with $h\in T_0$.

\begin{theorem}[Linear response]\label{thm:linear-response}
Let $\mu,\eta\in\mathcal A_{<1}$ and $h:=\eta-\mu$. For every Lipschitz $f$,
\begin{equation}\label{eq:linear-response}
\big|\pi_\eta(f)-\pi_\mu(f)-\langle s_\mu(f),h\rangle\big|\le C_{\mu,\eta}\operatorname{Lip}(f)\,\|h\|_1^2,
\qquad C_{\mu,\eta}:=\frac{L_{\max}D_\mu}{2(1-L(\mu))(1-L(\eta))},
\end{equation}
and $|\langle s_\mu(f),h\rangle|\le D_\mu\operatorname{Lip}(f)\|h\|_1/(2(1-L(\mu)))$. Hence the linear map $h\mapsto D\pi_\mu[h]$, defined by $\langle D\pi_\mu[h],f\rangle:=\langle s_\mu(f),h\rangle$, is the derivative of $\mu\mapsto\pi_\mu$ relative to $\mathcal A_{<1}$, in the sense that
\[
\sup_{f\in\operatorname{Lip}_1}\big|\pi_\eta(f)-\pi_\mu(f)-\langle D\pi_\mu[\eta-\mu],f\rangle\big|=O(\|\eta-\mu\|_1^2).
\]
Moreover,
\begin{equation}\label{eq:neumann}
\langle D\pi_\mu[h],f\rangle=\sum_{k=0}^\infty\zeta_h\big(P_\mu^kf\big),\qquad\zeta_h:=\sum_{i=1}^mh_i\,(w_i)_\#\pi_\mu,
\end{equation}
that is, $D\pi_\mu[h]=\sum_{k\ge0}(P_\mu^*)^k\zeta_h$ in duality with Lipschitz functions.
\end{theorem}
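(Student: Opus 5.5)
The plan is to start from the exact perturbation identity~\eqref{eq:identity} of Proposition~\ref{prop:identity} and compare it with $\langle s_\mu(f),h\rangle$. Subtracting, the remainder is
\[
\pi_\eta(f)-\pi_\mu(f)-\langle s_\mu(f),h\rangle=\sum_{i=1}^m h_i\Big(\int u_f^\mu\circ w_i\,\dd\pi_\eta-\int u_f^\mu\circ w_i\,\dd\pi_\mu\Big).
\]
Put $g:=\sum_i h_i\,u_f^\mu\circ w_i$. Then the remainder is $\pi_\eta(g)-\pi_\mu(g)$, so it is at most $\Lip(g)\,W_1(\pi_\mu,\pi_\eta)$. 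The factor $W_1(\pi_\mu,\pi_\eta)$ is controlled by~\eqref{eq:stability-general}, which gives $D_\mu\|h\|_1/(2(1-L(\eta)))$.

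The key step, and the only delicate one, is to bound $\Lip(g)$ by a multiple of $\|h\|_1$ rather than by $\sum_i|h_i|L_i\,\Lip(u)$ with an extra constant. Since $\sum_ih_i=0$, we can write $h=h^+-h^-$ with $\|h^\pm\|_1=\frac12\|h\|_1$. Then
\[
g=\sum_ih_i^+u\circ w_i-\sum_ih_i^-u\circ w_i,
\]
and crudely $\Lip(g)\le\sum_i|h_i|L_i\Lip(u)\le L_{\max}\|h\|_1\Lip(f)/(1-L(\mu))$, using $\Lip(u_f^\mu)\le\Lip(f)/(1-L(\mu))$ from Lemma~\ref{lem:poisson}. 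Combining the two bounds gives $L_{\max}D_\mu\|h\|_1^2\Lip(f)/(2(1-L(\mu))(1-L(\eta)))$, which is exactly $C_{\mu,\eta}$. The crude bound is therefore enough, and no cancellation is needed.

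The bound on $|\langle s_\mu(f),h\rangle|$ follows from the first part of Lemma~\ref{lem:mixture}, applied with $g=u_f^\mu$ and with $\Lip(u_f^\mu)$ bounded as above. This shows that $h\mapsto\langle s_\mu(f),h\rangle$ is linear and bounded uniformly over $f\in\Lip_1$. Taking the supremum over $f\in\Lip_1$ in~\eqref{eq:linear-response} then gives the $O(\|\eta-\mu\|_1^2)$ statement, with constant $\sup_{\eta\in\mathcal A_r}C_{\mu,\eta}$ on each $\mathcal A_r$, or locally near $\mu$ since $L$ is continuous.

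For~\eqref{eq:neumann}, insert the series defining $u_f^\mu$ into $s_\mu(f)_i=((w_i)_\#\pi_\mu)(u_f^\mu)$ and sum against $h$. The constants $\pi_\mu(f)$ cancel because $\sum_ih_i=0$, which yields $\sum_k\zeta_h(P_\mu^kf)$. Exchanging the sum and the integral is the only point to check. It is justified by the bound $|P_\mu^kf(y)-\pi_\mu(f)|\le\Lip(f)L(\mu)^k\int\rho(y,z)\,\pi_\mu(\dd z)$ from the proof of Lemma~\ref{lem:poisson}, together with integrability of $\rho(w_i(x),z)$ against $\pi_\mu\otimes\pi_\mu$, which follows from the first-moment bound and linear growth. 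This gives dominated convergence, and the Neumann form $D\pi_\mu[h]=\sum_k(P_\mu^*)^k\zeta_h$ is a restatement by duality.
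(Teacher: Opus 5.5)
Your proposal is correct and follows the paper's proof essentially step by step. Proposition~\ref{prop:identity} reduces the remainder to $(\pi_\eta-\pi_\mu)(g_h)$, and the crude bound $\operatorname{Lip}(g_h)\le L_{\max}\operatorname{Lip}(f)\|h\|_1/(1-L(\mu))$ together with \eqref{eq:stability-general} gives exactly $C_{\mu,\eta}$, with no cancellation needed, as you concluded. Lemma~\ref{lem:mixture} with $g=u_f^\mu$ bounds $\langle s_\mu(f),h\rangle$, and \eqref{eq:neumann} follows by inserting the Poisson series and exchanging sums using the absolute-convergence bound from Lemma~\ref{lem:poisson}.
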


\begin{proof}
By Proposition~\ref{prop:identity}, the left-hand side of \eqref{eq:linear-response} equals $|(\pi_\eta-\pi_\mu)(g_h)|$ with $g_h:=\sum_ih_i\,u_f^\mu\circ w_i$. By Lemma~\ref{lem:poisson}, $\operatorname{Lip}(g_h)\le\sum_i|h_i|L_i\operatorname{Lip}(u_f^\mu)\le L_{\max}\operatorname{Lip}(f)\|h\|_1/(1-L(\mu))$. Therefore, by \eqref{eq:KR} and \eqref{eq:stability-general},
\[
|(\pi_\eta-\pi_\mu)(g_h)|\le\operatorname{Lip}(g_h)\,W_1(\pi_\eta,\pi_\mu)\le C_{\mu,\eta}\operatorname{Lip}(f)\|h\|_1^2 .
\] The bound on $\langle s_\mu(f),h\rangle$ is Lemma~\ref{lem:mixture} with $g=u_f^\mu$. For \eqref{eq:neumann}, insert the series of Lemma~\ref{lem:poisson} into $\langle s_\mu(f),h\rangle=\sum_ih_i\,\pi_\mu(u_f^\mu\circ w_i)$. The constants $\pi_\mu(f)$ cancel because $\sum_ih_i=0$, and the order of summation may be exchanged because the double series converges absolutely, by the bound in the proof of Lemma~\ref{lem:poisson}.
\end{proof}

\subsection{Structure of the shaping objective}

Fix $\nu\in\mathcal P_1(\mathcal X)$ and $J(\mu)=W_1(\nu,\pi_\mu)$. For $\mu\in\mathcal A_{<1}$, let
\[
\Phi_\mu:=\big\{f\in\operatorname{Lip}_1:\ f(x_\circ)=0,\ \pi_\mu(f)-\nu(f)=J(\mu)\big\}
\]
be the set of optimal Kantorovich potentials, which is nonempty by \eqref{eq:KR}. Set
\[
K_r:=\frac{L_{\max}D_r}{2(1-r)^2},\qquad \omega_r:=2mK_r,\qquad G_r:=\frac{\sqrt m\,D_r}{1-r}.
\]
For a function $\varphi$ that is $\omega$-weakly convex on a convex set $C$, i.e.\ $\varphi+\frac\omega2\|\cdot\|_2^2$ is convex on $C$, we write $\partial\varphi(\mu)$ for the set of $v\in\R^m$ such that $\varphi(\eta)\ge\varphi(\mu)+\langle v,\eta-\mu\rangle-\frac\omega2\|\eta-\mu\|_2^2$ for all $\eta\in C$. This set coincides with the Fr\'echet and Clarke subdifferentials \cite{DavisDrusvyatskiy2019}.

\begin{theorem}[Weak convexity, subgradients, and directional derivatives]\label{thm:structure}
Let $r\in[0,1)$.
\begin{enumerate}
\item[(i)] For all $\mu,\eta\in\mathcal A_r$, $f\in\Phi_\mu$ and $f'\in\Phi_\eta$,
\begin{equation}\label{eq:two-sided}
\langle s_\mu(f),\eta-\mu\rangle-K_r\|\eta-\mu\|_1^2\ \le\ J(\eta)-J(\mu)\ \le\ \langle s_\mu(f'),\eta-\mu\rangle+K_r\|\eta-\mu\|_1^2 .
\end{equation}
\item[(ii)] $J$ is $\omega_r$-weakly convex on $\mathcal A_r$, and $s_\mu(f)\in\partial(J+\iota_{\mathcal A_r})(\mu)$ for every $\mu\in\mathcal A_r$ and $f\in\Phi_\mu$, where $\iota_{\mathcal A_r}$ is the indicator function of $\mathcal A_r$. The centred vector $\bar s_\mu(f):=s_\mu(f)-\frac1m\big(\sum_is_\mu(f)_i\big)\1$ satisfies $\|\bar s_\mu(f)\|_2\le G_r$.
\item[(iii)] If $\mathcal X$ is compact, then for every $\mu\in\mathcal A_r$ and every $h\in T_0$ with $\mu+th\in\mathcal A_r$ for all small $t>0$,
\begin{equation}\label{eq:danskin}
J'(\mu;h):=\lim_{t\downarrow0}\frac{J(\mu+th)-J(\mu)}t=\max_{f\in\Phi_\mu}\langle s_\mu(f),h\rangle .
\end{equation}
In particular, if all $s_\mu(f)$, $f\in\Phi_\mu$, coincide modulo multiples of $\1$ (e.g.\ if the optimal potential is unique), then $J'(\mu;h)=\langle s_\mu(f),h\rangle$ is linear in $h$.
\end{enumerate}
\end{theorem}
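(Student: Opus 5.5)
The plan is to obtain (i) from the exact perturbation identity, Proposition~\ref{prop:identity}, combined with the Kantorovich potentials, and then deduce (ii) and (iii) from (i).

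\emph{Part (i), lower bound.} Fix $\mu,\eta\in\mathcal A_r$, $h=\eta-\mu$, and $f\in\Phi_\mu$. Since $f\in\operatorname{Lip}_1$, duality \eqref{eq:KR} gives $J(\eta)\ge\pi_\eta(f)-\nu(f)$, while $J(\mu)=\pi_\mu(f)-\nu(f)$. Subtracting,
\[
J(\eta)-J(\mu)\ge\pi_\eta(f)-\pi_\mu(f).
\]
Theorem~\ref{thm:linear-response} then gives $\pi_\eta(f)-\pi_\mu(f)\ge\langle s_\mu(f),h\rangle-C_{\mu,\eta}\|h\|_1^2$. On $\mathcal A_r$ we have $L(\mu),L(\eta)\le r$ and $D_\mu\le D_r$ by \eqref{eq:Dr}, so $C_{\mu,\eta}\le K_r$.

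\emph{Part (i), upper bound.} Take $f'\in\Phi_\eta$ and argue the same way: $J(\eta)-J(\mu)\le\pi_\eta(f')-\pi_\mu(f')$. Theorem~\ref{thm:linear-response} with base point $\mu$, applied to $f'$, bounds this by $\langle s_\mu(f'),h\rangle+K_r\|h\|_1^2$.

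\emph{Part (ii).} Use $\|h\|_1^2\le m\|h\|_2^2$. The lower bound in (i) then reads
\[
J(\eta)\ge J(\mu)+\langle s_\mu(f),\eta-\mu\rangle-\tfrac{\omega_r}{2}\|\eta-\mu\|_2^2,\qquad \omega_r=2mK_r.
\]
This is exactly the stated subgradient inequality on $\mathcal A_r$. It holds at every point of the convex set $\mathcal A_r$, and such a global quadratic minorant everywhere implies that $J+\frac{\omega_r}{2}\|\cdot\|_2^2$ is convex: it is a supremum of affine functions. Indeed, adding $\frac{\omega_r}{2}\|\eta\|_2^2$ to both sides turns the minorant into an affine function of $\eta$ that is exact at $\mu$.

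For the bound on $\bar s_\mu(f)$, note that its entries are $\sum_j c_{ij}(s_i-s_j)/m$ with suitable weights. More directly, $\|\bar s\|_2\le\sqrt m\,\|\bar s\|_\infty\le\sqrt m\max_{i,j}|s_i-s_j|$. Lemma~\ref{lem:mixture} with $h=e_i-e_j$ and $g=u_f^\mu$ gives $|s_i-s_j|\le D_\mu/(1-L(\mu))\le D_r/(1-r)$, using $\operatorname{Lip}(u_f^\mu)\le1/(1-L(\mu))$ from Lemma~\ref{lem:poisson}. Together these give $\|\bar s_\mu(f)\|_2\le G_r$.

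\emph{Part (iii).} Set $\eta_t=\mu+th$. The lower bound in (i) gives
\[
\liminf_{t\downarrow0}\frac{J(\eta_t)-J(\mu)}{t}\ge\sup_{f\in\Phi_\mu}\langle s_\mu(f),h\rangle.
\]
For the reverse inequality, pick $f_t\in\Phi_{\eta_t}$. The upper bound yields $(J(\eta_t)-J(\mu))/t\le\langle s_\mu(f_t),h\rangle+K_rt\|h\|_1^2$. Choose a sequence $t_n\downarrow0$ realizing the limsup. On compact $\mathcal X$ the functions in $\operatorname{Lip}_1$ vanishing at $x_\circ$ form a compact set under uniform convergence (Arzel\`a--Ascoli), so along a subsequence $f_{t_n}\to f$ uniformly. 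The limit satisfies $f\in\Phi_\mu$, because $\pi_{\eta_t}\to\pi_\mu$ in $W_1$ and $J$ is continuous (Lemma~\ref{lem:lipschitz-pi}).

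The main obstacle is showing that $f\mapsto\langle s_\mu(f),h\rangle$ is continuous under uniform convergence. By linearity and \eqref{eq:neumann}, $|\langle s_\mu(g),h\rangle|\le\sum_k|\zeta_h(P_\mu^kg)|$. Each term is at most $\|h\|_1\|g\|_\infty$, and the tail satisfies $|\zeta_h(P_\mu^kg)|\le\frac12 D_\mu L(\mu)^k\operatorname{Lip}(g)$ with $\operatorname{Lip}(g)\le2$ for a difference $g=f_{t_n}-f$ of two functions in $\operatorname{Lip}_1$. Splitting the sum at a large index and taking $\|g\|_\infty\to0$ gives the continuity. This bounds the limsup by $\langle s_\mu(f),h\rangle$ with $f\in\Phi_\mu$, which proves the limit exists and that the maximum in \eqref{eq:danskin} is attained, since $\Phi_\mu$ is compact by the same argument. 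Finally, if all $s_\mu(f)$ agree modulo $\1$, then pairing with $h\in T_0$ gives a single value, and $J'(\mu;h)$ is linear in $h$.
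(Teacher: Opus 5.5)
Your proof is correct and follows the paper's argument essentially step by step. Part (i) uses optimal potentials together with Theorem~\ref{thm:linear-response} and $C_{\mu,\eta}\le K_r$; part (ii) uses $\|h\|_1^2\le m\|h\|_2^2$ and the supremum-of-exact-affine-minorants argument; part (iii) uses Arzel\`a--Ascoli on potentials of the nearby laws. The only variation is in (iii): you get continuity of $f\mapsto\langle s_\mu(f),h\rangle$ from \eqref{eq:neumann} and Lemma~\ref{lem:mixture}, whose tail bound should carry a factor $\|h\|_1$, whereas the paper shows $u^\mu_{f_n}\to u^\mu_f$ uniformly; both are the same truncation argument.
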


\begin{proof}
(i) Since $f\in\operatorname{Lip}_1$, \eqref{eq:KR} gives $J(\eta)\ge\pi_\eta(f)-\nu(f)=J(\mu)+\pi_\eta(f)-\pi_\mu(f)$. The lower bound then follows from Theorem~\ref{thm:linear-response}, because $C_{\mu,\eta}\le K_r$ by \eqref{eq:Dr} and $L(\mu),L(\eta)\le r$. Similarly, $J(\mu)\ge\pi_\mu(f')-\nu(f')$ and $J(\eta)=\pi_\eta(f')-\nu(f')$ give $J(\eta)-J(\mu)\le\pi_\eta(f')-\pi_\mu(f')$, and Theorem~\ref{thm:linear-response} gives the upper bound.

(ii) By the Cauchy--Schwarz inequality, $\|h\|_1^2\le m\|h\|_2^2$. Hence the lower bound in \eqref{eq:two-sided} gives $J(\eta)\ge J(\mu)+\langle s_\mu(f),\eta-\mu\rangle-\frac{\omega_r}2\|\eta-\mu\|_2^2$ for all $\eta\in\mathcal A_r$, which is the defining inequality of $s_\mu(f)\in\partial(J+\iota_{\mathcal A_r})(\mu)$. Adding $\frac{\omega_r}2\|\eta\|_2^2=\frac{\omega_r}2\|\mu\|_2^2+\omega_r\langle\mu,\eta-\mu\rangle+\frac{\omega_r}2\|\eta-\mu\|_2^2$ shows that $J+\frac{\omega_r}2\|\cdot\|_2^2$ has, at every $\mu\in\mathcal A_r$, an affine minorant on $\mathcal A_r$ that is exact at $\mu$. It is therefore a pointwise supremum of affine functions on $\mathcal A_r$, hence convex. Finally, $|s_\mu(f)_i-s_\mu(f)_j|=|\pi_\mu(u_f^\mu\circ w_i-u_f^\mu\circ w_j)|\le\operatorname{Lip}(u_f^\mu)\,D_\mu\le D_r/(1-r)$. Hence every entry of $\bar s_\mu(f)$ is bounded by $D_r/(1-r)$ in absolute value, and $\|\bar s_\mu(f)\|_2\le G_r$.

(iii) Let $f\in\Phi_\mu$. By (i), $J(\mu+th)-J(\mu)\ge t\langle s_\mu(f),h\rangle-K_rt^2\|h\|_1^2$, so the $\liminf$ of the difference quotients is at least $\sup_{f\in\Phi_\mu}\langle s_\mu(f),h\rangle$. Conversely, take $t_n\downarrow0$ that realize the $\limsup$, and $f_n\in\Phi_{\mu+t_nh}$. By (i), $t_n^{-1}(J(\mu+t_nh)-J(\mu))\le\langle s_\mu(f_n),h\rangle+K_rt_n\|h\|_1^2$. The $f_n$ are $1$-Lipschitz and vanish at $x_\circ$, so by the Arzel\`a--Ascoli theorem a subsequence converges uniformly to some $f\in\operatorname{Lip}_1$ with $f(x_\circ)=0$. Since $W_1(\pi_{\mu+t_nh},\pi_\mu)\to0$ by \eqref{eq:stability-general}, we get $\pi_\mu(f)-\nu(f)=\lim_n\big(\pi_{\mu+t_nh}(f_n)-\nu(f_n)\big)=\lim_nJ(\mu+t_nh)=J(\mu)$, i.e.\ $f\in\Phi_\mu$. Moreover, for $g:=f_n-f$ we have $|(P_\mu^kg)(y)-\pi_\mu(g)|\le\min\{2\|g\|_\infty,\ 2L(\mu)^k\operatorname{diam}(\mathcal X)\}$, so $u^\mu_{f_n}\to u^\mu_f$ uniformly, and hence $s_\mu(f_n)\to s_\mu(f)$. Therefore the $\limsup$ is at most $\langle s_\mu(f),h\rangle$, which proves \eqref{eq:danskin}. The same compactness argument shows that the maximum is attained.
\end{proof}

\begin{corollary}[First-order optimality: equal sensitivities]\label{cor:optimality}
Let $\mathcal X$ be compact, $r\in[0,1)$, and let $\mu^\star$ minimize $J$ over $\mathcal A_r$. Then there exists $g$ in the convex hull of $\{s_{\mu^\star}(f):f\in\Phi_{\mu^\star}\}$ such that $\langle g,\eta-\mu^\star\rangle\ge0$ for all $\eta\in\mathcal A_r$. If, in addition, $L(\mu^\star)<r$, then there is $\lambda\in\R$ with
\[
g_i\ge\lambda\quad\text{for all }i\in[m],\qquad g_i=\lambda\quad\text{whenever }\mu_i^\star>0 .
\]
\end{corollary}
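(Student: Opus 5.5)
We derive the first claim from the directional derivative formula of Theorem~\ref{thm:structure}(iii) and a minimax argument. Let $\Sigma:=\{s_{\mu^\star}(f):f\in\Phi_{\mu^\star}\}$ and $C:=\cl\conv\Sigma$. The set $\Sigma$ is bounded modulo multiples of $\1$, by Theorem~\ref{thm:structure}(ii), and this suffices because we only pair with $h\in T_0$. We first check that $\Sigma$ is compact. By the compactness argument of Theorem~\ref{thm:structure}(iii), $\Phi_{\mu^\star}$ is compact in the uniform topology; we use Arzel\`a--Ascoli and the closedness of the optimality condition. The map $f\mapsto s_{\mu^\star}(f)$ is continuous for uniform convergence, by the estimate on $u^\mu_{f_n}-u^\mu_f$ in that proof. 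Hence $\Sigma$ is compact, and so is $C=\conv\Sigma$ in $\R^m$ by Carath\'eodory. Now let $T:=\{\eta-\mu^\star:\eta\in\mathcal A_r\}$, a convex compact polytope. For $\eta\in\mathcal A_r$ and $h:=\eta-\mu^\star$, convexity of $\mathcal A_r$ gives $\mu^\star+th\in\mathcal A_r$ for $t\in[0,1]$. Minimality then gives $J'(\mu^\star;h)\ge0$, so $\max_{g\in C}\langle g,h\rangle=\max_{f\in\Phi_{\mu^\star}}\langle s_{\mu^\star}(f),h\rangle\ge0$ for all $h\in T$.

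Next we apply a minimax theorem (Sion, or von Neumann for bilinear functions on compact convex sets) to $(g,h)\mapsto\langle g,h\rangle$ on $C\times T$:
\[
\max_{g\in C}\min_{h\in T}\langle g,h\rangle=\min_{h\in T}\max_{g\in C}\langle g,h\rangle\ge0 .
\]
This yields $g\in C$ with $\langle g,\eta-\mu^\star\rangle\ge0$ for all $\eta\in\mathcal A_r$. The main obstacle is the compactness of $\Sigma$, or at least of its closed convex hull. If continuity of $f\mapsto s_{\mu^\star}(f)$ on $\Phi_{\mu^\star}$ turned out to be delicate, I would fall back on a separation argument for $\cl\conv\Sigma$. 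One would then still need the closure to lie in the convex hull, which again requires compactness; so I would verify the uniform convergence bound $|(P^k_\mu g)(y)-\pi_\mu(g)|\le\min\{2\|g\|_\infty,2L(\mu)^k\diam\cX\}$ carefully, together with dominated summation.

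For the second claim, assume $L(\mu^\star)<r$. Then for every $i,j$ with $\mu^\star_j>0$, the direction $h=e_i-e_j$ satisfies $\mu^\star+th\in\Delta_m$ and $L(\mu^\star+th)\le L(\mu^\star)+t|L_i-L_j|<r$ for small $t>0$. Hence $\eta=\mu^\star+th\in\mathcal A_r$, and the variational inequality gives $g_i\ge g_j$. Take $\lambda:=\min_ig_i$. If $\mu^\star_j>0$, then $g_i\ge g_j$ for all $i$, so $g_j=\lambda$. Thus every index in the support attains the minimum, and $g_i\ge\lambda$ holds everywhere. This is the equal-sensitivity condition.
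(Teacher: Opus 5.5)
Your proposal is correct and follows the paper's proof: compactness of the convex hull via the Arzel\`a--Ascoli and uniform-continuity argument of Theorem~\ref{thm:structure}(iii), then nonnegativity of the directional derivative~\eqref{eq:danskin} at the minimizer, then Sion's minimax theorem for the bilinear pairing on two compact convex sets. The only difference is cosmetic. For the second claim you test the variational inequality on the exchange directions $e_i-e_j$ with $\mu^\star_j>0$, whereas the paper notes that $\mu^\star$ minimizes $\langle g,\cdot\rangle$ over all of $\Delta_m$ and hence is supported on $\arg\min_i g_i$; the two arguments amount to the same thing.
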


\begin{proof}
Let $\mathcal S$ be the convex hull of $\{s_{\mu^\star}(f):f\in\Phi_{\mu^\star}\}$. It is compact and convex, by the compactness argument in the proof of Theorem~\ref{thm:structure}(iii). Optimality and \eqref{eq:danskin} give $\max_{g\in\mathcal S}\langle g,\eta-\mu^\star\rangle=J'(\mu^\star;\eta-\mu^\star)\ge0$ for every $\eta\in\mathcal A_r$. Since $\mathcal A_r$ is compact and convex and the pairing is bilinear, Sion's minimax theorem \cite{Sion1958} yields $g\in\mathcal S$ with $\min_{\eta\in\mathcal A_r}\langle g,\eta-\mu^\star\rangle\ge0$. If $L(\mu^\star)<r$, then $\mu^\star+t(\eta-\mu^\star)\in\mathcal A_r$ for every $\eta\in\Delta_m$ and all small $t>0$. Hence $\langle g,\eta-\mu^\star\rangle\ge0$ for all $\eta\in\Delta_m$, i.e.\ $\mu^\star$ minimizes $\langle g,\cdot\rangle$ over $\Delta_m$. This holds if and only if $\mu^\star$ is supported on $\arg\min_ig_i$, which is the claim with $\lambda:=\min_ig_i$.
\end{proof}

In control terms, Corollary~\ref{cor:optimality} is an \emph{equal-sensitivity} condition. At an optimal randomized policy, every mode in use has the same marginal effect on the shaping error, and moving probability to an unused mode cannot decrease it to first order. This is analogous to the equal-incremental-cost condition of economic dispatch.

\subsection{Identifiability and parameter recovery}

For a finite signed measure $\zeta$ on $\mathcal X$ with $\zeta(\mathcal X)=0$ and $\int\rho(x,x_\circ)\,|\zeta|(dx)<\infty$
we write $\|\zeta\|_{\rm KR}:=\sup_{f\in\operatorname{Lip}_1}\zeta(f)$ for its Kantorovich--Rubinstein norm, so that
$W_1(\alpha,\beta)=\|\alpha-\beta\|_{\rm KR}$ by~\eqref{eq:KR}. For $\mu\in\mathcal A_{<1}$ the size of the linear response
in a direction $h\in T_0$ is
\[
N_\mu(h):=\sup_{f\in\operatorname{Lip}_1}\langle D\pi_\mu[h],f\rangle=\sup_{f\in\operatorname{Lip}_1}\langle s_\mu(f),h\rangle ,
\]
the first-order change of $\pi_\mu$ in $W_1$. It is controlled by the signed measure $\zeta_h=\sum_ih_i(w_i)_\#\pi_\mu$
of~\eqref{eq:neumann}, which is explicit.

\begin{proposition}[Injectivity of the linear response]\label{prop:injectivity}
Let $\mu\in\mathcal A_{<1}$. For every $h\in T_0$,
\[
\frac{\|\zeta_h\|_{\rm KR}}{1+L(\mu)}\ \le\ N_\mu(h)\ \le\ \frac{\|\zeta_h\|_{\rm KR}}{1-L(\mu)} .
\]
In particular, the linear response at $\mu$ is injective on $T_0$ if and only if the push-forwards
$(w_1)_\#\pi_\mu,\dots,(w_m)_\#\pi_\mu$ are linearly independent, and in that case
\[
\min_{h\in T_0,\ \|h\|_1=1}N_\mu(h)\ \ge\ \frac1{1+L(\mu)}\ \min_{h\in T_0,\ \|h\|_1=1}\Big\|\sum_ih_i(w_i)_\#\pi_\mu\Big\|_{\rm KR}>0 .
\]
\end{proposition}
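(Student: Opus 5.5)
We work with the Neumann representation~\eqref{eq:neumann}, $D\pi_\mu[h]=\sum_{k\ge0}(P_\mu^*)^k\zeta_h$, and with the operator $I-P_\mu^*$ on zero-mass signed measures. The KR norm is taken on the space of signed measures with zero total mass and finite first moment, and $\zeta_h$ belongs to it because $\sum_ih_i=0$ and $\pi_\mu\in\cP_1(\cX)$.

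\emph{Upper bound.} For $f\in\operatorname{Lip}_1$ we have $\langle s_\mu(f),h\rangle=\sum_ih_i\,\pi_\mu(u_f^\mu\circ w_i)=\zeta_h(u_f^\mu)$. By Lemma~\ref{lem:poisson}, $\operatorname{Lip}(u_f^\mu)\le1/(1-L(\mu))$. Since $\zeta_h$ has zero mass, $\zeta_h(g)\le\operatorname{Lip}(g)\|\zeta_h\|_{\rm KR}$ for every Lipschitz $g$; subtracting constants changes nothing. Taking the supremum over $f$ gives $N_\mu(h)\le\|\zeta_h\|_{\rm KR}/(1-L(\mu))$.

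\emph{Lower bound.} We invert the Poisson equation. Given $g\in\operatorname{Lip}_1$, put $f:=g-P_\mu g$, so that $\operatorname{Lip}(f)\le1+L(\mu)$. We claim that $u_f^\mu=g-\pi_\mu(g)$. Indeed, the partial sums telescope to $\sum_{k<K}(P_\mu^kf)=g-P_\mu^Kg$, and $P_\mu^Kg(x)\to\pi_\mu(g)$ by Lemma~\ref{lem:contraction}. Also $\pi_\mu(f)=0$ by invariance. Hence
\[
\langle s_\mu(f),h\rangle=\zeta_h\big(g-\pi_\mu(g)\big)=\zeta_h(g),
\]
again using zero total mass. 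Rescaling $f$ by $1/(1+L(\mu))$ to bring it into $\operatorname{Lip}_1$ gives $N_\mu(h)\ge\zeta_h(g)/(1+L(\mu))$. Taking the supremum over $g$ yields the lower bound. (If $L(\mu)=0$ the bound reads $N_\mu(h)\ge\|\zeta_h\|_{\rm KR}$, which is consistent.)

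\emph{Injectivity.} By the two-sided bound, $N_\mu(h)=0$ if and only if $\|\zeta_h\|_{\rm KR}=0$. The main point to check is that the KR norm is a genuine norm, i.e.\ $\zeta(f)=0$ for all Lipschitz $f$ forces $\zeta=0$. This holds because bounded Lipschitz functions determine finite signed Borel measures on a metric space: take $f_\epsilon(x)=\max\{0,1-\dist(x,F)/\epsilon\}$ for closed $F$ and let $\epsilon\to0$ by dominated convergence with respect to $|\zeta|$. Closed sets form a $\pi$-system generating $\cB(\cX)$, which gives uniqueness.

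It remains to match this with linear independence of the push-forwards. One direction is immediate: if the $(w_i)_\#\pi_\mu$ are linearly independent, then $\zeta_h\ne0$ for $h\ne0$. For the converse, suppose $\sum_ic_i(w_i)_\#\pi_\mu=0$ with $c\ne0$. Evaluating on $\cX$ gives $\sum_ic_i=0$, because each push-forward is a probability measure. So $c\in T_0$ and $\zeta_c=0$, and the response is not injective.

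\emph{Quantitative bound.} When the response is injective, the unit $\ell_1$-sphere of $T_0$ is compact, and $h\mapsto\|\zeta_h\|_{\rm KR}$ is continuous there: it is a seminorm on a finite-dimensional space, bounded by $\frac12D_\mu\|h\|_1$ by Lemma~\ref{lem:mixture}. Under injectivity it is positive on the sphere, so its minimum is positive, and combining with the lower bound gives the displayed estimate.

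The only delicate step is the lower bound: one has to verify that $u_f^\mu$ equals $g$ minus a constant (convergence of $P_\mu^Kg$ pointwise), and that the constant is annihilated by $\zeta_h$.
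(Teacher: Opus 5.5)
Your proof is correct and follows the paper's argument. The upper bound comes from $\operatorname{Lip}(u_f^\mu)\le1/(1-L(\mu))$, equivalently the Neumann series \eqref{eq:neumann}. The lower bound uses the test function $f=(g-P_\mu g)/(1+L(\mu))$ and a telescoping sum. Injectivity follows because bounded Lipschitz functions separate finite signed measures, and any linear relation among the push-forwards forces $\sum_ih_i=0$.

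The only difference is cosmetic: you telescope inside the Poisson solution, identifying $u_f^\mu=g-\pi_\mu(g)$ pointwise. The paper instead telescopes $\sum_k\zeta_h(P_\mu^kg-P_\mu^{k+1}g)$ and kills the tail with $|\zeta_h(P_\mu^Kg)|\le\|\zeta_h\|_{\rm KR}L(\mu)^K$.
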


\begin{proof}
By Lemma~\ref{lem:mixture}, $|\zeta_h(g)|\le\frac12D_\mu\operatorname{Lip}(g)\|h\|_1$ for Lipschitz $g$, so $\|\zeta_h\|_{\rm KR}<\infty$ and,
applying the definition to $\pm g/\operatorname{Lip}(g)$, $|\zeta_h(g)|\le\|\zeta_h\|_{\rm KR}\operatorname{Lip}(g)$. For the upper bound, \eqref{eq:neumann}
and $\operatorname{Lip}(P_\mu^kf)\le L(\mu)^k$ give $\langle D\pi_\mu[h],f\rangle\le\sum_{k\ge0}\|\zeta_h\|_{\rm KR}L(\mu)^k$ for $f\in\operatorname{Lip}_1$. For
the lower bound, let $g\in\operatorname{Lip}_1$ and $f:=(g-P_\mu g)/(1+L(\mu))$, so that $f\in\operatorname{Lip}_1$. By~\eqref{eq:neumann}
the series telescopes:
\[
(1+L(\mu))\,\langle D\pi_\mu[h],f\rangle=\lim_{K\to\infty}\sum_{k=0}^{K-1}\zeta_h\big(P_\mu^kg-P_\mu^{k+1}g\big)
=\zeta_h(g)-\lim_{K\to\infty}\zeta_h\big(P_\mu^Kg\big)=\zeta_h(g),
\]
because $|\zeta_h(P_\mu^Kg)|\le\|\zeta_h\|_{\rm KR}L(\mu)^K\to0$. Taking the supremum over $g$ gives the lower
bound. Hence $N_\mu(h)=0$ if and only if $\zeta_h(g)=0$ for all Lipschitz $g$, that is, $\zeta_h=0$, because bounded
Lipschitz functions determine a finite signed measure. Since the $(w_i)_\#\pi_\mu$ are probability measures,
$\sum_ih_i(w_i)_\#\pi_\mu=0$ forces $\sum_ih_i=0$, so $\zeta_h=0$ for some $h\in T_0\setminus\{0\}$ if and only if the
push-forwards are linearly dependent. The last claim follows because $h\mapsto\|\zeta_h\|_{\rm KR}$ is then a norm on
$T_0$.
\end{proof}

Linear independence holds, for instance, when the push-forwards are mutually singular, as for the $8$-adic
system of Section~\ref{sec:exp-many}, whose maps have disjoint images, and for the ternary system of
Section~\ref{sec:exp-ternary}, whose images overlap only at $\frac13$ and $\frac23$, points charged by at most one
push-forward. Nearly dependent push-forwards suggest flat directions of $J$, although the two bounds of
Proposition~\ref{prop:injectivity} differ by the factor $(1+L(\mu))/(1-L(\mu))$, which is large when $L(\mu)$ is
close to one. In the population model of Section~\ref{sec:exp-tcl}, where $L(\mu)\approx0.99$, the push-forwards
are translates of one another by small amounts, and the flat direction can be read off directly: the maps
share the slope $a$ and have fixed points $c_i$, so the stationary mean $\sum_i\mu_ic_i$ changes at rate
$\sum_ih_ic_i$ along $h$, while the stationary variance $\frac{1-a}{1+a}\operatorname{Var}_\mu(c)$ changes at a rate of
order $\frac{1-a}{1+a}\sum_ih_ic_i^2$. Along designs with a common mean, $J$ is therefore much flatter than
across them.

\begin{proposition}[Local sharpness]\label{prop:sharpness}
Let $r\in[0,1)$, $\mu^\star\in\mathcal A_r$ and $\nu=\pi_{\mu^\star}$. Suppose that the linear response at $\mu^\star$ is injective, that is, by Proposition~\ref{prop:injectivity}, that the push-forwards $(w_i)_\#\pi_{\mu^\star}$ are linearly independent. Then
\[
\kappa_0:=\min_{h\in T_0,\ \|h\|_1=1}N_{\mu^\star}(h)\ \ge\ \frac1{1+r}\min_{h\in T_0,\ \|h\|_1=1}\Big\|\sum_ih_i(w_i)_\#\pi_{\mu^\star}\Big\|_{\rm KR}>0,
\]
and $J(\mu)\ge\frac{\kappa_0}2\|\mu-\mu^\star\|_1$ for every $\mu\in\mathcal A_r$ with $\|\mu-\mu^\star\|_1\le\kappa_0/(2K_r)$. If, moreover, $\mu\mapsto\pi_\mu$ is injective on $\mathcal A_r$, then there is $\kappa>0$ with $J(\mu)\ge\kappa\|\mu-\mu^\star\|_1$ for all $\mu\in\mathcal A_r$.
\end{proposition}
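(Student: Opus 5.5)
The bound on $\kappa_0$ is Proposition~\ref{prop:injectivity} at $\mu^\star$, combined with $L(\mu^\star)\le r$, which gives $1/(1+L(\mu^\star))\ge1/(1+r)$. The minimum over the compact set $\{h\in T_0:\|h\|_1=1\}$ is attained. To see this, note that $N_{\mu^\star}$ is a supremum of linear functionals of $h$, each bounded by $D_{\mu^\star}\|h\|_1/(2(1-L(\mu^\star)))$ by Theorem~\ref{thm:linear-response}. It is therefore a finite seminorm, hence continuous, and by the injectivity assumption it is a norm on $T_0$.

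For the local bound, fix $\mu\in\cA_r$ and put $h:=\mu-\mu^\star\ne0$. Since $\nu=\pi_{\mu^\star}$, we have $J(\mu)=W_1(\pi_{\mu^\star},\pi_\mu)=\sup_{f\in\Lip_1}(\pi_\mu(f)-\pi_{\mu^\star}(f))$. For each $f\in\Lip_1$, Theorem~\ref{thm:linear-response} with base point $\mu^\star$ and $\eta=\mu$ gives
\[
\pi_\mu(f)-\pi_{\mu^\star}(f)\ge\langle s_{\mu^\star}(f),h\rangle-C_{\mu^\star,\mu}\|h\|_1^2 .
\]
By \eqref{eq:Dr} and $L(\mu^\star),L(\mu)\le r$ we have $C_{\mu^\star,\mu}\le K_r$. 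Taking the supremum over $f$ and using homogeneity, $N_{\mu^\star}(h)\ge\kappa_0\|h\|_1$, so
\[
J(\mu)\ge\kappa_0\|h\|_1-K_r\|h\|_1^2\ge\tfrac{\kappa_0}{2}\|h\|_1
\]
whenever $\|h\|_1\le\kappa_0/(2K_r)$. If $K_r=0$ (i.e.\ $L_{\max}=0$ or $D_r=0$), the restriction on $\|h\|_1$ is vacuous and the bound holds globally.

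For the global statement, I would argue by compactness. The set $\cA_r$ is a compact polytope, and $J$ is continuous on it: Lemma~\ref{lem:mixture} gives $W_1(\pi_\mu,\pi_\eta)\le D_r\|\mu-\eta\|_1/(2(1-r))$, and $J$ is $1$-Lipschitz in $\pi_\mu$. The set $\{\mu\in\cA_r:\|\mu-\mu^\star\|_1\ge\kappa_0/(2K_r)\}$ is compact. If it is empty, we are done with $\kappa=\kappa_0/2$. Otherwise $J$ attains a minimum $j_0$ on it, and $j_0>0$ because $J(\mu)=0$ forces $\pi_\mu=\pi_{\mu^\star}$, hence $\mu=\mu^\star$ by the assumed injectivity. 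Since $\|\mu-\mu^\star\|_1\le2$ on the simplex, the choice
\[
\kappa:=\min\{\kappa_0/2,\ j_0/2\}
\]
works: on the near region $J(\mu)\ge\tfrac{\kappa_0}{2}\|\mu-\mu^\star\|_1$, and on the far region $J(\mu)\ge j_0\ge\tfrac{j_0}{2}\|\mu-\mu^\star\|_1$.

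I expect the main subtle point to be the first: passing from the pointwise second-order remainder to a bound that is uniform in $f$. This works precisely because the constant in \eqref{eq:linear-response} does not depend on $f$, so the supremum can be taken after the linear-response estimate. It is also where one has to be careful that $N_{\mu^\star}$ is attained as a minimum (continuity of the seminorm) rather than being merely an infimum.
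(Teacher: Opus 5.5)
Your proof is correct and follows the paper's argument step by step. The lower bound on $\kappa_0$ comes from Proposition~\ref{prop:injectivity} together with $L(\mu^\star)\le r$. For the local bound, you bound the linear-response remainder uniformly in $f\in\Lip_1$ by $K_r$ before taking the supremum. The global bound then follows from continuity of $J$ and compactness of the region far from $\mu^\star$. Your additions fill in details the paper leaves implicit:
\begin{itemize}
\item attainment of the minimum defining $\kappa_0$;
\item the degenerate case $K_r=0$;
\item the explicit constant $\kappa=\min\{\kappa_0/2,j_0/2\}$, justified by $\|\mu-\mu^\star\|_1\le2$.
\end{itemize}
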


\begin{proof}
The bound on $\kappa_0$ is Proposition~\ref{prop:injectivity}, since $L(\mu^\star)\le r$. For $\mu\in\mathcal A_r$ and $h:=\mu-\mu^\star$, \eqref{eq:KR} and \eqref{eq:linear-response} give
$J(\mu)=\sup_{f\in\operatorname{Lip}_1}(\pi_\mu(f)-\pi_{\mu^\star}(f))\ge N_{\mu^\star}(h)-K_r\|h\|_1^2\ge\kappa_0\|h\|_1-K_r\|h\|_1^2$, because $N_{\mu^\star}$ is positively homogeneous. This is at least $\frac{\kappa_0}2\|h\|_1$ when $\|h\|_1\le\kappa_0/(2K_r)$. Under global injectivity, $J$ is continuous by \eqref{eq:stability-general} and positive on the compact set $\{\mu\in\mathcal A_r:\|\mu-\mu^\star\|_1\ge\kappa_0/(2K_r)\}$. Hence $J(\mu)/\|\mu-\mu^\star\|_1$ is bounded below there by a positive constant.
\end{proof}

\begin{corollary}[Parameter recovery from samples]\label{cor:recovery}
Let $\mathcal X$ be compact with diameter $D$ and $L=\max_iL_i<1$, and let $\nu=\pi_{\mu^\star}$ for some $\mu^\star\in\Delta_m$. Suppose $J(\mu)\ge\kappa\|\mu-\mu^\star\|_1$ for all $\mu\in\Delta_m$ (cf.\ Proposition~\ref{prop:sharpness}). If $\hat\mu\in\Delta_m$ satisfies $\hat J_{N,k}(\hat\mu)\le\inf_{\Delta_m}\hat J_{N,k}+\varepsilon_{\rm opt}$, then
\[
\|\hat\mu-\mu^\star\|_1\le\frac{2W_1(\hat\nu_N,\nu)+2L^kD+\varepsilon_{\rm opt}}{\kappa}.
\]
If $\mathcal X\subset\R$, then $\E\|\hat\mu-\mu^\star\|_1\le\big(D/\sqrt N+2L^kD+\varepsilon_{\rm opt}\big)/\kappa$.
\end{corollary}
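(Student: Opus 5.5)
The plan is to combine the sharpness hypothesis with the uniform approximation bound \eqref{eq:uniform} of Theorem~\ref{thm:eps}, taking $\cK=\cK'=\Delta_m$. Since $L=\max_iL_i<1$, Corollary~\ref{cor:constants}(c) says this is a regular search region with $r_k=D\,L^k$, for any $\lambda_0\in\cP(\cX)$. Hence
\[
\sup_{\mu\in\Delta_m}\big|\hat J_{N,k}(\mu)-J(\mu)\big|\le W_1(\hat\nu_N,\nu)+L^kD=:\eps_{N,k}.
\]
Because $\nu=\pi_{\mu^\star}$ is realizable, we have $\inf_{\Delta_m}J=J(\mu^\star)=0$.

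The last statement of Theorem~\ref{thm:eps} then gives
\[
J(\hat\mu)\le\inf_{\Delta_m}J+2\eps_{N,k}+\eps_{\rm opt}=2W_1(\hat\nu_N,\nu)+2L^kD+\eps_{\rm opt}.
\]
One can also see this directly: $J(\hat\mu)\le\hat J_{N,k}(\hat\mu)+\eps_{N,k}\le\hat J_{N,k}(\mu^\star)+\eps_{\rm opt}+\eps_{N,k}\le J(\mu^\star)+2\eps_{N,k}+\eps_{\rm opt}$. Combining this with the sharpness inequality $\kappa\|\hat\mu-\mu^\star\|_1\le J(\hat\mu)$ and dividing by $\kappa$ yields the first bound.

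For the expectation bound on $\cX\subset\R$, I take expectations in the first bound. Only $W_1(\hat\nu_N,\nu)$ is random, assuming $\eps_{\rm opt}$ is deterministic. So it suffices to show $\E\,W_1(\hat\nu_N,\nu)\le D/(2\sqrt N)$, since the factor $2$ then produces $D/\sqrt N$.

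Since $\cX$ is a compact subset of $\R$ with diameter $D$, it lies in an interval $[a,b]$ with $b-a=D$. On $\R$, $W_1$ is the $L^1$ distance between distribution functions, so
\[
\E\,W_1(\hat\nu_N,\nu)=\int_a^b\E\big|\hat\nu_N((-\infty,y])-\nu((-\infty,y])\big|\,\dd y .
\]
For each $y$, Jensen's inequality and the binomial variance bound $1/(4N)$ give at most $1/(2\sqrt N)$ for the integrand, exactly as in the proof of Corollary~\ref{cor:samplesize}.

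There is no real obstacle. The only care points are checking that $\Delta_m$ qualifies as the search region when $L<1$, and noting that $\eps_{\rm opt}$ and $k$ are nonrandom when passing to expectations.
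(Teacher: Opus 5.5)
Your proposal is correct and follows essentially the same argument as the paper. The paper also takes $\cK=\cK'=\Delta_m$ with $r_k=L^kD$ from Corollary~\ref{cor:constants}(c), and uses the same chain $\kappa\|\hat\mu-\mu^\star\|_1\le J(\hat\mu)\le\hat J_{N,k}(\hat\mu)+\eps_{N,k}\le\hat J_{N,k}(\mu^\star)+\varepsilon_{\rm opt}+\eps_{N,k}\le J(\mu^\star)+2\eps_{N,k}+\varepsilon_{\rm opt}$ with $J(\mu^\star)=0$. It then bounds $\E\,W_1(\hat\nu_N,\nu)\le D/(2\sqrt N)$ on the line exactly as in Corollary~\ref{cor:samplesize}.
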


\begin{proof}
Let $\eps_{N,k}:=W_1(\hat\nu_N,\nu)+L^kD$, which bounds $\sup_\mu|\hat J_{N,k}(\mu)-J(\mu)|$ by~\eqref{eq:uniform} and Corollary~\ref{cor:constants}(c), applied with $\cK=\cK'=\Delta_m$. Then
$\kappa\|\hat\mu-\mu^\star\|_1\le J(\hat\mu)\le\hat J_{N,k}(\hat\mu)+\eps_{N,k}\le\hat J_{N,k}(\mu^\star)+\varepsilon_{\rm opt}+\eps_{N,k}\le J(\mu^\star)+2\eps_{N,k}+\varepsilon_{\rm opt}$, and $J(\mu^\star)=0$. On the real line, $\E W_1(\hat\nu_N,\nu)\le D/(2\sqrt N)$, as in the proof of Corollary~\ref{cor:samplesize}; see also~\cite{BobkovLedoux2019}.
\end{proof}

\subsection{A projected subgradient method}

\begin{algorithm}[htbp]
\caption{Projected subgradient method for invariant-measure shaping}
\label{alg:psg}
\KwIn{Maps $w_1,\dots,w_m$; target $\nu$ (or $\hat\nu_N$); $r<1$; $\mu^0\in\mathcal A_r$; number of iterations $T$; step sizes $(\alpha_t)$; tolerance $\delta_\pi\ge0$ and number of terms $\bar k$ for the inner computations.}
\KwOut{The iterates $\mu^0,\dots,\mu^T$ and a selected iterate $\hat\mu$.}
\For{$t=0,1,\dots,T-1$}{
Compute $\pi^t\approx\pi_{\mu^t}$ by fixed-point iteration of $P^*_{\mu^t}$, warm-started at $\pi^{t-1}$, until $W_1(\pi^t,P^*_{\mu^t}\pi^t)\le(1-r)\delta_\pi$\;
Compute an optimal Kantorovich potential $f^t$ for $\pi^t$ and $\nu$, i.e.\ a dual solution of the transport problem between $\pi^t$ and $\nu$ (on $\R$: $(f^t)'=-\operatorname{sign}(F_{\pi^t}-F_\nu)$)\;
Compute $u^t:=\sum_{k=0}^{\bar k-1}P_{\mu^t}^kf^t$, the truncated series of Lemma~\ref{lem:poisson} (up to an additive constant)\;
$g^t_i\leftarrow\big((w_i)_\#\pi^t\big)(u^t)$ for $i\in[m]$, and $g^t\leftarrow g^t-\frac1m\big(\sum_ig^t_i\big)\1$\;
$\mu^{t+1}\leftarrow\operatorname{proj}_{\mathcal A_r}\big(\mu^t-\alpha_tg^t\big)$\;
}
\Return{$\hat\mu:=\mu^\tau$ with $\tau$ uniform on $\{0,\dots,T-1\}$ (the output analyzed in Theorem~\ref{thm:psg}(a)); in practice, the iterate with the smallest computed value of $J$.}
\end{algorithm}

The projection onto $\mathcal A_r=\Delta_m\cap\{\mu:\langle\ell,\mu\rangle\le r\}$, $\ell:=(L_1,\dots,L_m)$, is $\operatorname{proj}_{\Delta_m}(y-\lambda^\star\ell)$. Here $\lambda^\star\ge0$ is found by bisection, since $\lambda\mapsto\langle\ell,\operatorname{proj}_{\Delta_m}(y-\lambda\ell)\rangle$ is nonincreasing, and $\operatorname{proj}_{\Delta_m}$ is computed by sorting \cite{Condat2016}. If all $L_i<1$ and $r=\max_iL_i$, then $\mathcal A_r=\Delta_m$. Projections onto subsets of $\{\sum_i\mu_i=1\}$ are invariant under shifts of their argument along $\1$, so centering $g^t$ does not change the iterates. The stopping rule for $\pi^t$ certifies $W_1(\pi^t,\pi_{\mu^t})\le\delta_\pi$, because $W_1(\pi^t,\pi_{\mu^t})\le W_1(\pi^t,P_{\mu^t}^*\pi^t)+L(\mu^t)\,W_1(\pi^t,\pi_{\mu^t})$.

Let $\omega:=\omega_r$, $\varphi:=J+\iota_{\mathcal A_r}$, and let $\varphi_\lambda(\mu):=\min_\eta\{\varphi(\eta)+\frac1{2\lambda}\|\eta-\mu\|_2^2\}$ be the Moreau envelope of $\varphi$. We call $g\in\R^m$ an \emph{$\eps$-subgradient} of $\varphi$ at $\mu\in\mathcal A_r$ if
\begin{equation}\label{eq:eps-subgradient}
J(\eta)\ge J(\mu)+\langle g,\eta-\mu\rangle-\tfrac\omega2\|\eta-\mu\|_2^2-\eps\qquad\text{for all }\eta\in\mathcal A_r .
\end{equation}
By Theorem~\ref{thm:structure}(ii), the exact vectors $\bar s_\mu(f)$, $f\in\Phi_\mu$, are $0$-subgradients of norm at most $G_r$. The next proposition quantifies the effect of the two approximations made in Algorithm~\ref{alg:psg}.

\begin{proposition}[Inexact subgradients]\label{prop:inexact}
Let $r\in[0,1)$, $\mu\in\mathcal A_r$, and let $\widetilde\pi\in\mathcal P_1(\mathcal X)$ satisfy $W_1(\widetilde\pi,\pi_\mu)\le\delta_\pi$. Let $f\in\operatorname{Lip}_1$ satisfy $\widetilde\pi(f)-\nu(f)=W_1(\widetilde\pi,\nu)$, let $\bar k\in\N$, $\tilde u:=\sum_{k=0}^{\bar k-1}P_\mu^kf$, and let $\tilde g$ be the centred vector with entries $((w_i)_\#\widetilde\pi)(\tilde u)$. Then $\tilde g$ is an $\eps$-subgradient of $\varphi$ at $\mu$ with
\[
\eps:=2\delta_\pi+2\psi,\qquad \psi:=\frac{2L_{\max}\delta_\pi+r^{\bar k}\big(D_r+2L_{\max}\delta_\pi\big)}{2(1-r)},
\]
and $\|\tilde g\|_2\le G_r+2\sqrt m\,\psi$.
\end{proposition}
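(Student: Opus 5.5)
The plan is to compare $\tilde g$ with the exact vector $s_\mu(f)$ built from the same potential $f$. We then exploit the fact that $f$ is nearly optimal for the pair $(\pi_\mu,\nu)$, and feed the result into the proof of Theorem~\ref{thm:structure}(i).

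\emph{Step 1: near-optimality of $f$.} Since $W_1(\widetilde\pi,\pi_\mu)\le\delta_\pi$ and $f\in\operatorname{Lip}_1$, we have $|\widetilde\pi(f)-\pi_\mu(f)|\le\delta_\pi$. The triangle inequality gives $|W_1(\widetilde\pi,\nu)-J(\mu)|\le\delta_\pi$. Hence $\pi_\mu(f)-\nu(f)\ge J(\mu)-2\delta_\pi$. For every $\eta\in\mathcal A_r$, duality then gives
\[
J(\eta)-J(\mu)\ge\pi_\eta(f)-\pi_\mu(f)-2\delta_\pi .
\]
The linear response bound~\eqref{eq:linear-response}, with $C_{\mu,\eta}\le K_r$ and $\|h\|_1^2\le m\|h\|_2^2$ as in Theorem~\ref{thm:structure}(ii), turns this into
\[
J(\eta)\ge J(\mu)+\langle s_\mu(f),\eta-\mu\rangle-\tfrac\omega2\|\eta-\mu\|_2^2-2\delta_\pi .
\]
This argument never uses $f(x_\circ)=0$, and additive constants are irrelevant after pairing with $h\in T_0$.

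\emph{Step 2: comparing $\tilde g$ with $s_\mu(f)$.} Only differences of entries matter, since $\eta-\mu\in T_0$ and both vectors may be centred. So it suffices to bound $|\langle\tilde g-s_\mu(f),h\rangle|\le\psi\|h\|_1$ for $h\in T_0$. Write
\[
\langle\tilde g,h\rangle-\langle s_\mu(f),h\rangle=\sum_ih_i\big[((w_i)_\#\widetilde\pi)(\tilde u)-((w_i)_\#\pi_\mu)(\tilde u)\big]+\sum_ih_i\,((w_i)_\#\pi_\mu)(\tilde u-u_f^\mu),
\]
where $u_f^\mu$ and $\tilde u$ differ only by a constant and by the tail $\sum_{k\ge\bar k}(P_\mu^kf-\pi_\mu(f))$.

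The first sum is handled by decomposing $h=h^+-h^-$ as in Lemma~\ref{lem:mixture}, which reduces it to $\tfrac12\|h\|_1$ times the maximum over $i,j$ of differences. Each term is bounded by $\operatorname{Lip}(\tilde u\circ w_i)\,\delta_\pi\le L_{\max}\delta_\pi/(1-r)$, since $\operatorname{Lip}(\tilde u)\le\sum_k r^k$. This yields a contribution $\tfrac12\|h\|_1\cdot 2L_{\max}\delta_\pi/(1-r)$, matching the first part of $\psi$.

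For the second sum, Lemma~\ref{lem:mixture} applied to the tail $g=\sum_{k\ge\bar k}P_\mu^kf$ gives at most $\tfrac12 D_\mu\operatorname{Lip}(g)\|h\|_1\le\tfrac12 D_r r^{\bar k}\|h\|_1/(1-r)$. The stated form of $\psi$, with the extra term $r^{\bar k}\cdot2L_{\max}\delta_\pi$, suggests the authors split differently: they expand $\tilde u$ against $\widetilde\pi$ and pay the tail on the perturbed measure. My bound is at most theirs, so either split suffices.

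\emph{Step 3: assembling.} Combining the two steps gives
\[
J(\eta)\ge J(\mu)+\langle\tilde g,\eta-\mu\rangle-\tfrac\omega2\|\eta-\mu\|_2^2-2\delta_\pi-\psi\|\eta-\mu\|_1 .
\]
Since $\|\eta-\mu\|_1\le2$ on the simplex, this yields $\eps=2\delta_\pi+2\psi$.

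For the norm bound, each centred entry of $\tilde g-\bar s_\mu(f)$ is controlled by the same pairwise-difference estimates. The pairwise differences are at most $2\psi$, so every centred entry is at most $2\psi$ and the $\ell_2$ norm is at most $2\sqrt m\,\psi$. Together with $\|\bar s_\mu(f)\|_2\le G_r$ from Theorem~\ref{thm:structure}(ii), this gives $\|\tilde g\|_2\le G_r+2\sqrt m\,\psi$.

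The main obstacle is the bookkeeping in Step 2. The argument must use that $\tilde u$ is Lipschitz with constant at most $1/(1-r)$ while keeping the truncation and the measure error separate, so that the constants come out as in $\psi$.
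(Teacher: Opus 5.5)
Your proposal is correct and is essentially the paper's proof: you use the same near-optimality estimate $\pi_\mu(f)-\nu(f)\ge J(\mu)-2\delta_\pi$ fed into Theorem~\ref{thm:linear-response}, the same pairwise-difference comparison of $\tilde g$ with $s_\mu(f)$ in the style of Lemma~\ref{lem:mixture}, and the same use of $\|h\|_1\le2$ and of centring for the norm bound. The only difference is the order of the split in Step~2: the paper charges the measure error to the full solution $u_f^\mu$ and integrates the truncation tail against $\widetilde\pi$, which requires $\int\rho(w_i(x),w_j(x))\,\widetilde\pi(\dd x)\le D_r+2L_{\max}\delta_\pi$ and produces the extra term $r^{\bar k}\cdot 2L_{\max}\delta_\pi$ in $\psi$, whereas your split (measure error on $\tilde u$, tail against $\pi_\mu$) gives a slightly smaller constant and needs no control of the mode spread under $\widetilde\pi$; note that the tail must be the centred series $\sum_{k\ge\bar k}(P_\mu^kf-\pi_\mu(f))$, not $\sum_{k\ge\bar k}P_\mu^kf$ as you wrote in Step~2, since the uncentred series need not converge.
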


\begin{proof}
Let $u:=u_f^\mu$, $s:=s_\mu(f)$, $\eta\in\mathcal A_r$ and $h:=\eta-\mu\in T_0$. Since $f\in\operatorname{Lip}_1$ and $W_1(\widetilde\pi,\pi_\mu)\le\delta_\pi$, we have $\pi_\mu(f)-\nu(f)\ge W_1(\widetilde\pi,\nu)-\delta_\pi\ge J(\mu)-2\delta_\pi$, and therefore, by~\eqref{eq:KR} and Theorem~\ref{thm:linear-response},
\[
J(\eta)\ge\pi_\eta(f)-\nu(f)\ge J(\mu)-2\delta_\pi+\langle s,h\rangle-K_r\|h\|_1^2 .
\]
Let $v_i:=((w_i)_\#\pi_\mu)(u)-((w_i)_\#\widetilde\pi)(\tilde u)$. As in the proof of Lemma~\ref{lem:mixture}, $|\langle v,h\rangle|\le\frac12\|h\|_1\max_{i,j}|v_i-v_j|$, and
\[
v_i-v_j=\int\big(u\circ w_i-u\circ w_j\big)\,\dd(\pi_\mu-\widetilde\pi)+\int\big(t\circ w_i-t\circ w_j\big)\,\dd\widetilde\pi ,
\]
where $t:=\sum_{k\ge\bar k}(P_\mu^kf-\pi_\mu(f))$ differs from $u-\tilde u$ by a constant, which cancels. The first integral is at most $(L_i+L_j)\operatorname{Lip}(u)\,\delta_\pi\le2L_{\max}\delta_\pi/(1-r)$ in absolute value, by Lemma~\ref{lem:poisson}. For the second, $\operatorname{Lip}(t)\le\sum_{k\ge\bar k}L(\mu)^k\le r^{\bar k}/(1-r)$, and $x\mapsto\rho(w_i(x),w_j(x))$ is $(L_i+L_j)$-Lipschitz, so that $\int\rho(w_i(x),w_j(x))\,\widetilde\pi(\dd x)\le D_\mu+2L_{\max}\delta_\pi\le D_r+2L_{\max}\delta_\pi$. Hence $\max_{i,j}|v_i-v_j|\le2\psi$ and $|\langle s-\tilde g,h\rangle|=|\langle v,h\rangle|\le\psi\|h\|_1\le2\psi$. Together with $K_r\|h\|_1^2\le\frac\omega2\|h\|_2^2$ this gives~\eqref{eq:eps-subgradient}. Finally $|\tilde g_i-\tilde g_j|\le|s_i-s_j|+2\psi\le D_r/(1-r)+2\psi$ by the proof of Theorem~\ref{thm:structure}(ii), which bounds every entry of the centred vector $\tilde g$.
\end{proof}

\begin{theorem}[Convergence of Algorithm~\ref{alg:psg}]\label{thm:psg}
Let $r\in[0,1)$ and $\nu\in\mathcal P_1(\mathcal X)$.
\begin{enumerate}
\item[(a)] Suppose that every $g^t$ is an $\eps$-subgradient of $\varphi$ at $\mu^t$ with $\|g^t\|_2\le G'$. With constant steps $\alpha_t\equiv\gamma/\sqrt T$, $\gamma>0$,
\[
\frac1T\sum_{t=0}^{T-1}\big\|\nabla\varphi_{1/(2\omega)}(\mu^t)\big\|_2^2\le\frac{2\big(J(\mu^0)-\min_{\mathcal A_r}J+\omega G'^2\gamma^2\big)}{\gamma\sqrt T}+4\omega\eps .
\]
The same bound holds for $\min_{0\le t<T}\|\nabla\varphi_{1/(2\omega)}(\mu^t)\|_2^2$ and for $\E\|\nabla\varphi_{1/(2\omega)}(\mu^\tau)\|_2^2$ with $\tau$ uniform on $\{0,\dots,T-1\}$. With exact quantities, $\eps=0$ and $G'=G_r$; for the quantities computed by Algorithm~\ref{alg:psg}, $\eps$ and $G'$ are given by Proposition~\ref{prop:inexact}. Moreover, if $\|\nabla\varphi_{1/(2\omega)}(\mu)\|_2\le\vartheta$ for some $\mu\in\mathcal A_r$, then there is $\hat\mu\in\mathcal A_r$ with $\|\hat\mu-\mu\|_2\le\vartheta/(2\omega)$, $J(\hat\mu)\le J(\mu)$, and $\operatorname{dist}(0,\partial\varphi(\hat\mu))\le\vartheta$.
\item[(b)] Suppose $\nu=\pi_{\mu^\star}$ with $\mu^\star\in\mathcal A_r$, that $J(\mu)\ge\kappa\|\mu-\mu^\star\|_2$ for all $\mu\in\mathcal A_r$ with $\|\mu-\mu^\star\|_2\le R_\star$, and that exact quantities are used, $g^t=\bar s_{\mu^t}(f^t)$ with $f^t\in\Phi_{\mu^t}$. Use the Polyak steps $\alpha_t:=J(\mu^t)/\|g^t\|_2^2$ (and stop if $J(\mu^t)=0$). If $\|\mu^0-\mu^\star\|_2\le\min\{R_\star,\kappa/(2\omega)\}$, then
\[
\|\mu^t-\mu^\star\|_2^2\le\Big(1-\frac{\kappa^2}{2G_r^2}\Big)^t\|\mu^0-\mu^\star\|_2^2\qquad\text{for all }t\ge0.
\]
\end{enumerate}
\end{theorem}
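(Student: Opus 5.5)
For part (a) I will use the Davis--Drusvyatskiy Moreau-envelope analysis of projected subgradient methods for weakly convex functions, adapted to $\eps$-subgradients. Put $\lambda:=1/(2\omega)$ and $\hat\mu^t:=\operatorname{prox}_{\lambda\varphi}(\mu^t)$. This prox point is well defined and unique, because $\varphi+\frac1{2\lambda}\|\cdot-\mu^t\|_2^2=\varphi+\omega\|\cdot-\mu^t\|_2^2$ is $\omega$-strongly convex on $\mathcal A_r$ by Theorem~\ref{thm:structure}(ii). The standard identity $\nabla\varphi_\lambda(\mu^t)=\lambda^{-1}(\mu^t-\hat\mu^t)=2\omega(\mu^t-\hat\mu^t)$ then holds.

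\textbf{One-step descent.} The projection onto the convex set $\mathcal A_r$ is nonexpansive and $\hat\mu^t\in\mathcal A_r$. Hence
\[
\|\mu^{t+1}-\hat\mu^t\|_2^2\le\|\mu^t-\hat\mu^t\|_2^2-2\alpha_t\langle g^t,\mu^t-\hat\mu^t\rangle+\alpha_t^2G'^2 .
\]
Apply the $\eps$-subgradient inequality~\eqref{eq:eps-subgradient} at $\eta=\hat\mu^t$:
\[
\langle g^t,\mu^t-\hat\mu^t\rangle\ge J(\mu^t)-J(\hat\mu^t)-\tfrac\omega2\|\mu^t-\hat\mu^t\|_2^2-\eps .
\]
Strong convexity of the prox subproblem, compared at $\mu^t$ with its minimizer $\hat\mu^t$, gives
\[
J(\mu^t)-J(\hat\mu^t)-\omega\|\hat\mu^t-\mu^t\|_2^2\ge\omega\|\mu^t-\hat\mu^t\|_2^2/2\cdot\text{(const)} .
\]
Combining these, I expect $\langle g^t,\mu^t-\hat\mu^t\rangle\ge c\,\omega\|\mu^t-\hat\mu^t\|_2^2-\eps$ with some $c\ge1/2$. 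Next use $\varphi_\lambda(\mu^{t+1})\le J(\hat\mu^t)+\omega\|\mu^{t+1}-\hat\mu^t\|_2^2$ and $\|\mu^t-\hat\mu^t\|_2^2=\|\nabla\varphi_\lambda(\mu^t)\|_2^2/(4\omega^2)$. This yields
\[
\varphi_\lambda(\mu^{t+1})\le\varphi_\lambda(\mu^t)-\tfrac{\alpha_t}{2}\|\nabla\varphi_\lambda(\mu^t)\|_2^2+2\omega\alpha_t\eps+\omega\alpha_t^2G'^2 .
\]

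\textbf{Telescoping.} Sum over $t$, using $\varphi_\lambda(\mu^0)\le J(\mu^0)$ and $\varphi_\lambda\ge\min_{\mathcal A_r}J$, and divide by $\frac12\sum_t\alpha_t=\frac12\gamma\sqrt T$. This should give exactly the stated bound. I expect the main obstacle to be tracking the constants so that they come out as $2(\cdot)/(\gamma\sqrt T)$ and $4\omega\eps$; the weak-convexity modulus in \eqref{eq:eps-subgradient} must be handled with the factor $\omega/2$ against the prox weight $\omega$. The statements about the minimum over $t$ and the uniform random $\tau$ are immediate, since the average dominates both. The inexact constants come from Proposition~\ref{prop:inexact}, and the exact ones from Theorem~\ref{thm:structure}(ii).

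\textbf{Near-stationarity and part (b).} For the final claim of (a), take $\hat\mu:=\operatorname{prox}_{\lambda\varphi}(\mu)$. Then $\|\hat\mu-\mu\|_2=\lambda\|\nabla\varphi_\lambda(\mu)\|_2\le\vartheta/(2\omega)$, and $J(\hat\mu)\le J(\hat\mu)+\omega\|\hat\mu-\mu\|^2\le J(\mu)$. The optimality condition of the prox gives $2\omega(\mu-\hat\mu)\in\partial\varphi(\hat\mu)$, which has norm at most $\vartheta$. For (b), I will expand
\[
\|\mu^{t+1}-\mu^\star\|_2^2\le\|\mu^t-\mu^\star\|_2^2-2\alpha_t\langle g^t,\mu^t-\mu^\star\rangle+\alpha_t^2\|g^t\|_2^2 ,
\]
which uses nonexpansiveness and $\mu^\star\in\mathcal A_r$. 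Centering $g^t$ is harmless, since $\mu^t-\mu^\star\in T_0$. The exact subgradient inequality with $J(\mu^\star)=0$ gives
\[
\langle g^t,\mu^t-\mu^\star\rangle\ge J(\mu^t)-\tfrac\omega2\|\mu^t-\mu^\star\|_2^2\ge J(\mu^t)/2 ,
\]
where the last step uses sharpness and $\|\mu^t-\mu^\star\|_2\le\kappa/(2\omega)$ through $\frac\omega2 d^2\le\frac\kappa4d\le J/4$. With the Polyak step, the right-hand side becomes at most $\|\mu^t-\mu^\star\|_2^2-J(\mu^t)^2/(2\|g^t\|_2^2)$. Applying $J\ge\kappa\|\mu^t-\mu^\star\|_2$ and $\|g^t\|_2\le G_r$ then gives the contraction factor $1-\kappa^2/(2G_r^2)$. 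The distance decreases at each step, so the iterates stay in the ball of radius $\min\{R_\star,\kappa/(2\omega)\}$, and induction closes the argument.
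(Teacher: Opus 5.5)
Your plan follows the paper's proof: the Davis--Drusvyatskiy Moreau-envelope descent for (a), and the Polyak-step distance recursion with the exact subgradient inequality at $\eta=\mu^\star$ plus sharpness for (b). However, two constants that you leave loose are exactly what the stated bounds require. With the values you actually wrote down, the argument does not close.

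\textbf{Part (a).} Minimality of $\hat\mu^t$ alone gives $J(\mu^t)-J(\hat\mu^t)\ge\omega\|d_t\|_2^2$, where $d_t:=\mu^t-\hat\mu^t$; this is your $c=\frac12$. You need the strong-convexity surplus. The prox function $\varphi+\omega\|\cdot-\mu^t\|_2^2$ is $\omega$-strongly convex, so it drops by at least $\frac\omega2\|d_t\|_2^2$ from $\mu^t$ to its minimizer. Hence $J(\mu^t)-J(\hat\mu^t)\ge\frac{3\omega}{2}\|d_t\|_2^2$, and the $\eps$-subgradient inequality then gives $\langle g^t,d_t\rangle\ge\omega\|d_t\|_2^2-\eps$, i.e.\ $c=1$. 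This matters: for general $c$ the descent term is $\frac{c\alpha_t}{2}\|\nabla\varphi_{1/(2\omega)}(\mu^t)\|_2^2$, and the final bound becomes $\frac2c\cdot\frac{J(\mu^0)-\min_{\mathcal A_r}J+\omega G'^2\gamma^2}{\gamma\sqrt T}+\frac{4\omega\eps}{c}$. With $c=\frac12$ both terms double.

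\textbf{Part (b).} Your own estimate $\frac\omega2 d_t^2\le\frac\kappa4 d_t\le\frac14 J(\mu^t)$ yields $\langle g^t,\mu^t-\mu^\star\rangle\ge\frac34J(\mu^t)$, not merely $\frac12J(\mu^t)$. The $\frac34$ is indispensable. With only $\frac12J(\mu^t)$ and $\alpha_t=J(\mu^t)/\|g^t\|_2^2$, the expansion gives $\|\mu^{t+1}-\mu^\star\|_2^2\le\|\mu^t-\mu^\star\|_2^2-\frac{J(\mu^t)^2}{\|g^t\|_2^2}+\frac{J(\mu^t)^2}{\|g^t\|_2^2}$, which is no decrease at all. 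With $\frac34$ you get the claimed $-J(\mu^t)^2/(2\|g^t\|_2^2)$. The paper writes this step as $d_{t+1}^2\le d_t^2-\frac{J_t}{\|g^t\|_2^2}(J_t-\omega d_t^2)$ together with $\omega d_t^2\le\frac12J_t$. The same inequality also shows $g^t\neq0$ whenever $J(\mu^t)>0$, so the Polyak step is well defined; say this explicitly.
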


\begin{proof}
(a) We follow \cite{DavisDrusvyatskiy2019}. Let $\hat\mu^t:=\arg\min_\eta\{\varphi(\eta)+\omega\|\eta-\mu^t\|_2^2\}$ and $d_t:=\mu^t-\hat\mu^t$. By Theorem~\ref{thm:structure}(ii), $\eta\mapsto\varphi(\eta)+\omega\|\eta-\mu^t\|_2^2$ is $\omega$-strongly convex, so $\hat\mu^t$ is well defined. Moreover, $\nabla\varphi_{1/(2\omega)}(\mu^t)=2\omega d_t$ and $2\omega d_t\in\partial\varphi(\hat\mu^t)$ \cite{DavisDrusvyatskiy2019}. Comparing the values of this strongly convex function at $\mu^t$ and at its minimizer gives $J(\mu^t)-J(\hat\mu^t)\ge\frac{3\omega}2\|d_t\|_2^2$. Together with~\eqref{eq:eps-subgradient} at $\mu^t$, applied with $\eta=\hat\mu^t$, this yields
\[
\langle g^t,d_t\rangle\ge J(\mu^t)-J(\hat\mu^t)-\tfrac\omega2\|d_t\|_2^2-\eps\ge\omega\|d_t\|_2^2-\eps .
\]
Since $\hat\mu^t\in\cA_r$, nonexpansiveness of the projection, $\|g^t\|_2\le G'$ and the last inequality give
\[
\|\mu^{t+1}-\hat\mu^t\|_2^2\le\|d_t-\alpha_tg^t\|_2^2\le(1-2\alpha_t\omega)\|d_t\|_2^2+2\alpha_t\eps+\alpha_t^2G'^2 .
\]
Hence
\[
\begin{aligned}
\varphi_{1/(2\omega)}(\mu^{t+1})&\le\varphi(\hat\mu^t)+\omega\|\mu^{t+1}-\hat\mu^t\|_2^2\\
&\le\varphi_{1/(2\omega)}(\mu^t)-\tfrac{\alpha_t}2\big\|\nabla\varphi_{1/(2\omega)}(\mu^t)\big\|_2^2+2\omega\alpha_t\eps+\omega G'^2\alpha_t^2 .
\end{aligned}
\]
Summing over $t=0,\dots,T-1$, and using $\varphi_{1/(2\omega)}(\mu^0)\le J(\mu^0)$ and $\varphi_{1/(2\omega)}\ge\min_{\mathcal A_r}J$, we obtain $\sum_t\alpha_t\|\nabla\varphi_{1/(2\omega)}(\mu^t)\|_2^2\le2(J(\mu^0)-\min_{\mathcal A_r}J)+4\omega\eps\sum_t\alpha_t+2\omega G'^2\sum_t\alpha_t^2$. Inserting $\alpha_t\equiv\gamma/\sqrt T$ and dividing by $\gamma\sqrt T$ proves the bound; the minimum and the expectation over $\tau$ are bounded by the average. For the last claim, take $\hat\mu:=\arg\min_\eta\{\varphi(\eta)+\omega\|\eta-\mu\|_2^2\}$. Then $\|\hat\mu-\mu\|_2=\|\nabla\varphi_{1/(2\omega)}(\mu)\|_2/(2\omega)$, $J(\hat\mu)\le J(\hat\mu)+\omega\|\hat\mu-\mu\|_2^2\le J(\mu)$, and $\nabla\varphi_{1/(2\omega)}(\mu)\in\partial\varphi(\hat\mu)$.

(b) Let $d_t:=\|\mu^t-\mu^\star\|_2$ and $J_t:=J(\mu^t)$, and suppose $d_t\le\min\{R_\star,\kappa/(2\omega)\}$. Theorem~\ref{thm:structure}(ii) at $\mu^t$, applied with $\eta=\mu^\star$, gives $0=J(\mu^\star)\ge J_t+\langle g^t,\mu^\star-\mu^t\rangle-\frac\omega2d_t^2$. Since $\omega d_t^2\le\frac\kappa2d_t\le\frac12J_t$, this implies $\langle g^t,\mu^t-\mu^\star\rangle\ge\frac34J_t>0$ whenever $J_t>0$, so $g^t\ne0$. By nonexpansiveness of the projection,
\[
\begin{aligned}
d_{t+1}^2&\le d_t^2-2\alpha_t\big(J_t-\tfrac\omega2d_t^2\big)+\alpha_t^2\|g^t\|_2^2=d_t^2-\frac{J_t}{\|g^t\|_2^2}\big(J_t-\omega d_t^2\big)\\
&\le d_t^2-\frac{J_t^2}{2\|g^t\|_2^2}\le\Big(1-\frac{\kappa^2}{2G_r^2}\Big)d_t^2 ,
\end{aligned}
\]
using $J_t\ge\kappa d_t$ and $\|g^t\|_2\le G_r$. In particular, $d_{t+1}\le d_t$, and the claim follows by induction.
\end{proof}

With inexact quantities, part~(a) thus reaches near-stationarity up to the floor $4\omega\eps$, and $\eps$ can be driven to zero by tightening the inner tolerance $\delta_\pi$ and increasing $\bar k$, at a cost that is only logarithmic in $1/\delta_\pi$ and $1/\eps$ because of the geometric decay of the fixed-point iteration and of the series. Part~(b) is more sensitive to errors, since the Polyak step is proportional to $J(\mu^t)$ and inversely proportional to $\|g^t\|_2^2$, and both are small near $\mu^\star$; in our experiments the inner tolerances are several orders of magnitude below the accuracies reached.

\begin{remark}[Practical aspects]\label{rem:psg-practice}
(1) Each iteration of Algorithm~\ref{alg:psg} requires one invariant-measure computation, one optimal-transport dual solution, one Poisson solve, and $m$ integrals. In the finite-state model with $n$ states and sparse transition matrices $W_i$ of the maps $w_i$, the fixed-point and Poisson computations cost $O(m\,\mathrm{nnz}\,\log(1/\delta_\pi)/(1-r))$ operations, where $\mathrm{nnz}$ is the number of nonzeros per $W_i$. By contrast, Algorithm~\ref{alg:shaping} requires $\binom{q+m-1}{m-1}$ invariant-measure computations.
(2) The diminishing steps $\alpha_t=\gamma/\sqrt{t+1}$ used in Section~\ref{sec:numerics} give the rate $O(\log T/\sqrt T)$ in (a), by the same argument. The constants $\omega_r$ and $G_r$ are worst-case bounds, so in practice we normalize the steps by $\|g^0\|_2$.
(3) Theorem~\ref{thm:structure} involves $\nu$ only through $\Phi_\mu$, and its constants do not depend on $\nu$. Hence Theorem~\ref{thm:structure}, Proposition~\ref{prop:inexact} and Theorem~\ref{thm:psg}(a) hold verbatim for the empirical objective $\mu\mapsto W_1(\hat\nu_N,\pi_\mu)$, and Corollary~\ref{cor:recovery} converts its approximate minimization into a parameter-recovery bound. Theorem~\ref{thm:psg}(b) does not transfer: it needs a realizable target, and an empirical measure is in general not an invariant measure of the system; in the ternary benchmark of Section~\ref{sec:exp-ternary}, for instance, $\pi_\mu$ has no atoms unless $\mu$ is a vertex.
(4) Since $J$ is nonconvex, Theorem~\ref{thm:psg}(a) guarantees only near-stationarity. A natural hybrid, used in Section~\ref{sec:numerics}, is to run Algorithm~\ref{alg:shaping} on a coarse net and to refine its output by Algorithm~\ref{alg:psg}.
(5) The factors $m$ in $\omega_r$ and $\sqrt m$ in $G_r$ come from comparing the $\ell_1$-norm, in which Theorem~\ref{thm:linear-response} is naturally stated, with the Euclidean norm; an entropic mirror-descent variant of Algorithm~\ref{alg:psg}, which works in the $\ell_1$-geometry of the simplex~\cite{BeckTeboulle2003}, avoids them. Moreover, by Theorem~\ref{thm:linear-response}, $J(\mu)=\|\nu-\pi_\mu\|_{\rm KR}$ is the composition of a convex Lipschitz function with a map whose linearization error is quadratic. This is the setting of prox-linear methods~\cite{DrusvyatskiyPaquette2019}, whose subproblems minimize the convex function $\eta\mapsto\|\nu-\pi_{\mu^t}-D\pi_{\mu^t}[\eta-\mu^t]\|_{\rm KR}+\frac1{2\alpha_t}\|\eta-\mu^t\|_2^2$ over $\mathcal A_r$, a convex quadratic program on a finite state space. We expect such methods to cope better with the ill-conditioning observed in Section~\ref{sec:exp-tcl}, but do not pursue them here.
\end{remark}

\section{Numerical experiments}\label{sec:numerics}

The experiments illustrate the parts of the theory: the geometry of $\cA$ and the structure of the
solution set, the performance of Algorithm~\ref{alg:shaping} without contraction, the sensitivity
analysis and Algorithm~\ref{alg:psg}, the behavior of $J$ at the missing boundary, and the scalability of
Algorithm~\ref{alg:psg} to many modes. Monte Carlo standard deviations or standard errors are given in
parentheses.

\emph{Implementation.} On an interval, all quantities are computed on a finite-state model: exact when the
maps preserve a finite lattice (Section~\ref{sec:exp-storage}) or act on digit expansions
(Section~\ref{sec:exp-ternary} and the $8$-adic system of Section~\ref{sec:exp-many}), and otherwise by the mean-preserving randomized
rounding of Proposition~\ref{prop:quantization}(ii). On a grid, $W_1$ is the $\ell_1$-distance of the
cumulative distribution functions and an optimal potential is given by
$f'=-\operatorname{sign}(F_\pi-F_\nu)$. In Algorithm~\ref{alg:psg}, the fixed-point iteration for $\pi^t$ is stopped
when $W_1(\pi^t,P_{\mu^t}^*\pi^t)<10^{-13}$, and the Poisson series is truncated when the oscillation
$\max P_{\mu^t}^kf^t-\min P_{\mu^t}^kf^t$ falls below $10^{-11}$; unless stated otherwise, the steps are
$\alpha_t=\gamma/(\|g^0\|_2\sqrt{t+1})$, with $\gamma$ reported for each experiment.

\subsection{Saturating storage: a non-contractive system}\label{sec:exp-storage}
We consider two storage models on $\cX=[0,1]$ built from the mechanisms
\begin{align*}
w_{+}(x)&=\min\{1,x+0.3\}, & w_{-}(x)&=\max\{0,x-0.2\},\\
w_{0}(x)&=x, & w_{=}(x)&=\max\{0,x-0.5\}
\end{align*}
(charge, discharge, hold, deep discharge). System $\mathsf S_1$ uses $(w_+,w_-,w_0)$ and is the system of
Example~\ref{ex:hold}; system $\mathsf S_2$ uses $(w_+,w_-,w_=)$. All maps have Lipschitz constant one, so no
sampling law is contractive on average. In $\mathsf S_2$ every singleton is point-accessible
($w_+^4\equiv1$, $w_-^5\equiv0$, $w_=^2\equiv0$), hence $\cA=\Delta_3$ by Theorem~\ref{thm:doeblin}. The lattice
$\{0,0.1,\dots,1\}$ is invariant under all maps and carries every invariant measure of an admissible law,
so with $\lambda_0=\delta_0$ all quantities below are computed exactly by finite linear algebra.

\emph{Landscapes.} Figure~\ref{fig:landscapes} shows $J$ on the simplex for the targets $\nu=\pi_{\mu^*}$ with
$\mu^*=(0.3,0.2,0.5)$ in $\mathsf S_1$ and $\mu^*=(0.47,0.31,0.22)$ in $\mathsf S_2$. In $\mathsf S_1$ the level
sets are the segments through $e_3$ predicted by Lemma~\ref{lem:lazy}; the minimizers form the half-open
segment from $(0.6,0.4,0)$ to the excluded vertex $e_3$, so the solution set is neither a singleton nor
closed, in agreement with Example~\ref{ex:hold}. In $\mathsf S_2$ the computed landscape has a single
minimizer, but the valley around it is flat in one direction.

\begin{figure}[t]
\centering
\includegraphics[width=\textwidth]{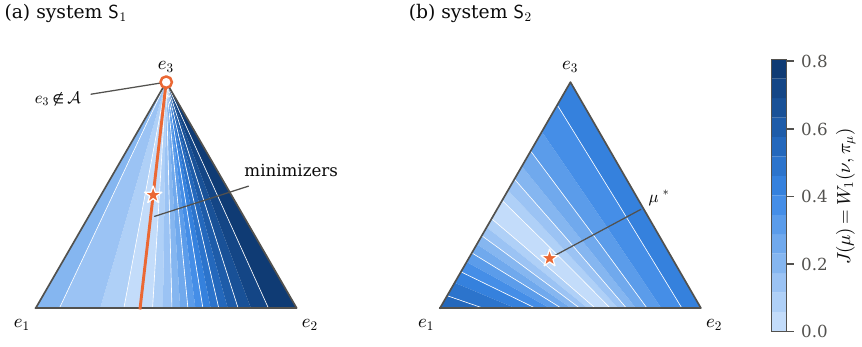}
\caption{The objective $J(\mu)=W_1(\nu,\pi_\mu)$ on $\Delta_3$ for the two storage systems of
Section~\ref{sec:exp-storage}, with $\nu=\pi_{\mu^*}$ (star). (a) System $\mathsf S_1$ (charge, discharge,
hold): $\cA=\Delta_3\setminus\{e_3\}$, and the minimizers form a half-open segment ending at the excluded
vertex. (b) System $\mathsf S_2$ (charge, discharge, deep discharge): $\cA=\Delta_3$.}
\label{fig:landscapes}
\end{figure}

\emph{Search over the $\delta$-net.} For $\mathsf S_2$ we ran Algorithm~\ref{alg:shaping} over the whole net
$\cN_\delta$ for $q\in\{12,30,60\}$, that is $\delta=2m/q\in\{0.5,0.2,0.1\}$ and
$|\cN_\delta|\in\{91,496,1891\}$, with $500$ independent samples of size $N$ for each configuration. The
target parameter $\mu^*$ does not belong to any of the three nets. Table~\ref{tab:gridN} reports the true
error $J(\hat\mu)$ of the returned law. It decreases with $N$ until it reaches the discretization floor
$\min_{\cN_\delta}J$, which itself decreases with $\delta$; the statistical term is of the order of
$\E\,W_1(\hat\nu_N,\nu)$, as predicted by~\eqref{eq:oracle}. Table~\ref{tab:gridk} shows the effect of the
number of iterations: the truncation error decays geometrically
(Figure~\ref{fig:gridsearch}(c)); at $k=32$ it is small compared with the statistical error for $N\le10^4$. Since $\nu$
and $\pi_\mu$ are known here, the bound~\eqref{eq:oracle} can be evaluated exactly; it held in every run, which
serves as a consistency check of the implementation.

\begin{table}[t]
\caption{System $\mathsf S_2$, $k=32$: mean (standard deviation) of $J(\hat\mu)$ over $500$ replications,
for three nets. Last two columns: mean of $W_1(\hat\nu_N,\nu)$, and mean parameter error
$\|\hat\mu-\mu^*\|_1$ for $q=60$. Last row: discretization floor.}
\label{tab:gridN}
\begin{tabular}{@{}lccccc@{}}
\hline
$N$ & $q=12$ & $q=30$ & $q=60$ & $W_1(\hat\nu_N,\nu)$ & $\|\hat\mu-\mu^*\|_1$ \\
\hline
$10^2$ & 0.0336 (0.0232) & 0.0316 (0.0214) & 0.0310 (0.0219) & 0.0364 & 0.333 \\
$10^3$ & 0.0181 (0.0044) & 0.0111 (0.0058) & 0.0100 (0.0066) & 0.0115 & 0.130 \\
$10^4$ & 0.0144 (0.0023) & 0.0074 (0.0011) & 0.0036 (0.0023) & 0.0037 & 0.059 \\
$10^5$ & 0.0138 (0.0000) & 0.0067 (0.0006) & 0.0022 (0.0004) & 0.0011 & 0.063 \\
\hline
$\min_{\mathcal N_\delta}J$ & 0.0138 & 0.0064 & 0.0021 & & \\
\hline
\end{tabular}
\end{table}

\begin{table}[t]
\caption{System $\mathsf S_2$, $q=60$: mean (standard deviation) of $J(\hat\mu)$ as a function of the number
of iterations $k$; last column: truncation error $\max_{\mu\in\cN_\delta}W_1(\pi_\mu^{(k)},\pi_\mu)$.}
\label{tab:gridk}
\begin{tabular}{@{}lcccc@{}}
\hline
$k$ & $N=10^3$ & $N=10^4$ & $N=10^5$ & truncation \\
\hline
4 & 0.1138 (0.0181) & 0.1120 (0.0076) & 0.1130 (0.0010) & $2.8\cdot10^{-1}$ \\
8 & 0.0311 (0.0150) & 0.0285 (0.0054) & 0.0300 (0.0022) & $1.0\cdot10^{-1}$ \\
16 & 0.0103 (0.0071) & 0.0042 (0.0025) & 0.0024 (0.0013) & $1.7\cdot10^{-2}$ \\
32 & 0.0100 (0.0066) & 0.0036 (0.0023) & 0.0022 (0.0004) & $5.1\cdot10^{-4}$ \\
\hline
\end{tabular}
\end{table}

\begin{figure}[t]
\centering
\includegraphics[width=\textwidth]{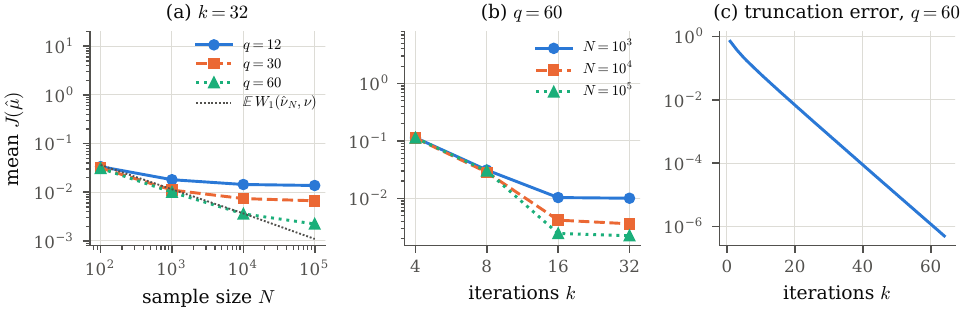}
\caption{Algorithm~\ref{alg:shaping} on system $\mathsf S_2$. (a) Mean error $J(\hat\mu)$ against the sample
size for three nets; the dotted line is the mean of $W_1(\hat\nu_N,\nu)$. (b) Mean error against the
number of iterations. (c) Truncation error $\max_{\mu\in\cN_\delta}W_1(\pi^{(k)}_\mu,\pi_\mu)$.}
\label{fig:gridsearch}
\end{figure}

Two further observations are worth recording. First, the guarantee of Theorem~\ref{thm:eps} concerns the
\emph{objective}, not the parameter: the last column of Table~\ref{tab:gridN} shows that $\hat\mu$ may stay
at a distance of several hundredths from $\mu^*$ while $J(\hat\mu)$ is of order $10^{-3}$, because of
the flat valley of Figure~\ref{fig:landscapes}(b). This is the correct notion of success for the shaping
problem, whose goal is the invariant measure. Second, the theoretical constants are conservative but
of the right nature. With $S=\{3\}$ and witness $(2,0)$, Proposition~\ref{prop:unifergodic}(ii) gives
$\dTV(\pi_\mu,\pi_\eta)\le\mu_3^{-2}\|\mu-\eta\|_1$; over $1.9\cdot10^5$ random pairs of points of the net with
$q=60$ the ratio of the two sides never exceeded $0.64$, so the bound was valid and within a factor two of
being attained. The resulting Lipschitz constant $L_\pi=\beta^{-2}$ on $\cK_\beta(\{3\})$, however, is far
above the largest ratio $W_1(\pi_\mu,\pi_\eta)/\|\mu-\eta\|_1\approx1.5$ observed on the net.

\subsection{An exact benchmark}
\label{sec:exp-ternary}

Let $w_i(x)=(x+i-1)/3$, $i=1,2,3$, on $[0,1]$, so $L=1/3$ and $D=1$. Then $\pi_\mu$ is the law of $\sum_{j\ge1}d_j3^{-j}$ with i.i.d.\ digits $d_j\sim\mu$. The finite-state model on the first $9$ ternary digits is exact: its invariant distribution is $\mu^{\otimes9}$, i.e.\ the masses of $\pi_\mu$ on the $3^9$ triadic intervals. Hence $J$ is computed with error at most $2\cdot3^{-9}<1.1\cdot10^{-4}$, and the sensitivity vector of Definition~\ref{def:sensitivity} can be compared with finite differences: over $20$ random directions the relative error is at most $1.4\cdot10^{-8}$.

Figure~\ref{fig:ternary}(a) shows $J$ on $\Delta_3$ for the bimodal target $\nu=\frac12\mathrm{Beta}(3,12)+\frac12\mathrm{Beta}(9,3)$. Grid search with $q=120$ gives $J=0.0318$ at $\mu=(0.433,\,0.175,\,0.392)$. Algorithm~\ref{alg:psg} started at $\frac13(1,1,1)$, with $\gamma=0.15$, reaches $J=0.0320$ in $40$ iterations. The minimizer of the convex collage surrogate $W_1(\nu,P_\mu^*\nu)$ has $J=0.0331$. Since $L=1/3$, the guarantee factor $(1+L)/(1-L)=2$ of the collage surrogate is mild, and indeed it is only 4\% worse than the optimum here.

Figure~\ref{fig:ternary}(b) concerns a realizable target $\nu=\pi_{\mu^\star}$ with $\mu^\star=(0.21,0.47,0.32)$, observed through $N$ samples. The estimator runs Algorithm~\ref{alg:shaping} on a coarse net ($q=30$) and refines its output with $150$ iterations of Algorithm~\ref{alg:psg} ($\gamma=0.03$) on $\mu\mapsto W_1(\hat\nu_N,\pi_\mu)$; we use $100$ replications per $N$. The three maps have disjoint images up to endpoints, so the linear response is injective at every $\mu^\star$ by Proposition~\ref{prop:injectivity}. The median error decays approximately like $N^{-1/2}$, in line with Corollary~\ref{cor:recovery}; at the largest $N$ it flattens slightly, where the optimization error of the refinement step becomes visible. The bound of the corollary, evaluated with the empirical $W_1(\hat\nu_N,\nu)$ and the sharpness constant $\kappa\approx0.131$ estimated on a net with $q=60$, is conservative by a roughly constant factor.

\begin{figure}[htbp]
\centering
\input{figures/fig_ternary.tex}
\caption{Ternary benchmark. (a) $J$ on $\Delta_3$ for a bimodal target (logarithmic color scale), with the trajectory of Algorithm~\ref{alg:psg} (white; start $\bullet$), the grid-search minimizer ($\star$) and the collage minimizer ($\diamond$). (b) Recovery of $\mu^\star$ from $N$ samples, with the bound of Corollary~\ref{cor:recovery}.}
\label{fig:ternary}
\end{figure}

\subsection{Homotheties near the missing boundary}\label{sec:exp-affine}
We illustrate Theorem~\ref{thm:homothety} and Proposition~\ref{prop:localization} on $\cX=\R$ with two
systems and two paths approaching the critical set as $p\downarrow0$:
\begin{align*}
\text{(a)}\quad & w_1(x)=x+1,\ \ w_2(x)=0.5\,x,\ \ w_3(x)=0.3\,x-2, && \mu(p)=(1-p,\tfrac p2,\tfrac p2);\\
\text{(b)}\quad & w_1(x)=x+1,\ \ w_2(x)=x-1,\ \ w_3(x)=0.5\,x, && \mu(p)=(\tfrac{1-p}2,\tfrac{1-p}2,p).
\end{align*}
In (a), $\langle a,\mu(p)\rangle=1-0.6p$ and $\bar b(\mu(p))=1-2p$, so $\bar x(\mu(p))=(1-2p)/(0.6p)$ and
condition~\eqref{eq:C} holds with $\kappa_{\rm b}=1$. In (b), $\langle a,\mu(p)\rangle=1-0.5p$, the mean vanishes
and~\eqref{eq:C} fails, but the critical law $\mu(0)$ is a symmetric random walk, which has no invariant
probability measure; a direct computation gives $\int x^2\,\dd\pi_{\mu(p)}=(1-p)/(0.75p)$. For each $p$ we
simulated $2\cdot10^5$ independent chains started at $0$ for $\lceil40/(1-\langle a,\mu(p)\rangle)\rceil$
steps, after which the law of the state is within $e^{-40}M_1(\pi_{\mu(p)})$ of $\pi_{\mu(p)}$ in $W_1$, and
we estimated $J$ for the target $\nu=N(0,1)$ from the empirical quantiles.

The simulated moments agree with the closed forms to within Monte Carlo error, all $18$ estimates lying
within $2.2$ standard errors (Tables~\ref{tab:affa} and~\ref{tab:affb} in Appendix~\ref{sec:supp-affine}), and Figure~\ref{fig:coercivity} shows the two predicted behaviors: $J$ has an interior minimum along each path and diverges at the
missing boundary, like $p^{-1}$ in (a), as in Proposition~\ref{prop:localization}(i), and more slowly in (b),
where the first absolute moment is bounded by $(\int x^2\,\dd\pi_{\mu(p)})^{1/2}$ and is observed to grow like
$p^{-1/2}$. Existence of a minimizer in case (b) is covered by Theorem~\ref{thm:homothety} although the
drift condition~\eqref{eq:C} fails.

\begin{figure}[t]
\centering
\includegraphics[width=\textwidth]{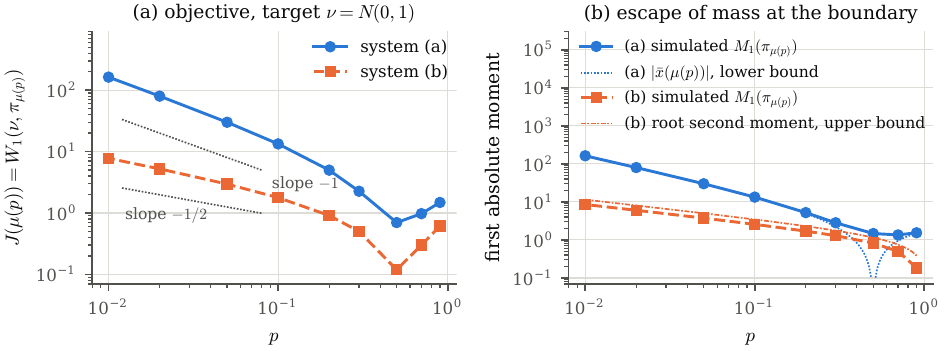}
\caption{Homotheties near the missing boundary (Section~\ref{sec:exp-affine}). (a) The objective along the
two paths. (b) First absolute moment of $\pi_{\mu(p)}$ with the closed-form lower bound $|\bar x(\mu(p))|$ for
system (a) and upper bound $(\int x^2\dd\pi_{\mu(p)})^{1/2}$ for system (b).}
\label{fig:coercivity}
\end{figure}

\subsection{Shaping the temperature distribution of a TCL population}
\label{sec:exp-tcl}

We use the discrete-time first-order thermal model of thermostatically controlled loads \cite{Callaway2009,Mathieu2013},
\[
\theta_{k+1}=a\,\theta_k+(1-a)\big(\theta_{\rm amb}-R\,P_{I_k}\big),\qquad a=e^{-\Delta t/(RC)},
\]
with thermal resistance $R=2\,^\circ$C/kW, capacitance $C=10$\,kWh/$^\circ$C, ambient temperature $\theta_{\rm amb}=32\,^\circ$C, and three cooling modes $P\in\{0,7,14\}$\,kW (off, half power, full power). The aggregator broadcasts $\mu$ every $\Delta t=15$\,min, and each agent, here a load, draws its mode independently. The three maps share the slope $a=0.9876$, so $L=a$ and the mixing time $1/(1-a)\approx81$ steps is long. All maps leave $[4,32]$ invariant, and we discretize this interval with step $h=0.01\,^\circ$C ($n=2801$), using the mean-preserving randomized rounding of Section~\ref{sec:quantization}. By Proposition~\ref{prop:quantization}(ii) with $\theta_2(\mu)=a^2$, the invariant measure of the discretized chain is within $(h/2)/\sqrt{1-a^2}\approx0.032\,^\circ$C of $\pi_\mu$ in $W_1$, for every design; the bound of Proposition~\ref{prop:quantization}(i) for deterministic rounding, of order $h/(1-a)$, would exceed the objective values themselves. Empirically the error is smaller still: halving $h$ changes $J$ by at most $0.0013$ for all designs below, and a Monte Carlo simulation of the continuous chain ($10^5$ agents) gives $W_1$ discrepancies of at most 0.005\,$^\circ$C. The target is the uniform distribution on the comfort band $[19,21]\,^\circ$C. Since the stationary mean is $\theta_{\rm amb}-R\,\E P_I$, every design with mean $20\,^\circ$C draws the same average power of $6$\,kW per agent; the designs differ only in how this power is distributed over the modes.

Table~\ref{tab:tcl} and Figure~\ref{fig:tcl} compare five designs. The on/off duty cycle, which only matches the mean, spreads the temperatures far beyond the band. The collage surrogate selects the minimum-variance mixture of the off and half-power modes and under-disperses. With $a\approx0.99$, its guarantee factor $(1+L)/(1-L)\approx160$ is uninformative: the collage distance ignores how variance accumulates over the long mixing time. A natural baseline that does account for it is moment matching~\cite{HandyMantica1990,Vrscay1991}: since the maps share the slope $a$, the stationary mean and variance are $\E c_I$ and $\frac{1-a}{1+a}\operatorname{Var}(c_I)$, where $c_i=\theta_{\rm amb}-RP_i$ is the fixed point of the $i$th map, and matching the mean $20\,^\circ$C and the variance $\frac13$ of the band is a pair of linear equations for $\mu$. It gives $J=0.090$, far better than the collage surrogate, because $\pi_\mu$ is nearly Gaussian when $a$ is close to one. $W_1$-shaping does better still, $J=0.080$, by choosing a slightly larger spread than that of the band (standard deviation $0.637$ versus $0.577$). The gaps between the first two designs and the last three exceed twice the certified discretization error; the gaps among the last three do not, but they are stable under halving $h$ ($0.0898$, $0.0860$ and $0.0795$). Note that $J$ measures the match of the whole distribution: the collage design keeps fewer agents outside the band, $1.0\%$ against $11.5\%$, but places almost all of them near $20\,^\circ$C rather than across the band. If the goal is to keep agents inside the band rather than to spread them across it, the probability of leaving the band, or a constraint on it, is the more appropriate objective.

Because $J$ is very steep across the set of designs with mean $20\,^\circ$C and flat along it, the subgradient method zig-zags (Figure~\ref{fig:tcl}(b)). This is consistent with the large constants $\omega_r$ and $G_r$ of Theorem~\ref{thm:psg} for $r$ close to $1$. Started at the barycenter with $\gamma=0.05$, Algorithm~\ref{alg:psg} reaches only $J=0.196$ after $200$ iterations. The hybrid of Remark~\ref{rem:psg-practice}(4) runs Algorithm~\ref{alg:shaping} on a net with $q=20$ ($231$ evaluations of $J$) and then Algorithm~\ref{alg:psg} with $\gamma=0.02$. It reaches $J=0.080$ (Monte Carlo: 0.079) after only 2 iterations, i.e.\ at the cost of about $235$ invariant-measure computations. Algorithm~\ref{alg:shaping} on the much finer net with $q=100$ needs $5151$ computations and attains only $0.086$, because the spacing of that net exceeds the width of the valley.

\begin{table}[htbp]
\centering
\caption{TCL population: five sampling laws, all with stationary mean $\approx20\,^\circ$C (average power $6$\,kW). ``Outside'': stationary fraction of agents outside the band, $\pi_\mu(\theta\notin[19,21])$; ``collage'': value of the collage surrogate $W_1(\nu,P_\mu^*\nu)$.}
\label{tab:tcl}
\small
\begin{tabular}{lcccccc}
\hline
design & $(\mu_{\rm off},\mu_{\rm half},\mu_{\rm full})$ & $J(\mu)$ & mean & std & outside & collage \\
\hline
on/off duty cycle & $(0.571,\,0.000,\,0.429)$ & 0.373 & 20.00 & 1.096 & 36.0\% & 0.0173 \\
collage surrogate & $(0.143,\,0.857,\,0.000)$ & 0.195 & 20.01 & 0.389 & 1.0\% & 0.0065 \\
moment matching & $(0.218,\,0.707,\,0.075)$ & 0.090 & 20.00 & 0.578 & 8.2\% & 0.0074 \\
Alg.~\ref{alg:shaping}, $q=100$ & $(0.250,\,0.640,\,0.110)$ & 0.086 & 19.96 & 0.646 & 12.1\% & 0.0081 \\
hybrid: $q=20$, Alg.~\ref{alg:psg} & $(0.247,\,0.649,\,0.104)$ & 0.080 & 20.00 & 0.637 & 11.5\% & 0.0080 \\
\hline
\end{tabular}

\end{table}

\begin{figure}[htbp]
\centering
\input{figures/fig_tcl.tex}
\caption{TCL population. (a) Stationary temperature densities for the target band and three of the five
designs of Table~\ref{tab:tcl}; the moment-matching design and the design of Algorithm~\ref{alg:shaping} with $q=100$
are omitted, since their densities are close to that of the hybrid design. (b) Objective values along
Algorithm~\ref{alg:psg}.}
\label{fig:tcl}
\end{figure}

\subsection{Many modes}\label{sec:exp-many}
All systems above have $m=3$ modes, which makes landscapes visible and exhaustive search feasible. We now
take $m=12$ affine maps $w_i(x)=a_ix+(1-a_i)z_i$ of $[0,1]$, contracting towards equally spaced points
$z_i\in[0.04,0.96]$ with slopes $a_i\in[0.30,0.60]$, so that $L_{\max}=0.6$ and $\cA=\Delta_{12}$. The
finite-state model has $n=2001$ grid points, and by Proposition~\ref{prop:quantization}(ii) its invariant
measures are within $3.2\cdot10^{-4}$ of the true ones in $W_1$. Already the net with $q=20$ has
$\binom{31}{11}\approx8.5\cdot10^7$ points, so Algorithm~\ref{alg:shaping} is out of reach. The sensitivity
vectors agree with central finite differences to a relative error of at most $6.1\cdot10^{-7}$ over $10$ random
directions.

For the bimodal target $\nu=0.6\,\mathrm{Beta}(4,14)+0.4\,\mathrm{Beta}(16,5)$, Algorithm~\ref{alg:psg}
($\gamma=0.5$, $T=300$) was started at the barycenter and at $9$ random laws drawn from the uniform
distribution on $\Delta_{12}$. All ten runs end with $J\in[0.0393,0.0396]$, and the median of the best
value found is $0.0408$ after $50$ iterations and $0.0398$ after $100$ (Figure~\ref{fig:scalability}(a)). The
ten returned laws are supported on the same four modes, $\{1,3,11,12\}$, but differ by up to $0.13$ in
$\ell_1$ (median pairwise distance $0.05$): they lie in a flat valley rather than at one point. Random search
with twice as many evaluations of $J$ ($600$ laws drawn uniformly from $\Delta_{12}$) reaches only
$0.052$--$0.069$ (median $0.062$) over ten repetitions. Stochastic gradient descent that uses only values of $J$,
with the two-point random-direction gradient estimates of~\cite{NesterovSpokoiny2017}, the projection and
step-size rule of Algorithm~\ref{alg:psg}, and for each budget the best of the three constants
$\gamma\in\{0.1,0.5,2\}$, reaches from the same ten starts a median of $0.0418$ (range $0.0393$--$0.0470$) with
$600$ evaluations of $J$, roughly the cost of $300$ iterations of Algorithm~\ref{alg:psg}, and needs $3000$
evaluations to reach a median of $0.0393$ (range $0.0392$--$0.0403$); the exact sensitivity vectors thus save
a factor of about five in evaluations of $J$. The minimizer of the collage surrogate, computed as a linear
program, has $J=0.0443$, and the uniform law has $J=0.122$. The agreement of the runs suggests that, for this
contractive system, the landscape has no spurious local minima that matter, which is consistent with weak
convexity (Theorem~\ref{thm:structure}) but not implied by it.

For realizable targets, Figure~\ref{fig:scalability}(b) compares Polyak steps with diminishing steps. For
the $8$-adic system $w_i(x)=(x+i-1)/8$, $i\in[8]$, computed exactly on $4$ digits, and $\mu^\star$ drawn from a
Dirichlet$(2,\dots,2)$ law, the push-forwards have disjoint supports, so the linear response is injective
(Proposition~\ref{prop:injectivity}); a search over $10^5$ random directions gives $\kappa_0\lesssim0.046$. With
Polyak steps, $\|\mu^t-\mu^\star\|_1$ falls from $0.66$ to $4\cdot10^{-3}$ after $50$ iterations, $9\cdot10^{-5}$ after
$100$ and $4.5\cdot10^{-9}$ after $200$, a contraction by a factor of about $0.91$ per step, while diminishing
steps ($\gamma=0.5$) stall above $0.1$. This is consistent with Theorem~\ref{thm:psg}(b) but not explained by it:
with $r=\frac18$ one finds $\omega_r\approx1.31$ and $G_r\approx3.23$, and the estimate of $\kappa_0$ gives
$\kappa\approx\kappa_0/2\approx0.023$, so the theorem guarantees linear convergence only from within Euclidean
distance $\kappa/(2\omega_r)\approx0.009$ of $\mu^\star$, at the rate $1-\kappa^2/(2G_r^2)\approx1-2.7\cdot10^{-5}$ per
step, whereas the run starts at distance $0.26$. The worst-case constants $\omega_r$ and $G_r$ are far from
sharp here; a local analysis in the $\ell_1$-geometry of the simplex, in terms of $N_{\mu^\star}$, is left open. For the $12$-map system above and $\mu^\star$ drawn uniformly from $\Delta_{12}$,
the decrease is much slower: after $400$ Polyak steps $J(\mu^t)\approx3\cdot10^{-3}$, but
$\|\mu^t-\mu^\star\|_1\approx0.25$. The size of the linear response accounts for the difference: over $10^5$ random
directions $h\in T_0$ with $\|h\|_1=1$, $N_{\mu^\star}(h)$ ranges from $0.0063$ to $0.34$ (median $0.074$) for the
$12$-map system, against $0.046$ to $0.44$ (median $0.14$) for the $8$-adic one, so that $\kappa_0\lesssim0.0063$ is
about seven times smaller. The $12$ maps contract towards nearby points and their push-forwards overlap
strongly; the target then determines the invariant measure much better than the sampling law.

\begin{figure}[htbp]
\centering
\input{figures/fig_scalability.tex}
\caption{Many modes (Section~\ref{sec:exp-many}). (a) $m=12$, non-realizable target: best value of $J$ found by
Algorithm~\ref{alg:psg} (median and range over $10$ starts) and by random search with twice as many
evaluations (median and range over $10$ repetitions), compared with the collage minimizer and the uniform
law. (b) Realizable targets: distance to $\mu^\star$ under Polyak and diminishing steps.}
\label{fig:scalability}
\end{figure}

\section{Conclusion and open problems}\label{sec:conclusion}

We have studied the problem of choosing the sampling law of an iterated function system so that its
invariant measure is as close as possible, in $W_1$, to a prescribed target. The feasible set of the
problem, the admissible set $\cA$ of laws with a unique globally attracting invariant measure, was
treated as an object of study in its own right rather than assumed to be the whole simplex. On bounded
state spaces, the regions in which admissibility is certified by a Doeblin or a splitting condition are
convex unions of relatively open faces, and they exhaust $\cA$ for all finite state spaces; $\cA$ is a
spectral sublevel set for affine systems with entrywise nonnegative matrices; and it has no regularity in
general, beyond being co-analytic on compact spaces. Existence of an optimal sampling law was reduced,
through a relaxation on the closure of $\cA$, to the set of limiting invariant measures at the missing
boundary; for compact state spaces solvability for every target is equivalent to closedness of the set of
reachable invariant measures. Boundary points without invariant measures of finite first moment are
harmless because they repel minimizing sequences, which gave an unconditional existence theorem for
homotheties. A search over a $\delta$-net of the simplex computes, from finite data and with high
probability, a law that is $\eps$-optimal over every regular search region, with constants that do not
involve contraction of the maps; it is exponential in the number of modes. In the contractive regime,
linear response, a criterion for its injectivity and weak convexity of the objective make a projected
subgradient method with inexact subgradients available, which handles twelve modes. We close with four
problems that we consider natural and open.

\begin{openproblem}[The admissible set of affine systems]\label{op:affine}
For affine maps $w_i(x)=A_ix+b_i$ on $\R^d$, Theorem~\ref{thm:affine} shows that
$\{\varrho_1<1\}\subseteq\cA\subseteq\{\rho(\bar A(\mu))<1\}$, with $\cA=\{\varrho_1<1\}$ when the matrices are
nonnegative or the system is linear. Is it true in general that $\cA=\{\mu:\varrho_1(\mu)<1\}$? Equivalently,
can the translation parts ever make the laws of two copies of the chain merge in $W_1$ although the
synchronous coupling does not contract in mean? In dimension one, the Kesten--Goldie tail asymptotics for affine
recursions~\cite{Kesten1973,Goldie1991,BuraczewskiDamekMikosch2016} support a positive answer in non-degenerate cases, and
for similarities with rotations the multidimensional tail theory of~\cite{BuraczewskiEtAl2009} may apply, but
we know of no argument that covers all affine systems. A related question is whether the existence
theorem for homotheties, Theorem~\ref{thm:homothety}, extends to entrywise nonnegative matrices; see the discussion
after its proof.
\end{openproblem}

\begin{openproblem}[Descriptive complexity and regularity of $\cA$]\label{op:borel}
For compact $\cX$ the sets $\cAu$ and $\cU$ are $G_\delta$ (Theorem~\ref{thm:topology}),
$\cAu\subseteq\cA\subseteq\cU$, and $\cA$ is co-analytic (Remark~\ref{rem:topology}(b)). Is $\cA$ always a Borel
subset of the simplex, and if so, of which class? Is
there a verifiable condition on the maps, weaker than the Doeblin and splitting conditions and applicable
to injective maps on a continuum, under which $\cA$ is a union of relatively open faces, or at least
convex? Example~\ref{ex:twolane} shows that compactness and Lipschitz continuity are not sufficient. Since
$P_\mu^n(x,\cdot)$ is atomic for an IFS with finitely many maps, a minorization condition on any set of states
again requires words that merge trajectories; weaker tools, such as drift conditions combined with
asymptotic coupling~\cite{HairerMattinglyScheutzow2011}, depend on $\mu$ through more than its support and
lead to sublevel sets, such as the half-space of Proposition~\ref{prop:contraction}, rather than to unions of
faces.
\end{openproblem}

\begin{openproblem}[Complexity and structure of the optimization problem]\label{op:complexity}
For a finite state space, $\pi_\mu$ is rational on $\cA$ (Theorem~\ref{thm:finite}), and~\eqref{eq:A} is
equivalent to minimizing $W_1(\nu,\pi)$ over pairs $(\mu,\pi)$ subject to the bilinear constraint
$\pi=\sum_i\mu_iW_i^\top\pi$, $\pi\ge0$, $\one^\top\pi=1$, where $W_i$ is the transition matrix of $w_i$, \emph{and} the
combinatorial constraint $\supp\mu\in\sD$; for fixed $\pi$ the bilinear constraint is linear in $\mu$, and for
fixed $\mu$ it is linear in $\pi$. The support constraint cannot be dropped: without it the program
minimizes over all invariant measures of all kernels, and at a vertex $e_j$ with $w_j=\id$ every $\pi$ is
feasible, so that its value is zero for every target; and even when the values coincide, the relaxation
may be attained only outside $\cA$, as in Example~\ref{ex:nonexistence}. Is Problem~\eqref{eq:A} solvable in time
polynomial in $m$, the number of states and $\log(1/\eps)$, or is it NP-hard? For a Dirac target $\nu=\delta_z$,
$J(\mu)=\sum_x\pi_\mu(x)\rho(x,z)$ is the long-run average cost of a randomized policy that ignores the
state, and hardness results for memoryless stochastic controllers of partially observable Markov decision
processes~\cite{VlassisLittmanBarber2012} may be relevant, although our transitions are deterministic. Are
there conditions under
which $J$ has no spurious local minima on $\cA$, so that the exhaustive search of
Algorithm~\ref{alg:shaping} can be replaced by a descent method such as Algorithm~\ref{alg:psg}, also
outside the contractive regime of Section~\ref{sec:sensitivity}?
\end{openproblem}

\begin{openproblem}[Identifiability and richer sampling schemes]\label{op:identifiability}
When is the shaping map $\mu\mapsto\pi_\mu$ injective on $\cA$, so that a reachable target determines the
sampling law? Example~\ref{ex:hold} shows that an idle mechanism destroys injectivity, and the flat
valley in Figure~\ref{fig:landscapes}(b) shows that injectivity can be numerically weak. A quantitative
answer, in the form of a lower bound $W_1(\pi_\mu,\pi_\eta)\ge c\,\|\mu-\eta\|_1$, turns
Theorem~\ref{thm:eps} into a statement about the estimation of $\mu^*$ from a sample of $\pi_{\mu^*}$
(Corollary~\ref{cor:recovery}). In the contractive regime, Propositions~\ref{prop:injectivity}
and~\ref{prop:sharpness} reduce the local question to linear independence of the push-forwards
$(w_i)_\#\pi_{\mu^\star}$, with a constant given by their Kantorovich--Rubinstein geometry; global injectivity,
non-contractive systems and minimax rates remain open. A related question is the extension of the theory to place-dependent
probabilities $\mu_i(x)$~\cite{BarnsleyEtAl1988} and to Markovian sampling of the maps, where the feasible
set is infinite-dimensional and where feedback, as in the ergodic control of
ensembles~\cite{FioravantiEtAl2019,KungurtsevEtAl2023}, enlarges the set of reachable invariant measures.
\end{openproblem}

\appendix

\section{Simulated moments near the missing boundary}\label{sec:supp-affine}
This appendix reports the simulations behind Section~\ref{sec:exp-affine}. For the two systems on $\cX=\R$
defined there,
\begin{align*}
\text{(a)}\quad & w_1(x)=x+1,\ \ w_2(x)=0.5\,x,\ \ w_3(x)=0.3\,x-2, && \mu(p)=(1-p,\tfrac p2,\tfrac p2);\\
\text{(b)}\quad & w_1(x)=x+1,\ \ w_2(x)=x-1,\ \ w_3(x)=0.5\,x, && \mu(p)=(\tfrac{1-p}2,\tfrac{1-p}2,p),
\end{align*}
Table~\ref{tab:affa} compares the closed-form mean $\bar x(\mu(p))=(1-2p)/(0.6p)$ of system (a), and
Table~\ref{tab:affb} the closed-form second moment $\int x^2\,\dd\pi_{\mu(p)}=(1-p)/(0.75p)$ of system (b),
with their Monte Carlo estimates. Each estimate uses $2\cdot10^5$ independent chains started at $0$ and run
for $\lceil40/(1-\langle a,\mu(p)\rangle)\rceil$ steps, after which the law of the state is within
$e^{-40}M_1(\pi_{\mu(p)})$ of $\pi_{\mu(p)}$ in $W_1$; the objective $J(\mu(p))$ for the target $\nu=N(0,1)$ is
estimated from the empirical quantiles. All $18$ estimates lie within $2.2$ standard errors of the closed forms, and $17$ of them
within $1.8$, as expected for independent Monte Carlo errors. In system (a), $M_1(\pi_{\mu(p)})$ and
$J(\mu(p))$ approach $|\bar x(\mu(p))|$, which grows like $p^{-1}$, as in
Proposition~\ref{prop:localization}(i); in system (b) the mean vanishes, and $M_1(\pi_{\mu(p)})$ grows like
$p^{-1/2}$, below the bound $(\int x^2\,\dd\pi_{\mu(p)})^{1/2}$.

\begin{table}[ht]
\caption{System (a): mean of $\pi_{\mu(p)}$ in closed form and by simulation (standard error), first
absolute moment, and $J(\mu(p))$ for $\nu=N(0,1)$.}
\label{tab:affa}
\begin{tabular}{@{}lccccc@{}}
\hline
$p$ & $\langle a,\mu(p)\rangle$ & $\bar x(\mu(p))$ & simulated mean & $M_1(\pi_{\mu(p)})$ & $J(\mu(p))$ \\
\hline
0.90 & 0.460 & $-1.481$ & $-1.484$ (0.002) & 1.543 & 1.485 \\
0.70 & 0.580 & $-0.952$ & $-0.953$ (0.003) & 1.359 & 0.976 \\
0.50 & 0.700 & 0.000 & 0.001 (0.004) & 1.474 & 0.695 \\
0.30 & 0.820 & 2.222 & 2.209 (0.007) & 2.824 & 2.249 \\
0.20 & 0.880 & 5.000 & 5.006 (0.012) & 5.258 & 5.006 \\
0.10 & 0.940 & 13.333 & 13.329 (0.024) & 13.368 & 13.329 \\
0.05 & 0.970 & 30.000 & 29.944 (0.049) & 29.947 & 29.944 \\
0.02 & 0.988 & 80.000 & 79.944 (0.123) & 79.944 & 79.944 \\
0.01 & 0.994 & 163.333 & 163.099 (0.247) & 163.099 & 163.099 \\
\hline
\end{tabular}

\end{table}

\begin{table}[ht]
\caption{System (b): second moment of $\pi_{\mu(p)}$ in closed form and by simulation (standard error),
first absolute moment, and $J(\mu(p))$ for $\nu=N(0,1)$.}
\label{tab:affb}
\begin{tabular}{@{}lccccc@{}}
\hline
$p$ & $\langle a,\mu(p)\rangle$ & $(1-p)/(0.75p)$ & simulated $\int x^2\dd\pi_{\mu(p)}$ & $M_1(\pi_{\mu(p)})$ & $J(\mu(p))$ \\
\hline
0.90 & 0.550 & 0.148 & 0.149 (0.001) & 0.184 & 0.614 \\
0.70 & 0.650 & 0.571 & 0.567 (0.003) & 0.497 & 0.304 \\
0.50 & 0.750 & 1.333 & 1.332 (0.006) & 0.825 & 0.121 \\
0.30 & 0.850 & 3.111 & 3.087 (0.014) & 1.292 & 0.494 \\
0.20 & 0.900 & 5.333 & 5.369 (0.024) & 1.714 & 0.916 \\
0.10 & 0.950 & 12.000 & 11.938 (0.052) & 2.573 & 1.775 \\
0.05 & 0.975 & 25.333 & 25.346 (0.110) & 3.748 & 2.951 \\
0.02 & 0.990 & 65.333 & 65.220 (0.281) & 6.026 & 5.228 \\
0.01 & 0.995 & 132.000 & 130.782 (0.565) & 8.548 & 7.750 \\
\hline
\end{tabular}

\end{table}

\section{A contractive benchmark}\label{sec:supp-battery}
For comparison with the classical setting we take three affine contractions of $[0,1]$,
\[
w_1(x)=0.4\,x+0.1,\qquad w_2(x)=0.5\,x+0.4,\qquad w_3(x)=0.3\,x+0.05,
\]
with $L_{\max}=0.5$, so that $\cA=\Delta_3$ and Corollary~\ref{cor:constants}(c) gives $L_\pi=1$ and
$r_k=2^{-k}$. The target is $\nu=\pi_{\mu^*}$ with $\mu^*=(0.2,0.5,0.3)$, which belongs to the net for
$q=60$, and we run Algorithm~\ref{alg:shaping} over the whole net. With $\lambda_0=\delta_0$, the atoms
$w_{\bi}(0)$, $\bi\in[3]^k$, of $\pi^{(k)}_\mu$ were enumerated once and rounded to a mesh of width $2^{-12}$,
which perturbs every $W_1$-distance below by at most $2^{-12}\approx2.4\cdot10^{-4}$; the reference measures
use $k=14$. Entries of Table~\ref{tab:battery} below this resolution (the value of $J(\hat\mu)$ for $N=10^5$,
$k=12$, and the truncation error for $k=12$) should be read as ``at most of order $10^{-4}$''.
Table~\ref{tab:battery} reports means over $200$ replications. The error $J(\hat\mu)$ follows the
statistical term once $2^{-k}$ is below it, and the parameter is recovered as well, the landscape being
less flat than in Section~\ref{sec:exp-storage}. On the net, the largest observed ratio
$W_1(\pi_\mu,\pi_\eta)/\|\mu-\eta\|_1$ was $0.49$, half of the theoretical constant $L_\pi=1$. As a consistency check of the implementation, the deterministic inequality~\eqref{eq:oracle} was evaluated exactly
in all $4{,}800$ runs, which include a coarser net with $q=20$ (not shown), and held in each.

\begin{table}[ht]
\caption{Contractive benchmark, $q=60$ ($|\cN_\delta|=1891$): mean of $J(\hat\mu)$ and of the parameter error
$\|\hat\mu-\mu^*\|_1$ over $200$ replications; last column: mean of $W_1(\hat\nu_N,\nu)$; last row:
truncation error $\max_{\mu\in\cN_\delta}W_1(\pi_\mu^{(k)},\pi_\mu)$.}
\label{tab:battery}
\begin{tabular}{@{}lccccccc@{}}
\hline
 & \multicolumn{2}{c}{$k=3$} & \multicolumn{2}{c}{$k=6$} & \multicolumn{2}{c}{$k=12$} & \\
$N$ & $J(\hat\mu)$ & error & $J(\hat\mu)$ & error & $J(\hat\mu)$ & error & $W_1(\hat\nu_N,\nu)$ \\
\hline
$10^2$ & 0.0296 & 0.173 & 0.0188 & 0.145 & 0.0186 & 0.146 & 0.0223 \\
$10^3$ & 0.0257 & 0.136 & 0.0066 & 0.053 & 0.0063 & 0.050 & 0.0074 \\
$10^4$ & 0.0257 & 0.132 & 0.0017 & 0.019 & 0.0011 & 0.014 & 0.0022 \\
$10^5$ & 0.0258 & 0.128 & 0.0014 & 0.017 & 0.0001 & 0.001 & 0.0007 \\
\hline
truncation & \multicolumn{2}{c}{$1.0\cdot10^{-1}$} & \multicolumn{2}{c}{$1.2\cdot10^{-2}$} & \multicolumn{2}{c}{$7.3\cdot10^{-5}$} & \\
\hline
\end{tabular}
\end{table}

\end{document}